\newtheorem{theorem}{Theorem}[section]
\newtheorem{prop}[theorem]{Proposition}
\newtheorem{definition}[theorem]{Definition}
\newtheorem{lemma}[theorem]{Lemma}
\newtheorem{fact}[theorem]{Fact}
\newtheorem{constr}[theorem]{Construction}
\newtheorem{subcon}[subsubsection]{}
\newtheorem{cor}[theorem]{Corollary}
\newtheorem{main}[theorem]{Main Theorem}
\newtheorem{rem}[theorem]{Remark}
\newtheorem{defi}[theorem]{Definition}
\newtheorem{ex}[theorem]{Example}
\newenvironment{proof*}{\par\noindent\textbf{Proof}\hspace{1em}}{}
\def\<{\langle}
\def\>{\rangle}
\newcommand{\proj}{\mathrm{proj}}
\newcommand{\Aut}{\mathrm{Aut}}
\newcommand{\PG}{\mathsf{PG}}
\newcommand{\cS}{\mathcal{S}}
\newcommand{\cA}{\mathcal{A}}
\newcommand{\Res}{\mathrm{Res}}
\newcommand{\cod}{\mathrm{codim}}
\newcommand{\N}{\mathbb{N}}
\begin{document}
\author{Anneleen De Schepper\thanks{Supported by the Fund for Scientific Research - Flanders (FWO - Vlaanderen)} \and Hendrik Van Maldeghem\thanks{Partly supported by the Fund for Scientific Research - Flanders (FWO - Vlaanderen)}}
\title{Maps related to polar spaces preserving a Weyl distance or an incidence condition}
\date{\footnotesize  Department of Mathematics,\\
Ghent University,\\
Krijgslaan 281-S25,\\
B-9000 Ghent,
BELGIUM\\
 \texttt{Anneleen.DeSchepper@UGent.be}\\  \texttt{Hendrik.VanMaldeghem@UGent.be}\\ $^*$Corresponding author}
\maketitle

\begin{abstract}
Let $\Omega_i$ and $\Omega_j$ be the sets of elements of respective types $i$ and $j$ of a polar space~$\Delta$ of rank at least $3$, viewed as a Tits-building. For any Weyl distance $\delta$ between $\Omega_i$ and $\Omega_j$, we show that $\delta$ is characterised by $i$ and $j$ and two additional numerical parameters $k$ and $\ell$. We consider permutations $\rho$ of $\Omega_i \cup \Omega_j$ that preserve a single Weyl distance $\delta$. Up to a minor technical condition on $\ell$, we prove that, up to trivial cases and two classes of true exceptions, $\rho$ is induced by an automorphism of the Tits-building associated to $\Delta$, which is always a type-preserving automorphism of $\Delta$ (and hence preserving \emph{all} Weyl-distances), unless $\Delta$ is hyperbolic, in which case there are outer automorphisms. For each class of exceptions, we determine a Tits-building $\Delta'$ in which $\Delta$ naturally embeds and is such that $\rho$ is induced by an automorphism of $\Delta'$. At the same time, we prove similar results for permutations preserving a natural incidence condition. These yield combinatorial characterisations of all groups of algebraic origin which are the full automorphism group of some polar space as the automorphism group of many bipartite graphs.  
\end{abstract}

{\footnotesize
\emph{Keywords:} Polar spaces, Weyl distance, Grassmannian\\
\emph{AMS classification:} 51E24, 51A50
}

\section{Introduction}
Let $\Delta$ be a polar space of rank $n$ with $n \geq 3$, with $\mathsf{T}$ its set of types and $\Omega_s$ its set of singular subspaces of type $s$ (the type of a singular subspace is its dimension, except for the maximal singular subspaces of a hyperbolic quadric). The following situation is the central theme of some recent papers: Define some natural (adjacency) relation $\sim$ on $\Omega_s$ and determine the full automorphism group of the corresponding graph $(\Omega_s,\sim)$, hoping for the full automorphism group of $\Delta$. For instance, Liu, Ma \& Wang \cite{Liu-Ma-Wan:12} essentially prove that when $\Delta$ is a finite unitary polar space, $s$ is arbitrary but not maximal, and adjacency is ``being incident with common singular subspaces  of types $s-1$ and $s+1$'', then the automorphism group of $(\Omega,\sim)$ coincides with the full automorphism group of the polar space. Zeng, Chai, Feng \& Ma \cite{Zen-Cha-Fen-Ma:13} prove the same thing for finite symplectic polar spaces. Pankov \cite{gras} shows this for general polar spaces, and points out the only exception, namely the polar space related to the triality quadric, where also trialities and dualities preserve this adjacency relation on the set of lines of the polar space (however, implicitly, this result was known long before, see Section~\ref{grassmann}). M.~Pankov, K.~Prazmovski \& M.~Zynel \cite{Pan-Pra-Zyn:06} show for an arbitrary polar space $\Delta$ and arbitrary $s$ that, when adjacency is ``being incident with a common singular subpace  of type $s-1$'', then the automorphism group of $(\Omega,\sim)$ coincides with the full automorphism group of the polar space (without exception). Huang \& Havlicek \cite{Hua-Hav:08} develop a technique  that can be applied to this problem when the adjacency relation is given by ``opposition'' (see below for the precise definition of this notion). However, their result can not be applied to all polar spaces. Kasikova \& Van Maldeghem \cite{Kas-Mal:13} solve the case of opposition for all polar spaces and all possible types (pointing out several exceptions to the expectation of getting the full automorphism group of the polar space). Huang \cite{Hua:10,Hua:11} shows that for many polar spaces, when $s$ is maximal and adjacency is given by ``intersecting in a singular subspace of type at most some fixed number'', the automorphism groups of the graph and the polar space coincide.  Liu, Pankov \& Wang \cite{pan} treat the case where adjacency is given by ``being incident with a common singular subpace  of type $s-1$ and not with one of type $s+1$'', and also the case where adjacency is defined as ``being contained in a unique maximal singular subpace''.  In the present paper we consider adjacency relations that contain and generalise all previously mentioned relations. Moreover, we consider these relations between singular subspaces of possibly different types, which gives rise to bipartite graphs and yields slightly more general results and more counter examples. We note that the adjacency relations in  \cite{Pan-Pra-Zyn:06,Hua:10,Hua:11} express an intersection property of the singular subspaces in question, while the adjacency relations in \cite{Liu-Ma-Wan:12, Zen-Cha-Fen-Ma:13, Hua-Hav:08, Kas-Mal:13, pan} express a certain \emph{Weyl distance} in the associated Tits-building.

Hence we study permutations of $\Omega_i \cup \Omega_j$, for $i,j \in \mathsf{T}$ (note that in most cases $i,j$ represent dimensions, only when $\Delta$ is hyperbolic there are two types of $(n-1)$-dimensional subspaces; hence, in general, $|i|$ and $|j|$ denote the corresponding dimensions), preserving either \begin{compactenum} \item[$(i)$] a single Weyl distance between elements of $\Omega_i$ and $\Omega_j$ in the Tits-building associated to $\Delta$, \end{compactenum} or \begin{compactenum} \item[$(ii)$] the members of $\Omega_i\times\Omega_j$ which intersect in a subspace of given dimension, \end{compactenum} or \begin{compactenum} \item[$(iii)$] the members of $\Omega_i\times\Omega_j$ whose intersection has dimension at least some given value. \end{compactenum}  In graph-theoretical terms, this amounts to automorphisms of bipartite graphs having $\Omega_i$ and $\Omega_j$ as bipartition classes, where $I \in \Omega_i$ and $J \in \Omega_j$ are adjacent if,  for some $k,\ell \in \mathsf{T}\cup\{-1,n-2\}$ (defining the type of the empty set as $-1$ and including $n-2$ in case $\Delta$ is a hyperbolic polar space, as then $n-2 \notin \mathsf{T}$), the type of $I \cap J$ is $k$ and the type of $I^J$ is $\ell$ in Case $(i)$  (this will be explained below), the type of $I \cap J$ is $k$ in Case $(ii)$, and the type of $I \cap J$ is at least $k$ in Case $(iii)$. With only a single restriction on the parameters, being ``$|\ell| = n-1$ implies $|i|=|j|=n-1$'', we prove that, up to two classes of exceptions and trivial graphs (meaning that the adjacency relation is empty, the graph is complete bipartite, a matching or the bipartite complement of a matching), every automorphism of these graphs is induced by an automorphism of the Tits-building corresponding to $\Delta$, which is just an automorphism of the polar space $\Delta$ if $\Delta$ is not hyperbolic. The mentioned restriction is not expected to give rise to counterexamples, yet it does require a different approach which does not fit in the current paper (and as such we leave this case for future work).   If $i\neq j$, then by considering all possible Weyl distances between elements of type $i$ and $j$, Case $(i)$ provides a partition of the complete bipartite graph $\Omega_i\times \Omega_j$ such that the automorphism group of each class of the partition coincides with the full automorphism group of the polar space. This is a nice and unexpected, though theoretical combinatorial property of these groups. 

In \cite{proj}, we studied a similar problem for a projective space $\mathbb{P}$, which gave rise to two types of graphs. Both have $\mathbb{P}_i$ and $\mathbb{P}_j$ as bipartition classes (with similar notation as above and $0 \leq i,j \leq \dim(\mathbb{P})$). In the first case (resp.\ the second case),  $I\in \mathbb{P}_i$ and $J \in \mathbb{P}_j$ are adjacent if $\dim(I \cap J)=k$ (resp.\ $\dim(I \cap J) \geq k$)  for a fixed $k$ with $k\geq -1$. The main result of \cite{proj} states that if these graphs are nontrivial, then all their automorphisms are induced by automorphisms of $\mathbb{P}$  (possibly including a duality). Both cases fit in an incidence geometric setting, since in the first case (resp.\ the second case), $I$ and $J$ are adjacent whenever there is exactly one $k$-space (resp.\ at least one $k$-space) incident with both of them. However, the first case in fact also fits in a metric setting, since it corresponds with the preservation of a single Weyl distance in the Tits-building corresponding to $\mathbb{P}$.  As a projective space is a particular type of Tits-building and the Weyl distance is defined for Tits-buildings in general, it is natural to ask whether this also holds for other types of (spherical) Tits-buildings. This paper answers this question for Tits-buildings associated to polar spaces. Yet, we are also able to treat analogs of the incidence-geometric case at the same time. Precise definitions and statements will be given in Section~\ref{statements}. The case of the preservation of a Weyl distance yields a rather general Beckman-Quarles \cite{Bec-Qua:53} type result for the vertices of spherical Tits-buildings of classical type. 

Since the analogous problem in the rank 2 spherical case is completely solved by Govaert \& Van Maldeghem \cite{Gov}, only the exceptional Tits-buildings of types $\mathsf{F_4,E_6,E_7,E_8}$ remain. These yield a finite number of possible Weyl distances. Note that a similar question for \emph{chambers} of any Tits-building has been answered by Abramenko \& Van Maldeghem \cite{Ab2}.

\section{Preliminaries}
To avoid ambiguity, we give definitions of the concepts that we will frequently use.
\subsection{Polar spaces and related notions} A \emph{ polar space} $\Delta = (X,\Omega)$ of rank $n$, $n \geq 2$, consists of a set of points $X$ and a family $\Omega$ of subsets of $X$, satisfying the following axioms.

\begin{itemize}
\item[(PS$1$)] Each element $U$ of $\Omega$ together with all elements of $\Omega$ contained in $U$ is a projective space of dimension at most $n-1$ (this dimension will be called the \emph{dimension} of $U$ and is denoted by $\dim(U)$). A projective space of dimension $-1$ is just the empty set, a projective space of dimension 0 is a point and a projective space of dimension 1 is a set of at least three points with no further structure. 
\item[(PS$2$)] The intersection of any number of elements of $\Omega$ is again contained in $\Omega$.
\item[(PS$3$)] For $U \in \Omega$ with $\dim(U) = n-1$ and $p \in X\setminus U$, the union of all elements of $\Omega$ of dimension 1 containing $p$ and intersecting $U$ nontrivially is an element of $\Omega$ of dimension $n-1$ which intersects $U$ in a hyperplane.
\item[(PS$4$)] There are two disjoint elements of $\Omega$ of dimension $n-1$.
\end{itemize} \vspace{-1em}
A set $X$ of cardinality at least two, together with $\Omega=X\cup\{\emptyset\}$ is considered to be a polar space of rank 1. Henceforth, $\Delta$ denotes a polar space of rank $n$ with $n\geq 2$.

\textbf{Collinearity and opposition} $-$  An element of $\Omega$ of dimension $n-1$ is called a \emph{maximal singular subspace} (MSS for short) and an element of $\Omega$ of dimension 1 is called a \emph{line}. Let $x$ and $y$ be two distinct points. If they are on a common line, they are called \emph{collinear} and we write $x \perp y$, if not, they are called \emph{opposite}. The set of points equal or collinear with $x$ is denoted by $x^{\perp}$. A \emph{subspace} $S$ of $\Delta$ is a subset of $X$ such that the lines joining any two collinear points of $S$ are contained in $S$. Moreover, if $S$ contains no pair of opposite points, the subspace is called \emph{singular}. The elements of $\Omega$ are precisely the singular subspaces of $\Delta$. If $U$ and $V$ are singular subspaces with $U \subseteq V$, then the \emph{codimension} $\cod_{V} U$ of $U$ in $V$ is defined as $\dim(V) - \dim(U) -1$.  

For a singular subspace $U$, we define $U^{\perp}$ as $\bigcap_{x \in U} x^{\perp}$. For any singular subspace $V$, we say that $U$ and $V$ are \emph{collinear} if $V \subseteq U^\perp$. If they are collinear but disjoint, we write $U \perp V$. Let $T$ be a set of pairwise collinear singular subspaces. We denote by $\<T\>$  the smallest singular subspace containing all members of $T$, and we also say that the members of $T$ generate $\<T\>$ or that $\<T\>$ is spanned by the members of $T$. If $T$ consists of two distinct collinear points $x,y$, we denote the unique line joining these points by $x\!y$. The \emph{projection} $\proj_V(U)$ of a singular subspace $U$ on a singular subspace $V$ is $V \cap U^\perp$ and the subspace spanned by $U$ and $\proj_V(U)$ is denoted by $U^V$ (note that  $\dim(U^V)=\dim(V^U)$). If $\proj_V(U)$ or $\proj_U(V)$ is empty, we say that $U$ and $V$ are \emph{semi-opposite}.  Now let $U$ and $V$ be semi-opposite singular subspaces. If $\dim(U) = \dim(V)$, then both $\proj_V(U)$ and $\proj_U(V)$ are empty and $U$ and $V$ are just called \emph{opposite}; in case $\dim(U)  < \dim(V)$, the projection $\proj_U(V)$ is empty whereas $\proj_V(U)$ is not, more precisely, it has dimension $\dim(V)- \dim(U) -1$.

%f $U,V$ are singular subspaces with $U \subseteq V$, then the \emph{codimension} $\cod_V U$ of $U$ in $V$ is defined as $\dim V - \dim U -1$, hence it is the dimension of a subspace $U'$ which is complementary to $U$ in $V$ (i.e., $\<U,U'\>=V$ and $U \cap U' = \emptyset)$. 
\par\bigskip
\textbf{Embeddable and non-embeddable polar spaces} $-$ A polar space $\Delta=(X,\Omega)$ is called \emph{embeddable} when $X$ is a (spanning) subset of the point set of a projective space and the elements of $\Omega$ are subspaces of that projective space. 

According to the classification of polar spaces of rank at least $3$ by Jacques Tits, there are only two classes which are not embeddable. Both occur when the rank equals $3$ and are denoted by $\Delta(\mathbb{L})$ and $\Delta(\mathbb{O})$, respectively. The first one has diagram of type $\mathsf{D}_3$, more precisely, it is a line Grassmannian of a projective space of dimension $3$ over a non-commutative skew field $\mathbb{L}$ and hence it has projective planes over both $\mathbb{L}$ and its opposite field, $\mathbb{L}^{\leftrightarrow}$; the second has diagram of type $\mathsf{C}_3$ and has planes over an octonion Cayley-Dickson division algebra $\mathbb{O}$  (hence these planes are non-Desarguesian). We now turn to the embeddable polar spaces.

An embeddable polar space does not necessarily admit a unique representation in projective space. However, it will suffice for us to have one specific representation, namely, the one arising from a pseudo-quadratic form. The following is based on Chapter 10 of \cite{Bru-Tit:72}, slightly modified by Tits in \cite{Tit:95}. Let $\Delta$ be an embeddable polar space of rank $n$ at least $3$. Then there are a skew field $\mathbb{L}$, a right vector space $V$ over $\mathbb{L}$ (of possibly infinite dimension), an isomorphism $\sigma$ of order at most 2 between $\mathbb{L}$ and its dual $\mathbb{L}^{\leftrightarrow}$, and a $(\sigma, \mathrm{id})$-linear form $g:V\times V\rightarrow \mathbb{L}$ (i.e., $g$ is $\sigma$-linear in the first argument and linear in the second argument) such that $\Delta$ can be described as follows. Put $\mathbb{L}_\sigma^\epsilon=  \{x-\epsilon x^\sigma \mid x \in \mathbb{L}\}$ for $\epsilon\in\{+1,-1\}$, and consider it as an additive group. Let $f: V \times V \rightarrow \mathbb{L}$ be the $(\sigma, \mathrm{id})$-linear mapping defined by $f(u,v)=g(u,v)+\epsilon g(v,u)^\sigma$, and define the \emph{pseudo-quadratic form} $\mathfrak{q}$ as 
\[\mathfrak{q}: V \rightarrow \mathbb{L}/\mathbb{L}^\epsilon_\sigma: v \mapsto g(v,v) + \mathbb{L}_\sigma^\epsilon,\] where $\mathbb{L}/\mathbb{L}^\epsilon_\sigma$ is considered as a quotient of additive groups.
We must assume that $\mathfrak{q}$ is anisotropic over the radical $\mbox{Rad}(f)=\{v \in V: f(v,w)=0,\forall w \in V\}$ of $f$, i.e., for $v\in \mbox{Rad}(f)$, $\mathfrak{q}(v)=0$ (this is the zero of the additive group $\mathbb{L}/\mathbb{L}^\epsilon_\sigma$) if and only if $v=\vec{o}$. Then the point set $X$ of $\Delta$ consists precisely of the points of the projective space $\PG(V)$ represented by vectors $v$ which vanish under $\mathfrak{q}$, i.e., $\mathfrak{q}(v)=0$. Two points of $\Delta$, say corresponding with the 1-spaces generated by respective vectors $u,v\in V$, are collinear precisely if $f(u,v)=0$.

In the above, we can always assume that $\epsilon=+1$ if $\sigma$ is nontrivial. If $\sigma$ is trivial, then $\mathbb{L}$ is commutative and $f$ is either symmetric ($\epsilon=+1$) or alternating ($\epsilon=-1$). 

Depending on $g$, $\sigma$ and $\epsilon$, we get different kinds of polar spaces, on which we will now comment. %Instead of doing this via algebraic properties, we use their geometric equivalents. 
First note though that $g$ is not uniquely determined by $\mathfrak{q}$. In spite of this, the pseudo-quadratic form $\mathfrak{q}$, if nontrivial, does determine the form $f(u,v) = g(u, v) + g(v, u)^\sigma \epsilon$ uniquely. 

We start assuming that  $\mathsf{char}\, \mathbb{L} \neq 2$, in which case the polar spaces described below correspond to non-degenerate alternating forms, bilinear forms and Hermitian forms, respectively.
\begin{itemize} 
\item \textit{Every point of $\PG(V)$ is a point of $\Delta$.} In this case $\sigma=1$, $\epsilon=-1$ and $f$ is alternating. Then $V=2n$ for some $n$ and we can choose a basis $\{e_{-n},...,e_{-1},e_1,...,e_n\}$ for $V$ such that, for $x^{(\prime)}=\sum_{i=-n, i\neq 0}^n e_ix^{(\prime)}_i$ with $x_{-n},...,x_n \in \mathbb{L}$, 
\[f(x,x')=x_{-n}x'_n-x_nx'_{-n} +x_{-n+1}x'_{n-1}-x_{n-1}x'_{-n+1} + \cdots x_{-1}x'_1 - x_1x'_{-1}.\]
These polar spaces are called \textbf{symplectic}. They have the property that every line $L$ of $\PG(V)$ is either a line of $\Delta$ or a full \emph{hyperbolic line} (see later on). 

\item \textit{Not all points of $\PG(V)$ are points of $\Delta$.} Here, as alluded to above, we may always assume $\epsilon=1$. In this case, there is a subspace $V_0$ of $V$ of (vectorial) codimension $2n$ and an \emph{anisotropic} pseudo-quadratic form $\mathfrak{q}_0: V_0 \rightarrow \mathbb{L}_\sigma^\epsilon$  (meaning that $\mathfrak{q}_0(v)=0$ if and only if $v=\vec{o}$, for all $v\in V_0$) and a basis $\{e_{-n},...,e_{-1},e_1,...,e_n\}$ of a subspace complementary to $V_0$ in $V$ such that for any vector $v=\sum_{i=-n, i\neq 0}^n (e_ix_i) +v_0$ with $x_{-n},...,x_n \in \mathbb{L}$ and $v_0 \in V_0$ we have
\[\mathfrak{q}(v)=x_{-n}^\sigma x_{-n} + x_{-n+1}^\sigma x_{n-1} + \cdots x_{-1}^\sigma x_1 + \mathfrak{q}_0(v_0)\]

We now distinguish between $\sigma$ being the identity, and $\sigma$ not being the identity. Let $x,y$ be any pair of non-collinear points of $\Delta$. Let $L$ be the line in $\PG(V)$ joining $x$ and $y$.
\begin{itemize} 
\item[$\circ$] If $\sigma$ is the identity, then \textit{$L$ always intersects $\Delta$ precisely in  $\{x,y\}$.} These polar spaces are called \textbf{orthogonal} (and sometimes also \emph{strictly orthogonal} for consistency with the case of characteristic 2). In particular, if $V_0=\{0\}$, then $\Delta$ is \textit{hyperbolic}; if $\dim(V_0)=1$ then $\Delta$ is \textit{parabolic}. Note that $\dim(V_0)$ can be arbitrary, even every infinite cardinal.
\item[$\circ$] If $\sigma$ is nontrivial, then \textit{$L$ always intersects $\Delta$ in at least 3 points.} Then $\Delta$ is called \textbf{unitary} or \textbf{Hermitian}. Note that $\mathbb{L}$ is not necessarily commutative here, as opposed to the previous cases.\end{itemize}
\end{itemize}
In both cases one sees that $n-1$ is the maximum dimension of a subspace of $\PG(V)$ entirely contained in the point set $X$ of $\Delta$.

If $\mathsf{char}\, \mathbb{L} =2$, the situation is richer. 
\begin{itemize}

\item If $\mathbb{L}$ is a perfect field, then a parabolic polar space (similarly defined as above for characteristic different from 2, in particular we assume $\sigma$ trivial) is isomorphic to a symplectic polar space (of the same rank and over $\mathbb{L}$). Consequently, the parabolic polar space can now be  embedded in $\PG(2n-1,\mathbb{L})$ as the \textbf{symplectic} one, and we will consider this as its standard embedding.
\item If $\mathbb{L}$ is an imperfect field, $\sigma$ is trivial, then we consider as standard embedding the embedding of the polar space induced in $\PG(V/\mbox{Rad}(f))$. If, and only if, Rad$(f)$ is nontrivial, then a line of $\PG(V/\mbox{Rad}(f))$ intersecting the polar space in at least two points, intersects it in at least three points. If Rad$(f)$ is trivial,  we say that the polar space is \textbf{strictly orthogonal}; otherwise \textbf{mixed}. 
\item If $\sigma$ is not trivial, it could happen that the corresponding polar space can also be obtained as the zeros of the diagonal of a non-degenerate Hermitian form (and this always happens if $\mathbb{L}$ is commutative), but if $\mathbb{L}$ is not commutative, then this is not necessarily true. In any case, we will refer to a polar space from a pseudo-quadratic form with $\sigma$ nontrivial as a \textbf{Hermitian} polar space. Again, we consider as standard embedding the embedding of the polar space induced in $\PG(V/\mbox{Rad}(f))$. Independently of the dimension of $\mbox{Rad}(f)$, every line intersecting the polar space in at least two points, intersects it in at least three points. \end{itemize}
\par\bigskip

\textbf{Residues of $\Delta$} $-$ Let $K$ be a singular subspace of dimension $k$ with $k \leq n-2$ and put $X_K=\{ U \in \Omega \; | \; K \subset U \text{ and } \dim(U) =k+1\}.$ If $M$ is an element of $\Omega$ containing $K$,  we let $M/K$ represent the elements of $X_K$ contained in $M$. We then define $\Omega_K$ as $\{M/K \mid K\subseteq M \in \Omega\}$. The resulting structure $\Res_{\Delta}(K)=(X_K,\Omega_K)$,  i.e., the \emph{residue}, is a polar space of rank $n-k-1$ of the same ``kind'' as $\Delta$, e.g.\ the residue of a parabolic polar space is parabolic too, and likewise for hyperbolic, unitary, mixed and so on. As such, we extend this terminology to rank 2 and rank 1 residues. An element $M/K \in \Omega_K$ has dimension $\dim(M) - k -1$ and will often be identified with $M$. If $\dim(K)=n-2$, then $\Res_{\Delta}(K)$ has rank $1$. This residue contains at least $2$ points and it contains precisely $2$ if and only if $\Delta$ is hyperbolic. 
\par\bigskip
\textbf{The Tits-building associated to $\Delta$} $-$ Denote by $\Delta^b$ the Tits-building associated to $\Delta$. Note that, if $\Delta$ is hyperbolic, $\Delta^b$ is in fact the Tits-building associated to the oriflamme complex of $\Delta$. This is the geometry having as elements of type $i$, with $i \leq n-3$, the elements of dimension $i$ of $\Delta$, and as elements of types $(n-1)'$ and $(n-1)''$ the elements of $\Delta$ of dimension $n-1$, hereby distinguishing between the two natural families of MSS. Incidence between elements of the latter two types is given by intersecting in an $(n-2)$-space of $\Delta$, incidence between all other pairs of elements is given by incidence in $\Delta$.  We define the type set $\mathsf{T}$ of $\Delta$ in this case as $\{0,...,n-3,(n-1)',(n-1)''\}$; in case $\Delta$ is not hyperbolic, $\mathsf{T}$ is just $\{0,...,n-1\}$. For $t\in \mathsf{T}$, we denote by $|t|$ the corresponding dimension if confusion is possible.
The type of a flag of elements is then the set of types of these elements.  If $\Delta$ is hyperbolic however, the type of a flag of type $\{(n-1)',(n-1)''\}$ will conveniently be denoted by $n-2$ sometimes, as this is the dimension of the corresponding subspace. Furthermore, to the empty subspace we \emph{assign} the type $-1$, as this is its projective dimension.

%Let $\Delta^b$ be the building associated to $\Delta$. If $\Delta$ is not hyperbolic, then the singular subspaces of $\Delta$ of dimension $s \in \{0,...,n-1\}$ are the elements of type $s$ of $\Delta^b$ and the type set $\mathsf{T}$ equals $\{0,...,n-1\}$. In case $\Delta$ is hyperbolic, $\Delta^b$ is derived from its oriflamme complex, and so all singular subspaces of $\Delta$ of dimension $i$ with $i \leq n-3$ are the elements of $\Delta^b$ of type $i$, and the MSS of $\Delta$ are its elements of types $(n-1)'$ and $(n-1)''$, hereby distinguishing between the two families of MSS of $\Delta$. Hence, $\mathsf{T}=\{0,...,n-3,(n-1)',(n-1)''\}$. The singular subspaces of dimension $n-2$ of $\Delta$ correspond to flags of type  $\{(n-1)',(n-1)''\}$ but will also be referred to as elements of type $n-2$ for simplicity (but $n-2 \notin \mathsf{T}$). Also, we refer to the empty subspace as an element of type $-1$.
\par\bigskip

\textbf{Automorphisms of $\Delta$ and $\Delta^b$} $-$  We denote by $\Aut(\Delta)$ the group of all automorphisms of the polar space $\Delta$, i.e., all permutations of the point set of $\Delta$ preserving collinearity and opposition of points. Further, we denote by $\Aut(\Delta^b)$ the group of automorphisms of the building $\Delta^b$, i.e., all permutations of the elements of the building preserving incidence and non-incidence. Finally, we denote by $\Aut^o(\Delta^b)$ the group of type preserving automorphisms of $\Delta^b$.  However, an automorphism $\rho$ of the Tits-building $\Delta^b$ associated to $\Delta$ is always type-preserving (recall that we assume that the rank $n$ of $\Delta$ is at least 3), unless possibly if $\Delta$ is hyperbolic as then $\Delta^b$ allows \emph{dualities}, or even \emph{trialities} if $n=4$. So assume that $\Delta$ is hyperbolic. A duality is an automorphism of $\Delta^b$ preserving all types but the maximal ones, which are interchanged. If $n=4$, a triality is an automorphism of $\Delta^b$ only preserving type $1$ and cyclically permuting the types $0,3',3''$. The composition of a duality and a triality of $\Delta^b$ yields an automorphism of $\Delta^b$ preserving types $1$ and $t$ for some $t\in\{3',3''\}$ while interchanging types $0$ and~$t'$. We call this automorphism a \emph{$t$-duality}. Analogously, we sometimes also speak of a $0$-duality.
\par\bigskip

\textbf{Hyperbolic subspaces} $-$ Let $U$ and $V$ be opposite $t$-spaces with $t \in \mathsf{T}$ non-maximal. We define the \emph{double perp} $\{U,V\}^{\perp\!\!\!\perp}$ of $U$ and $V$ as the set of points collinear with  $U^\perp \cap V^\perp$. If $U \cup V \subsetneq \{U,V\}^{\perp\!\!\!\perp}$, this double perp induces a polar space $\Delta' \subseteq \Delta$ of rank $t+1$, which is called a \emph{hyperbolic $(2t+1)$-space}. A hyperbolic $1$-space is just called a \emph{hyperbolic line} and hence has at least three points. In the standard embedding of $\Delta$ in a projective space $\PG(V)$, we obtain $\Delta'$ by intersecting $\Delta$ with the $(2t+1)$-space of $\PG(V)$ generated by $U$ and $V$. This way it is easily seen that each point in $\Delta$ which is collinear with two opposite $t$-spaces of $\{U,V\}^{\perp\!\!\!\perp}$ is collinear with all elements of $\{U,V\}^{\perp\!\!\!\perp}$, though this property also holds when $\Delta$ is not embeddable.

If each point $p$ collinear to $U$ and $V$ should also be collinear with some point $q$, then it follows immediately that $q$ belongs to $\{U,V\}^{\perp\!\!\!\perp}$, since $q \in p^\perp$ for all $p \in \{U,V\}^\perp$. This property will often be used.

If $t=0$, two opposite points determine a hyperbolic line unless $\Delta$ is  a strictly orthogonal polar space. In case $\Delta$ is Moufang (which it certainly is if $n \geq 3$), the existence of one hyperbolic line is equivalent with all pairs of opposite points contained in a hyperbolic line.  If $t =1$, a hyperbolic $3$-space is a hyperbolic quadrangle (that is, a hyperbolic polar space of rank 2) precisely if $\Delta$ is orthogonal.  This is the only kind of polar spaces in which a maximal set $R$ of pairwise opposite lines of a hyperbolic $3$-space has the property that each line intersecting two of them intersects all of them ($R$ is a regulus of a hyperbolic quadrangle then). For $\Delta \in \{\Delta(\mathbb{O}),\Delta(\mathbb{L})\}$, the hyperbolic $3$-space $\{U,V\}^{\perp\!\!\!\perp}$ is given by $\{x,y\}^\perp$, for any two points $x,y \in \{U,V\}^\perp$.

%Also, when $\rk \Delta=2$ \emph{and} all points are regular, then a hyperbolic line $\{p,q\}^{\perp\!\!\!\perp}$ can be given by $\{x,y\}^\perp$ for any two points $x,y$ in $\{p,q\}^\perp$. 
%Moreover, any two opposite $t$-spaces in $\{U,V\}^{\perp\!\!\!\perp}$ determine the hyperbolic $2t+1$-space. 

%In particular, when $t=1$ and the lines of $U$ also satisfy the property that any line which intersects two of them, intersect all of them, then $\mathcal U$ intersects $\Delta$ in a hyperbolic quadric and $U$ corresponds to lines of one of its reguli. In this case, we say that the lines of $U$ form a \emph{HQ-set}. This only occurs when $\Delta$ is orthogonal.

%A set $U$ consisting of at least three pairwise opposite singular subspaces of dimension $t$, $t \geq 0$, is said to form a \emph{hyperbolic $2t+1$-space} if any point collinear with two of them is collinear with all of them and there is at least one such point. If $\Delta$ is embedded in a projective space $\mathbb P$, then this space corresponds to a $2t+1$-space $\tilde{U}$ of $\mathbb P$, and the intersection of $\tilde{U}$ and $\Delta$ is a polar space of rank $t+1$ in which $U$ is a maximal set of pairwise disjoint maximal subspaces. If $t=0$, we speak of a \emph{hyperbolic line}. Note that, if $\Delta$ is an orthogonal polar space, there are no hyperbolic lines as each line in $\mathbb{P}$ intersects $\Delta$ in at most two points. 

\subsection{Weyl distance between two subspaces of a polar space}
We recall the definition of the Weyl distance but assume the reader to be familiar with its basic properties. For more details, see for example Sections~3.5 and~4.8 of \cite{Abr-Bro:08}, or Section 11 of \cite{diag}. The Weyl distance is defined in any Tits-building, in particular in $\Delta$. First assume that $\Delta$ is not of hyperbolic type. 

Let $[-n,n]_0$ denote the set of nonzero integers not smaller than $-n$ and not larger than $n$, for $n$ any natural number. Let $\Xi$ be the graph of a cross-polytope with $2n$ vertices (where $n$ is now indeed the rank of $\Delta$), i.e., $\Xi$ consists of the vertices $\xi_{-n},\xi_{-n+1},\ldots,\xi_{-1},\xi_1,\xi_2,\ldots,\xi_n$ and $\xi_i$ is adjacent with $\xi_j$, $i,j\in [-n,n]_0$, if and only if $i\neq-j$. The automorphism group of $\Xi$ is a Coxeter group $W$ of type $\mathsf{B}_n$, and we choose the following canonical set $S$ of generators. The automorphism $s_i$, $i\in\{1,2,\ldots,n-1\}$ is given by the involution interchanging $\xi_i$ with $\xi_{i+1}$ and $\xi_{-i}$ with $\xi_{-i-1}$. The automorphism $s_n$ is given by interchanging $\xi_{-n}$ with $\xi_n$. The group $W$ is generated by $s_1,\ldots,s_n$ and by no proper subset of it, and we have the relations $(s_is_j)^{m_{ij}}=1$, where $m_{ij}$ is really the order of the product $s_is_j$, given by $$m_{ij}=\left\{\begin{array}{lr} 1 \hspace{.5cm}& \mbox{if }i=j,\\2 & \mbox{if } |i-j|>1, \\ 3 & \mbox{if }\min\{i,j\}+1=\max\{i,j\}<n, \\ 4 & \mbox{if }\{i,j\}=\{n,n-1\}.\end{array}\right.$$

A \emph{chamber} of $\Xi$ is a maximal set of nested cliques. The standard chamber is the nested chain $C_0=\{\{\xi_1\},\{\xi_1,\xi_2\},\ldots,\{\xi_1,\xi_2,\ldots,\xi_n\}\}$. One easily verifies that $W$ acts sharply transitively on the set of all chambers of $\Xi$ (there are $|W|=2^nn!$ chambers in $\Xi$). Hence, given any chamber $C$, there exists a unique $w\in W$ such that $C=C_0^w$. We say that $w$ is the Weyl distance from $C_0$ to $C$, in symbols $\delta(C_0,C)=w$. In general, for two chambers $C,C'$, we define $\delta(C,C')=\delta(C_0,C)^{-1}\delta(C_0,C')$. The numerical distance $d(C,C')\in\mathbb{N}\cup\{0\}$ is the minimal length of any expression of $\delta(C,C')$ in terms of the generators in $S$ (that number is also called the length of the corresponding element of $W$).

It is well known that $W$, just like each finite Coxeter group, contains a unique element $w_0$ of maximal length. In our case, the maximal length is $n^2$ and $w_0$ is given by $$w_0=(s_ns_{n-1}\cdots s_1)\cdot(s_ns_{n-1}\cdots s_2)\cdots(s_ ns_{n-1})\cdot(s_n)\cdot(s_{n-1}s_{n-2}\cdots s_1)\cdot(s_{n-1}s_{n-2}\cdots s_2)\cdots(s_{n-1}s_{n-2})\cdot (s_{n-1}).$$

An \emph{apartment} $\cA$ of $\Delta$ is a set of all singular subspaces spanned by a subset of the set $\mathcal{S}=\{x_1,\ldots,x_n$, $y_1,\ldots,y_n\}$ of $2n$ points for which $y_i$ is the unique point of $\mathcal{S}$ opposite $x_i$ and $x_i$ the unique point of $\mathcal{S}$ opposite $y_i$, for $1 \leq i \leq n$. The set $\cS$ is called a \emph{frame}. A \emph{chamber} $C$ of $\Delta$ is a maximal chain of nested nonempty singular subspaces. A chamber $C$ is contained in the apartment $\cA$ if each of the singular subspaces of $C$ is contained in $\cA$, i.e., each member of $C$ is generated by a subset of the point set $\mathcal{S}$. By Theorem~7.4 of \cite{Tits}, for every pair of chambers $C,C'$ there exists an apartment containing both $C$ and $C'$. The frame $\mathcal{S}=\{x_1,\ldots,x_n,y_1,\ldots,y_n\}$ of that apartment can be numbered so that $C$ contains the singular subspace spanned by $x_1,\ldots,x_i$, for every $i\in\{1,2,\ldots,n\}$. We can then attach to $C'$ a nested sequence of $n$ subsets of $\mathcal{S}$ such that each subset generates a singular subspace of $C'$. The bijection $x_i\mapsto\xi_i$, $y_i\mapsto \xi_{-i}$, $i\in\{1,2\ldots,n\}$, identifies this sequence with a chamber $C_1$ of $\Xi$. A similar identification maps $C$ to the standard chamber $C_0$ of $\Xi$. The \emph{Weyl distance $\delta(C,C')$} is now by definition equal to $\delta(C_0,C_1)$. It is independent of the choice of the apartment containing $C$ and $C'$. If $\delta(C,C')=w_0$, then we say that $C$ and $C'$ are \emph{opposite}. 

%The type preserving automorphism group of $\cA$ is a Coxeter group $W$ and is called the \emph{Weyl group} $W$ of $\Delta$ (as any two apartments are isomorphic, so are their Coxeter groups). For a polar space of rank $n$, the Weyl group $W$ is isomorphic to $2^n : S_n$, where $S_n$ acts naturally on the set $\{(x_1,y_1),...,(x_n,y_n)\}$ and $2^n$ acts on naturally on each pair $(x_i,y_i)$, $1 \leq i \leq n$, by involuting. Hence $W$ is generated by $\{S(i) \,\mid\, 1\leq i \leq n-1\} \cup \{\sigma\}$ where $S(i)$, $1\leq i \leq n-1$, is the element of $S_n$ only transposing $(x_i,y_i)$ with $(x_{i+1},y_{i+1})$ and $\sigma$ only switches $x_1$ and $y_1$. With respect to a given set of generators, we can define the \emph{length}  of an element of $W$ as the minimal number of generators needed to form it. 

%Then $W$ acts sharply transitively on the set $\mathcal C$ of chambers of $\cA$, thus the following map is well defined.\[ \delta: \mathcal{C} \times \mathcal{C} \rightarrow W: (c,d) \mapsto w \text{ with } c^w=d.\]
%Then $\delta(c,d)$ is called the \emph{Weyl distance} between $c$ and $d$. This is independent of $\cA$, as for any other apartment $\cA'$ containing $C \cup C'$, there is an isomorphism $\cA \rightarrow \cA'$ which is the identity on $\cA \cap \cA'$. 
This Weyl distance can also be defined in a natural way on pairs of singular subspaces of $\Delta$ as follows. 
Let $U$ and $W$ be two singular subspaces of $\Delta$. Let $D$ be the set of Weyl distances from a chamber of $\Delta$ containing $U$ to a chamber of $\Delta$ containing $W$. Then one shows (Proposition~4.88 in \cite{Abr-Bro:08}) that $D$ contains a unique element $w$ of minimal length. We set $w=\delta(U,W)$. %Remarkably, in our case, for type $\mathsf{B}_n$, if $U$ and $W$ have the same dimension, then $w$ is always an involution, so that we can conclude that $\delta(U,W)=\delta(W,U)$, for every pair of singular subspaces of $\Delta$ of the same dimension.  

%Denote by $\mathcal{C}_S$ the set of all chambers containing $S$ for a given singular subspace $S$. This set corresponds bijectively to all chambers of $\Res_{\Delta} S$, whose Weyl group is denoted by $W_S$. Then the Weyl distance $\delta(I,J)$ between any two elements $I$ and $J$ of $\Delta$ is defined as the set $\{\delta(c_I,c_J) \mid \ c_I \in \mathcal{C}_I, c_j \in \mathcal{C}_j \}$, which is the double coset $W_IwW_J$ for some $w \in \delta(I,J)$. There is a unique such representative whose length is minimal, and this element is also denoted by $\delta(I,J)$. The following lemma explains the geometric interpretation of $\delta(I,J)$.

Now suppose that $\Delta$ is hyperbolic and of rank $n$. Then the building associated with $\Delta$ is identified with the oriflamme complex rather than with $\Delta$ itself. We still consider the cross polytope graph $\Xi$ as above, but we define the chambers in a different way. A chamber is now a nested set of $n-2$ cliques of size at most $n-2$, together with two cliques of size $n$ intersecting in a clique of size $n-1$ which contains each clique of the nested set of $n-2$ cliques. Note that the set of maximal cliques of $\Xi$ falls naturally into two classes in such a way that the size of the intersection of two elements (not) belonging to the same class has (does not have) the same parity as $n$.  Every chamber contains a maximal clique of each class. The Coxeter group $W'$ is now defined as the group of automorphisms of $\Xi$ preserving the two classes of maximal cliques. It is a Coxeter group of type $\mathsf{D}_n$. It is generated by the same elements $s_1,s_2,\ldots,s_{n-1}$ and a new element $s_n'$ interchanging $\xi_n$ with $\xi_{-n+1}$ and $\xi_{n-1}$ with $\xi_{-n}$ (and then $s_n'=s_ns_{n-1}s_n$).  Writing for a moment $s_i$ as $s_i'$ for convenience ($i\in\{1,2,\ldots,n-1\}$),  we again have relations $(s'_is'_j)^{m'_{ij}}=1$, where $m'_{ij}$ is really the order of the product $s'_is'_j$, given by $$m'_{ij}=\left\{\begin{array}{ll} 1 \hspace{.5cm}& \mbox{if }i=j,\\2 & \mbox{if } |i-j|>1\mbox{ with }\max\{i,j\}<n, \mbox{ and if }\{i,j\}=\{n-1,n\}, \\ 3 & \mbox{if }\min\{i,j\}+1=\max\{i,j\}<n, \mbox{ and if }\{i,j\}=\{n-2,n\}.\end{array}\right.$$

Again, there is a unique longest element $w_0'$ in $W'$, and it has length $n^2-n$. It reads $$w_0'=s_1s_2\ldots s_{n-1}s_n's_{n-2}\ldots s_1.s_2s_3\ldots s_{n-1}s_n's_{n-2}\ldots s_2.\cdots.s_{n-3}s_{n-2}s_{n-1}s_n's_{n-2}s_{n-3}.s_{n-2}s_{n-1}s_n's_{n-2}.s_{n-1}s_n'.$$ 

The standard chamber is now $$C_0=\{\{\xi_1\},\{\xi_1,\xi_2\},\cdots,\{\xi_1,\xi_2,\ldots,\xi_{n-2}\},\{\xi_1,\xi_2,\ldots,\xi_n\},\{\xi_1,\xi_2,\dots,\xi_{n-1},\xi_{-n}\}\}.$$ The Weyl distance from $C_0$ to any other chamber of $\Xi$ is defined as above, now using the Coxeter group $W'$ and the set $\mathcal{S}'=\{s_1,s_2,\ldots,s_{n-1},s_n'\}$ of generators. Similarly as above, also the Weyl distance between two arbitrary chambers is defined. Also, if we define a chamber $C$ in $\Delta$ as the union $C_{\leq n-2}\cup C_{n-1,n}$ of a set $C_{\leq n-2}$ of $n-2$ nested singular subspaces of dimension $0$ up to $n-3$ with a pair $C_{n-1,n}$ of maximal singular subspaces intersecting in a singular subspace $U$ of dimension $n-2$ which contains each element of $C_{\leq n-2}$, then we can define the Weyl distance from one chamber to another in the same way as for type $\mathsf{B}_n$ above. Similarly, one also defines the Weyl distance between singular subspaces in $\Delta$. %This time, the Weyl distance between singular subspaces is not always an involution. 

Before we prove the next lemma, we note that, if $U$ is an element of type $i$ of $\Delta^b$, where $\Delta$ is not of hyperbolic type, and $W$ is an element of type $j$ not incident with $U$, $i,j\in\{0,1,\ldots,n-1\}$, then any shortest expression of $\delta(U,W)$ in terms of the generators in $S$ starts with $s_{i+1}$ and ends with $s_{j+1}$. Indeed, it follows from the proof of Proposition 3.87 in \cite{Abr-Bro:08} that $\delta(U,W)$ is the shortest element of the double coset $W_{i+1}\delta(U,W)W_{j+1}$, where $W_t$ denotes the (Weyl) subgroup generated by $S\setminus\{s_t\}$, $t\in\{1,2,\ldots, n\}$; hence if the shortest expression of $\delta(U,W)$ would start with $s_t$, $t\neq i+1$, then we can absorb it in $W_{i+1}$ and get a shorter representative of the double coset, a contradiction (similarly if the shortest expression of $\delta(U,W)$ would not end with $s_{j+1}$). Hence the Weyl distance between two distinct elements reveals the type of the elements. Similarly for the case that $\Delta$ is hyperbolic (but then, if $U$ has type $(n-1)'$ or $(n-1)''$, then $\delta(U,W)$ starts with $s_{n-1}$ or $s_n'$, respectively, and similar for $W$). 

\begin{lemma}\label{weyl} Let $I$, $I'$, $J$, $J'$ be four singular subspaces of $\Delta$ conforming to a type of the building and such that neither $\{I,J\} $ nor $\{I',J'\}$ are flags. Then $\delta(I,J)=\delta(I',J')$ if and only if $t(I)=t(I')$, $t(J)=t(J')$, $t(I\cap J)=t(I'\cap J')$ and $t(I^J)=t(I'^{J'})$. 
\end{lemma}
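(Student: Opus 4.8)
The plan is to reduce the statement to a combinatorial computation inside a single apartment and to exploit that the Weyl distance between two subspaces is, on the one hand, intrinsic (independent of the apartment, by Proposition~4.88 in \cite{Abr-Bro:08}) and, on the other hand, invariant under type-preserving automorphisms of the relevant thin building. First I would fix a non-flag pair $(I,J)$ of singular subspaces. By Theorem~7.4 of \cite{Tits} there is an apartment $\mathcal{A}$ that contains a chamber through $I$ and a chamber through $J$, and moreover one may choose $\mathcal{A}$ so that the minimum in the definition of $\delta(I,J)$ is attained inside it; let $\mathcal{S}=\{x_1,\dots,x_n,y_1,\dots,y_n\}$ be the corresponding frame, so that $I=\langle S_I\rangle$ and $J=\langle S_J\rangle$ for subsets $S_I,S_J\subseteq\mathcal{S}$ containing no opposite pair $\{x_m,y_m\}$. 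The first key point is that the three derived subspaces are again read off directly from $S_I$ and $S_J$: working either in the standard embedding with the defining form diagonalised along the frame, or purely combinatorially inside the thin polar space $\mathcal{A}$, one checks that $I\cap J=\langle S_I\cap S_J\rangle$, that $\proj_J(I)=\langle S_J\setminus\overline{S_I}\rangle$ (where $\overline{\phantom{x}}$ is the opposition involution on $\mathcal{S}$), and hence that $I^J=\langle S_I\cup(S_J\setminus\overline{S_I})\rangle$. Consequently the quadruple $\bigl(t(I),t(J),t(I\cap J),t(I^J)\bigr)$ is equivalent to the quadruple of cardinalities $\bigl(|S_I|,|S_J|,|S_I\cap S_J|,|S_J\setminus\overline{S_I}|\bigr)$ in any apartment containing chambers through $I$ and $J$; this equivalence also records, when relevant in the hyperbolic case, the family of a maximal singular subspace.

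\textbf{Classification of configurations.}
Next I would classify the pairs $(S_I,S_J)$ up to renumbering of the frame, i.e.\ up to the natural action of the group $W(\mathsf{B}_n)$ of signed permutations of $\mathcal{S}$ (respectively $W(\mathsf{D}_n)$, with the oriflamme and, for $n=4$, the triality diagram automorphisms, in the hyperbolic case). Partitioning the index set $\{1,\dots,n\}$ according to the \emph{state} of each index relative to $(S_I,S_J)$ --- in both with equal sign, in both with opposite sign, in $S_I$ only, in $S_J$ only, in neither --- one sees that the renumbering orbit of $(S_I,S_J)$ is completely determined by the five cardinalities of these classes. An elementary count (using $|S_I\cap(S_J\setminus\overline{S_I})|=|S_I\cap S_J|$) expresses these five numbers as explicit affine functions of $(i,j,k,\ell):=\bigl(|t(I)|,|t(J)|,|t(I\cap J)|,|t(I^J)|\bigr)$, namely, in order, $k+1$, $i+j-k-\ell$, $\ell-j$, $\ell-i$, $n-1-\ell$. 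Hence any two non-flag pairs sharing the quadruple $\bigl(t(I),t(J),t(I\cap J),t(I^J)\bigr)$ have frame configurations that are carried to one another by a type-preserving renumbering of frames.

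\textbf{Conclusion of both directions.}
A renumbering of the frame induces a type-preserving automorphism of the thin building $\mathcal{A}$, and type-preserving building automorphisms preserve the Weyl distance between chambers, hence between subspaces; combined with apartment-independence of $\delta$ and the optimal choice of apartments, this yields the ``if'' direction: equal quadruples force $\delta(I,J)=\delta(I',J')$. For the ``only if'' direction, the discussion preceding the lemma already shows that $\delta(I,J)$ reveals $t(I)$ and $t(J)$ (the first and last generators of any reduced expression). It remains to recover $t(I\cap J)$ and $t(I^J)$ from $\delta(I,J)$; here I would write down the canonical reduced word of $\delta(I,J)$ produced by the adapted apartment --- it has a transparent block shape whose block lengths are exactly the five cardinalities above --- and read $|S_I\cap S_J|$ and $|S_J\setminus\overline{S_I}|$, hence $k$ and $\ell$, off the multiplicities and positions of the generators. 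Equivalently, one checks that the map from admissible quadruples $(i,j,k,\ell)$ to elements of $W$ obtained in the previous paragraph is injective.

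\textbf{Main obstacle.}
The real work is the bookkeeping in two places. The first is verifying carefully that $\proj_J(I)$ and $I^J$ are the claimed frame-spanned subspaces: this needs the diagonal form of the defining (pseudo-)quadratic form, the remark that intersecting a frame-spanned subspace with a coordinate subspace such as $I^\perp$ simply deletes the offending frame points, and, for the two non-embeddable rank-$3$ polar spaces, a direct check. The second, and genuinely delicate, point is the hyperbolic $\mathsf{D}_n$ case: only even numbers of sign changes are available, the two families of maximal singular subspaces and the degenerate dimensions $-1$ and $n-2$ must be tracked, the longest element and reduced words are less uniform, and the non-flag hypothesis is really used (for instance to exclude the trivial Weyl distance and the ``oriflamme-adjacent'' configurations in which $I\cap J$ or $I^J$ collapses). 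The rank-$2$ residues, which behave like generalized quadrangles, provide a convenient sanity check on the formulas throughout.
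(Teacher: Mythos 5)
Your proposal is correct, but it takes a genuinely different route from the paper's, most visibly in the ``only if'' direction. The paper's argument there is soft: since $\delta(I,J)=\delta(I',J')$ already forces $t(I)=t(I')$ and $t(J)=t(J')$, both pairs can be completed to pairs of chambers at the same Weyl distance, and strong transitivity of $\Aut^o(\Delta^b)$ on the \emph{thick} building produces a type-preserving automorphism carrying $(I,J)$ to $(I',J')$; this automorphism carries $I\cap J$ to $I'\cap J'$ and $\proj_J(I)$ to $\proj_{J'}(I')$, so the remaining type equalities follow with no reduced-word computation at all. You instead stay inside a single apartment and recover $k$ and $\ell$ from $\delta(I,J)$ itself, either by reading a canonical reduced word or by proving injectivity of the map from admissible quadruples to elements of $W$; for the latter to be clean you should invoke explicitly the standard bijection between $W$-orbits of pairs of vertices of types $(i,j)$ in the Coxeter complex and double cosets $W_{i+1}\backslash W/W_{j+1}$ (equivalently their minimal representatives, i.e.\ the Weyl distances) --- that is what converts ``distinct orbits'' into ``distinct Weyl distances'' without any bookkeeping of reduced expressions. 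For the ``if'' direction the two proofs essentially coincide: the paper also reduces to a common apartment (using strong transitivity to arrange $J=J'$ first) and then defers to a check in the cross-polytope graph $\Xi$; your classification of frame configurations by the five cardinalities $k+1$, $i+j-k-\ell$, $\ell-j$, $\ell-i$, $n-1-\ell$ is precisely the content of that check, made explicit. What your approach buys is a self-contained combinatorial description of the Weyl distances in terms of exactly the parameters $k,\ell$ used throughout the rest of the paper; what it costs is that the $\mathsf{D}_n$ parity analysis you rightly flag as delicate must be carried out in \emph{both} directions, whereas the paper's strong-transitivity argument sidesteps it entirely in the forward direction.
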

\begin{proof}
First suppose that $\delta(I,J)=\delta(I',J')$. By the definition of Weyl distance, we find chambers $c,c',d$ and $d'$ containing $I, I',J$ and $J'$, respectively, such that $\delta(c,d)=\delta(c',d')$. As $\Aut^o(\Delta^b)$ acts strongly transitively on $\Delta^b$, it acts transitively  on the family of pairs of chambers at the same Weyl distance (see e.g.~Proposition~7.11 in \cite{Abr-Bro:08}). Hence there is a type-preserving automorphism $g$ of $\Delta^b$ mapping $(c,d)$ on $(c',d')$. Since the Weyl distance $\delta(I,J)=\delta(I',J')$ determines the types of $I,I',J,J'$, we deduce that the types of $I$ and $I'$ are the same, and also the types of $J$ and $J'$ coincide. This means that $(I,J)$ is mapped by $g$ onto $(I',J')$ (because each chamber contains a unique element of each type), and moreover, $I \cap J$ is mapped on $I' \cap J'$ and $\proj_J(I)$ on $\proj_{J'}(I')$. As $g$ is type preserving, the assertion follows.  

To show the converse, it suffices to find an element of $\Aut^o(\Delta^b)$ that sends $(I,J)$ to $(I',J')$, since such a map preserves the Weyl distance. Without loss, $J=J'$, for there is a type preserving automorphism mapping $J$ onto $J'$ and this of course preserves the respective types of intersection and projection. We may also assume that $I,I'$ and $J$ are in a common apartment $\cA$ determined by the frame $\{x_1,\ldots, y_n\}$ (with previous notation). Indeed, suppose that $\Sigma$ is an apartment containing $I$ and $J$ and $\Sigma'$ an apartment containing $I'$ and $J$.  Then by the strong transitivity of $\Aut^o(\Delta^b)$, there is a type preserving automorphism mapping $\Sigma$ on $\Sigma'$ while fixing $J$. We now look for a type preserving automorphism in $\cA$ that fixes $J$ and maps $I$ on $I'$. Let $Q$ (resp.\ $Q'$) be a subspace of $I$ (resp.\ $I'$) complementary to $\proj_I(J)$.  The subspaces $I,I',J$ and their subspaces correspond to subsets of $\{x_1,...,y_n\}$. Applying the bijection $x_i\mapsto\xi_i$, $y_i\mapsto \xi_{-i}$, $i\in\{1,2\ldots,n\}$,  the assertion is now easily checked in the graph $\Xi$ (for both cases of type $\mathsf{B}_n$ and $\mathsf{D}_n$). %Pictured like this, it is easily seen that we can map $I \cap J$ on $I \cap J'$, $\proj_J I$ on $\proj_J I'$ and $Q$ on $Q'$ while fixing $J$. 
\end{proof}

\begin{rem}
\em 
\begin{itemize}
\item The condition that both $\{I,J\}$ and $\{I',J'\}$ are not flags is necessary but harmless. Indeed, it is necessary because if $\{I,J\}$ and $\{I',J'\}$ are flags then $\delta(I,J)=\delta(I',J')=\mathrm{id}$, regardless of the types of $I,I',J,J'$. It is harmless because, in our case we always have $\mathsf{t}(I)=\mathsf{t}(I')$ and $\mathsf{t}(J)=\mathsf{t}(J')$ and, given this, we also have $\delta(I,J)=\delta(I',J')$ if and only if $t(I\cap J)=t(I'\cap J')$ and $t(I^J)=t(I'^{J'})$. 

\item The previous lemma also holds if $I, I', J, J'$ are flags (with an obvious definition of Weyl distance). However, we would only need this when $\Delta$ is hyperbolic of rank $n$, when dealing with singular subspaces of dimension $n-2$, i.e., flags with type set $\{(n-1)', (n-1)''\}$. Yet, in that situation we will consider $\Delta$ as a non-thick building of type $\mathsf{B}_n$ and then we can apply the previous lemma anyway. So we do not need the flag version of the lemma after all.

\end{itemize}

\end{rem}

\section{Statements of the results}\label{statements}

Let $\Delta$ be a polar space of rank $n$, with $n \geq 3$, having type set $\mathsf{T}$. Again, denote by $\Omega_s$ the set of singular subspaces of $\Delta$ having type $s$. 
We define, for each pair $i,j \in \mathsf{T}$, three classes of bipartite graphs with bipartition classes $C_1 = \Omega_i$ and $C_2 = \Omega_j$ (this entails two disjoint copies of $\Omega_i$ if $i=j$). The first one's adjacency corresponds to a Weyl distance $w$ between some $i$-space $I_0$ and some  $j$-space $J_0$. By Lemma~\ref{weyl}, $(I,J) \in C_1 \times C_2$ are adjacent if $\mathsf{t}(I\cap J) =\mathsf{t}(I_0 \cap J_0)$ and $\mathsf{t}(I^J)=\mathsf{t}({I_0}^{J_0})$. The latter type sets can also be $-1$ and, in case $\Delta$ is hyperbolic, also $\{(n-1)',(n-1)''\}$. Therefore, we let $k,\ell$ be elements of $\mathsf{T}\cup\{-1\}$ and, if $\Delta$ is hyperbolic, we also allow $\{(n-1)',(n-1)''\}$ (which we abbreviate to $n-2$). 

\begin{definition} \rm 
\begin{itemize}
\item In the \emph{$(k,\ell)$-Weyl graph} $\Gamma_{i,j;k,\ell}^n(\Delta)$, a pair of vertices $(I,J) \in C_1 \times C_2$ is adjacent precisely if $\mathsf{t}(I \cap J)=k$ and $\mathsf{t}(I^J)=\ell$,
\item  In the \emph{$k$-incidence graph} $\Gamma_{i,j;k}^n(\Delta)$, a pair of vertices $(I,J) \in C_1 \times C_2$ is adjacent precisely if $\mathrm{t}(I \cap J)=k$,
\item In the \emph{$k_\geq$-incidence graph} $\Gamma_{i,j;\geq k}^n(\Delta)$, a pair of vertices $(I,J) \in C_1 \times C_2$ is adjacent precisely if $\dim(I\cap J) \geq |k|$.
\end{itemize}
 \end{definition}
 
\textbf{Convention} $-$ In short, we will determine the automorphism groups of the above graphs. However, there is just  one case that we will not consider in this paper, being the $(k,\ell)$-Weyl graph where $|\ell|=n-1$ when $|i| < n-1$ or $|j| <n-1$, i.e., we only allow $|\ell|=n-1$ in case $|i|=|j|=n-1$. This is a very specific case that does not fit in the technique used in this paper. 
 
Clearly, the definitions of the $k$-incidence graphs and the $k_{\geq}$-incidence graphs are independent of the order of $i$ and $j$. We now discuss what happens for the $(k,\ell)$-Weyl graph if we switch the roles of $i$ and $j$. Let $I$ and $J$ be adjacent vertices in $\Gamma_{i,j;k,\ell}^n(\Delta)$ and put $\overline{\ell} = \mathsf{t}(J^I)$. If $\Delta$ is not hyperbolic or $|\ell| < n-1$, then $\overline{\ell} = \ell$ and hence switching the roles of $i$ and $j$ yields the same graph. If $|\ell|=n-1$ and $\Delta$ is hyperbolic, possibly $\overline{\ell} \neq \ell$ (i.e., then $\overline{\ell}=\ell'$) and in that case $\Gamma_{i,j;k,\ell}^n(\Delta) \neq \Gamma_{j,i; k, \ell}^n(\Delta)$, however, $\Gamma_{i,j;k,\ell}^n(\Delta) = \Gamma_{j,i;k,\overline{\ell}}^n(\Delta)$.

As we are only concerned with the automorphism group of the graphs, isomorphic graphs are considered equivalent.

\par\medskip 
\textbf{Trivial and equivalent cases} $-$ The above graphs are considered trivial if they or their bipartite complements (which are obtained by interchanging edges and non-edges between the biparts while keeping no edges within the biparts) are empty or matchings.  We list the cases for which it is obvious that they are trivial or equivalent to other cases.
\begin{itemize}
\item \textit{Suppose first that $\Delta$ is not hyperbolic.} In order for the graphs to be nonempty, we need $k \leq \min\{i,j\}$ and for $\Gamma_{i,j;k,\ell}^n(\Delta)$ we also need $\max\{i,j\} \leq \ell \leq i+j-k$.  A matching occurs if $k=i=j$.  If $k+1=i=j=0$, then $\Gamma_{i,j; k}^n(\Delta)$ is the bipartite complement of a matching; if $k=-1$, then $\Gamma_{i,j;\geq k}^n(\Delta)$ is a complete bipartite graph. Also note that, if $i=j=n-1$, then $\Gamma_{k;\ell} = \Gamma_k$ (as $\ell=n-1$ anyhow). 

\item \textit{Next suppose that $\Delta$ is hyperbolic.}  The previous paragraph still applies if we replace $i,j,k, \ell$ by $|i|, |j|, |k|, |\ell|$. However, if $|k|=|i|=|j|=n-1$, we need to be more precise: if $i=j$ then the graphs are matchings if $k=i$ and empty if $k \neq i$; if $i \neq j$ then they are empty.  Moreover, there are additional trivial/equivalent cases when $n-1 \in \{|i|,|j|,|k|,|\ell|\}$. To study those cases, assume the previously mentioned measures have already been taken into account. This implies that we may assume that $k = |k| < n-1$. 

Assume $|i|=|j|=n-1$. Note that this is always the case for the $(k,\ell)$-Weyl graph as soon as $|\ell|=n-1$, by our convention. In order for this graph to be non-empty, $i =\ell$, $j=\overline{\ell}$ and, moreover, if $i=j$ then $n-k$ should be odd, if $i \neq j$ then $n-k$ should be even. The latter also holds when $\Gamma=\Gamma_{i,j;k}^n(\Delta)$ when $|i|=|j|=n-1$, note that in fact $\Gamma_{i,j;k,\ell}^n(\Delta)=\Gamma_{i,j;k}^n(\Delta)$ when $|i|=|j|=n-1$. If $i=j$ (resp., $i \neq j$) and $n-k$ is even (resp., odd), then $\Gamma_{i,j;\geq k}^n(\Delta)=\Gamma_{i,j; \geq(k+1)}^n(\Delta)$. As the latter two graphs are equivalent, we will choose not to work with $\Gamma_{i,j;\geq k}^n(\Delta)$, since intersecting in exactly a $k$-space does not occur. Lastly, if $k\leq 0$, we also have that $\Gamma_{i,j;k,\ell}^n(\Delta)$ (and hence also $\Gamma_{i,j;k}^n(\Delta)$) is isomorphic to the bipartite complement of $\Gamma_{i,j;\geq k+2}^n(\Delta)$. The latter graph is easier to work with, so that is what we will do.

If $n=4$ then $\Gamma_{1,1;0,1}^4(\Delta) \cong \Gamma_{1,1;-1,3'}^4(\Delta) \cong \Gamma_{1,1;-1,3''}^4(\Delta)$ as we can apply a triality. Hence in this specific situation, we can treat a case where $|\ell|=3$ and $|i|,|j|<3$.

\end{itemize}

The automorphism groups of the trivial graphs are readily deduced. For the nontrivial graphs, it is clear that each automorphism of the associated building induces an automorphism of the graph. We aim for the converse, which roughly says that each automorphism of the graph is induced by an automorphism of the associated building. This statement is made precise in Main Theorems~\ref{main1} and~\ref{main2}, including the description of the two cases in which there are more automorphisms. In each of the latter two cases, the graph $\Gamma$, related to some building $\Delta$, turns out to be isomorphic to a graph $\Gamma'$ related to \emph{another} building in which the original building can be embedded naturally, and as such, each automorphism of this other building,  also those  not preserving $\Delta^b$, will induce an automorphism of $\Gamma$. We first discuss those two cases in detail.

\begin{ex} [\textbf{Special equivalent case 1}] \label{special1} \em  Let $\Delta$ be a parabolic polar space of rank $n$ and $\Delta'$ a hyperbolic polar space of rank $n+1$ containing $\Delta$ as a subspace. Put $\Gamma = \Gamma_{n-1,n-1; -1, n-1}^n(\Delta)$ (hence adjacent vertices in $\Gamma$ correspond to opposite MSS of $\Delta$) and 
\[ \Gamma' = \begin{cases} \Gamma_{n',n'; -1, n'}^{n+1}(\Delta') \quad \text{(} n \text{ odd)}\\  \Gamma_{n',n''; -1, n'}^{n+1}(\Delta') \quad \text{(} n \text{ even)} \end{cases}\]
(in $\Gamma'$, adjacent vertices correspond to opposite MSS of $\Delta'$) and denote the bipartition classes of $\Gamma$ by $C_1$ and $C_2$ again, and those of $\Gamma'$ by $C'_1$ and $C'_2$.

We claim that $\Gamma \cong \Gamma'$. Indeed, let $\mathsf{M}_1$ be one of the two families of MSS of $\Delta'$ and let $\mathsf{M}_2$ be the family of MSS of $\Delta'$ of the opposite type (i.e., $\mathsf{M}_1 = \mathsf{M}_2$ if $n$ is odd  and $\mathsf{M}_1$ and $\mathsf{M}_2$ are distinct if $n$ is even). We may assume that $C'_1 = \mathsf{M}_1$ and then our choice of $\mathsf{M}_2$ implies that $C'_2 = \mathsf{M}_2$. 
For $r=1,2$, consider the mappings $\beta_r: C_r \rightarrow C'_r$ which takes an element $X \in C_r$ to the  the unique element of $\mathsf{M}_r$ containing it. Then the mapping \[\beta_1 \times \beta_2: C_1 \times C_2 \rightarrow C'_1 \times C'_2: (I,J) \rightarrow (\beta_1(I),\beta_2(J))\]
defines a graph isomorphism between $\Gamma$ and $\Gamma'$: if $(I,J) \in C_1 \times C_2$ is an adjacent pair of $\Gamma$, i.e., if they are disjoint, then $\beta_1(I)$ and $\beta_2(J)$ are also disjoint and hence adjacent (precisely by our choice of $\mathsf{M}_2$); if $(I',J') \in C'_1 \times C'_2$ are disjoint, then clearly $\beta_1^{-1}(I')=I' \cap \Delta$ and $\beta_2^{-1}(J')=J' \cap \Delta$ are disjoint.

We now describe the action of an automorphism $\sigma$ of $\Delta'$ on $\Gamma$ (note that $\sigma$ does not necessarily stabilise $\Delta$, i.e., possibly $\sigma(\Delta) \neq \Delta$). Each vertex $X \in C_r$, $r=1,2$, is mapped to the vertex $(\beta_r^{-1}\circ \sigma \circ \beta_r)(X)$. As $\sigma$ preserves the adjacency of $\Gamma'$ and $\beta_1 \times \beta_2$ defines an isomorphism between $\Gamma$ and $\Gamma'$, this map preserves the adjacency of $\Gamma$ and as such, $\sigma$ induces an automorphism of $\Gamma$. Note that, in the non-bipartite case, i.e., for $\Gamma_{n-1;-1,n-1}^n(\Delta)$ (as treated in \cite{Kas-Mal:13}), we can only work with one class of MSS of $\Delta'$ at a time, so there is only such an isomorphism for $n$ odd. Note that its bipartite double is isomorphic to $\Gamma$, so when $n$ is even, taking the bipartite double yields additional automorphisms.
\end{ex}

\begin{ex} [\textbf{Special equivalent case 2}] \label{special2} \em Let $\Delta$ be a symplectic polar space of rank $n$. Then $\Delta$ arises from a symplectic polarity $\rho$ in a projective space $\mathbb{P}=\PG(2n-1, \mathbb{L})$, for some field $\mathbb{L}$. Let $\Gamma = \Gamma_{0,0;-1,0}^n(\Delta)$ (hence adjacent vertices in $\Gamma$ correspond to opposite points of $\Delta$) and $\Gamma'$ be the bipartite graph with bipartition classes $C'_1$ and $C'_2$ containing the points and hyperplanes of $\mathbb{P}$, respectively, and a point $p$ and a hyperplane $H$ are adjacent if $p \notin H$ (hence $p$ and $H$ are opposite in $\mathbb{P}$). 

Again, we claim that $\Gamma \cong \Gamma'$. 
The points of $\Delta$ are precisely those of $\mathbb{P}$, so $C_1 = C'_1$. Let $x$ be a vertex in $C_2$. We define $B(x)$ as the set of vertices of $C_1$ not adjacent with $x$. Then $B(x)$ equals the set of points of $\Delta$ equal to or collinear with $x$, i.e., this is exactly $\rho(x)$ as a set of points. Hence the morphism $\beta_2: C_2 \rightarrow C'_2: x \mapsto B(x)$ is well defined. As $B(x)= \rho(x)$, it follows that $\beta_2$ is an isomorphism. Putting $\beta_1 = \mathsf{id}_{C_1}$, we have that $\beta_1 \times \beta_2$ defines an isomorphism between $\Gamma$ and $\Gamma'$: $(p,q)$ is an adjacent pair of $\Gamma$, i.e., $p \notin q^\perp$, if and only if $\beta_1(p)=p \notin \beta_2(q)=q^\perp$, i.e., if $(\beta_1(p),\beta_2(q))$ is an adjacent pair of $\Gamma'$.

Like above, an automorphism $\sigma$ of $\mathbb{P}$ (not necessarily preserving $\Delta$) induces an automorphism of $\Gamma$ by mapping each vertex $x \in C_r$ on $(\beta_r^{-1} \circ \sigma \circ \beta_r)(x)$. The smallest example of this case has already been explained in \cite{proj} (Theorem 4.2$(vi)$). 

In the non-bipartite case, i.e., for $\Gamma_{0,-1;0}^n(\Delta)$, there is no meaningful isomorphism like above to consider, since we worked with \emph{two} types of subspaces. Also here, the bipartite double of $\Gamma_{0,-1;0}^n(\Delta)$ is isomorphic to $\Gamma$, which has additional automorphisms.   \end{ex}

As one can see, there is a similarity between those two special cases, even more when we observe that also in the first case, the vertex sets $C_1$ and $C'_1$ are point sets of certain geometries: the dual parabolic polar space and the half spin geometry, respectively. 

\begin{rem}\em
The pairs of point-line geometries corresponding to the two counter examples above, namely, \begin{compactenum} \item the pair of a projective space of odd dimension $2d-1$ and a symplectic polar space of rank $d$, $d\geq 2$, over the same field, and \item the pair of a half spin geometry of type $\mathsf{D}_k$ and a dual parabolic polar space of type $\mathsf{B}_{k-1}$, $k\geq 3$, defined over the same field (for $k=3$, this pair coincides with the first pair for $d=2$, using the same field), \end{compactenum}
are precisely the pairs of geometries related to split spherical buildings with the property that their point sets have a common projective representation as a projective variety, and the line set of the second is strictly contained in the line set of the first (the line set of the first one consists of all lines on the projective variety). Such pairs are classified by Cohen \& Cooperstein \cite{Coh-Coo:98}. The explanation why exactly these pairs turn up in our result is that the relation of being not opposite induces geometric hyperplanes in these geometries, which are induced by ordinary projective hyperplanes; these hyperplanes coincide for both geometries in the pair, and so the opposition relation in both geometries are indistinguishable. This points to the conjecture that there are no more examples of this phenomenon to be found in the non-split case. It is conceivable that our result, together with the analogue for the exceptional buildings, can be used to prove this. Note that Cardinali, Giuzzi and Pasini \cite{Car-Giu-Pas:18} verify the conjecture for (Grassmannians of) polar spaces arising from reflexive bilinear and sesquilinear forms in finite dimensional vector spaces over commutative fields. 
\end{rem}

\par\bigskip

We now state our main results. Denote by $\Aut_c(\Gamma)$ the group of automorphisms of $\Gamma$ preserving the bipartition classes of $\Gamma$. We use the terminology regarding automorphisms of $\Delta^b$ defined in the previous section.

\begin{main}\label{main1}
Let $\Gamma=\Gamma_{i,j;k,\ell}^n(\Delta)$ be nontrivial and assume moreover that if $|\ell|=n-1$, then $|i|=|j|=n-1$. Let $\rho$ be an arbitrary element of  $\Aut_c(\Gamma)$.
\begin{enumerate}[$(i)$]
\item If $\Delta$ is a parabolic polar space, $i=j=\ell=n-1$ and $k=-1$,  then $\rho$ is induced by an automorphism of a hyperbolic polar space of rank $n+1$ containing $\Delta$ and every such automorphism induces an element of $\Aut_c(\Gamma)$ (see Example~\ref{special1}).

\item If $\Delta$ is a symplectic polar space, $i=j=\ell=0$ and $k=-1$, then $\rho$ is induced by an automorphism of  its ambient projective space $\PG(2n-1,\mathbb L)$ for some field $\mathbb L$ and every such automorphism induces an element of $\Aut_c(\Gamma)$ (see Example~\ref{special2}).

\item In all other cases, $\rho$ is induced by an automorphism $\rho$ of $\Delta^b$. Moreover, the automorphisms of $\Delta^b$ inducing an element of $\Aut_c(\Gamma)$ are precisely the type-preserving ones, except if $\Delta$ is hyperbolic and one of the following holds.
\begin{enumerate}
\item  The dualities of $\Delta$ also induce elements of $\Aut_c(\Gamma)$ if $|\ell|< n-1$.
\item If $n=4$, then for each $t \in \{3',3''\}$,  the $t$-dualities of $\Delta$ also induce elements of $\Aut_c(\Gamma)$ if either $0$ and $t'$ do not occur in $\{i,j,k,\ell\}$ (including $(i,j,k,\ell)=(1,1,-1,2)$), or if $(i,j,k,\ell)=(1,1,0,2)$.
%, (1,t,0,t'),(t,1,0,t)\}$.

\item[(ab)] If $n=4$ and all conditions mentioned in both (a) and (b) are satisfied, i.e., if $i=j=1$ and $(k,\ell)\in\{(-1,1),(-1,2),(0,2)\}$, then also the trialities of $\Delta$ induce elements of $\Aut_c(\Gamma)$. 
\end{enumerate}
 \end{enumerate} 

If $i=j$ or if $\Delta^b$ has an automorphism switching $i$ and $j$ then $\Aut(\Gamma) = \Aut_c(\Gamma) \times 2$; otherwise $\Aut(\Gamma) = \Aut_c(\Gamma)$.
\end{main}

\begin{ex}\em
As an example to the cases mentioned in Main Theorem~\ref{main1}$(iii)$, we explain the following situation. Suppose $\Delta$ is hyperbolic, $n=4$ and $(i,j,k,\ell)=(1,1,0,2)$. We show that the $t$-dualities, for each $t \in \{0,3',3''\}$, indeed preserve the adjacency of $\Gamma$ (hence also their compositions, trialities in particular, preserve the adjacency). Let $L$ and $L'$ be adjacent lines in $\Gamma$. This means that $L \cap L'$ is a point $p$ and there is a $3'$-space $U$ and a $3''$-space $V$ containing $\<L,L'\>$. Equivalently, there is a set $\{p, U, V'\}$ of pairwise incident elements which  are all incident with both $L$ and $L'$. If we apply a $t$-duality $\theta$, then $\{p^\theta, U^\theta, V^\theta\}$ is also a set containing a point, a $3'$-space and a $3''$-space which are pairwise incident, and all of them are incident with both lines $L^\theta$ and $L'^\theta$. Hence $L^\theta$ and $L'^\theta$ are indeed adjacent vertices of $\Gamma$. The types $0,3',3''$ play the same role in the adjacency relation.
%This means that there are unique points $p$ and $p'$ on $L$ and $L'$, respectively, with $p \perp L'$ and $p' \perp L$. The planes $\<p,L'\>$ and $\<p',L'\>$ are both intersections of a $3'$-spaces $U$ and $U'$ with $3''$-spaces $V$ and $V'$, respectively. So $L$ is incident with $p, U, V$ and $L'$ is incident with $p', U', V'$; and among the latter six subspaces, each subspace is incident with all others except the one of its own type. Hence the types $0,3',3''$ play the same role and applying a $t$-duality again gives the same configuration for $L^\theta$ and ${L'}^\theta$.
\end{ex}

\begin{main}\label{main2}
Let $\Gamma$ be $\Gamma_{i,j;\geq k}^n(\Delta)$ or $\Gamma_{i,j; k}^n(\Delta)$ and suppose $\Gamma$ is nontrivial. If $|i|=|j|=n-1$, assume moreover that $\Gamma \neq \Gamma_{i,j;k}^n(\Delta)$, since $\Gamma_{i,j;k}^n(\Delta)=\Gamma_{i,j;k,\ell}^n(\Delta)$. Let $\rho$ be an arbitrary element of  $\Aut_c(\Gamma)$. Then $\rho$ is induced by an automorphism of $\Delta^b$. Moreover, the automorphisms of $\Delta^b$ inducing an element of $\Aut_c(\Gamma)$ are precisely the type-preserving ones, except if $\Delta$ is hyperbolic and one of the following holds.

\begin{enumerate}[(a)]
\item The dualities of $\Delta$ also induce elements of $\Aut_c(\Gamma)$ if  $|i|,|j| < n-1$. 

\item If $n=4$, then for $t \in \{3',3''\}$,  the $t$-dualities of $\Delta$ also induce elements of $\Aut_c(\Gamma)$ if $(i,j)\in\{(1,t), (t,1), (t,t)\}$.
\end{enumerate}
%up to duality if $\{i,j\}=\{(n-1)',(n-1)''\}$ or if $n=4$ and $(i,j,k) = (1,1,0)$; and up to triality if if $n=4$ and $(i,j,k)=(1,1,-1)$.
If $i=j$ or if $\Delta^b$ has an automorphism switching $i$ and $j$ then  $\Aut(\Gamma) = \Aut_c(\Gamma) \times 2$; otherwise $\Aut(\Gamma) = \Aut_c(\Gamma)$.\end{main}

\begin{rem}
\em If $\Delta$ is of type $\mathsf{D}_4$, then the \emph{nontrivial} graphs $\Gamma_{\geq k}$ are all equivalent with $\Gamma_{k'}$ for some $k'$ or the complement of such a graph. Indeed, $\Gamma_{\geq 0} \cong \overline{\Gamma_{-1}}$; if $1 \in \{i,j\}$ or $\{i,j\}=\{3',3''\}$ then $\Gamma_{\geq 1} \cong \Gamma_1, \Gamma_2$, respectively; if $i=j=3'$ or $i=j=3''$ then $\Gamma_{\geq 1} \cong \overline{\Gamma_{-1}}$; lastly, $\Gamma_{\geq 2} \cong \Gamma_2$ (in this case $\{i,j\}=\{3',3''\}$ in order for the graph to be nontrivial). This, together with the fact that the presence of $t$-dualities ($t \in \{0,3',3''\}$) only depends on $i$ and $j$, explains why we do not distinguish between those two types of graphs in Main Theorem~\ref{main2}$(a)$ and $(b)$. 
\end{rem}
\begin{ex} \em
Note that in Main Theorem~\ref{main2}, there are no trialities of $\Delta$ inducing elements of $\Aut_c(\Gamma)$. Indeed: for example, if $\Gamma = \Gamma_{1,1; -1}^4(\Delta)$,  two lines corresponding to adjacent vertices are mapped by a triality on two lines that possibly share a point, which happens if the original lines were contained in a $t$-space ($t\in\{3',3''\}$).
% if $(n,i,j,k)=(4,1,1,0)$, then the lines corresponding to two adjacent vertices would be mapped by a triality on two lines in a $t$-space ($t\in \{3',3''\}$), which do not necessarily share a point unless they were contained in a plane (for the same reason, there are no $t$-dualities with $t\in\{3',3''\}$ if $i=j=1$ and $k=0$).
Like before, there is only a $t$-duality ($t\in\{3',3''\}$) if the relations of an adjacent pair of vertices w.r.t.\ subspaces of types $0$ and $t'$ is symmetrical.
\end{ex}
One could also consider the non-bipartite versions of the graphs defined above, denoted by $\Gamma_{j;k,\ell}^n(\Delta)$, $\Gamma_{j;k}^n(\Delta)$ and $\Gamma_{j;\geq k}^n(\Delta)$, respectively, with self-explaining notation. In general, the (extended) bipartite double $2\Gamma$ ($\overline{2}\Gamma$) of a given graph $\Gamma$  is obtained by taking two copies of the vertex set of $\Gamma$, without the edges, and defining a vertex of one copy to be adjacent to a vertex of the other copy if the corresponding vertices are (equal or) adjacent in $\Gamma$. It is clear that $\Aut\,\Gamma$ is isomorphic to a (possibly proper) subgroup of  $\Aut_c\,(2\Gamma)\leq \Aut\,(2\Gamma)$ and of $\Aut_c\,(\overline{2}\Gamma)\leq \Aut\,(\overline{2}\Gamma)$. This almost immediately yields the following corollaries. Note that there is no counterpart of Main Theorem~\ref{main1}$(i)$ for $n$ is even, nor for Main Theorem~\ref{main1}$(ii)$, as was explained in Examples~\ref{special1} and~\ref{special2}.

\begin{cor}\label{cor1}
Let $\Gamma=\Gamma_{j;k,\ell}^n(\Delta)$ be nontrivial and assume moreover that if $|\ell|=n-1$, then $|j|=n-1$. Let $\rho$ be an arbitrary element of  $\Aut(\Gamma)$.
 
\begin{itemize}
\item[$(i)$] If $\Delta$ is a parabolic polar space and $j=\ell=n-1$, $k=-1$ and $n$ is odd, then $\rho$ is induced by an automorphism of hyperbolic polar space of rank $n+1$ containing~$\Delta$ and every such automorphism induces an element of $\Aut(\Gamma)$ (see Example~\ref{special1}).
\item[$(ii)$] In all other cases, $\rho$ is induced by an automorphism of $\Delta^b$. Moreover, the automorphisms of $\Delta^b$ inducing an element of $\Aut_c(\Gamma)$ are precisely the type-preserving ones, except if $\Delta$ is hyperbolic and one of the following holds.
\begin{enumerate}[(a)]
\item The dualities of $\Delta$ also induce elements of $\Aut_c(\Gamma)$ if $|\ell|<n-1$.
\item If $n=4$, then for $t \in \{3',3''\}$,  the $t$-dualities of $\Delta$ also induce elements of $\Aut_c(\Gamma)$
  if either $0,t'$ do not occur in  $\{j,k,\ell\}$ (including $(j,k,\ell)=(1,-1,2)$), or if $(j,k,\ell)=(1,0,2)$.

\item[(ab)] If $n=4$ and the conditions mentioned in both (a) and (b) are satisfied, i.e.,  if $j=1$ and $(k,\ell)\in\{(-1,1), (-1,2), (0,2)\}$, the trialities of $\Delta$  also induce elements of $\Aut_c(\Gamma)$.

\end{enumerate}

\end{itemize} \end{cor}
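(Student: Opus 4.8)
The plan is to deduce Corollary~\ref{cor1} from Main Theorem~\ref{main1} by the standard bipartite-double argument already sketched immediately before the statement. Concretely, let $\Gamma=\Gamma_{j;k,\ell}^n(\Delta)$ be a nontrivial non-bipartite graph satisfying the hypothesis ($|\ell|=n-1\Rightarrow|j|=n-1$), and form its bipartite double $2\Gamma$. One first checks that, with $i=j$, the bipartite double $2\Gamma_{j;k,\ell}^n(\Delta)$ is isomorphic to the bipartite graph $\Gamma_{j,j;k,\ell}^n(\Delta)$ (two disjoint copies of $\Omega_j$, with $(I,J)$ adjacent iff $\mathsf{t}(I\cap J)=k$ and $\mathsf{t}(I^J)=\ell$), because the adjacency relation defining $\Gamma$ is symmetric in its two arguments whenever $|\ell|<n-1$ or $j=n-1$ is not hyperbolic of the exceptional kind; in the remaining hyperbolic case with $|\ell|=n-1$ the hypothesis forces $|j|=n-1$ and there the relation is $\Gamma_{j,j;k}$, which is again symmetric once the trivial cases are excluded. (The case $\Delta$ parabolic, $j=\ell=n-1$, $k=-1$, $n$ odd is set aside as item $(i)$: Example~\ref{special1} shows the bipartite double $2\Gamma$ is isomorphic to $\Gamma_{n',n';-1,n'}^{n+1}(\Delta')$ for a rank-$(n+1)$ hyperbolic $\Delta'\supseteq\Delta$, and every automorphism of $\Delta'$ induces one of $\Gamma$, while conversely Main Theorem~\ref{main1}$(i)$ applied to $2\Gamma$ gives the classification.)

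Next I would invoke the elementary group-theoretic fact stated just above the corollary: $\Aut\Gamma$ embeds as a (possibly proper) subgroup of $\Aut_c(2\Gamma)$, the bipartition-preserving automorphisms of the double, via $\sigma\mapsto(\sigma,\sigma)$ acting diagonally on the two copies. Given $\rho\in\Aut\Gamma$, its image $\widetilde\rho\in\Aut_c(2\Gamma)=\Aut_c(\Gamma_{j,j;k,\ell}^n(\Delta))$ is, by Main Theorem~\ref{main1}$(iii)$ (we are in the ``all other cases'' branch, since $i=j$ rules out branch $(ii)$, and branch $(i)$ is our separate item), induced by a type-preserving automorphism of $\Delta^b$ — or, in the hyperbolic subcases, possibly by a duality, a $t$-duality, or a triality, exactly under the numerical conditions (a), (b), (ab) listed, which are obtained by specialising the conditions of Main Theorem~\ref{main1}$(iii)$ to $i=j$ (so ``$0$ and $t'$ do not occur in $\{i,j,k,\ell\}$'' becomes ``$0,t'$ do not occur in $\{j,k,\ell\}$'', and $(i,j,k,\ell)=(1,1,0,2)$ becomes $(j,k,\ell)=(1,0,2)$, etc.). Since $\widetilde\rho$ stabilises each of the two bipartition classes and acts the same way on each (it is diagonal), the inducing building automorphism restricts to a single automorphism $\rho'$ of the copy of $\Omega_j$ inducing $\rho$ on $\Gamma$; and conversely every such building automorphism clearly induces an automorphism of $\Gamma$ (just apply it to $\Omega_j$). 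This yields item $(ii)$.

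The one point requiring genuine care — and the step I expect to be the main obstacle — is the converse direction combined with the ``diagonal descent'': a priori $\Aut\Gamma$ is only a subgroup of $\Aut_c(2\Gamma)$, so one must verify that the building automorphisms produced by Main Theorem~\ref{main1} for $2\Gamma$ that happen to come from $\Aut\Gamma$ are exactly those inducing a genuine graph automorphism of the non-bipartite $\Gamma$, with no extra ones and none lost. For type-preserving automorphisms and for (in the hyperbolic case) those dualities/$t$-dualities/trialities that preserve the relevant adjacency of $\Gamma$ itself, this is immediate. The subtlety is to rule out a hypothetical $g\in\Aut(\Delta^b)$ inducing an element of $\Aut_c(2\Gamma)$ that is \emph{not} diagonal (i.e.\ swaps or non-trivially mixes the two copies of $\Omega_j$) and could thereby fail to descend; but since $2\Gamma$ has the two copies as its bipartition classes and $\Aut_c$ by definition preserves them, any such $g$ must stabilise each copy, and because both copies are identified with the \emph{same} $\Omega_j$ sitting in the \emph{same} $\Delta^b$, $g$ acts identically on the two, hence descends. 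The numerical bookkeeping in (a), (b), (ab) — checking that each exceptional hyperbolic automorphism type indeed preserves the $\Gamma$-adjacency precisely under the stated conditions on $(j,k,\ell)$ — is then a routine specialisation of the corresponding verification in Main Theorem~\ref{main1}, of the kind illustrated in the Example following it, and I would present it as such rather than redo it from scratch.
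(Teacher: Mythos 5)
Your overall strategy --- embed $\Aut(\Gamma)$ diagonally into $\Aut_c(2\Gamma)$, identify $2\Gamma$ with $\Gamma_{j,j;k,\ell}^n(\Delta)$, apply Main Theorem~\ref{main1}, and descend --- is exactly the route the paper takes (the paper's entire proof is the sentence ``This almost immediately yields the following corollaries'' together with the remarks in Examples~\ref{special1} and~\ref{special2}). The descent argument you give for branch $(iii)$, and the specialisation of the numerical conditions (a), (b), (ab) to $i=j$, are fine.

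There is, however, a concrete error in your case analysis. You assert that for item $(ii)$ of the corollary ``we are in the `all other cases' branch, since $i=j$ rules out branch $(ii)$, and branch $(i)$ is our separate item.'' Both halves of this are wrong: branch $(ii)$ of Main Theorem~\ref{main1} is precisely the case $i=j=\ell=0$, $k=-1$, $\Delta$ symplectic, so $i=j$ does not rule it out; and branch $(i)$ of Main Theorem~\ref{main1} applies to $2\Gamma$ whenever $\Delta$ is parabolic and $i=j=\ell=n-1$, $k=-1$, \emph{regardless of the parity of} $n$, whereas your item $(i)$ only covers $n$ odd. Consequently two cases of the corollary's item $(ii)$ are left unproved: $\Delta$ symplectic with $j=\ell=0$, $k=-1$, and $\Delta$ parabolic with $j=\ell=n-1$, $k=-1$, $n$ even. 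In both, Main Theorem~\ref{main1} only tells you that the diagonal image $(\rho,\rho)$ is induced by an automorphism $\sigma$ of the \emph{larger} structure ($\PG(2n-1,\mathbb L)$, resp.\ the rank-$(n+1)$ hyperbolic $\Delta'$), and you must still show that diagonality forces $\sigma$ to stabilise $\Delta$. This does hold, but needs an argument: in the symplectic case the two bipartition classes are identified with $\Omega_0$ via $\beta_1=\mathrm{id}$ and $\beta_2=$ the symplectic polarity, so diagonality says $\sigma$ commutes with the polarity, i.e.\ $\sigma\in\Aut(\Delta)$; in the even-rank parabolic case $\beta_1(X)$ and $\beta_2(X)$ are the two MSS of $\Delta'$ through $X$ and meet exactly in $X$, so if $\sigma(X)\not\subseteq\Delta$ then $\sigma(\beta_1(X))\cap\Delta$ and $\sigma(\beta_2(X))\cap\Delta$ are distinct, contradicting diagonality; hence $\sigma(\Delta)=\Delta$. (This is exactly the content of the closing remarks of Examples~\ref{special1} and~\ref{special2}: the bipartite double acquires \emph{additional} automorphisms that do not descend.) Without this supplement your proof does not establish item $(ii)$ in those two cases.
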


\begin{cor}\label{cor2}
Let $\Gamma$ be $\Gamma_{j;\geq k}^n(\Delta)$ or $\Gamma_{j;k}^n(\Delta)$ and suppose $\Gamma$ is nontrivial. If $|j|=n-1$, assume moreover that $\Gamma \neq \Gamma_{j;k}^n(\Delta)$ since $\Gamma_{j;k}^n(\Delta)=\Gamma_{j;k,\ell}^n(\Delta)$. Let $\rho$ be an arbitrary element of $\Aut(\Gamma)$. Then $\rho$ is induced by an automorphism of $\Delta^b$. Moreover, the automorphisms of $\Delta^b$ inducing an element of $\Aut_c(\Gamma)$ are precisely the type-preserving ones, except if $\Delta$ is hyperbolic one of the following holds.
\begin{enumerate}[(a)]
\item The dualities of $\Delta$ also induce elements of $\Aut_c(\Gamma)$ if $|j| < n-1$.
\item  If $n=4$, then for $t \in \{3',3''\}$,  the $t$-dualities of $\Delta$ also induce elements of $\Aut_c(\Gamma)$
  if $j=t$.
\end{enumerate}

\end{cor}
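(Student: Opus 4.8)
The plan is to derive Corollary~\ref{cor2} from Main Theorem~\ref{main2} by passing to the bipartite double, exactly as announced in the paragraph preceding Corollary~\ref{cor1}. Write $\Gamma=\Gamma_{j;\geq k}^n(\Delta)$ (the case $\Gamma=\Gamma_{j;k}^n(\Delta)$ being handled in the same way). I would first identify the bipartite double $2\Gamma$ with the bipartite graph $\Gamma_{j,j;\geq k}^n(\Delta)$ (respectively $\Gamma_{j,j;k}^n(\Delta)$ in the exact case): by construction the two bipartition classes of $2\Gamma$ are two copies of $\Omega_j$, and a vertex $I$ of the first copy is adjacent to a vertex $J$ of the second copy precisely when $\dim(I\cap J)\geq|k|$ (respectively $\mathsf{t}(I\cap J)=k$). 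Since the degree of a vertex of $2\Gamma$ equals the degree of the corresponding vertex of $\Gamma$, the double $2\Gamma$ is nontrivial exactly when $\Gamma$ is; and when $|j|=n-1$ the hypothesis $\Gamma\neq\Gamma_{j;k}^n(\Delta)$ is precisely the one needed so that Main Theorem~\ref{main2} applies to $2\Gamma$.

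Next, starting from an arbitrary $\rho\in\Aut(\Gamma)$, I would let $2\rho$ be the automorphism of $2\Gamma$ that applies $\rho$ separately on each of the two copies of $\Omega_j$; by construction $2\rho$ stabilises each bipartition class, so $2\rho\in\Aut_c(2\Gamma)$. Main Theorem~\ref{main2} then produces an automorphism $\varphi$ of $\Delta^b$ inducing $2\rho$. Because $\Delta^b$ carries only one copy of $\Omega_j$, $\varphi$ necessarily acts diagonally on the two copies occurring in $2\Gamma$, and restricting to either copy shows that $\rho$ itself is induced by $\varphi$. This establishes the first assertion.

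For the ``moreover'' part I would observe that an automorphism $\psi$ of $\Delta^b$ induces an element of $\Aut(\Gamma)$ if and only if it preserves type $j$ together with the incidence relation defining $\Gamma$ on $\Omega_j$, which is the very same condition as inducing an element of $\Aut_c(2\Gamma)$. Hence the automorphisms of $\Delta^b$ in question are exactly those described by Main Theorem~\ref{main2} for the graph $\Gamma_{j,j;\geq k}^n(\Delta)$ (respectively $\Gamma_{j,j;k}^n(\Delta)$), specialised to $i=j$: the type-preserving ones always qualify; condition~(a) of Main Theorem~\ref{main2} becomes ``$|j|<n-1$'', which is part~(a) of the corollary; and condition~(b), requiring $(i,j)\in\{(1,t),(t,1),(t,t)\}$ with $t\in\{3',3''\}$, collapses when $i=j$ (since $t\neq1$) to the single case $j=t$, which is part~(b). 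No trialities occur, for the reason already given in the example following Main Theorem~\ref{main2}.

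I do not expect a genuine obstacle here, since the whole argument is a formal translation of Main Theorem~\ref{main2}. The only points that require a moment of care are checking that $2\Gamma$ is genuinely one of the graphs covered by Main Theorem~\ref{main2} (nontrivial, and meeting the extra hypothesis when $|j|=n-1$) and observing that ``being induced by an automorphism of $\Delta^b$'' passes from $2\rho$ back to $\rho$ through the diagonal action; both are straightforward.
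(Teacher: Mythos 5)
Your proposal is correct and follows exactly the route the paper intends: it passes to the bipartite double, applies Main Theorem~\ref{main2} to $\Gamma_{j,j;\geq k}^n(\Delta)$ or $\Gamma_{j,j;k}^n(\Delta)$, and descends via the diagonal action, with the case analysis (a), (b) specialising correctly at $i=j$. The only cosmetic slip is that for the $\geq k$ graph the bipartite graph $\Gamma_{j,j;\geq k}^n(\Delta)$ is the \emph{extended} bipartite double $\overline{2}\Gamma$ (since $I=J$ gives $\dim(I\cap J)=|j|\geq|k|$), not $2\Gamma$; this changes nothing in the argument, as the diagonal embedding of $\Aut(\Gamma)$ lands in $\Aut_c$ of either double.
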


 \par
\bigskip
For simplicity, we henceforth denote the graphs $\Gamma^n_{i,j;k,\ell}(\Delta)$, $\Gamma^n_{i,j; \geq k}(\Delta)$ and $\Gamma^n_{i,j; k}(\Delta)$ by $\Gamma_k^{\ell}$, $\Gamma_{\geq k}$ and $\Gamma_{k}$, respectively. We always assume these graphs to be nontrivial. According to the following remark, we may also assume that $\Delta$ is an infinite polar space which is not $\Delta(\mathbb{L})$.

\begin{rem} \label{infinite} \rm When $\Delta$ is a finite polar space, Main Theorems~\ref{main1} and~\ref{main2} can be proven using a group-theoretical result of Liebeck, Praeger and Saxl \cite{Lie-Pre-Sax:87} on the maximal subgroups of the alternating and symmetric groups. For more details, see \cite{proj}. Also, if $\Delta=\Delta(\mathbb{L})$, then $\Delta^b$ is isomorphic to a projective space of dimension $3$ over $\mathbb{L}$ and all occurring graphs in this case are also graphs that occurred in \cite{proj}. Hence the result follows from this paper. \end{rem}

\section{Sketch of the proof}\label{specialcase}
It is in fact possible to prove Main Theorem~\ref{main2} along the lines of \cite{proj}, though extra cases arise. However, Main Theorem~\ref{main1} requires another approach, as only the concept of the so-called \emph{ round-up triples} and \emph{round-up quadruples}  can be recycled from \cite{proj}. This new approach is general enough to cover Main Theorem~\ref{main1} and~\ref{main2} at the same time and provides a more elegant proof for Main Theorem~\ref{main2}.

We start by (re)defining the round-up triples and quadruples,
%\subsection{Round-up triples and round-up quadruples}
stated in terms of $j$, but equally valid for $i$. For any graph and any subset $V$ of its vertices, we denote by $\mathsf{N}(V)$ the set of all common neighbours of $V$, i.e., $\mathsf{N}(V)=\bigcap_{v\in V} \mathsf{N}(v)$, with $\mathsf{N}(v)$ the neighbourhood of~$v$. 

\begin{definition}\emph{ A set $\{J_1,J_2,J_3\}$ of three distinct element of $\Omega_j$ is called a \emph{round-up triple} if no vertex is adjacent to exactly two of them and $\mathsf{N}(J_1,J_2,J_3)$ is nonempty.}\end{definition}

\begin{definition}\emph{
A set $\{J_1,J_2,J_3,J_4\}$ of four distinct elements of $\Omega_j$ is called  a \emph{round-up quadruple} if every vertex that is adjacent to at least two of them is adjacent to at least three of them and the sets $\mathsf{N}(J_1,J_2,J_3,J_4)$ and $\mathsf{N}(J_1,J_2,J_3)\setminus \mathsf{N}(J_1,J_2,J_3,J_4)$ are nonempty for any permutation of the indices.}  \end{definition}

When $\Gamma=\Gamma_{\geq k}$, we aim to classify round-up triples; when $\Gamma \in \{\Gamma_k, \Gamma_k^{\ell}\}$, we aim to classify round-up quadruples. To this end, we give a construction of an $i$-space adjacent to two $j$-spaces at distance $2$ in $\Gamma$ (Section~\ref{constructie}). Since such an $i$-space then has to be adjacent to a third member of the round-up triple or quadruple, this limits the possible configurations of such triples and quadruples. We narrow down these possibilities until we obtain a Grassmann graph or a graph strongly related to it (Section~\ref{k>-1} for $k>-1$ and Section~\ref{k=-1} for $k=-1$).  The latter graphs determine $\Delta^b$ completely (see Section~\ref{grassmann}).

As such, an automorphism $\sigma$ of $\Gamma$ extends to an automorphism $\overline{\sigma}$ of $\Delta^b$. We even claim that $\sigma$ is the restriction of $\overline{\sigma}$ to the $i$- and $j$-spaces. Suppose that we constructed $\Delta^b$ out of its $j$-spaces (which is the case if we first construct $\mathsf{G}_j$ from $\Gamma$). By definition of $\overline{\sigma}$, its action on the $j$-spaces coincides with the action of $\sigma$ on the $j$-spaces. Now the action of $\sigma$ on one of the biparts of $\Gamma$ uniquely determines the action on the other bipart, since  $\mathsf{N}_\Gamma(I) = \mathsf{N}_\Gamma(I')$ if and only if $I=I'$ (of course still under the assumption that $\Gamma$ is nontrivial). Hence also the actions of $\overline{\sigma}$ and $\sigma$ on the $i$-spaces coincides. This shows that $\sigma$ is indeed the restriction of an automorphism of~$\Delta^b$.

\par\bigskip
\textbf{Convention} $-$ In order to consider round-up triples and round-up quadruples at the same time, a round-up triple $\{J_1,J_2,J_3\}$ will be written as $\{J_1,J_2,J_3,J_3\}$. Conversely, if $\{J_1,J_2,J_3,J_4\}$ in fact represents a round-up triple, we assume $J_3 = J_4$. For simplicity, we will refer to a round-up quadruple simply by a \emph{quadruple}, whenever we need ordinary quadruples, we will make this clear by calling these $4$-tuples; likewise for the (round-up) triples.
\par\bigskip

\section{Grassmann graphs}\label{grassmann}
The $t$-Grassmann graph is the collinearity graph of the so-called $t$-Grassmannian geometry associated to $\Delta$ and is defined as follows. 

\begin{definition} For $t \in \mathsf{T}$, the \emph{$t$-Grassmann graph} $\mathsf{G}_t(\Delta)$ has $\Omega_t$ as vertex set, and two vertices $U$ and $V$ are adjacent if $\dim(U \cap V) = \max\{T \cap T' \mid T,T' \in \Omega_t,\,T \neq T'\}$ and $\dim(U^V)=\dim(V^U)= \min\{|t|+1,n-1\}$. 
\end{definition}

If $\Delta$ is hyperbolic, we also consider $\mathsf{G}_{n-2}(\Delta)$ and $\mathsf{G}_{n-1}(\Delta)$, whose definitions are analogous up to the indices that now refer to dimensions only. If no confusion is possible, we omit $\Delta$.
Throughout the proofs of Main Results~\ref{main1} and~\ref{main2}, we will encounter a graph with the same vertex set as $\mathsf{G}_t$ where two $t$-spaces are adjacent precisely if their intersection has maximal dimension amongst all elements of $\{T \cap T' \, \mid \, T,T' \in \Omega_t,\,T \neq T'\}$. This graph will be denoted $\mathsf{G}'_t$ and $\mathsf{G}_t$ can be reconstructed from it, as the following lemma says. 

\begin{lemma}\label{gras3} For all $t \in (\mathsf{T} \cup \{n-2,n-1\})\setminus\{0\}$, we can construct $\mathsf{G}_t$ from $\mathsf{G'_t}$.\end{lemma}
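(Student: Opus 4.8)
The plan is to recover, from the graph $\mathsf{G}'_t$ alone, the additional condition $\dim(U^V)=\dim(V^U)=\min\{|t|+1,n-1\}$ that distinguishes the edges of $\mathsf{G}_t$ among those of $\mathsf{G}'_t$. Both graphs have the same vertex set $\Omega_t$, and every edge of $\mathsf{G}_t$ is an edge of $\mathsf{G}'_t$; so it suffices to characterise, purely in terms of the adjacency of $\mathsf{G}'_t$, which pairs $\{U,V\}$ of adjacent vertices of $\mathsf{G}'_t$ already satisfy the projection condition. Two $t$-spaces $U,V$ whose intersection has the maximal dimension $m:=\max\{\dim(T\cap T')\mid T\neq T'\in\Omega_t\}$ fall into two classes: those for which $\langle U,V\rangle$ is again singular (then $\dim(U^V)=|t|+1$, and, since $t$ is non-maximal in the relevant residue, this is $\min\{|t|+1,n-1\}$), and those for which $U$ and $V$ generate a non-singular subspace — these are exactly the pairs that are edges of $\mathsf{G}'_t$ but not of $\mathsf{G}_t$. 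I would show that these two classes of edges can be told apart by a local count inside $\mathsf{G}'_t$: for a "singular" edge $\{U,V\}$, the common neighbourhood $\mathsf{N}_{\mathsf{G}'_t}(U,V)$ (or the induced subgraph on $\{U,V\}\cup\mathsf{N}_{\mathsf{G}'_t}(U,V)$) looks like a Grassmannian of the singular $(|t|+1)$-space $\langle U,V\rangle$ together with contributions from $t$-spaces meeting $U\cap V$ in a hyperplane, whereas for a "non-singular" edge the structure of $\mathsf{N}_{\mathsf{G}'_t}(U,V)$ is genuinely different (for instance, no vertex $W$ is simultaneously adjacent to $U$ and $V$ in $\mathsf{G}'_t$ with $W\cap U=W\cap V=U\cap V$, or the two classes have different cliques through the edge). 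Concretely, I would pick a combinatorial invariant of an edge $\{U,V\}$ — e.g.\ whether there exists a vertex $W$ adjacent to both $U$ and $V$ with $\dim(U\cap W)=\dim(V\cap W)=\dim(U\cap V)$ and $\langle U,V,W\rangle$ still of "small" type, detectable via further adjacencies — that holds precisely for the singular edges.

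The key steps, in order, are: (1) recall from the structure theory of polar spaces and their residues exactly what $m$ is for each $t$ (namely $m=|t|-1$ when $|t|\le n-2$, and the appropriate value when $t\in\{n-2,n-1\}$ in the hyperbolic case), and verify that for an edge $\{U,V\}$ of $\mathsf{G}'_t$ the subspace $\langle U,V\rangle$ is either singular of dimension $|t|+1$ or spans a hyperbolic/non-singular configuration, with no intermediate possibility; (2) in the singular case, identify $\dim(U^V)=\min\{|t|+1,n-1\}$ and note these are the edges we must keep; (3) produce a first-order (in the language of the graph $\mathsf{G}'_t$) predicate $P(U,V)$ that distinguishes the two cases — I expect this to be phrased in terms of the existence/non-existence of certain common neighbours, or the isomorphism type of a bounded ball around the edge; (4) conclude that $\mathsf{G}_t$ is the spanning subgraph of $\mathsf{G}'_t$ obtained by deleting exactly the edges $\{U,V\}$ for which $\lnot P(U,V)$, which is manifestly reconstructible from $\mathsf{G}'_t$. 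The restriction $t\neq 0$ is used because for $t=0$ the "maximal intersection dimension" is $-1$ (opposite vs.\ collinear points are not separated this way) and the residue argument degenerates; for $t\ge 1$ the residue $\Res_\Delta(U\cap V)$ has rank at least $2$, giving enough room to run the local analysis.

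The main obstacle I anticipate is step (3): finding a clean graph-theoretic separator of "singular" from "non-singular" edges that works uniformly across all the polar-space types (symplectic, orthogonal, Hermitian, mixed, and the non-embeddable $\Delta(\mathbb{O})$) and for the extra hyperbolic indices $t\in\{n-2,n-1\}$, where the geometry of maximal singular subspaces and the oriflamme complex complicates the picture. In particular, when $|t|=n-2$ (so $|t|+1=n-1$ and the two values in $\min\{|t|+1,n-1\}$ coincide), or when $t$ is maximal in some residue, one must be careful that "generating a singular $(|t|+1)$-space" is still the right dividing line and is still detectable. I would handle this by working inside the residue $\Res_\Delta(U\cap V)$, where $U/({U\cap V})$ and $V/({U\cap V})$ become two distinct collinear or opposite lines (or points), reducing the distinction to the well-understood rank-$2$ situation — collinear lines lie in a common plane/MSS, opposite ones do not — and then lifting the resulting local criterion back up to $\mathsf{G}'_t$. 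Everything else (the value of $m$, the fact that edges of $\mathsf{G}_t$ are edges of $\mathsf{G}'_t$, the final reconstruction statement) is bookkeeping once this separator is in hand.
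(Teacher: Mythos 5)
Your overall strategy is the paper's: note that for $|t|=n-1$ the two graphs coincide, and otherwise decide, inside $\mathsf{G}'_t$, which pairs $\{U,V\}$ with $\dim(U\cap V)$ maximal actually span a singular $(|t|+1)$-space. The gap is that you never commit to, let alone verify, a separator definable from the graph alone --- you explicitly defer this as ``step (3)'' --- and the one concrete candidate you do write down is backwards. For a pair $\{U,V\}$ with $\dim(U\cap V)=|t|-1$ and $\langle U,V\rangle$ \emph{not} singular, one has $\proj_V(U)=V\cap U^\perp=U\cap V$, and a short argument (if $W$ is a common neighbour with $u\in (W\cap U)\setminus V$ and $v\in(W\cap V)\setminus U$, then $u\perp v$ forces $u\in v^\perp\cap U=U\cap V$, a contradiction) shows that \emph{every} common neighbour $W$ of $U$ and $V$ in $\mathsf{G}'_t$ contains $U\cap V$ and hence satisfies $W\cap U=W\cap V=U\cap V$. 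This is the opposite of your claim that for a non-singular edge no such $W$ exists; and since for a singular edge the other $t$-spaces through $U\cap V$ are also common neighbours with this property, that invariant does not distinguish the two classes.

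The separator that does work, and that the paper uses, is clique-theoretic. The maximal cliques of $\mathsf{G}'_t$ come in two kinds: all $t$-spaces contained in a fixed $(|t|+1)$-space, and all $t$-spaces containing a fixed $(|t|-1)$-space. An edge $\{U,V\}$ of $\mathsf{G}'_t$ is an edge of $\mathsf{G}_t$ if and only if $U$ and $V$ admit a common neighbour outside a maximal clique containing them both --- equivalently, a common neighbour not containing $U\cap V$. Indeed, if $M:=\langle U,V\rangle$ is singular, any hyperplane of $M$ not through $U\cap V$ is such a neighbour, whereas in the non-singular case the observation above shows every common neighbour lies in the clique of $t$-spaces through $U\cap V$. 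Once this criterion is in place, the rest of your outline (delete the edges failing the predicate) is correct, and your worry about $t=n-2$ in the hyperbolic case is unfounded: there $\min\{|t|+1,n-1\}=n-1$, so ``lying in a common maximal singular subspace'' is still the dividing line and is detected by the same criterion.
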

\begin{proof} If $|t|=n-1$, clearly $\mathsf{G}_t=\mathsf{G'_t}$. So suppose $|t| < n-1$. A standard arguments yields two types of maximal cliques in $\mathsf{G'_t}$: One consisting of all $t$-spaces in a $(t+1)$-space, and one containing all $t$-spaces containing a common $(t-1)$-space. Either way, two $t$-spaces in such a maximal clique are contained in a singular subspace precisely if there exists a vertex outside the clique that is adjacent to both of them. Removing the edges in $\mathsf{G'_t}$ between vertices for which this is not the case, $\mathsf{G}_t$ is obtained. \end{proof}

\par\bigskip
The following proposition can be found in the literature (\cite{gras}), but we include a proof written in the same spirit as the rest of this paper for completeness' sake. Note also that this result was implicitly contained in the characterisations of polar Grassmannians obtained in the eighties mainly (\cite{Bro-Wil:83}, \cite{Cam:82}, \cite{Coh-Coo:83}, \cite{Coo:77}, \cite{Ell-Shu:88}, \cite{Han:86}, \cite{Sch:94}).
%, which is defined as follows. The points are the $j$-spaces and the lines are the sets of $j$-spaces containing a fixed $(j-1)$-space $J_-$ and, in case $j<n-1$, are contained in a fixed $(j+1)$-space $J_+$, with $J_-\subseteq J_+$. The following result (see for instance \cite{gras}) will be very useful.

\begin{prop}\label{gras1} For all $t \in \mathsf{T}$, the $t$-Grassmann graph $\mathsf{G}_t$ uniquely determines $\Delta^b$. That is, it uniquely determines $\Delta$ if $\Delta$ is not hyperbolic and, if $\Delta$ is hyperbolic, up to triality or $t_1$-duality for $t_1\in \{0,3',3''\}$ if $(n,t)=(4,1)$, up to $t$-duality if $(n,t)\in\{(4,3'),(4,3'')\}$ and up to duality if $t\notin \{(n-1)',(n-1)''\}$.
%In particular, each automorphism of $\mathsf{G}_t$ induced by an automorphism of $\Delta^b$. 
\end{prop}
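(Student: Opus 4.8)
The plan is to show that the abstract graph $\mathsf{G}_t$ recovers enough of the incidence structure of $\Delta^b$ to pin it down uniquely (in the stated sense). The natural strategy is a two-step reconstruction: first, from $\mathsf{G}_t$ identify the singular subspaces of neighbouring types as distinguished substructures (cliques, or intersections of cliques), and second, check that the resulting point--line geometry, with its incidence, satisfies the axioms characterising the $t$-Grassmannian of a polar space, so that $\Delta$ (or $\Delta^b$) can be read off. For $|t|=n-1$ we have $\mathsf{G}_t=\mathsf{G}'_t$, and for $|t|<n-1$, by Lemma~\ref{gras3} it suffices to work with $\mathsf{G}_t$ directly (the adjacency already encodes the ``being in a common singular subspace'' refinement). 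So first I would analyse the maximal cliques of $\mathsf{G}_t$: by a standard argument (the one alluded to in Lemma~\ref{gras3}) there are exactly two families, the \emph{point-cliques} $\mathcal{C}_{t+1}(M)=\{U\in\Omega_t \mid U\subseteq M\}$ for $M$ a $(t+1)$-space, and the \emph{star-cliques} $\mathcal{C}_{t-1}(K)=\{U\in\Omega_t\mid K\subseteq U\}$ for $K$ a $(t-1)$-space. One then reconstructs the $(t\pm1)$-spaces as these cliques, two of them being incident (in $\Delta^b$) precisely when they share at least two vertices of $\mathsf{G}_t$, i.e. when the corresponding clique intersection is nontrivial; iterating this ``going up and down by one type'' and using connectivity of the type diagram recovers all the singular subspaces of $\Delta$ together with their incidences, hence $\Delta^b$.

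The delicate point, and the one that explains the list of exceptions in the statement, is that the two families of maximal cliques need not be \emph{distinguishable} from the graph alone: any graph automorphism, or indeed the reconstruction procedure, is free to swap the roles of point-cliques and star-cliques whenever the two families ``look the same''. This is exactly what happens for the hyperbolic polar spaces in the exceptional ranks. Concretely: when $t\notin\{(n-1)',(n-1)''\}$ and $\Delta$ is hyperbolic, the residue at a $(t-1)$-space is itself a hyperbolic building, whose two classes of MSS can be interchanged, producing a duality of $\Delta$ that fixes the isomorphism type of $\mathsf{G}_t$; so $\mathsf{G}_t$ only determines $\Delta^b$ up to this duality. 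The case $(n,t)=(4,1)$ is the richest: the line-Grassmann graph of $\Delta(\mathbb{L})$-type $\mathsf{D}_4$ has a symmetry group that also realises trialities and $t_1$-dualities for $t_1\in\{0,3',3''\}$, since the three ``end node'' types $0,3',3''$ play symmetric roles in the adjacency relation on lines — this is the classical triality phenomenon and must be acknowledged rather than defeated. Similarly for $(n,t)\in\{(4,3'),(4,3'')\}$ one gets $t$-dualities. So the main obstacle is precisely the bookkeeping of which building automorphisms fix the isomorphism class of $\mathsf{G}_t$, i.e. verifying that the exceptional list is exactly right: that these dualities/trialities really do preserve $\mathsf{G}_t$, and — more work — that no \emph{further} ambiguity arises, i.e. that once one has fixed a labelling of the two clique families consistently across all residues, the reconstruction of $\Delta^b$ is forced.

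Concretely I would organise the argument as follows. (1) Describe the maximal cliques of $\mathsf{G}'_t$ and, via Lemma~\ref{gras3}, of $\mathsf{G}_t$, and set up the map $K\mapsto\mathcal{C}_\bullet(K)$ between singular subspaces of type $t\pm1$ and maximal cliques. (2) Show incidence of two such cliques is detected graph-theoretically (common vertices, or chains of common vertices, as in Lemma~\ref{gras3}), and that a $(t-1)$-space and a $(t+1)$-space are incident iff the star-clique is contained in the point-clique — so one recovers the truncated building on types $t-1,t,t+1$. (3) Bootstrap outward: from type $t+1$ and $t-1$ one reaches types $t+2$ and $t-2$ by the same clique analysis inside the relevant residues/quotients, and continuing along the (connected) diagram one recovers all of $\Omega_s$ and their mutual incidences, hence $\Delta^b$; appeal here to the characterisations of polar Grassmannians cited before the Proposition to know the recovered geometry is a polar space building and not something larger. (4) Finally, run through the hyperbolic exceptional cases and verify directly — as in the \emph{Example} following Main Theorem~\ref{main1} — that a $t_1$-duality ($t_1\in\{0,3',3''\}$) or triality when $(n,t)=(4,1)$, a $t$-duality when $(n,t)\in\{(4,3'),(4,3'')\}$, and an ordinary duality when $t\notin\{(n-1)',(n-1)''\}$ all stabilise the adjacency of $\mathsf{G}_t$, and conversely that the only ambiguity in Steps~(1)--(3) is the choice of which clique family is labelled ``point'' vs.\ ``star'' at each node, which is governed exactly by these automorphisms. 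This last step — pinning down that the ambiguity is \emph{precisely} the stated group and no more — is where the real care is needed.
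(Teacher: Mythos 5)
Your outline follows the same route as the paper: analyse the maximal cliques of $\mathsf{G}_t$, identify the two families with singular subspaces of types $t\pm1$, recover incidence from clique intersections, propagate along the diagram, and locate the hyperbolic duality/triality exceptions in the failure to distinguish the two clique families. However, as a proof it has genuine gaps at exactly the steps you flag as ``where the real care is needed,'' and these are not routine. First, a small but real error: for $|t|<n-1$ the set $\{U\in\Omega_t\mid K\subseteq U\}$ is a clique of $\mathsf{G}'_t$ but \emph{not} of $\mathsf{G}_t$, since adjacency in $\mathsf{G}_t$ also requires $\dim(U^V)=|t|+1$; the star-cliques of $\mathsf{G}_t$ are the $t$-spaces through a fixed $(t-1)$-space \emph{inside a fixed MSS}. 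Second, and decisively, you give no intrinsic criterion for telling the two clique families apart. The paper supplies one: for $0<t<n-3$ a maximal clique is a star-clique iff it contains a member of $\mathcal{P}_3$ in the poset of intersections of maximal cliques; for $t=n-3$ with $n>4$ one compares the ``dimensions'' $t+1$ versus $n-t-1$ of the cliques viewed as projective spaces; and for $(n,t)=(4,1)$ one must instead build the symplecta as convex closures of pairs at distance two and compare their internal geometry (Klein quadric versus rank-$3$ hyperbolic polar space) — it is precisely this comparison that succeeds unless $\Delta$ has type $\mathsf{D}_4$, which is where the triality exception actually lives. Without some such argument you cannot conclude that the ambiguity is ``exactly'' the stated group and no more.

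Third, your framework does not cover the top type. When $|t|=n-1$ there are no $(t+1)$-spaces, and in the hyperbolic oriflamme case with $t\in\{(n-1)',(n-1)''\}$ the maximal cliques correspond to $(n-4)$-spaces and $t'$-spaces and are \emph{not} determined by two of their members; the paper has to introduce submaximal cliques (corresponding to $(n-3)$-spaces) and then descend step by step from $\mathsf{C}_{[n-4,n-3]}$ to the whole building. Your ``bootstrap outward along the diagram'' asserts this descent but the residues/quotients you invoke are not directly available from the graph — one must first construct $\mathsf{G}'_{t\pm1}$ and reapply Lemma~\ref{gras3}, or carry out the explicit $\mathsf{C}_{[i,i+1]}\rightarrow\mathsf{C}_{[i-1,i]}$ construction. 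Finally, your fallback of citing the known characterisations of polar Grassmannians is legitimate in principle (the paper acknowledges the result is implicit there) but would require verifying the hypotheses of those theorems, which is itself a nontrivial piece of work the proposal does not do.
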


As a consequence of Lemma~\ref{gras3}, we immediately have the following corollary.

\begin{cor}\label{gras2}
For all $t \in (\mathsf{T} \cup \{n-2,n-1\})\setminus\{0\}$, the graph $\mathsf{G'_t}$ uniquely determines $\Delta^b$. That is, it uniquely determines $\Delta$ if $\Delta$ is not hyperbolic and, if $\Delta$ is hyperbolic, up to triality or $t_1$-duality for $t_1\in \{0,3',3''\}$ if $(n,t)=(4,1)$, up to $t$-duality if $(n,t)\in\{(4,3'),(4,3'')\}$ and up to duality if $t\notin \{(n-1)',(n-1)''\}$.
\end{cor}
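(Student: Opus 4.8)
The plan is to deduce the statement directly from Proposition~\ref{gras1} together with Lemma~\ref{gras3}. By Lemma~\ref{gras3}, for every $t \in (\mathsf{T} \cup \{n-2,n-1\})\setminus\{0\}$ we can reconstruct $\mathsf{G}_t$ from $\mathsf{G}'_t$ (this is where the restriction $t\neq 0$ is used; it is baked into the hypothesis of Lemma~\ref{gras3}). Concretely, first I would invoke Lemma~\ref{gras3} to pass from $\mathsf{G}'_t$ to $\mathsf{G}_t$, and then I would invoke Proposition~\ref{gras1} applied to $\mathsf{G}_t$ to obtain $\Delta^b$, up to exactly the automorphisms listed there. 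Since the list of exceptional automorphisms (triality or $t_1$-duality for $t_1\in\{0,3',3''\}$ when $(n,t)=(4,1)$; $t$-duality when $(n,t)\in\{(4,3'),(4,3'')\}$; duality when $t\notin\{(n-1)',(n-1)''\}$) is identical in the two statements, the conclusion transfers verbatim.

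The only subtlety to check is that Lemma~\ref{gras3} is being applied with exactly the right index set. Proposition~\ref{gras1} is stated for $t\in\mathsf{T}$, but when $\Delta$ is hyperbolic we also want to cover $t=n-2$ and $t=n-1$, which are not in $\mathsf{T}$; I would note that Proposition~\ref{gras1} (and its proof) applies verbatim to these two additional values of $t$ as well, since $\mathsf{G}_{n-2}$ and $\mathsf{G}_{n-1}$ were explicitly introduced just before the proposition, with the index referring to dimension. For $t$ of type $(n-1)'$ or $(n-1)''$ — equivalently $|t|=n-1$ in the hyperbolic case — Lemma~\ref{gras3} tells us $\mathsf{G}_t=\mathsf{G}'_t$, so the reconstruction is trivial in that sub-case and the "up to $t$-duality'' clause (for $(n,t)\in\{(4,3'),(4,3'')\}$) is inherited unchanged.

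There is essentially no obstacle here: the corollary is a formal consequence of the two preceding results, and the proof is a one-line composition, "$\mathsf{G}'_t \rightsquigarrow \mathsf{G}_t \rightsquigarrow \Delta^b$''. If anything, the point requiring a moment's care is purely bookkeeping — confirming that the exceptional-automorphism lists in Lemma~\ref{gras3}'s domain and in Proposition~\ref{gras1}'s conclusion line up across all the hyperbolic sub-cases ($t\in\{1,3',3''\}$ for $n=4$, and $t\notin\{(n-1)',(n-1)''\}$ in general), and that the excluded value $t=0$ in the corollary matches the excluded value $t=0$ in Lemma~\ref{gras3}. Hence the proof is simply: "By Lemma~\ref{gras3}, $\mathsf{G}_t$ is determined by $\mathsf{G}'_t$; now apply Proposition~\ref{gras1}.''
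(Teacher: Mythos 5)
Your proposal is correct and matches the paper's own proof: the corollary is stated there as an immediate consequence of Lemma~\ref{gras3} combined with Proposition~\ref{gras1}, which is exactly your composition $\mathsf{G}'_t \rightsquigarrow \mathsf{G}_t \rightsquigarrow \Delta^b$. Your extra remark about the index set ($t=n-2$, $t=n-1$ in the hyperbolic case) is sensible bookkeeping that the paper handles implicitly via its preceding lemmas on $\mathsf{G}_{n-1}$ and $\mathsf{G}_t$ for $t\in\{(n-1)',(n-1)''\}$.
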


\begin{lemma}\label{step1} For all $t \in \mathsf{T} \cup \{n-2\}$ with $|t|<n-1$, we can construct $\mathsf{G_{n-1}}$ from $\mathsf{G}_t$, unless $\Delta$ is of type $\mathsf{D}_4$ and $t=1$. In the latter case, $\mathsf{G}_1$ uniquely determines $\Delta^b$, i.e., it uniquely determines $\Delta$ up to  $t_1$-duality for $t_1 \in \{0,3',3''\}$.
\end{lemma}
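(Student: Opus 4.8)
The plan is to construct $\mathsf{G}_{n-1}$ from $\mathsf{G}_t$ by exhibiting, in purely graph-theoretic terms read off from $\mathsf{G}_t$, the maximal singular subspaces of $\Delta$ together with the adjacency relation of $\mathsf{G}_{n-1}$ (two MSS adjacent precisely when they meet in an $(n-2)$-space). First I would recall, via Lemma~\ref{gras3} and standard arguments, the two types of maximal cliques of $\mathsf{G}_t$ (equivalently $\mathsf{G}'_t$): the \emph{point-cliques} $\mathcal{C}_K$, consisting of all $t$-spaces containing a fixed $(t-1)$-space $K$, and the \emph{top-cliques} $\mathcal{C}_L$, consisting of all $t$-spaces contained in a fixed $(t+1)$-space $L$. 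Using the incidence between these cliques (and Lemma~\ref{gras3} to have access to the ``contained in a common singular subspace'' relation), one recovers all the singular subspaces of dimension $t-1$, $t$ and $t+1$ of $\Delta$, together with their mutual incidences; iterating the construction of point- and top-cliques inside the geometry on the $(t+1)$-spaces (a residue argument) one climbs up to dimension $t+2$, and so on, up to dimension $n-1$, thereby reconstructing the full poset of singular subspaces of $\Delta$ of dimension $\geq t-1$, in particular the set $\Omega_{n-1}$ and the relation ``meet in an $(n-2)$-space'', which is exactly $\mathsf{G}_{n-1}$.

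The point where this breaks down is precisely the $\mathsf{D}_4$, $t=1$ case: there the $t$-spaces are lines, the point-cliques correspond to points of $\Delta$ and the top-cliques to planes, but in a hyperbolic quadric of rank $4$ there are \emph{two} families of planes (the $3'$- and $3''$-spaces), and the collinearity graph $\mathsf{G}_1$ does not, by itself, distinguish a point-clique from a top-clique of either family — this is the source of the triality. So in that case the reconstruction cannot single out $\mathsf{G}_{n-1}=\mathsf{G}_3$; instead one invokes Proposition~\ref{gras1} directly: $\mathsf{G}_1$ still determines $\Delta^b$, but only up to the ambiguity coming from $\mathrm{Aut}(\Delta^b)/\mathrm{Aut}^o(\Delta^b)$, which for $(n,t)=(4,1)$ is generated by a triality (equivalently, by the $t_1$-dualities for $t_1\in\{0,3',3''\}$). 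Thus the statement for this exceptional case is a direct quotation of Proposition~\ref{gras1}.

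For the main case I would organise the argument as follows. First, fix $t$ with $|t|<n-1$ and distinguish the two clique types combinatorially: a top-clique $\mathcal{C}_L$ has the property that any two of its members lie in a common singular subspace (namely $L$), whereas in a point-clique $\mathcal{C}_K$ two members lie in a common singular subspace only when they additionally span a $(t+1)$-space — and by Lemma~\ref{gras3} we can detect ``lying in a common singular subspace'' in $\mathsf{G}'_t$, hence in $\mathsf{G}_t$. (When $t=0$ this is vacuous, but $t=0$ is excluded from $\mathsf{G}'_t$-type reasoning; for $\mathsf{G}_0$ the graph is the collinearity graph of $\Delta$ itself and $\mathsf{G}_{n-1}$ is recovered by the classical reconstruction of MSS as maximal cliques, so one should treat $t=0$ separately and quickly, or simply note $\Omega_0=X$ and reconstruct $\Delta$ and hence $\mathsf{G}_{n-1}$ directly.) Second, identify $(t-1)$-spaces with point-cliques and $(t+1)$-spaces with top-cliques, and $t$-spaces with vertices; incidence ``$K\subset L$'' corresponds to ``$\mathcal{C}_K\cap\mathcal{C}_L\neq\emptyset$'', and one checks this intersection is then a maximal clique of the line-graph-like structure — a standard Grassmann-recovery step. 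Third, having the geometry on $(t+1)$-spaces with its point/top cliques in hand, pass to residues: for a fixed $(t-1)$-space $K$, the $(t+1)$-spaces through $K$ with ``meeting in a $t$-space through $K$'' form $\mathsf{G}_1$ of the residue $\mathrm{Res}_\Delta(K)$ of rank $n-t$, and induct on rank (the induction base being small-rank residues, where the reconstruction is immediate), so that all higher-dimensional singular subspaces and their incidences are obtained, in particular $\Omega_{n-1}$ and the $\mathsf{G}_{n-1}$-adjacency. Throughout, the hyperbolic case needs the usual care — the ``$(n-1)$-spaces'' split into $(n-1)'$ and $(n-1)''$ and $\mathsf{G}_{n-1}$ refers to the oriflamme-type graph — but for $t$ with $|t|\le n-3$ (so $t\notin\{(n-1)',(n-1)''\}$ and, when $n=4$, $t\neq 1$) the residue argument keeps both families distinguishable, exactly as in Proposition~\ref{gras1}. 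The main obstacle is making the residue/induction step airtight in the hyperbolic case and at the boundary $t=n-2$, where one must be careful that ``$\mathsf{G}_{n-1}$'' and the geometry of $(n-1)$-spaces are the oriflamme objects; but since $|t|<n-1$ keeps us away from the self-dual top, the $\mathsf{D}_4$-with-$t=1$ exception is genuinely the only place the construction fails.
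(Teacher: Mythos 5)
Your overall strategy --- read off the two families of maximal cliques of $\mathsf{G}_t$, identify the $(t+1)$-spaces with the cliques of the first family, climb up one dimension at a time via Lemma~\ref{gras3}, and quote Proposition~\ref{gras1} for the $\mathsf{D}_4$, $t=1$ exception --- is exactly the paper's. But the step on which everything hinges, distinguishing the two clique families, does not work as you state it. In $\mathsf{G}_t$ (as opposed to $\mathsf{G}'_t$) two $t$-spaces are adjacent only if they already span a singular $(t+1)$-space, so the maximal ``point-cliques'' are not ``all $t$-spaces through a fixed $(t-1)$-space $K$'' but only those through $K$ that lie in a fixed MSS; consequently \emph{every} pair of members of \emph{either} type of maximal clique lies in a common singular subspace, and your proposed test (``any two members of a top-clique lie in a common singular subspace, whereas in a point-clique only those spanning a $(t+1)$-space do'') is vacuous. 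It would be a correct statement about the maximal cliques of $\mathsf{G}'_t$, but you are given $\mathsf{G}_t$, and Lemma~\ref{gras3} goes in the direction $\mathsf{G}'_t\Rightarrow\mathsf{G}_t$, not the reverse. The paper instead considers the poset of intersections of maximal cliques and observes that a maximal clique is of the second type precisely when it contains the set of all $t$-spaces through a common $(t-1)$-space inside a $(t+2)$-space (an element of its $\mathcal{P}_3$); some such purely graph-theoretic criterion is needed here.

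Moreover, that criterion only applies for $0<t<n-3$, and your plan is silent about the boundary $t=n-3$, which is precisely where the exception originates: there a clique of the first type carries the structure of a projective $(t+1)$-space and one of the second type that of a projective $(n-t-1)$-space, so they can be told apart by ``dimension'' only when $t+1\neq n-t-1$, i.e.\ when $n>4$. For $n=4$, $t=1$ the paper switches to building the symplecta (convex closures of pairs at distance two); the point-symplecta and the $3$-space-symplecta are both Klein quadrics exactly when $\Delta$ is of type $\mathsf{D}_4$, and this indistinguishability is the true source of the triality ambiguity --- your informal explanation of the exception is in the right spirit but attaches it to the wrong objects (cliques rather than symplecta). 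Your separate treatments of $t=0$ and $t=n-2$ match the paper. So the skeleton is right, but the two load-bearing steps --- separating the clique families in general, and the $t=n-3$ analysis --- are missing or incorrect and must be supplied along the lines above.
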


\begin{proof} As $|t|<n-1$, $t=|t|$. A set consisting of all $t$-spaces in a $(t+1)$-space is clearly a clique of $\mathsf{G}_t$, as is a set consisting of all $t$-spaces that go through a fixed $(t-1)$-space and are contained in some MSS. Denote by $\mathcal{C}_1$ all cliques of the first type and by $\mathcal{C}_2$ all cliques of the second type. A clique maximal with the property of being contained in more than one maximal clique is the set of all lines through a $(t-1)$-space and contained in a $(t+1)$-space. Such a clique is denoted by $\mathcal{C}(M,N)$ if $M$ and $N$ are two of its members. 

Suppose first that $0 < t < n-3$. Again, standard arguments imply that $\mathcal{C}_1 \sqcup \mathcal{C}_2$ coincides with the set of maximal cliques of $\mathsf{G}_t$. Let $\mathcal P$ be the poset consisting of elements $\{ M_1 \cap \dots \cap M_m \mid m \in \mathbb{N}_{>1}, M_i \text{ a maximal clique}, M_i \neq M_j,1 \leq i < j \leq m\}$. A member of $\mathcal{P}\setminus\{\emptyset\}$ is the set of all $t$-spaces through a $(t-1)$-space and contained in a $(t+s-1)$-space for some $s$ with $1 \leq s \leq n-t-1$ (denote the subset of $\mathcal{P}$  consisting of those members by $\mathcal{P}_s$). 
A maximal clique is of the second type precisely if it contains an element of $\mathcal{P}_3$. By taking the cliques of the first type, we obtain the vertices of $\mathsf{G'}_{t+1}$. They correspond to adjacent vertices of this graph if they have a one element (a $t$-space) in common. By Lemma~\ref{gras3}, we obtain $\mathsf{G}_{t+1}$ and we can continue up to $\mathsf{G}_{n-3}$. Hence we still need to deal with $t \in \{0,n-3,n-2\}$. 
\begin{itemize}
\item[($t=0$)] In this case, the set of maximal cliques is given by $\mathcal{C}_2$ and hence we obtain all $(n-1)$-spaces. Considering the poset $\mathcal{P}$ again, it is easily seen that we can determine when two such MSS intersect in an $(n-2)$-space. 
\item[($t=n-2$)] Now, $\mathcal{C}_1$ is the set of all maximal cliques and again, we obtain all $(n-1)$-spaces. They intersect each other in an $(n-2)$-space if they share precisely one element.

\item[($t=n-3$)] Note that, when endowed with the elements of $\mathcal{P}_2$ as lines, a clique of type $\mathcal{C}_1$,  is isomorphic to a $(t+1)$-space and a clique of type $\mathcal{C}_2$ to an $(n-t-1)$-space. If $t+1 > n-t-1$, which happens if $n>4$, we can distinguish the cliques by their dimension. Indeed, in our graph this comes down to the following: in a maximal clique of type $\mathcal{C}_2$, each two elements of $\mathcal{P}_2$ have a $t$-space in common, whereas, if $n > 4$, a maximal clique of type $\mathcal{C}_1$ contains elements of $\mathcal{P}_2$ that do not share a $t$-space. This way we can again recognise the cliques of type $\mathcal{C}_1$, after which we can proceed by constructing $\mathsf{G}_{n-2}$ from this, like above.

If $n=4$, then $t=1$ and we consider two lines $M$ and $N$ at distance 2 in $\mathsf{G}_{1}$ having at least two common neighbours. Either $M$ and $N$ are disjoint collinear lines and hence $\<M,N\>$ is a $3$-space $V$, or $M$ and $N$ are intersecting non-collinear lines and hence $M \cap N$ is a point, which we also call $V$ (to deal with both cases at the same time). We now construct their convex closure (called a \emph{symplecton}), which in the first case consists of all lines incident with $V$.  We start with all members of $\mathsf{N}(M,N)$, which are clearly incident with $V$. For any $R \in \mathsf{N}(M,N)$, we also take all members of $\mathcal{C}(M,R)$ and of $\mathcal{C}(R,N)$. This way we have already obtained the set $C_M$ which are, in the first case, all lines incident with $V$ and intersecting $M$ and, in the second case, all lines incident with $V$ that are collinear with $M$; likewise we have a set $C_N$. Finally, for any pair $(V,V') \in C_M \times C_N$ having distance two in $\mathsf{G}_1$, we add all members of $\mathsf{N}(V,V')$. It is easily verified that we have all elements of $\mathsf{G}_1$ incident with $V$. 

Now, the set of all lines an $3$-space is, regardless of the type of $\Delta$, a Klein quadric (note that we also have its lines, which are the planar line pencils). However, the set of lines through a point is a hyperbolic polar space of rank 3 if and only if $\Delta$ is hyperbolic. So if $\Delta$ is of type $\mathsf{D}_4$, we cannot distinguish between the two types of symplecta; if $\Delta$ is not of type $\mathsf{D}_4$, we can. In order to do so, take two such symplecta that have more than one line in common. It is impossible that both symplecta are of the second type; if the symplecta are both of the first type, this means that the $3$-spaces intersect in a plane, moreover, as $\Delta$ is not of type $\mathsf{D}_4$, there are more symplecta through this intersection; if the symplecta are of different types, the point is contained in the $3$-space and, regardless of the type of $\Delta$, there are no other symplecta through this intersection. Hence we can indeed distinguish between the two types of symplecta. 

%Now suppose $\Delta$ is of type $\mathsf{D}_n$ with $n\geq 4$. Consider two symplecta that intersect in more than element and take a third symplecton that contains more than one element of this intersection. We obtain the set $\mathcal{S}$ of $(n-3)$-spaces that are incident with: an $(n-1)'$-space $W_1$ and an $(n-1)''$-space $W_2$ that intersects $W_1$ in an $(n-2)$-space $V$ and an $(n-4)$-space $W_3$ inside $V$. Now, if $n>4$ and only then, there exists an $(n-3)$-space that belongs to $W_1$ such that it is not adjacent with any member of $\mathcal{S}$, likewise for $W_2$. However, each $(n-3)$-space $N$ through $W_3$ is adjacent with a member of $W_1$ and of $W_2$ (using the fact that the projection of $N$ on $W_1$ and $W_2$ cannot equal $V$). We can also see that, if $n=4$, the structure of the two types of symplecta is the same, as both are isomorphic to the Klein quadric.
We conclude that, if $\Delta$ is of type $\mathsf{D}_4$, the set of all symplecta yields a tripartite graph by letting two of them be adjacent whenever they have more than one line in common. So up to a permutation of the types $\{0,3',3''\}$, we obtain $\Delta$. In all other cases, we construct the graph having the symplecta corresponding to the $3$-spaces as vertices and with adjacency ``having more than one line in common'' and obtain $\mathsf{G}_{n-1}$.
\end{itemize}
\end{proof}

Let $\mathsf{C}_{\mathsf{T}}(\Delta)$ denote the incidence graph of $\Delta$ and, for $\mathsf{T}' \subseteq \mathsf{T}$, let $\mathsf{C}_{\mathsf{T}'}(\Delta)$ denote its restriction to elements of types in $\mathsf{T'}$. We will use the notation $[s,t]$ for all types in between $s$ and $t$.

\begin{lemma} 
\begin{itemize}
\item For any polar space $\Delta$, the graph $\mathsf{G_{n-1}}(\Delta)$ completely determines $\Delta$ if $\Delta$ is not hyperbolic and up to duality if $\Delta$ is hyperbolic.
\item  For any hyperbolic polar space $\Delta$, the graph $\mathsf{G}_t(\Delta)$, for $t \in \{(n-1)',(n-1)''\}$, completely determines $\Delta$ if $n\neq 4$; and up to $t$-duality if $n=4$. 
\end{itemize}
\end{lemma}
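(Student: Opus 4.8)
The plan is to reconstruct $\Delta^b$ (hence $\Delta$) from the given graph by recognising its vertices, together with a canonical hierarchy of cliques, as the singular subspaces of $\Delta$ of every dimension along with their incidences, and then to appeal to the Buekenhout--Shult characterisation of polar spaces. In $\mathsf{G}_{n-1}(\Delta)$ two distinct maximal singular subspaces are adjacent exactly when they meet in an $(n-2)$-space, so the graph is the collinearity graph of the dual polar space of $\Delta$ (bipartite and triangle-free when $\Delta$ is hyperbolic); the statement is thus in essence that a dual polar space is determined by its collinearity graph, so one may also simply invoke the classical characterisations cited before Proposition~\ref{gras1}. The first step of a self-contained argument is to show that the maximal cliques of $\mathsf{G}_{n-1}(\Delta)$ are the sets $\mathcal C_W$ of all MSS through a fixed $(n-2)$-space $W$, so that they correspond bijectively to $\Omega_{n-2}$, incidence being membership: if $U_1,U_2,U_3$ are pairwise adjacent MSS with $U_1\cap U_2\neq U_1\cap U_3$, these would be two distinct points of the rank-$2$ residue $\Res_\Delta(U_1\cap U_2\cap U_3)$ lying on the three ``lines'' $U_1,U_2,U_3$, i.e.\ a triangle --- impossible in a generalised quadrangle or a grid; hence every clique of size $\geq 3$ lies in some $\mathcal C_W$, and since each $\mathcal C_W$ is itself a maximal clique (of size $\geq 3$ if $\Delta$ is not hyperbolic, an edge if $\Delta$ is hyperbolic, where $\mathsf{G}_{n-1}(\Delta)$ is triangle-free), the claim follows.

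Next I would recover $\mathsf{G}_{n-2}(\Delta)$ as the intersection graph of these maximal cliques, using that for distinct $(n-2)$-spaces $W,W'$ one has $\dim(W^{W'})=n-1$ --- equivalently $W,W'$ lie in a common MSS, equivalently $\mathcal C_W\cap\mathcal C_{W'}\neq\emptyset$ --- if and only if $W$ and $W'$ are collinear, and then $W\cap W'\in\Omega_{n-3}$. I would then iterate downwards: from $\mathsf{G}_s(\Delta)$ together with the already recovered higher-dimensional subspaces and their incidences, one obtains $\mathsf{G}_{s-1}(\Delta)$ and $\Omega_{s-1}$ by a clique analysis of exactly the kind in the proof of Lemma~\ref{step1} --- the maximal cliques split into those filling an already known $(s+1)$-space and those consisting of all $s$-spaces through a fixed $(s-1)$-space inside a fixed MSS, and the latter are grouped by their common $(s-1)$-space via the intersection poset of cliques. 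The procedure terminates at $\mathsf{G}_0(\Delta)$, the collinearity graph of $\Delta$ on its actual points, whence $\Delta$ by Buekenhout--Shult; since a duality of a hyperbolic polar space preserves type $0$ whereas $\mathsf{B}_n$ has no non-trivial diagram automorphism, this reconstruction is exact when $\Delta$ is not hyperbolic and up to a duality when $\Delta$ is hyperbolic, as required.

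For the second assertion, $\mathsf{G}_t(\Delta)$ with $t\in\{(n-1)',(n-1)''\}$ has a single family of MSS as vertices and adjacency ``meeting in an $(n-3)$-space'', i.e.\ it is the collinearity graph of the corresponding half-spin geometry. If $n=4$, that half-spin geometry is itself a polar space $\Delta'$ of type $\mathsf{D}_4$, identified with $\Delta$ by a triality under which the points of $\Delta'$ are the $t$-elements of $\Delta$, the lines of $\Delta'$ the lines of $\Delta$, and the two families of MSS of $\Delta'$ the points and the $t'$-elements of $\Delta$; by Buekenhout--Shult $\mathsf{G}_t(\Delta)$ determines $\Delta'$ up to a duality of $\Delta'$, which translates into a $t$-duality of $\Delta$, giving the claim. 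If $n\geq 5$, I would instead identify the maximal cliques of $\mathsf{G}_t(\Delta)$ and separate the ``lines'' (all chosen-family MSS through a fixed $(n-3)$-space) from the maximal singular subspaces of the half-spin geometry, one sort of which consists precisely of the chosen-family MSS meeting a fixed MSS of the other family in an $(n-2)$-space; recognising these recovers $\mathsf{G}_{n-1}(\Delta)$, so the first assertion applies, and since an automorphism of $\mathsf{G}_t(\Delta)$ then extends to an automorphism of $\Delta^b$ fixing type $t$ --- hence, as $n\geq 5$, type-preserving --- it determines $\Delta$ exactly.

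The main obstacle I anticipate is the iterative clique bookkeeping in the first assertion: separating at each level the two kinds of maximal cliques, grouping the ``pencil'' cliques correctly, and checking that the resulting incidences are those of $\Delta^b$ --- precisely the sort of case analysis already carried out, in the upward direction, in the proof of Lemma~\ref{step1} --- together with, for $n\geq 5$ in the second assertion, correctly isolating among the maximal cliques of the half-spin collinearity graph those encoding the opposite family of MSS. The $\mathsf{D}_4$ coincidences responsible for the triality and $t$-duality phenomena, and a few degenerate low-rank configurations, also need separate attention.
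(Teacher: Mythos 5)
Your proposal is correct and rests on the same underlying idea as the paper's proof --- identify the maximal cliques of the given graph with singular subspaces of a lower type and iterate downwards --- but the intermediate objects differ. The paper passes from the maximal (and, for the half-spin graph, submaximal) cliques to the two-type incidence graphs $\mathsf{C}_{[n-3,n-2]}(\Delta)$, resp.\ $\mathsf{C}_{[n-4,n-3]}(\Delta)$, via convex closures of pairs at distance $2$, and then iterates on $\mathsf{C}_{[i,i+1]}(\Delta)\to\mathsf{C}_{[i-1,i]}(\Delta)$; you instead iterate on the Grassmann graphs $\mathsf{G}_s$ themselves, terminate at the point-collinearity graph, and close with Buekenhout--Shult, while for the second bullet with $n\geq 5$ you reduce to the first bullet by extracting the opposite family of MSS from the maximal cliques rather than running the downward iteration in parallel. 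Your route buys a cleaner endpoint (a polar space is determined by its collinearity graph, the lines being double perps) at the cost of two grouping arguments you only sketch and which deserve a word: (a) in the step $\mathsf{G}_s\to\mathsf{G}_{s-1}$ the pencil-type maximal cliques must be glued according to their common base $(s-1)$-space; two such cliques sharing at least two vertices necessarily have the same base, and the resulting relation on the pencil cliques with a fixed base is connected because the relevant residue has rank at least $3$ once $s\leq n-3$ (which is where your iteration starts, the levels $n-2$ and $n-3$ having been handled directly); (b) for $n\geq 5$ the two sorts of maximal cliques of $\mathsf{G}_t$ must be told apart, which works because, measured by the submaximal cliques they contain, one is a projective $3$-space and the other a projective $(n-1)$-space --- the same dimension count as in the proof of Lemma~\ref{step1}, and exactly what collapses at $n=4$, consistently with your separate triality argument there. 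Your $n=4$ treatment of the second bullet (identifying $\mathsf{G}_t$ outright with the collinearity graph of a polar space obtained from $\Delta$ by triality, so that a duality of that polar space is a $t$-duality of $\Delta$) is a pleasant shortcut compared with the paper's reconstruction of the lines followed by a triality argument. No genuine gap.
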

\begin{proof}
Let $\mathsf{G}$ be one of $\mathsf{G}_{n-1}(\Delta), \mathsf{G}_t(\Delta)$ with $t \in \{(n-1)',(n-1)''\}$. In the latter case,  we assume $\Delta$ to be hyperbolic and $n>4$ (note that, if $n=3$, $\mathsf{G}_{t}(\Delta)$ is the collinearity graph of a projective $3$-space). 
%A maximal clique in $\mathsf{G}_{n-1}(\Delta)$ consists of all $(n-1)$-spaces through a $(n-2)$-space, in $\mathsf{G}_t(\Delta)$ it consists of all $t$-spaces through a $(n-3)$-space. Hence, from $\mathsf{G}_{n-1}(\Delta)$ we can obtain $\mathsf{C}_{[n-2,n-1]}(\Delta)$ and from $\mathsf{G}_t(\Delta)$ we can deduce $\mathsf{C}_{[n-3,t]}(\Delta)$.
We now construct $\mathsf{C}_{[n-3,n-2]}(\Delta)$ and $\mathsf{C}_{[n-4,n-3]}(\Delta)$, respectively. First observe that the maximal cliques in $\mathsf{G}_{n-1}$ correspond to the $(n-2)$-spaces, and are determined by any two of its members, say $U$ and $V$, and then the clique is denoted by $\mathcal{C}(U,V)$. The maximal cliques in $\mathsf{G}_t(\Delta)$ correspond to the  $(n-4)$-spaces and the $t'$-spaces. Those are not determined by any two of their members: suppose $(U_i)_i$ is a subset of a maximal clique which all contain a common $(n-3)$-space $W$, then $(U_i)_i$ is contained in all maximal cliques corresponding to $(n-4)$-spaces containing $W$ and in all maximal cliques corresponding to $t'$-spaces containing $W$. In general, ``submaximal'' cliques (cliques which are maximal with the property of being contained in more than two maximal cliques) correspond to the $(n-3)$-spaces and are determined by any two of its members $U$ and $V$ and we also denote this by $\mathcal{C}(U,V)$.  

Now take two elements $M$ and $N$ at distance two in $\mathsf{G}$. Their convex closure can be obtained as in the proof of the previous lemma and yields all elements of $\mathsf{G}$ containing $M \cap N$. In the first case, this readily gives us $\mathsf{C}_{[n-3,n-2]}(\Delta)$. In the second case, we can construct $\mathsf{C}_{[n-4,n-3]}(\Delta)$, as two distinct $(n-3)$-spaces intersect in an $(n-4)$-space if they have at least two $(n-5)$-spaces in common; and the intersection of the two sets of $t$-spaces through those two $(n-5)$-spaces is then exactly the set of $t$-spaces through this $(n-4)$-space.

Given $\mathsf{C}_{[i,i+1]}(\Delta)$, for $1\leq i\leq n-3$, we can build the graph $\mathsf{C}_{[i-1,i]}(\Delta)$. Let $R$ and $Q$ be two $i$-spaces contained in an $(i+1)$-space $S_{R,Q}$. Then $R\cap Q$ is $(i-1)$-dimensional and we aim for all $i$-spaces through $R\cap Q$. We start by taking all $i$-spaces $L \notin \mathsf{N}(S_{R,Q})$ such that $\mathsf{N}(L,R)$ and $\mathsf{N}(L,Q)$ are nonempty. Then $L$ contains $R \cap Q$. Also, the members of $\mathsf{N}(S_{R,Q})$ that have a common neighbour with $L$ are all $i$-spaces in $S_{R,Q}$ through $R \cap Q$. This way, we already obtained all $i$-spaces through $R\cap Q$ collinear with $S_{R,Q}$. Now let $M$ be such an $i$-space that is not contained in $S_{R,Q}$. Then any $i$-space through $R\cap Q$ is collinear with at least an $i$-space of each of $S_{Q,R}, S_{R,M},S_{M,Q}$, and at least two of these $i$-spaces are, or can be chosen, distinct. Hence, repeating the previous argument for all pairs $(R',Q')$ in $S_{Q,R} \cup S_{R,M} \cup S_{M,Q}$, we obtain all $i$-spaces through $R\cap Q$.

Finally, we deal with the graph $ \mathsf{G}_t(\Delta)$ in the case where $\Delta$ is hyperbolic and $n=4$. As before, we can construct the lines of $\Delta$, and given the lines and the $t$-spaces, we can construct $\Delta$ up to a $t$-duality. Indeed, using the fact that $\Delta$ allows a triality, we may assume that we are given its points and lines, and then the planes and the $3$-spaces (so without distinction between the $3'$- and $3''$-spaces) can be constructed from this.
\end{proof}
\par\medskip
The above lemmas now prove Proposition~\ref{gras1}. 

\section{Construction of an $i$-space adjacent to two $j$-spaces at distance $2$ in~$\Gamma$}\label{constructie}

Let $\Gamma$ be one of $\Gamma_{\geq k}, \Gamma_k, \Gamma_k^\ell$. Most of the time, it will be most convenient to assume $|i|\leq |j|$, up to one particular situation:

\textbf{Convention on $i$ and $j$} $-$  If $\max\{|i|,|j|\}<|\ell|$, we suppose $|j|=\max\{|i|,|j|\}$; if  $\max\{|i|,|j|\}=|\ell|$, we suppose that $|j|=\min\{|i|,|j|\}$.

Let $J_1$ and $J_2$ be elements of $\Omega_j$ at distance $2$. In general, an  element of $\mathsf{N}(J_1,J_2)$ is generated by three kinds of subspaces (those will be called the `building bricks') which we want to be able to place in ``good'' positions, as will be explained below. We first introduce notation regarding these building bricks, after which we start the construction. 
%Note that this section is not an attempt to investigate all possible mutual positions of $I,J_1$ and $J_2$. 

\subsection{The building bricks}

The mutual position of any pair of subspaces of $\Delta$ is determined by their intersection and projection on each other. 

\textbf{The mutual position of $J_1$ and $J_2$} $-$ Let $\{c,c'\}$ be $\{1,2\}$ throughout this section. The mutual position of $J_1$ and $J_2$ is  described as follows (see Figure~\ref{adjij}). \vspace{-0.5em}

\begin{itemize}[$\circ$]
\item The \emph{intersection} is the subspace $J_1 \cap J_2$ and is denoted by $S$, its dimension by $s$;
\item the \emph{collinear part} is the set  $\proj_{J_c}(J_{c'})\setminus S$ and is denoted by $\mathsf{P}_c$. We fix a subspace $P_c \subseteq \mathsf{P}_c$ such that $\<S, P_c\> = \proj_{J_c}(J_{c'})$, and denote by $p^*$ the dimension of $P_c$;
\item the \emph{semi-opposite part} is the set $J_c\setminus (S \cup \mathsf{P}_c)$ and is denoted by  $\mathsf{Q}_c$. Let $Q_c\subseteq\mathsf{Q}_c$ be a fixed subspace such that $\<S,P_c,Q_c\>=J_c$, and denote by $q^*$ the dimension of $Q_c$. 
\end{itemize} 
\vspace{-0.5em}
As the notation suggests, $p^*$ and $q^*$ do not depend on the value of $c$. The subspace spanned by the intersection and the collinear part, i.e., $\<S,P_1,P_2\>$, is sometimes denoted by $P$ and it is equal to $\<\proj_{J_1}(J_2),\proj_{J_2}(J_1)\>$.
\begin{figure}
    \centering
      \includegraphics[width=0.8\textwidth]{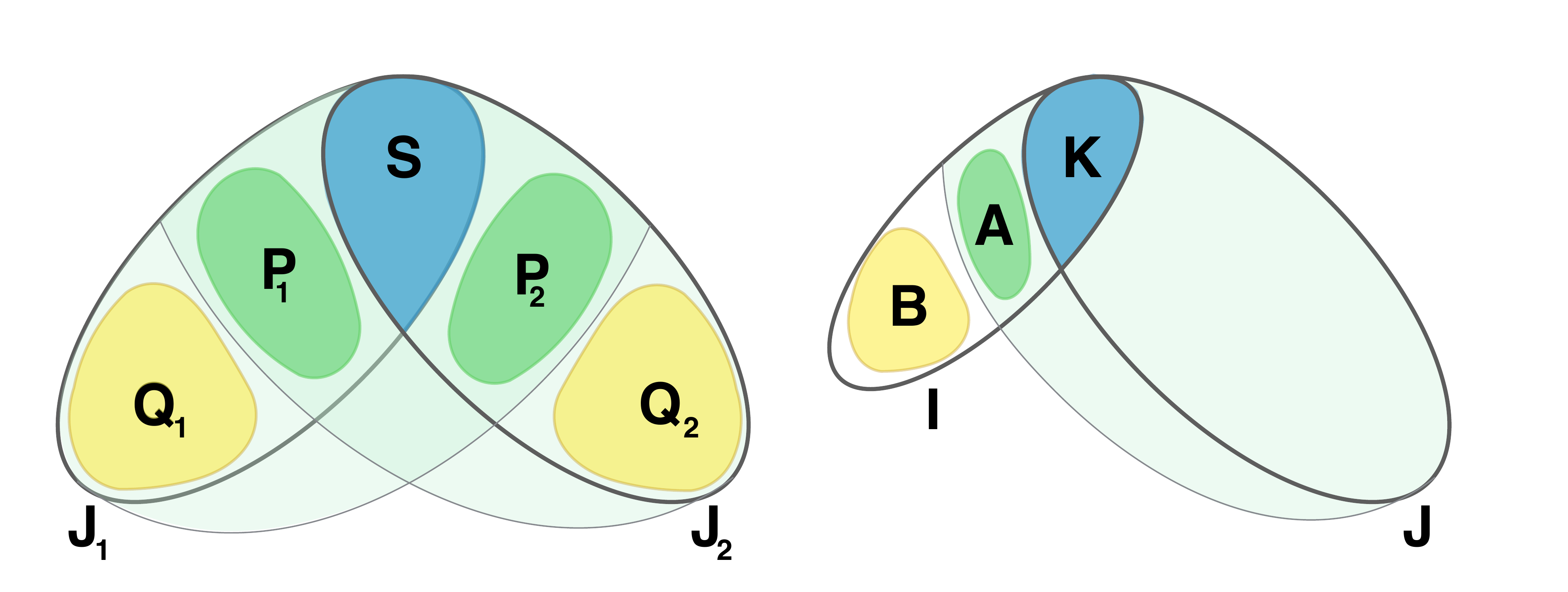}
        \caption{Mutual position of  $J_1$ and $J_2$ (left) and the position of $I$ w.r.t.\ $J$ (right)}
\label{adjij}
\end{figure}
\par\bigskip

\textbf{The position of $I$ w.r.t.\ $J_c$} $-$ For $I \in \mathsf{N}(J_1,J_2)$, its position w.r.t.\ $J_1$ and $J_2$ has an analogous description but we use $K$, $A$ and $B$ instead of $S,P$ and $Q$ to denote each of the previous subsets (see Figure~\ref{adjij}). Again we fix subspaces $A_c \subseteq \mathsf{A}_c$ and $B_c \subseteq \mathsf{B}_c$ such that $I=\<K_c,A_c,B_c\>$. The adjacency relation in $\Gamma$ puts restrictions on the dimensions $k_c$, $a_c$ and $b_c$ of $K_c$, $A_c$ and $B_c$. Clearly,  $k_c+a_c+b_c+2=i$. If $\Gamma=\Gamma_{\geq k}$, then $k_c \geq k$, if $\Gamma=\Gamma_k$, then $k_1=k_2=:k$ and if $\Gamma = \Gamma_k^{\ell}$, all values are determined and independent of the index $c$:
\begin{align*}
k&:=k_1=k_2;\\
a&:=a_1=a_2=|\ell| -|j|-1; \\
b&:=b_1=b_2=|i|+|j|-k-|\ell|-1.
\end{align*}

For $X \in \{K,A,B\}$, $X_1 \cap X_2$ is a subspace incident with/collinear with/semi-opposite \emph{both} $J_1$ and $J_2$ and will be referred to by $X^-$, or simply $X$ whenever $X_1=X_2$. One should picture this for choices of $A_1 \subseteq \mathsf{A}_1$ and $A_2 \in \mathsf{A}_2$ for which $\<K_1 \cap K_2, A_1 \cap A_2\>\setminus \<K_1,K_2\> = \mathsf{A}_1 \cap \mathsf{A_2}$; likewise with $B_1 \subseteq \mathsf{B}_1$ and $B_2 \subseteq \mathsf{B}_2$ such that $B_1 \cap B_2$ is complimentary to $\<K_1,A_1\>$ and to $\<K_2,A_2\>$ (i.e., the subspaces $A_1$ and $A_2$, respectively, $B_1$ and $B_2$, are chosen such that their intersection is maximal). 

\par\bigskip

We say that a subspace \emph{avoids} a set of subspaces if it is disjoint from each of its members. Fact~\ref{lem4a} below describes how we can choose collinear parts and (semi-)opposite parts while avoiding a finite set of subspaces. Parts of the proofs of these facts can be found in the literature; yet the ``avoiding''-part cannot and this will be essential for us. 

\begin{fact}\label{lem4a}\label{lem5a}\label{lem6a}
Let $\Delta$ be infinite and let $i,j \in \mathsf{T}$. Let $\mathcal{F}\subseteq \Omega_j$ and $\mathcal{F}'\subseteq \Omega$ be finite sets.
\begin{enumerate}[$(i)$]

\item[$(i)^p$] Suppose $\mathcal{F}=\{U,V\}$ and let $\mathcal{F}'$ be such that each of its members intersects $U$ and $V$ in subspaces of dimension at most $\dim(U\cap V)$. Then there is a subspace $W^p$ in $\<\proj_U(V),\proj_V(U)\>\setminus(U\cup V)$ if and only if $w^p:=\dim(W^p) \leq \cod_{\proj_U(V)}(U\cap V)$. Moreover, $W^p$ can be chosen such that it avoids $\mathcal{F}'$.

\item[$(i)^o$] Suppose $\mathcal{F}=\{U,V\}$ and let $\mathcal{F}'$ be such that each of its members intersects $U$ and $V$ in subspaces of dimension at most $\dim(U\cap V)$. Then there is a singular subspace $W^o$ avoiding $(U^V \cup V^U)$ collinear with $U$ and $V$ if and only if $w^o:=\dim(W^o) \leq n-\dim(U^V) -2$. Moreover, $W^o$ can be chosen such that it avoids $\mathcal{F}'$, unless if $\Delta$ is hyperbolic, $\dim(U^V)=n-2$ and $w^o = 0$, then we only have $\dim(W^o \cap F) \leq 0$ for all $F \in \mathcal{F'}$.

\item[$(i)^*$] Combining $(i)^p$ and $(i)^o$, there is a subspace $W=\<W^p,W^o\>$ such that $W \subseteq (U^\perp \cap V^\perp)\setminus(U \cup V)$ if and only if $w:=\dim(W) \leq n-|j|-2$. Moreover, $W$ can be chosen such that it avoids $\mathcal{F}'$, unless if $\Delta$ is hyperbolic, $\dim(U^V)=n-2$ and $w=n-|j|-2$, in this case there are exactly two subspaces $P^1$ and $P^2$ containing $\<\proj_V(U), \proj_U(V)\>$ as a hyperplane and such that $W$ is contained in one of them, say in $P^1$, then for all $F \in \mathcal{F}'$ with $\dim(F \cap P^1)=\dim(\proj_{U}(V))+1$ we only have $\dim(W \cap F) = 0$.

\item[$(ii)$] There is an element  $W \in \Omega_t$ such that it is opposite each member of $\mathcal{F}$ for some type $t \in \mathsf{T}$ (if $\Delta$ is hyperbolic, $|j|=n-1$ and $n$ is odd, then $t=j'$; in all other cases $t=j$). Moreover, $W$ can be chosen such that it avoids $\mathcal{F'}$, unless if $\Delta$ is hyperbolic and $|j|=n-1$, then $W$ can be chosen such that it avoids $\mathcal{F}' \cap (\Omega\setminus\Omega_{j'})$ and intersects each member of $\mathcal{F}' \cap \Omega_{j'}$ in exactly a point.

\item[$(iii)$] Let $\mathcal{F}=\{U,V\}$ with $U \neq V$ and put $t=\mathsf{t}(U^V)$. If $|i| \leq |t| - |j| -1$, there is an element $W\in \Omega_i$ such that $W \subseteq U^\perp\setminus U$ and with $W$ and $V$ semi-opposite. Moreover, $W$ can be chosen such that it avoids $\mathcal{F}'$, except if $\Delta$ is hyperbolic and $|j|<|t|=n-1$ and $|i|=|t|-|j|-1$, then $W$ avoids $\mathcal{F}' \cap (\Omega\setminus\Omega_{t'})$ and intersects each member $F$ with $U \subseteq F \in \mathcal{F}' \cap \Omega_{t'}$ in exactly a point.

\end{enumerate}
\end{fact}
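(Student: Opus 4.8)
The plan is to treat the four parts in order, each time isolating the existence statement (which is essentially a dimension count inside a projective/polar geometry) from the ``avoiding'' refinement (which is where the infinitude of $\Delta$ and the genericity arguments enter). For part $(i)^p$, I would work entirely inside the projective space $\<\proj_U(V),\proj_V(U)\>$: here $U\cap V$ is a common subspace, $\proj_U(V)$ and $\proj_V(U)$ are two subspaces through it, and I want a subspace $W^p$ of prescribed dimension meeting neither $U$ nor $V$. The maximal such dimension is $\cod_{\proj_U(V)}(U\cap V)$, realised by taking a complement of $\proj_U(V)$ inside $\<\proj_U(V),\proj_V(U)\>$ that also complements $\proj_V(U)$ (both have the same codimension there, namely $\dim(U^V)-\dim(\proj_U(V))-1$, by the remark on $\dim(U^V)=\dim(V^U)$). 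For the avoiding part: each member $F$ of $\mathcal F'$ meets $U,V$ in dimension at most $\dim(U\cap V)$, so a generic complement will miss $F\setminus(U\cup V)$ as well; since $\Delta$ (hence the relevant projective space) is infinite, a finite union of proper subvarieties (the ``bad'' loci of complements) cannot cover the whole Grassmannian of complements, and a good $W^p$ exists.

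For part $(i)^o$ the same scheme applies, but now inside $\Delta$ rather than a single projective space: I want a singular subspace $W^o$ collinear with both $U$ and $V$, avoiding $U^V\cup V^U$, of dimension up to $n-\dim(U^V)-2$. The bound is forced because $W^o$ lives in $U^\perp\cap V^\perp$, which (together with $U^V$) spans a singular subspace of dimension $n-1$; the residue $\Res_\Delta(U^V)$ is a polar space of rank $n-\dim(U^V)-2$ (using the residue discussion), and $W^o$ is just a singular subspace there avoiding a finite configuration. The one genuine subtlety, flagged in the statement, is the hyperbolic case with $\dim(U^V)=n-2$ and $w^o=0$: then $\Res_\Delta(U^V)$ has rank $1$ and contains exactly two points, so there is no room to avoid anything — the best one can say is that a single point $W^o$ meets each $F\in\mathcal F'$ in dimension $\le 0$, which is automatic. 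Part $(i)^*$ is then just the combination $W=\<W^p,W^o\>$, with $\dim W\le w^p+w^o+1 = n-|j|-2$; the exceptional clause records the analogous hyperbolic degeneracy, where the two ``overgroups'' $P^1,P^2$ of $\<\proj_V(U),\proj_U(V)\>$ are exactly the two MSS-germs through the relevant $(n-2)$-space, and one can only guarantee $\dim(W\cap F)=0$ for those $F$ that are ``trapped'' in $P^1$.

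For part $(ii)$ I would invoke the standard fact that in a polar space any finite set of singular subspaces of a fixed type has a common opposite (this is where Theorem~7.4 of \cite{Tits}, or equivalently the fact that any two chambers lie in an apartment, does the work: iteratively extend a frame), and then upgrade to ``opposite \emph{and} avoiding $\mathcal F'$'' by the same genericity argument — the set of $j$-spaces opposite all of $\mathcal F$ is a ``large'' (non-empty, and not contained in a finite union of subvarieties) subset, so a generic one avoids the finitely many subspaces in $\mathcal F'$. The type issue ($t=j'$ when $\Delta$ is hyperbolic, $|j|=n-1$, $n$ odd) is purely a parity/oriflamme bookkeeping point: a subspace opposite a $j$-space has the same type iff $n$ is even. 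The hyperbolic $|j|=n-1$ exception (meeting each $F\in\mathcal F'\cap\Omega_{j'}$ in a point) again reflects that two MSS of the \emph{other} family always meet in at least a point, so ``avoiding'' is simply impossible there. Part $(iii)$ is the most involved: I want $W\in\Omega_i$ with $W\subseteq U^\perp\setminus U$ and $W$ semi-opposite to $V$; I would build it inside $\Res_\Delta(U\cap V')$ or, more precisely, first pick a subspace of $U^\perp$ complementary to $U$ and semi-opposite to $V$ inside the hyperbolic-type span $U^V$ (using that $|i|\le|t|-|j|-1$ leaves exactly enough room, by the codimension computation $\dim(U^V)-\dim(U)-1$), then apply $(i)^*$-style genericity for the avoiding clause, with the hyperbolic $|t|=n-1$ degeneracy handled exactly as before.

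The main obstacle, as the statement's proliferation of exceptional clauses suggests, is \textbf{not} the existence statements — those are routine dimension counts plus residue arguments — but the precise form of the ``avoiding'' conclusions in the hyperbolic, near-maximal cases. There, the relevant residue has rank $1$ (two points only), or the two MSS-families force unavoidable intersections, and one must state exactly how much can still be salvaged (intersection in a point, or $\dim\le 0$, rather than disjointness). Getting these boundary statements sharp and correct — and checking they are exactly what the later constructions of Section~\ref{constructie} need — is the delicate part; the generic-position argument itself (a finite union of proper subvarieties over an infinite field is proper) is standard and will be invoked repeatedly without fuss.
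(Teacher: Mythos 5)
Your treatment of $(i)^p$, $(i)^o$ and $(i)^*$ matches the paper's proof in substance: work inside $P=\<\proj_U(V),\proj_V(U)\>$ for the projective part, pass to a residue for the collinear part, and observe that the only obstruction to avoiding $\mathcal F'$ is the rank-one residue with exactly two points in the hyperbolic, $\dim(U^V)=n-2$ case. (Two small slips: the relevant polar space is $U'^\perp\cap V'^\perp$ computed inside $\Res_\Delta(P)$, of rank $n-\dim(U^V)-1$, whose MSS have dimension $n-\dim(U^V)-2$; and $\Res_\Delta(U^V)$ itself is not the right object, since collinearity with $U^V$ does not give collinearity with all of $V$.)

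The genuine gap is in $(ii)$. The content of $(ii)$ is not the bare existence of a common opposite but the simultaneous statement ``opposite to every member of $\mathcal F$ \emph{and} avoiding $\mathcal F'$ (up to the forced point of intersection in the hyperbolic MSS case).'' Your plan delegates this to a genericity argument -- ``the set of $j$-spaces opposite all of $\mathcal F$ is not contained in a finite union of subvarieties'' -- but the polar spaces here may be defined over skew fields, or not embeddable at all, so there is no Grassmannian or Zariski-type structure to appeal to, and even in the embeddable case this claim is not self-proving. The paper instead runs a double induction (on the number $r$ of elements of $\mathcal F$ for points, then on $|j|$ via $\Res_\Delta(p)$ for a suitable point $p$), and it is precisely this induction that both produces the avoidance and pins down the exceptional clause: when lifting from $\Res_\Delta(p)$ back to $\Delta$ the intersection dimension with an element of $\mathcal F'\cap\Omega_{j'}$ can grow by at most one, and the parity constraint then forces it to be exactly a point. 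Your plan names the reason the exception is unavoidable (MSS parity) but gives no mechanism for proving the positive half of the exceptional statement, which is what the later constructions actually use.

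Part $(iii)$ as you describe it would also fail: you propose to find $W$ ``inside the hyperbolic-type span $U^V$,'' but a subspace of $U^V$ cannot be semi-opposite $V$ in the required way -- the correct construction (as in the paper) passes to $\Res_\Delta(U)$, where $U^V$ becomes a $(|t|-|j|-1)$-space $V'$, applies $(ii)$ (not $(i)^*$) to get a subspace \emph{opposite} $V'$ there, and lifts back; the resulting $Z'$ meets $U^V$ only in $U$. The avoiding clause and its exception for $(iii)$ are then inherited directly from $(ii)$, which is another reason the inductive proof of $(ii)$ cannot be skipped.
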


%Note that, if $|i| + |j| =n-2$ and $\dim(U^V)=\dim(V^U)=n-1$, the first fact implies that $W$ needs to be inside $\<\proj_V(U),\proj_U(V)\> \setminus (U \cup V)$, so in particular $W$ is a subspace in $\<U,\proj_V(U)\>=U^V$, implying $\<W,U\> \subseteq U^V$ and as $W$ is complimentary to $U$, $\dim(\<W,U\>)=\dim(U^V)$, hence  $\<W,U\> = U^V$; likewise $\<W,V\>=V^U$. So if moreover $\Delta$ is hyperbolic, we are not free to choose the types of $\<W,U\>$ and $\<W,V\>$.
\begin{proof}
\begin{enumerate}
\item[$(i)^p$]
Put $P = \<\proj_V(U), \proj_U(V)\>$, $\overline{U}=P \cap U$, $\overline{V}=P \cap V$ and write $\ell$ for $\dim(U^V)$ and $s$ for $\dim(U\cap V)$. We show that we can find a subspace $W^p$ of dimension $w^p:=\cod_{\proj_U(V)(U\cap V)}$ in $P$ that avoids $\overline{U}$, $\overline{V}$ and $\mathcal{F}'$, which then also shows that any subspace of smaller dimension with the same properties can be found as a subspace of this one.

By assumption, each member $F \in \mathcal{F}'$ intersects $U$ and $V$, hence also $\overline{U}$ and $\overline{V}$, in subspaces of dimension at most $s$. This implies $\dim(F \cap P) \leq \dim(\overline{U})$, for if not, $F\cap P$ would intersect $\overline{U}$ in a subspace with a dimension strictly bigger than $s$. 

Hence, $W^p$ has to be a subspace complementary to $\overline{U}$ and $\overline{V}$ in $P$, which implies $W^p$ would not be found if $w^p > \cod_{\proj_U(V)}(U\cap V)$. Moreover, $W^p$ has to avoid $\mathcal{F}'$, a finite set of subspaces of dimensions at most $\dim(\overline{U})$. As $P$ is an infinite projective space, this is possible.

\item[$(i)^o$]
We keep on using the notation introduced above. We first establish $W^o \subseteq (U^\perp \cap V^\perp)\setminus(U^V \cup V^U)$, afterwards we verify whether we can do this while avoiding $\mathcal{F}'$.  Again, it suffices to do this for $w^o=n-\ell-2$ and show that we cannot find such a $W^o$ with $w_o > n-\ell-2$.

We look in $\Res_{\Delta}(P)$, where $U$ and $V$ correspond to opposite subspaces $U'$ and $V'$. In $\Res_\Delta(P)$, consider  $U'^\perp \cap V'^\perp$, which is isomorphic to a polar space $\Delta'$ of rank $n-\ell-1$. Note that $n-\ell-1 \geq 1$ since we may assume that $\ell < n-1$. Indeed, if $\ell=n-1$,  necessarily each subspace $W \subseteq U^\perp \cap V^\perp$ belongs to $P$ since no point outside $P$ can be collinear with both $U$ and $V$, as it would be collinear with the $(n-1)$-dimensional subspaces $U^V$ and $V^U$. Observe moreover that each point collinear with both $U$ and $V$ and not contained in $U^V \cup V^U$ corresponds to a point of $\Delta'$. 

Now, in $\Delta'$, let $W^o$ be a maximal singular subspace, i.e., a subspace of dimension $n-\ell-2$ (this, and the above observation, shows that $w^0 > n-\ell-2$ will not work). If $\Delta'$ has infinitely many MSS then we can choose $W^0$ in $\Delta'$ such that it avoids the set corresponding to $\mathcal{F}'$, i.e., the set $\{ P^F/P  \cap \Delta'  \mid F \in \mathcal{F}'\}$. The only case in which $\Delta'$ does not have infinitely many MSS is when $\Delta$ is hyperbolic and $\ell=n-2$, so $w^o=0$.

\item[$(i)^*$] First note that $\cod_{\proj_U{(V)}}(U\cap V)+(n-\dim(U^V)-2)+1 = n-|j|-2$. Now, if $w < n - |j| - 2$, then we choose $W^p$ and $W^o$ by means of $(i)^p$ and $(i)^o$ respectively such that $w^o < n-\dim(U^V)-2$ and $w^p+w^o+1 = w$. Then $(i)^p$ and $(i)^o$ imply that we can choose $W^p$ such that it avoids $\mathcal{F}'$ and $W^o$ in $\Res_\Delta(P)$ such that it avoids $\{P^F/P \mid F \in \mathcal{F}'\}$, implying that in $\Delta$, $\<W^p,W^o\>$ avoids $\mathcal{F}'$.
If $w=n-|j|-2$ then we are forced to choose $w^p=\cod_{\proj_U(U\cap V)}$ and $w^o=n-\dim(U^V)-2$. If $\Delta$ is not hyperbolic or $\dim(U^V) \neq n-2$, everything is as above. If $\Delta$ is hyperbolic and $\dim(U^V) = n-2$, then by $(i)^o$, there are exactly two subspaces $P^1$ and $P^2$ containing $P$ as a hyperplane which are collinear with $U$ and $V$. If we aim for a subspace $W$ of dimension $w^p+w^o+1$ in $P^c$ that avoids $U$, $V$ and $\mathcal{F'}$, then this is possible unless $\dim(F \cap P^c)=\dim(\overline{U})+1$ for some $F \in \mathcal{F}'$, as then $\dim(F \cap W)=0$.

\item[$(ii)$] 
We prove this fact by induction on $j$. The induction basis depends on $\Delta$ and $j$. Up to now, we have assumed $n \geq 3$ but since these proofs are of general nature and we want the lowest possible induction basis, we include $n=1,2$. 

\begin{itemize}
\item[(IH0)] \textit{Suppose that $|j|=0$ and that either $\Delta$ is not hyperbolic or $n \geq 2$.} Note that this assumption says that there are infinitely many points in $\Delta$.  Let there be given a finite set $\{x_1,...,x_r\}$ $(r\in \mathbb{N}$) of points. We aim for a point opposite all of them, now by using induction on~$r$. First suppose $r=1$. Let $M$ be any MSS not containing $x_1$ and not coinciding with any member of $\mathcal{F}'$. Then the set $\{M \cap F \mid F \in \mathcal{F'}\} \cup \proj_M(x_1)$ is finite and hence its union cannot cover $M$, yielding the existence of a point $p \in M$ opposite $x_1$ and avoiding $\mathcal{F}'$. Now suppose $r>1$. By induction there is a point $p$ which is opposite all points of $\{x_2,...,x_r\}$ and not contained in any member of $\mathcal{F}'$. If $p=x_1$, take any line $L$ through $x_1$. Note that no member of $\mathcal{F}'$ contains $L$ as they do not contain $p$. Since $L$ has infinitely many points, there is a point on $L$ not in $\mathcal{F}' \cup \bigcup_{i=2}^r \proj_{L} (x_i)$.  So we may assume that $p$ and $x_1$ are distinct collinear points. Now take a line $L'$ through $p$ such that, in $\Res_{\Delta}(p)$, the lines $px_1$ and $L'$ correspond to opposite points. Clearly, any point on $L'$ disjoint from $\mathcal{F}'\cup\bigcup_{i=1}^r \proj_{L'}(x_i)$ satisfies the requirements.

\item[(IH1)] \textit{Suppose that $|j|=1$ and that $\Delta$ is a hyperbolic polar space of rank 2.} 
Note that $\Delta$ is a grid and $\mathcal{F} \cup (\mathcal{F}' \cap \Omega_{j})$ is a subset of one of its reguli, whereas $\mathcal{F}' \cap \Omega_{j'}$ is a subset of the other regulus. As a regulus contains infinitely many elements, the first mentioned regulus contains an element opposite the members of $\mathcal{F}$ while avoiding the members of $\mathcal{F}' \cap (\Omega\setminus\Omega_{j'})$ and intersecting the members of $\mathcal{F}' \cap \Omega_{j'}$ in a point.
\end{itemize}

Now suppose $|j| > 0$ (in particular, $n > 1$), and if $|j|=1$, we may assume that $\Delta$ is not a hyperbolic polar space of rank 2, since that case has been dealt with already. Let $\{X_1,...,X_r\}$ ($r \in \mathbb{N}$) be a finite subset of $\Omega_j$. Take a point $x_i \in X_i$ for all $i$ with $1\leq i\leq r$. We already know that there is a point $p$ opposite all these points and avoiding $\mathcal{F}'$. In $\Res_\Delta(p)$, the $j$-spaces $X_i$ correspond to $(j-1)$-spaces $p^{X_i}$. By induction (up to case (IH1) if $\Delta$ is hyperbolic and $|j|=n-1$, otherwise up to case (IH0)), there is a $(j-1)$-space opposite all of them and avoiding the set corresponding to $\mathcal{F}'$, or, if $\Delta$ is hyperbolic and $|j|=n-1$, intersecting the members of $\mathcal{F}' \cap \Omega_{t'}$ in a point only. The corresponding $|j|$-space in $\Delta$ is opposite all members of $\{X_1,...,X_r\}$ and avoids $\mathcal{F}' \setminus \Omega_{t'}$ and up to a point, it avoids $\mathcal{F'} \cap \Omega_{t'}$ (this cannot be more than a point by going back to $\Delta$, since the dimension of intersection could only grow by one whereas the parity has to remain unchanged). 

%Finally, note that if $\mathcal{F}' \subseteq \Omega_j$, then $W$ avoids $\mathcal{F'}$ indeed.
%We prove this by induction on the rank $n$ of $\Delta$. If $n=2$, the assertion is easy to check. Denote $U \cap V$ by $X$. If $C$ is nonempty, we can apply the induction hypothesis: in $\Res_{\Delta} X$, take any subspace opposite the subspaces corresponding to $U$ and $V$ and denote the subspace in $\Delta$ corresponding to it by $W$. Let $W'$ be a subspace in $W$ complementary to $X$. Now take any subspace $W^*$ in $\Res_{\Delta} W$ opposite $U \cap V$, the subspace in $\Delta$ corresponding to it is clearly opposite $U$ and $V$. 
%Suppose now $U \cap V = \emptyset$, but $\proj_U V \neq \emptyset$. Then the same argument as above, now with $X = \< \proj_U V, \proj_V U\>$, applies. The only difference is that the resulting subspace $W^*$ \emph{contains} a subspace opposite $U$ and $V$, as $\dim X$ is bigger than $\dim X \cap U$.
%On the other hand, if $U$ and $V$ are opposite and $j>0$, take a point $u \in U$ and some point $v \in V$ collinear with $u$. Let $p$ be an arbitrary point on $uv \setminus \{u,v\}$. Define $U_p = \<p,p^{\perp} \cap U\>$ and $V_p = \<p,p^{\perp} \cap V\>$, these are again $j$-spaces. By induction there exists a $W^*$ in $\Res_{\Delta} p$ opposite the subspaces corresponding to $U_p$ and $V_p$. Clearly, the subspace corresponding to $W^*$ in $\Delta$ is opposite $U$ and $V$. If $j=0$, it is easy to check such a point exists.
\item[$(iii)$]
If $|t|=|j|$ , there is nothing to prove, so assume $|t|>|j|$ . Consider $\Res_{\Delta} (U)$, in which $U^V$ corresponds to a singular subspace $V'$ of dimension $|t| - |j| -1\geq 0$. By the previous fact, there is a singular subspace $W'$ opposite $V'$ that avoids the set corresponding to $\mathcal{F'}$, unless $|t|=n-1$, as then it avoids the set corresponding to $\mathcal{F}\cap(\Omega\setminus\Omega_{t'})$ and intersects each member of the set corresponding to $\mathcal{F}\cap\Omega_{t'}$ in exactly a point. Now let $Z$ be the singular subspace in $\Delta$ through $U$ corresponding to $W'$ and let $Z'$ be a subspace in $Z$ complimentary to $U$. If $W'$ avoids the set corresponding to $\mathcal{F}'$, then $Z'$ avoids $\mathcal{F}'$. If for some $F \in \mathcal{F'}$, the subspace corresponding to it intersects $W'$ in exactly a point of it, then $Z$ also contains exactly a point of $F$. Only if some $F \in \mathcal{F}'$ contains $U$, we are not able to choose  $Z'$ such that it avoids $\mathcal{F}'$. This shows the lemma.
\end{enumerate}
\end{proof}

\subsection{The construction of an element of $\mathsf{N}(J_1,J_2)$.}

We define $\mathsf{N}_{(x)}(J_1,J_2)$ to be the subset of $\mathsf{N}(J_1,J_2)$ consisting of those  $i$-spaces $I$ for which $K_1 \cap S = K_2 \cap S$ has dimension $x$. As mentioned, an element $I \in \mathsf{N}(J_1,J_2)$ consists of the building bricks $K_1,K_2,A_1,A_2,B_1,B_2$. We now want to give a construction for \emph{some} members of $\mathsf{N}_{(x)}(J_1,J_2)$ as build up from these buildings bricks. These members  will then help us to narrow down the mutual positions for a round-up quadruple. To make this a powerful tool, we need `many' elements in $\mathsf{N}_{(x)}(J_1,J_2)$, `many' in the sense that we need to be able to choose our building bricks such that they avoid certain subspaces, cf. Fact~\ref{lem4a}. Yet we can limit ourselves to `easy' members, `easy' in the sense that, for any $X$ in $\{K,A,B\}$, we choose $X_1$ and $X_2$ such that $X^-$ is as large as possible.  This part is rather technical, but it is a key element of the proof.
\par\bigskip
\textbf{Some assumptions} $-$ We list assumptions on the parameters that we use throughout the construction and in the rest of the proof.
\begin{itemize} 
\item In view of Subsection~\ref{incidence}, we may assume $k < \min\{|i|,|j|\}$ and, if $\Delta$ is hyperbolic, $k \neq n-2$.
\item As mentioned at the beginning of this section, either $|i| \leq |j|$ or $|i| = |j|+|a|+1$. \end{itemize}
Our construction depends on the mutual position of $J_1$ and $J_2$ and also on $x$. The cases of interest turn out to be those with $x=k$ in case $s \geq k$, $x=k-1$ in case $s \geq k \geq 0$ and $x=s$ if $s < k$ (note that also in the last case, $k \geq 0$). So we restrict our attention to those cases, despite the fact that a construction equal or similar to ours would also work for other values of $x$.  
We first suppose $\Gamma = \Gamma_k^{\ell}$ for a non-trivial Weyl graph  $\Gamma_k^\ell$. Afterwards we deal with the other types of graphs, which do not need much additional effort as their adjacency imposes less constraints. Moreover, we first study the case in which $\Delta$ is not hyperbolic and afterwards we summarise the differences. At the conclusion of this section, we summarise our findings.

\par\bigskip
\begin{constr}\label{con} \em Our construction consists of three steps. In the first step, we examine the possibilities for $K_1$ and $K_2$. Then, given $\<K_1,K_2\>$, we do the same for $A_1$ and $A_2$ and afterwards, taking into account $\<K_1,K_2,A_1,A_2\>$, we do this again for $B_1$ and $B_2$. In each step we verify some ``avoiding properties''.
Figure~\ref{twojspaces} depicts an element of $\mathsf{N}_{(s)}(J_1,J_2)$ as generated by its different ``building bricks'', with respect to $J_1$ and $J_2$ respectively.

\begin{figure}[h]\centering \includegraphics[scale=0.28]{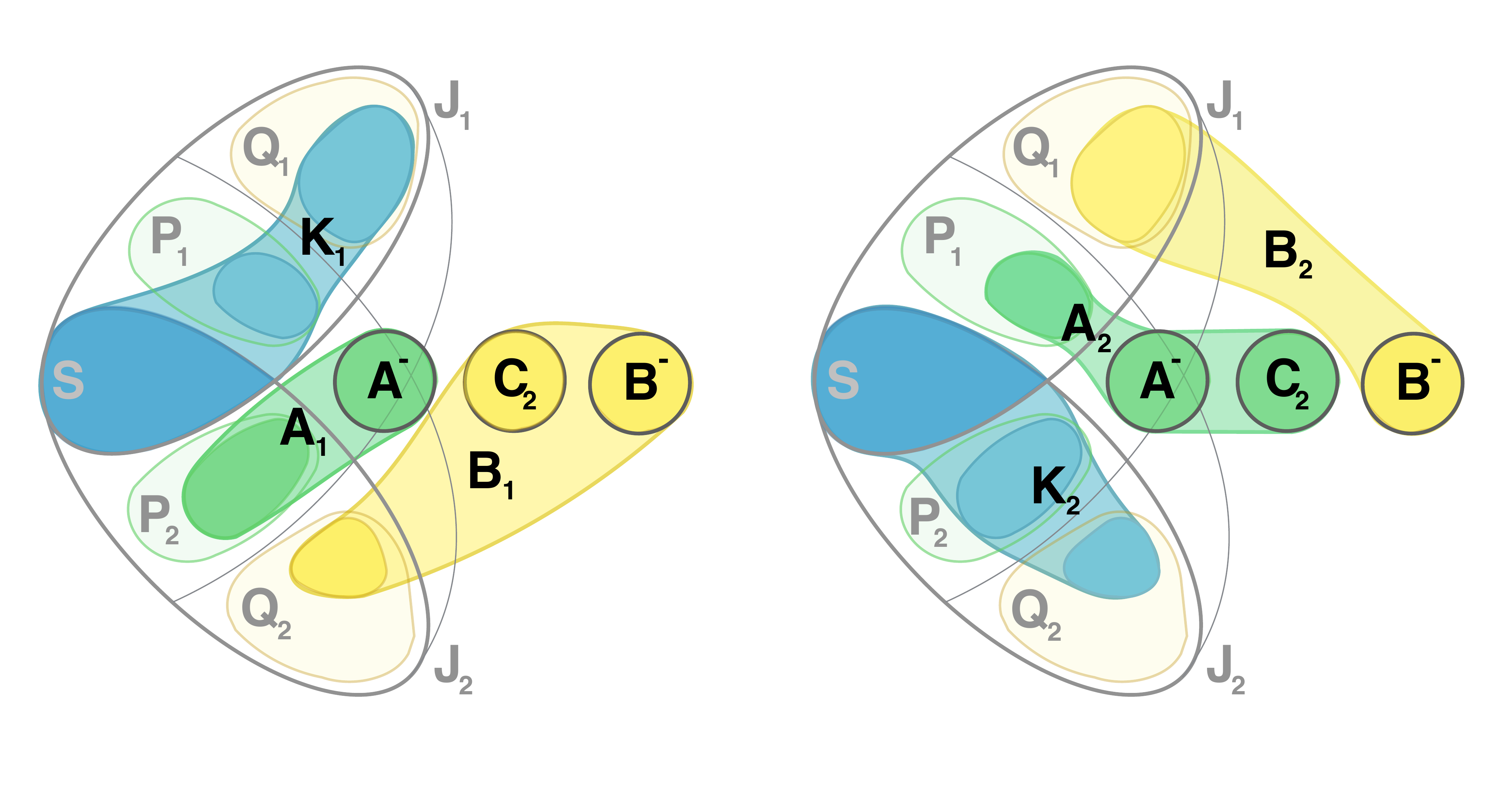} \caption{An element of $\mathsf{N}_{(s)}(J_1,J_2)$, depicted w.r.t.\ $J_1$ (left) and $J_2$ (right).}  \label{twojspaces}  \end{figure}

\par\bigskip

\begin{subcon}[Selection of $K_1$ and $K_2$]\label{con1} \em We will choose $K_c$ such that $K_c=\<S\cap K_c, K_c\cap P_c, K_c \cap Q_c\>$, i.e., we choose subspaces of $P_c$ and $Q_c$ which, together with the part of $K_c$ chosen in $S$, generate $K_c$. The parts $K_c \cap P_c$ and $K_c \cap Q_c$ need to be chosen carefully, as $\<K_1,K_2\>$ has to be singular and, moreover, $K_c \cap P_c \subseteq A_{c'}$ and $K_c \cap Q_c \subseteq B_{c'}$. Our method depends on $x$.

\boldmath{[$x=k$]} \unboldmath In this case, $K_1=K_2$ is simply any $k$-space in  $S$.

\boldmath{[$x=k-1$]} \unboldmath Now $K_c=\<K^-,z_c\>$ with $z_c \in J_c \setminus S$ and $z_1 \perp z_2$. If $-1 \notin \{a,b\}$, we can choose any pair of collinear points and such a pair always exists if $(p^*,q^*)\neq(-1,0)$. If $a=-1$ then necessarily $z_c \in Q_c$, which is possible unless $q^* \leq  0$. Likewise, if $b=-1$ then $z_c \in P_c$, which is possible unless $p^*=-1$. 

\boldmath{$[x=s]$}  \unboldmath Here we still have to choose two collinear $(k-s-1)$-spaces to complete $K_1$ and $K_2$. We call a 4-tuple $(p_1,q_1;p_2,q_2)$ of integer numbers \textbf{allowed values} if we can find an element of $\mathsf{N}_{(s)}(J_1,J_2)$ for which $K_c$ is such that $\dim(K_c \cap P_c) = p_c$, $\dim(K_c \cap Q_c)=q_c$ (still assuming $K_c=\<S,K_c\cap P_c,K_c\cap Q_c\>$). This $4$-tuple is sometimes abbreviated by  $(p_c,q_c)_c$ and, in case $p_1=p_2=p$ and $q_1=q_2=q$ for some $p$ and $q$, we will sometimes write $(p_c,q_c)_c=(p,q)$.  Note that this definition does not depend on the choices of $P_c$ and $Q_c$ in $\mathsf{P}_c$ and $\mathsf{Q}_c$, respectively, as they all play the same role. The following constraints apply to $(p_c,q_c)_c$. 
\begin{equation}\label{cond1} -1 \leq p_c \leq \min\{a,p^*\}, \qquad -1 \leq q_c \leq \min\{b,q^*\} \end{equation}

Furthermore, as $Q_1\cap K_1$ needs to be collinear with $Q_2 \cap K_2$, the latter needs to be contained inside $\proj_{Q_2} (Q_1 \cap K_1)$, resulting in the condition 
\begin{equation} \label{cond2} q_1+q_2+1 \leq q^*. \end{equation}

We also have to keep in mind that $p_c$ and $q_c$ are related by 
\begin{equation}\label{rel} p_c+q_c+1 = k- x -1. \end{equation}

Now let $(p_c,q_c)_c$ be values satisfying (\ref{cond1}), (\ref{cond2}) and (\ref{rel}) and let $K_1$ and $K_2$ be such that $\dim(K_c \cap P_c)=p_c$ and with $K_1 \cap Q_1$ and $K_2\cap Q_2$ collinear subspaces of dimensions $q_1$ and $q_2$, respectively. By choosing $A_c$ and $B_c$, i.e., by finishing the construction and obtaining an element $I=\<K_1,K_2,A_1,A_2,B_1,B_2\>$ in $\mathsf{N}_{(s)}(J_1,J_2)$, we will show that $(p_c,q_c)_c$ are indeed allowed values, so it will then follow that $(p_c,q_c)_c$ are allowed values if and only if (\ref{cond1}), (\ref{cond2}) and (\ref{rel}) hold. To see that there are values $(p_c,q_c)_c$ satisfying (\ref{cond1}), (\ref{cond2}) and (\ref{rel}), take any $I\in \mathsf{N}(J_1,J_2)$. The $k$-spaces $I\cap J_1$ and $I\cap J_2$ are generated by subspaces $\overline{S}=I \cap S$, $\overline{P}_c \subseteq \mathsf{P}_c$ and $\overline{Q}_c \subseteq \mathsf{Q}_c$ of respective dimensions $\overline{s}$, $\overline{p}_c$ and $\overline{q}_c$ with $k=\overline{s}+\overline{p}_c+\overline{q}_c+2$. Clearly, $(\overline{p}_c,\overline{q}_c)_c$ satisfies (\ref{cond1}) and (\ref{cond2}) and $\overline{p}_c + \overline{q}_c+1 = k - \overline{s} - 1 \geq k - s -1$. This means that there are $p_c$ and $q_c$ with $-1 \leq p_c \leq \overline{p}_c$ and $-1 \leq q_c \leq \overline{q}_c$ such that $p_c+q_c+1 = k-s-1$, i.e., (\ref{rel}) is satisfied for $x=s$. Furthermore, as $(\overline{p}_c,\overline{q}_c)_c$ satisfies (\ref{cond1}) and (\ref{cond2}), so does $(p_c,q_c)_c$.
\end{subcon}
We encounter our first ``avoiding property''. Suppose $p_1 \leq p_2$; if not, we switch the roles of $J_1$ and $J_2$.

\begin{subcon}[$(K_1, K_2)$-avoiding]\label{con2} \em
\textit{Suppose $x=s<k$ and let  $\mathcal F$ be a finite set of $j$-spaces intersecting $J_1$ in at most a subspace of dimension $s$. We can choose $K_1$ and $K_2$ in such a way that  $\dim(F \cap \<K_1,K_2\>)<k$ for each $F \in \mathcal F$, unless if $S \subseteq F$ for some $F \in \mathcal{F}$ or if $(p_c,q_c)_c=(p^*,b)$. In the latter case, each member $I \in \mathsf{N}(J_1,J_2)$ contains $P=\<S,P_1,P_2\>$.} 

To show this, we start from $k$-spaces $K_c$ in $J_c$ such that $(p_c,q_c)_c$ takes allowed values, i.e., such that there is an element of $\mathsf{N}_{(s)}(J_1,J_2)$ containing $\<K_1, K_2\>$, or equivalently, values satisfying conditions (1), (2) and (3) (as explained above we prove this later, independently of this). Suppose that $\dim(F \cap \<K_1,K_2\>) \geq k$ for some $F \in \mathcal{F}$. A dimension argument yields that $\dim(F \cap K_1)=\dim(F\cap J_1) = s$. If $S \neq F \cap J_c$ for all $F \in \mathcal{F}$, we show that, when $(p_c,q_c)_c \neq (p^*,b)$, we can choose $K_1$ and $K_2$ such that $\dim(K_c \cap F) \leq s-1$ for all $F\in\mathcal{F}$ and hence $\dim(\<K_1,K_2\> \cap F) <k$. If $(p_c,q_c)_c=(p^*,b)$, it readily follows that there are no other allowed values and that each $I \in \mathsf{N}(J_1,J_2)$ has to contain $\<S,P_1,P_2\>=P$.

We want to replace $K_1$ by a $k$-space $K'_1$ through $S$, possibly by also replacing $K_2$ by a $k$-space $K'_2$ with $S \subseteq K'_2 \subseteq \proj_{J_2} (Q_1 \cap K'_1)$.  Recall that $K_c=\<S, K_c \cap P_c, K_c \cap Q_c\>$.  We use the following (independent) actions. 

\begin{itemize}
\item[$\mathsf{(SP)}$] The subspace $K_1 \cap P_1$ may be replaced by any other $p_1$-space $\overline{P}_1$ in $P_1$.
\item[$\mathsf{(SQ)}$] The subspace  $Q_1$ may be replaced by any other $q^*$-space $Q'_1$ in $\mathsf{Q}_1$ and $K_1\cap Q_1$ may be replaced by any other $q_1$-space $\overline{Q}_1$ in $Q'_1$, if we replace the subspace $K_2 \cap Q_2$ by any $q_2$-space $\overline{Q}_2$ in $\proj_{Q_2} (\overline{Q}_1)$.
\end{itemize}

Then we put $K'_1=\<S,\overline{P}_1,\overline{Q}_1\>$ and $K'_2=\<S,K_2\cap P_2,\overline{Q}_2\>$ (and hence $\overline{P}_1=K'_1\cap P_1$, $\overline{Q}_c=K'_c\cap Q'_c$). Note that replacing $P_1$ by any other $p^*$-space $P'_1$ in $\mathsf{P}_1$ would not make a difference, as $\<S,K_1 \cap P_1\> = \<S,K_1 \cap P'_1\>$.

First note that it is possible that $K_1$ contains $\mathsf{P}_1$ as it can contain $\<S,P_1\>$ (at least if $q^* \geq 0$) but it is not possible that $K_1$ contains $\mathsf{Q}_1$ as then it would contain $J_1$, contradicting $k<j$. We may suppose that $\dim(F \cap J_c)=s$ for all $F \in \mathcal{F}$, because as noted before, if $\dim(F \cap J_c) < s$, then automatically $\dim(F \cap \<K_1,K_2 \>) <k$. For each $F \in \mathcal{F}$, set $s_F:=\dim(F \cap S)$, $p_F:=\cod_{F \cap \<S,P_1\>}(F \cap S)$ and $q_F:=\cod_{F \cap J_1}(F \cap \<S,P_1\>)$. Denote the subsets $\{F \in \mathcal{F} : p_F \geq 0\}$ and $\{F \in \mathcal{F} : q_F \geq 0\}$  by $\mathcal{F}_p$ and $\mathcal{F}_q$, respectively.  
\begin{itemize}
\item \textit{First suppose that $\mathcal{F}_q$ is non-empty.} Then we may assume that $q_1 \geq 0$, as otherwise $K_1$ cannot contain $F \cap J_1$ for $F \in \mathcal{F}_q$. We use $\mathsf{(SQ)}$ to replace $Q_1$ such that $K_1$ does not contain $F \cap J_1$ for each $F \in \mathcal{F}_q$. Indeed, since $K_1$ cannot contain $\mathsf{Q}_1$, we can always make sure that $K_1 \cap \mathsf{Q}_1$ avoids a point of each subspace of $\{F \cap \mathsf{Q}_1 \mid F \in \mathcal{F}_q\}$, as $\mathcal{F}_q$ is finite. 
\item \textit{Suppose next that $\mathcal{F}_p$ is non-empty.} If $p_1 < p_F$ for some $F \in \mathcal{F}_p$, then clearly $K_1$ cannot contain $F \cap J_1$, hence we may assume that $p_1 \geq p_F \geq 0$. Now, if $p_1 < p^*$, we can use $\mathsf{(SP)}$ to change $P_1 \cap K_1$ such that $\<S,P_1 \cap K_1\>$ does not contain $F \cap \<S,P_1\>$ (not containing a point from each subspace of $\{F \cap \mathsf{P}_1 \mid F \in \mathcal{F}_p\}$ suffices).  

If $p_1 = p^*$ and $q_1 <b$, we replace $K_1$ by another $k$-space $K'_1$ through $S$, one for which $(p'_1,q'_1) = (p^*-1,q_1+1)$. We claim that these are allowed values, i.e., that conditions (\ref{cond1}), (\ref{cond2}) and (\ref{rel}) are satisfied. First note that $p_1 \leq p_2$ implies $p_1=p_2=p^*$ and $q_1=q_2$. Since $p_1 \geq 0$, $p_1-1 \geq -1$ and by assumption $q_1+1 \leq b$. If $(q_1+1)+q_2+1 > q^*$, then $q_1 +q_2 +1=q^*$. However, this would imply $i = s +2p^*+ 2q_2 + (a-p^*-1) + (b-q_2-1) + 6 = j +a+1 +(b-q_2) \geq j$, whereas we know that $i \leq j$  or $i=j+a+1$. Either way, this implies $b-q_2=0$ and then, since $q_1=q_2$, this contradicts our assumption that $q_1 < b$ and the claim holds. Since $p'_1 < p^*$, we can again apply the above argument (if necessary).
\end{itemize}
Since $s_F + p_F+q_F = s$ and $s_F < s$, we have $p_F + q_F +2 \geq 0$, so $\mathcal{F}_p \cup \mathcal{F}_q = \mathcal{F}$. Hence, this shows that, if $(p_c,q_c)_c \neq (p^*,b)$ we can choose $K_1$ and $K_2$ such that $\dim(F \cap \<K_1,K_2\>)<k$ for all $F \in \mathcal{F}$, under the assumption that $S \neq F \cap J_c$ for any $F \in \mathcal{F}$. 

\begin{rem} \label{fq} \em Note that for each $F \in \mathcal{F}$, also if $(p_c,q_c)_c=(p^*,b)$, we can make sure that $\dim(\<K_1,K_2\>\cap F) <k$, as long as $S \nsubseteq F$. \end{rem}

 We continue with our construction. First note that the dimensions $(p_c,q_c)_c$  can also be used in the case $x=k-1$. In this case, $p_c$ and $q_c$ belong to $\{-1,0\}$,  satisfy conditions (\ref{cond1}) and (\ref{cond2}) and condition (\ref{rel}) with $x=k-1$ becomes $p_c+q_c+1=0$. In the sequel, we handle the cases $x=k-1$ and $x=s$ simultaneously as they behave the same with respect to choosing $A_1$, $A_2$, $B_1$ and $B_2$.
\end{subcon}

\begin{subcon}[Selection of $A_1$ and $A_2$]\label{con3} \em As $A_1$ and $A_2$ have to be collinear with $J_1$ and $J_2$, they are automatically collinear with $\<K_1,K_2\>$, the part of our $i$-space that has been constructed up to now. Denote by $\mathcal A_t$ the set of all $t$-spaces $T$ belonging to  $(J_1^\perp \cap  J_2^\perp)\setminus(J_1\cup J_2)$. By Fact~\ref{lem4a}$(i)^*$, $\mathcal A_t$ is certainly nonempty for all $t$ with $-1 \leq t \leq a$, since $a = \ell-j-1 \leq n-j-2$.

\boldmath{[$x=k$]} \unboldmath Take $A_1=A_2 \in$ $\mathcal A_a$ arbitrarily. 

\boldmath{[$x\in\{k-1,s\}$]} \unboldmath In these cases, $K_2 \cap P_2 \subseteq A_1$ and $K_1 \cap P_1 \subseteq A_2$. Assume $p_1 \leq p_2$ and let $a'=a-p_2-1$. As we prefer $A^-$ to be as large as possible, we choose it in the set $\mathcal A_{a'}$, which is nonempty as $-1 \leq a' \leq a$. Then $A_1=\<A^-,K_2 \cap P_2\>$. Put $a^p := \dim(A^- \cap P)$.

If $p_1=p_2$, we also put $A_2=\<A^-,K_1 \cap P_1\>$; if $p_1 <p_2$, we still need a $(p_2-p_1-1)$-space $C$  that together with $A^-$ and $K_1\cap P_1$ will generate the $a$-space $A_2$, as $a = (a-p_2-1)+p_1+(p_2-p_1-1)+2$. This subspace $C$ has to be collinear with $J_2$  and semi-opposite $J_1$, and also needs to be collinear with $A^-$. We define $J'_c = \<J_c, P_{c'}\cap K_{c'}, A^-\>$. A $(p_2-p_1-1)$-space $C$ collinear with $J'_2$ and semi-opposite $J'_1$ will be collinear with $A^-$, $K_1 \cap P_1$ and $J_2$ and will be semi-opposite $J_1$. By Fact~\ref{lem4a}$(iii)$, such a subspace exists if $\dim({J'_2}^{J'_1}) - \dim(J'_2)-1 \geq p_2-p_1-1$. One can verify that $\dim({J'_2}^{J'_1})-\dim(J'_2) -1 = p^* - (p_1 +a^p+1)-1$ (note that $\<A^- \cap P, P_1 \cap K_1,J_2\> \cap J_1$ has dimension $s+(p_1 + a^p +1)+1$) . Now $p_2-p_1-1 \leq p^*-p_1-a^p-2$ if and only if $a^p \leq p^*-p_2 -1$, which is true. Note that equality holds if and only if $a^p=p^*-p_2-1$. We set $A_2 = \<A^-, K_1 \cap P_1, C\>$. \par
\bigskip

We encounter some more avoiding properties. Recall that $P = \<S,P_1,P_2\>$. 
\end{subcon}

\begin{subcon}[$(A\cap P)$-avoiding] \label{con4} \em
\textit{Let $\mathcal{F}$ be a finite subset of $\Omega_j$, all of whose members intersect $J_1$ and $J_2$ in subspaces of dimension at most $s$. Suppose $a^p:=\dim(A^-\cap P) \geq 0$.
Then: \vspace{-1em}
\begin{enumerate}[$(i)$]
\item If $x=k \geq 0$,  we can choose $A \cap P$ such that $\dim(\<K, A \cap P\> \cap F) < k$ for each $F \in \mathcal{F}$, unless if either $K \subseteq F$, or, if $a^p=p^* \geq 0$ and $\dim(F \cap P)=p^*+s+1$ for some $F \in \mathcal{F}$. If $K \subseteq F$ for some $F \in \mathcal{F}$, then we can always choose $A\cap P$ such that $\<K, A \cap P\> \cap F=K$ for all $F \in \mathcal{F}$ with $K \subseteq F$.
\item If $x=s<k$, we can choose $A^-\cap P$, $K_1\setminus S$ and $K_2 \setminus S$ such that $\dim(\<K_1,K_2, A^- \cap P\> \cap F) < k$ for each $F\in\mathcal{F}$, unless if either $\dim(F \cap \<K_1,K_2\>) \geq k$ for some $F \in \mathcal{F}$, or, if $\dim(F \cap P)=p^*+s+1$ for some $F \in \mathcal{F}$, $a^p=p^*-p_2-1 \geq 0$ and $-1\in\{q^*,b\}$.  If $\dim(\<K_1,K_2\> \cap F)=k$ for some $F \in \mathcal{F}$, then we can always choose $A^-\cap P$ such that $\<K_1,K_2, A^- \cap P\> \cap F=\<K_1,K_2\> \cap F$ for all $F \in \mathcal{F}$ with $\dim(\<K_1,K_2\> \cap F)= k$.
\end{enumerate}}

We now verify that this is true. Let $F$ be an arbitrary member of $\mathcal{F}$ and put $a^p = \dim(A^-\cap P)$.
\begin{enumerate}[$(i)$]
\item Clearly, if $K \subseteq F$, then $\dim(\<K, A \cap P\> \cap F) \geq k$. We start with the first assertion, so suppose $K \nsubseteq F$ and suppose $\dim(\<K, A \cap P\> \cap F) \geq k$ for all choices of $A\cap P$. This means that $\dim(F \cap P) + \dim(\<K,A\cap P\>) - k \geq \dim(P)$. Now $\dim(P)=2p^*+s+2$, $\dim(\<K,A\cap P\>) = k + a^p + 1 \leq k + p^*+1$ and $\dim(F \cap P) \leq p^*+s+1$ (as otherwise $\dim(F\cap J_c) > s$). We obtain that $ \dim(\<K,A \cap P\>) + \dim(F \cap P) - k \leq (k+p^* +1) + (p^*+s+1) -k = \dim(P)$ and equality only holds when $\dim(F\cap P)=k+p^*+1$ and $a^p=p^*$. We conclude that only when these conditions are fulfilled, it is not possible for $F \cap P$ and $\<K, A \cap P\>$ to intersect in a subspace of dimension strictly less than $k$. On the other hand, this also reveals that if $\dim(K \cap F)=k$, it is always possible to choose $A \cap P$ such that $K \cap F = \<K,A \cap P\> \cap F$. 

\item Let $K_1$ and $K_2$ be such that $(p_c,q_c)_c$ are allowed values. We consider the singular subspace $\overline{P}:=\<P, K_1 \cap Q_1, K_2 \cap Q_2\>$, which has dimension $2p^* +s + q_1 + q_2 +4$. As before, we assume $q_1 \geq q_2$. Suppose again that $\dim(\<K_1,K_2, A^- \cap P\> \cap F) \geq k$ for all choices of $A^-\cap P$, while assuming $\dim(\<K_1,K_2\> \cap F)<k$ for each $F \in \mathcal{F}$.

Hence, as above, we then conclude $\dim(F \cap \overline{P}) + \dim(\<K_1,K_2,A^-\cap P\>) - k \geq \dim(\overline{P})$. Now $\dim(\<K_1,K_2,A^-\cap P\>)= k+a^p + p_2 +q_2+3 \leq k +p^* + q_2 +2$ and $\dim(F \cap \overline{P}) \leq p^*+s+q_2 +2$ (otherwise $\dim(F\cap J_1) > s$). This yields $\dim(F \cap \overline{P}) + \dim(\<K_1,K_2,A^-\cap P\>) - k \leq (p^*+s+q_2 +2)+(k+p^* +q_2+2)-k = \dim(\overline{P}) - q_1 +q_2 \leq \dim(\overline{P})$ and equality only holds when $\dim(F \cap \overline{P})=p^*+s+q_2+2$, $a^p=p^*-p_2-1$ and $q_1=q_2$. Note that this, like in the previous item, reveals that it is always possible to choose $A^-\cap P$ such that $\<K_1,K_2\> \cap F = \<K_1,K_2,A^- \cap P\> \cap F$  for all $F \in \mathcal{F}$ with $\dim(\<K_1,K_2\> \cap F)=k$. 

Now, we claim that there are allowed values for which $q_1 > q_2$ if and only if $-1\notin\{b,q^*\}$. So suppose that $p_1=p_2$ and $q_1=q_2$. Then $(p_1-1,p_2;q_1+1,q_2)$ is not a tuple of allowed values if and only if either $p_1=p_2=-1$ or $q_1=b$ or $q_1+q_2+1=q^*$; likewise, $(p_1+1,p_2;q_1-1,q_2)$ is not a tuple of allowed values if and only if either $p_1=p_2=p^*$ or $p_1=p_2=a$ or $q_1=q_2=-1$. First note that if $p_1=p_2=p^*$ or if $p_1=p_2=a$, then necessarily $a^p=-1$ so this contradicts our assumptions. Hence we may assume that $q_1=q_2=-1$ (otherwise the second tuple would be allowed after all). Now for the first tuple not to be allowed, we should have $p_1=p_2=q_1=q_2=-1$, or $b=q_1=q_2=-1$, or $q^*=q_1+q_2+1=-1$. The first possibility would imply that $k=s$, which contradicts $s < k$, so we conclude that $-1 \in \{b,q^*\}$, showing the claim. However, we also need to make sure that this does not conflict with the $\<K_1,K_2\>$-avoiding used when proving property~\ref{con2}. Indeed, there was one situation in which we changed the values $(p_1,q_1;p_2,q_2)$, namely from $(p^*,q_1;p^*,q_2)$ to $(p^*-1,q_1+1;p^*,q_2)$. Yet $p_2=p^*$ here, so $a^p=-1$, while we assume $a^p \geq 0$.

We conclude that only when $-1\in\{q^*,b\}$ (and hence $q_1=q_2=-1$), $a^p=p^*-p_2-1$ and $\dim(F \cap \overline{P})=p^*+s+q_2+2$, we cannot choose $K_1\setminus S$, $K_2\setminus S$ and $A^- \cap P$ such that $\dim(\<K_1,K_2, A^- \cap P\> \cap F) < k$.
\end{enumerate}

Before we continue with our construction, we give one more avoiding property, concerning the selection of $A_1$ and $A_2$. In constructing these, we used Facts~\ref{lem4a}$(i)^*$ and~\ref{lem4a}$(iii)$, and as we still assume that $\Delta$ is not hyperbolic, these facts also give the following property:
\end{subcon}
\begin{subcon}[$(A\setminus P)$-avoiding]\label{con5} \em
\textit{Let $\mathcal{F}$ be a finite subset of $\Omega_j$ such that $\dim(F \cap J_c) \leq s$ for all $F \in \mathcal{F}$. Then the parts of $A_1$ and $A_2$ outside $P$ can be chosen such that $\<K_1,K_2, A_1,A_2\> \setminus \<K_1,K_2,A\cap P\>$ avoids $\mathcal{F}$.}

\end{subcon}
\begin{subcon}[Selection of $B_1$ and $B_2$]\label{con6} \em Let $I^*$ denote $\<K_1,K_2,A_1,A_2\>$. Possibly, $\dim(I^*)=i$ and nothing more needs to be done. So suppose $\dim(I^*)<i$. As $B_1$ and $B_2$ have to be collinear with $I^*$, we look for them in $\Delta'=\Res_{\Delta}(I^*)$. Now, $\dim(J_1 \cap I^*)=k$ by definition and $\dim(\proj_{J_1}(I^*)) = j-q_1-1$ since $\<K_2 \cap Q_2,C\>$, which has dimension $q_1$ (recall $p_1 + q_1 = p_2 +q_2$), is a subspace of $I^*$ maximal with the property of being semi-opposite $J_1$. Hence, in $\Delta'$, $J_1$ corresponds to a subspace $J'_1$ of dimension $(j-q_1-1)-k-1$. Likewise, $\dim(J_2 \cap I^*)=k$ and $\dim(\proj_{J_2}(I^*)) = j-q_1-1$ as a maximal subspace of $I^*$ semi-opposite $J_2$ is $K_1 \cap Q_1$, which has dimension $q_1$. Therefore, $J_2$ corresponds to a subspace $J'_2$ in $\Delta'$ of dimension $(j-q_1-1)-k-1$ too. 

As before, we choose $B^-$ as large as possible. If $x=k$, we aim for a $b$-space $B_1=B_2$ semi-opposite $J_1$ and $J_2$. If $x \neq k$, then $I^* \cap B_1 = \<K_2 \cap Q_2,C\>$ and $I^* \cap B_2 = K_1 \cap Q_1$. Both subspaces are $q_1$-dimensional, as $q_2 + (p_2-p_1-1)+1 = q_1$. So in this case we need a $(b-q_1-1)$-space $B^-$ to define $B_1=\<K_2\cap Q_2, C, B^-\>$ and $B_2= \<K_1 \cap Q_1, B^-\>$. This can be achieved as follows. Put $q_1=-1$ in case $x=k$ and define $b'=b-q_1-1$. In $\Delta'$, we select two arbitrary $b'$-spaces $T_1$ and $T_2$ in $J'_1$ and $J'_2$, respectively. This is possible as $b'=b-q_1-1 \leq (j-k-1)-q_1-1$ (recall that $b=i+j-k-\ell-1$). By Fact~\ref{lem5a}$(ii)$, we know that there is some $b'$-space $B'$ in $\Delta'$ which is opposite $T_1$ and $T_2$, hence the subspace of $\Delta$ corresponding to $B'$ is precisely a member of $\mathsf{N}_{(x)}(J_1,J_2)$. 
\end{subcon}

\begin{rem}\label{b} $-$ \em The way we select $B^-$ has some nice features.
\begin{itemize}
\item
As we choose $B^-$ in a residue, each $b'$-space in $I\setminus\<K_1,K_2,A_1,A_2\>$ is semi-opposite $J_1$ and~$J_2$. 
\item The above implies a generalisation of Fact~\ref{lem4a}$(ii)$, stated here informally and not including the case where $\Delta$ is hyperbolic: For each finite set of subspaces of dimensions \textit{at least} $b$, there is a $b$-dimensional subspace semi-opposite them all.
\item The subspaces $T_1$ and $T_2$ can be chosen in $J'_1$ and $J'_2$ wherever we want, a feature we will exploit at some point. 
\end{itemize}
\end{rem}

We end this construction with one last avoiding property, which again follows immediately from Fact~\ref{lem4a}$(ii)$ and our assumption that $\Delta$ is not hyperbolic. 

\begin{subcon}[$(B_1,B_2)$-avoiding]\label{con7} \em
\textit{Let $\mathcal{F}$ be a finite subset of $\Omega_j$. Then $B_1$ and $B_2$ outside $\<K_1,K_2\>$ can be chosen such that $I \setminus \<K_1,K_2,A_1,A_2\>$ avoids $\mathcal{F}$.}
\end{subcon}

\textbf{Intermediate summary} $-$ If $x=k$, we have $I=\<K_c,A_c,B_c\>$ with $X_1=X_2$ for all $X\in\{K,A,B\}$ and clearly, $i=k+a+b+2$ and $I\in \mathsf{N}_{(k)}(J_1,J_2)$. If $x \in\{k-1,s\}$, we have $I=\<K_1,K_2,A_1,A_2,B_1,B_2\>$ with $K_c = \<S, K_c \cap P_c, K_c \cap Q_c\>$, $A_1=\<A^-,K_2 \cap P_2\>$, $A_2=\<A^-,K_1 \cap P_1, C\>$, $B_1=\<B^-,K_2\cap Q_2,C\>$ and $B_2=\<B^-,K_1\cap Q_1\>$ with notation as before (recall that $C$ is a subspace collinear with $J_2$ and semi-opposite $J_1$). In each stage, we checked that $\dim(K_c)=k$, $\dim(A_c)=a$ and $\dim(B_c)=b$, so the resulting singular subspace is indeed an $i$-space in $\mathsf{N}_{(x)}(J_1,J_2)$. If $x=s$, then the structure of the resulting $i$-spaces can be seen in Figure~\ref{twojspaces}.

Note that it now follows that all values of $(p_c,q_c)_c$ satisfying (\ref{cond1}), (\ref{cond2}) and  (\ref{rel}) are indeed allowed values. For any pair of collinear $k$-spaces in $J_1$ and $J_2$ satisfying those conditions, $\<K_1,K_2\>$ can, by means of the construction, be extended to an $i$-space $I \in \mathsf{N}_(J_1,J_2)$. If $s<k$, then we will sometimes call a pair of $k$-spaces  \textbf{allowed} $\mathbf{k}$\textbf{-spaces} if $x=s$, i.e., if $\dim(K_1 \cap K_2) =s$, abbreviating ``$k$-spaces as obtained in the construction in the case when $x=s<k$''. 

\par\bigskip

\end{constr}

\textbf{The hyperbolic case} $-$ 
We list the differences that occur when $\Delta$ is hyperbolic. 

$\bullet$ \textit{The selection of $K_1$ and $K_2$.} It is easy to see that choosing $K_1$ and $K_2$ can be done in the same way, once we know that we do not have to do anything special for $A_1, A_2, B_1$ and $B_2$ (if we would have to, then $K_1 \setminus S$ and $K_2 \setminus S$ could need special care). Also Avoiding Property~\ref{con2} remains unchanged.
\par\medskip
$\bullet$ \textit{The selection of $A_1$ and $A_2$.}  There could be a problem as the types $\mathsf{t}(J_c^I)=\mathsf{t}(\<J_c,A_c\>)$ should be correct. Recall that our definition is such that if $I \sim J_c$ then $\mathsf{t}(I^{J_c})=\ell$, and we put  $\mathsf{t}=\mathsf{t}(J_c^I)$. Now, our assumptions are such that $|\ell|<n-1$ unless $|j|=n-1$, in which case $j=\mathsf{t}$ and $i=\ell$ (and $\mathsf{t} = \ell$ if $b$ is odd and and $\mathsf{t}=\ell'$ if $b$ is even). When $|\ell| < n-1$, choosing $A_1$ and $A_2$ is not different than before; if $|j|=|\ell|=n-1$, then $a=-1$. 

Avoiding Property~\ref{con4} also holds in this case, but Property~\ref{con5} has one exception.

\begin{subcon}[$(A\setminus  P)$-avoiding]\label{con8} \em \emph{ If $\cod_{A^-}(A^- \cap P)=0$ and $\dim({J_1}^{J_2}) = n-2$, then possibly $\dim(F \cap \<K_1,K_2,A^-\>) = \dim(F \cap \<K_1,K_2,A^-\cap P) +1$ for some $F \in \mathcal{F}$ with $\dim(F \cap \<P,A^-\>)= p^*+s+2$. In all other cases, $\dim(F \cap \<K_1,K_2,A_1,A_2\>)=\dim(F\cap\<K_1,K_2,A^- \cap P\>)$.}

The first assertion follows immediately from Fact~\ref{lem4a}$(i)^o$ and~\ref{lem4a}$(i)^*$. According to Fact~\ref{lem4a}$(iii)$, a problem could occur in selecting $C$ if, with the notation used during the selection of $C$, $\dim({J'_2}^{J'_1})=n-1$. Note that $\<J'_2,C\>=\<J_2,A_2\>$ and hence $\dim(J'_2)=|\ell|-(p_2-p_1-1)-1$, furthermore, we have already verified that $\dim({J'_2}^{J'_1}) \leq \dim(J'_2)+(p_2-p_1-1)+1 = |\ell|$. So recalling that we assume $|\ell|<n-1$ except when $|j|=n-1$, we are fine (note that, if $|j|=|\ell|=n-1$, then there is no need a subspace $C$ collinear with $J_2$ and semi-opposite $J_1$). 
\end{subcon}
\par\bigskip
$\bullet$ \textit{The selection of  $B_1$ and $B_2$.} Also $B_1$ and $B_2$ can be chosen as before. 

Avoiding Property~\ref{con7} has one exception too. 

\begin{subcon}[$(B_1,B_2)$-avoiding]\label{con9} \em \emph{If $|i|=|j|=n-1$ then possibly $\dim(F \cap \<K_1,K_2,B^-\>) = \dim(F \cap \<K_1,K_2\>) +1$ for some $F \in \mathcal{F}$, in all other cases, the dimension of the intersection with $F$ does not increase for any $F \in \mathcal{F}$.}

Again with the notation as used during the selection of $B^-$, we run into problems when $T_c=J'_c$ and $J'_c$ is a MSS in $\Res_{\Delta}(I^*)$. The latter is a polar space of rank $n-(i-b'-1)-2$, in which $\dim(J'_c)=j-q_1-k-2$. Recalling that $b'=b-q_1-1$ and $b=i+j-k-|\ell|-1$, we obtain that $J'_c$ is a MSS if and only if $|\ell|=n-1$. Furthermore, $\dim(T_c)=\dim(J'_c)$ if and only if $b'=j-k-q_1-2$, i.e., if $b=j-k-1$. Hence $|i|=|j|$ and $a=-1$. Together with $|\ell|=n-1$, we hence obtain $|i|=|j|=n-1$.
\end{subcon}
\par\bigskip
\textbf{Final summary} $-$ Before we get to the other graphs (for which the construction follows almost immediately from this one), we give a brief overview of the selection procedure and all avoiding properties. Let $\mathcal{F}$ be a finite set of $j$-spaces intersecting $J_1$ and $J_2$ in subspaces of dimension at most $s$.
\begin{itemize}
\item If $x=s<k$ and if $x=k-1$, the values $(p_c,q_c)_c$ should satisfy conditions (\ref{cond1}), (\ref{cond2}) and (\ref{rel}). Property~\ref{con2} describes, for $x=s<k$, when we can choose $K_1$ and $K_2$ such that $\dim(\<K_1,K_2\> \cap F)<k$. 
\item Property~\ref{con4} says when we can choose $A^- \cap P$ such that $\dim(\<K_1,K_2, A^- \cap P\> \cap F)<k$.
\item In general, we can complete $A_1$ and $A_2$ (i.e., choose the part of $A^-$ outside $P$ and choose the subspace $C$) such that $\dim(\<A_1,A_2,K_1,K_2\> \cap F)$ equals $\dim(\<A^- \cap P, K_1,K_2\> \cap F)$; only when $\Delta$ is hyperbolic, this dimension increases by one. This is described in Properties~\ref{con5} and~\ref{con8}.
\item In general, we can complete  $B_1$ and $B_2$ (i.e., choose $B^-$) such that  $\dim(\<B_1,B_2, A_1,A_2,K_1,K_2\> \cap F) = \dim(\<A_1,A_2, K_1,K_2\> \cap F)$; only when $\Delta$ is hyperbolic, possibly this dimension increases by one. This is described in Properties~\ref{con7} and~\ref{con9}.
 \end{itemize}
 
\par\bigskip
\textbf{The other graphs} $-$ If $\Gamma \in \{\Gamma_k,\Gamma_{\geq k}\}$ then Construction~\ref{con} gets easier as we do not longer have to take into account the dimensions $\proj_{I}(J_1)$ and $\proj_{I}(J_2)$. We quickly go through the steps of the construction. Again, let $\mathcal{F}$ be a finite set of $j$-spaces intersecting $J_1$ and $J_2$ in subspaces of dimension at most $s$.
\par\medskip
The selection of $K_1$ and $K_2$ can be done similarly. If $x=k$, nothing changes; if $x=k-1$ we only need $(p^*,q^*)\neq(-1,0)$ in order to find a pair of collinear points in $J_1\setminus S$ and $J_2\setminus S$; if $x=s<k$ then condition (1) becomes
\begin{equation}\label{cond4} -1 \leq p_c \leq p^*, \qquad -1 \leq q_c \leq q^* \tag{1'}\end{equation}
and like before, we can show that conditions (\ref{cond4}), (\ref{cond2}) and (\ref{rel}) give allowed values $(p_c,q_c)_c$.
If $x=s<k$, we can choose $K_1$ and $K_2$ such that $\dim(\<K_1,K_2\> \cap F) < k$ unless some member $F$ of $\mathcal{F}$ contains $S$.
\par\medskip
If $k':=\dim(\<K_1,K_2\>)$, we now just need an arbitrary $(i-k'-1)$-space in $\Res_\Delta(\<K_1,K_2\>)$ avoiding the subspaces corresponding to $J_1$, $J_2$ and $\mathcal{F}$ to complete our $i$-space $I$. Only if $\Delta$ is hyperbolic and $|i|=n-1$, and hence also $|j|=n-1$, it could be the case that $\dim(I \cap F)= \dim(\<K_1,K_2\> \cap F) +1$ for some $F \in \mathcal{F}$, as the parity of the dimension of the intersection is fixed. In all other cases, there is no problem choosing $I\setminus \<K_1,K_2\>$ such that it avoids $\mathcal{F}$.

This concludes our construction.
\hfill $\blacksquare$

\section{The $k_{(\geq)}$-incidence graphs and  $(k,\ell)$-Weyl graphs for~$k~\geq~0$}\label{k>-1}
Let $\Gamma$ be one of $\Gamma_{\geq k}$, $\Gamma_k$ and $\Gamma_k^{\ell}$. We will assume $k \neq -1$ throughout this section. Yet, a couple of general lemmas also hold when $k=-1$, and this will be mentioned explicitly. On all other occasions, we assume $k \geq 0$. Recall that we assume that $|i| \leq |j|$ or either $|i|=|j|+|a|+1$. We need two more preliminary lemmas.

\begin{lemma}\label{lem0}
Let $U,V,W$ (possibly $V=W$) be singular subspaces of respective dimensions $a,a',a''$, with  $a \geq \max\{a',a'',0\}$, and such that $V \neq U \cap V = U \cap W\neq W$. Let $p \in U^\perp$ be a point not contained in $U \cup V \cup W$. If, for each $q \in U \setminus V$, the line $pq$ intersects $V$ or $W$ in a point $q'$, then $\dim (U \cap V)=a-1$. Moreover, $\<p,U\>$ contains at least one of $V,W$ and $a'=a=a''$. If, say, $W$ is not contained in $\<p,U\>$, then $pq \cap W=\mathsf{\emptyset}$  for all $q \in U \setminus V$.
\end{lemma}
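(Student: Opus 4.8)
The plan is to work inside the projective geometry spanned by $U$ and $p$, exploiting that every point $q\in U\setminus V$ is ``seen'' from $p$ through $V\cup W$. First I would set $D=U\cap V=U\cap W$ and let $d=\dim(D)$; note $d\leq a-1$ since $D\neq U$ (as $V\neq U$, we have $D\neq U$; similarly using $W\neq U$). The key observation is the following counting/covering argument: the map sending $q\in U\setminus D$ to the point $pq\cap(V\cup W)$ is well-defined by hypothesis, and for a fixed point $r$ on $V$ or $W$ the set of $q\in U$ with $pq\ni r$ is exactly $\langle p,r\rangle\cap U$, which is either empty or a single point (it cannot be a line, since a line of $U$ through two such points would force $p\in\langle U\rangle$, but $p\notin U$ only rules out $p\in U$, so I must be slightly more careful — actually $\langle p,r\rangle$ is a line, and a line meets $U$ in at most a point unless it lies in $U$, which is impossible as it contains $p\notin U$). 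Hence the points of $U\setminus D$ are covered by the traces on $U$ of the lines $\langle p,r\rangle$, $r\in V\cup W$. Counting dimensions: the union of all such trace-points lies in $\bigl(\langle p,V\rangle\cup\langle p,W\rangle\bigr)\cap U$, a union of two subspaces of $U$ of dimensions at most $\dim(\langle p,V\rangle\cap U)$ and $\dim(\langle p,W\rangle\cap U)$. Since $V\cap U=W\cap U=D$ and $p\notin U$, each of $\langle p,V\rangle\cap U$ and $\langle p,W\rangle\cap U$ has dimension at most $d+1$. A projective space $U$ of dimension $a$ cannot be covered by two subspaces of dimension $\leq a-2$, so at least one of these, say $\langle p,V\rangle\cap U$, has dimension exactly $a-1$, forcing $d+1\geq a-1$ together with $d\leq a-1$; combined with the covering being genuinely needed, I expect to pin down $d=a-1$.

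Once $d=a-1$ is established, $D$ is a hyperplane of $U$. Then $\langle p,U\rangle$ has dimension $a+1$ and contains $\langle p,D\rangle$ of dimension $a$. Since $V\supseteq D$ with $\dim V=a'$ and $V\subseteq U^\perp$ is collinear with everything in sight, the subspace $\langle D,V\rangle$ is singular of dimension $d+\operatorname{cod}_V D+1\le a-1+(a'-d)$; to show $V\subseteq\langle p,U\rangle$ I would argue that if neither $V$ nor $W$ lay in $\langle p,U\rangle$ then the trace subspaces $\langle p,V\rangle\cap U$ and $\langle p,W\rangle\cap U$ would both have dimension at most $d=a-1$... hmm, that is not quite a contradiction by itself, so the cleaner route: pick $q\in U\setminus D$; the line $pq$ meets $V$ (say) in $q'$, so $q'\in\langle p,q\rangle\subseteq\langle p,U\rangle$, and as $q$ ranges over $U\setminus D$ these $q'$ together with $D$ span $V$ — wait, only if the ``$V$ case'' happens for enough $q$. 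This is where the two-subspace covering is used again: $U\setminus D$ is covered by $X_V:=\langle p,V\rangle\cap U$ and $X_W:=\langle p,W\rangle\cap U$, and since $U$ is not covered by two hyperplanes-minus-less unless one of them is all of $U$ minus... precisely, $U=X_V\cup X_W\cup D$ forces $X_V=U$ or $X_W=U$, i.e.\ $U\subseteq\langle p,V\rangle$ or $U\subseteq\langle p,W\rangle$. Say $U\subseteq\langle p,V\rangle$; then $\langle p,U\rangle\subseteq\langle p,V\rangle$, and comparing dimensions $a+1\leq \dim\langle p,V\rangle\leq a'+1$ gives $a'\geq a$, hence $a'=a$ by hypothesis $a\geq a'$, and then $\dim\langle p,V\rangle=a+1=\dim\langle p,U\rangle$ yields $V\subseteq\langle p,U\rangle$. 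By the symmetry between $V$ and $W$ built into the hypothesis $U\cap V=U\cap W$, running the same argument shows the analogous statement for $W$; but both $V\subseteq\langle p,U\rangle$ and $W\subseteq\langle p,U\rangle$ need not hold simultaneously, so I get: at least one of $V,W$ lies in $\langle p,U\rangle$ and has dimension $a$. To upgrade ``at least one'' to ``and $a'=a=a''$'', I would observe that whichever of $V,W$ fails to be covered, say $W\not\subseteq\langle p,U\rangle$, then the covering $U=X_V\cup X_W\cup D$ with $X_W\subsetneq U$ a subspace of dimension $\leq a''\le a$... here I need $X_W$ to still be forced large enough; if $X_W$ had dimension $\le a-2$ then $X_V\cup D$ already covers $U$ forcing $X_V=U$ and we're in the previous case — fine — but I still must extract $a''=a$. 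For this, note that if $q\in U\setminus(D\cup X_W)$ then $pq$ meets $V$, while if $q\in X_W\setminus D$ then $pq$ meets $W$; as $X_W$ is a proper subspace there are points of $U$ in both pieces, and taking such a $q\in X_W\setminus D$ gives a point $q'\in W$ on $\langle p,U\rangle$... no, this just shows $q'\in\langle p,U\rangle\cap W$. I think the honest conclusion is: the last clause, ``if $W\not\subseteq\langle p,U\rangle$ then $pq\cap W=\emptyset$ for all $q\in U\setminus V$'', is actually forced once we know $U\subseteq\langle p,V\rangle$, because for $q\in U$ the line $pq$ lies in $\langle p,V\rangle$, while $W\cap\langle p,V\rangle$ — if $W\not\subseteq\langle p,V\rangle$ — has codimension $\geq1$ in $W$ and one checks it equals $D$ (using $W\cap U=D\subseteq\langle p,V\rangle$ and a dimension count: $\dim(W\cap\langle p,V\rangle)\le \dim W + \dim\langle p,V\rangle-\dim\langle W,p,V\rangle$), so $pq\cap W\subseteq D$, and since $q\notin V\supseteq D$ and $q'\in pq$, if $q'\in D$ then $q'\in V$, contradicting that we chose the ``$W$ branch''; hence $pq\cap W=\emptyset$. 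With this last clause in hand, every $q\in U\setminus V$ has $pq$ meeting $V$, so the $q'$ span (together with $D$) all of $V$ inside $\langle p,U\rangle$, giving $\dim V\le \dim\langle p,U\rangle-1=a$, hence $a'=a$; and then reading the hypothesis $U\cap W=U\cap V=D$ a hyperplane of $W$ too (since $D$ has codimension... wait $\dim W=a''$ and $\dim D=a-1$, so $D$ is a hyperplane of $W$ iff $a''=a$) — so I get $a''=a$ from $D=U\cap W$ being forced to be a hyperplane of $W$ by the symmetric half of the first-step covering argument applied with the roles of the two branches swapped.

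**Main obstacle.** The delicate point, and where I'd spend the most care, is the very last clause and the extraction of $a'=a=a''$: the two-subspace covering argument cleanly yields ``$d=a-1$'' and ``one of $V,W$ is in $\langle p,U\rangle$'', but teasing out that the \emph{other} one then contributes \emph{no} secant lines from $p$ through $U\setminus V$, and that both auxiliary dimensions equal $a$, requires chasing several dimension counts of the form $\dim(W\cap\langle p,V\rangle)$ and being scrupulous about the distinction between ``$p\notin U$'' versus ``$p\notin\langle U\rangle$'' (they coincide here since $U$ is singular and $\langle U\rangle=U$, but the line $\langle p,r\rangle$ arguments still need the no-common-line observation stated carefully). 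I expect the whole thing to go through by elementary projective dimension arithmetic inside $\langle p,U\rangle$, with the combinatorial heart being: \emph{a projective space is not the union of two proper subspaces plus a hyperplane unless one of the two subspaces is the whole space} — which is the standard fact that a vector space over any (even non-commutative, possibly infinite) field is not a union of two proper subspaces, applied in $U/D$.
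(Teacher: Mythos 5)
Your argument is sound and reaches all four conclusions, but it takes a genuinely different route from the paper for the key first step. The paper proves $\dim(U\cap V)=a-1$ by a pigeonhole on a single line: if $S=U\cap V$ had codimension at least $2$ in $U$, pick a line $L\subseteq U$ disjoint from $S$ and three points on it; two of the three lines from $p$ meet the same one of $V,W$, so the singular plane $\langle p,L\rangle$ meets that subspace in a line, which must then meet $L$ inside $S$ --- a contradiction. You instead cover $U$ by the two trace subspaces $X_V=\langle p,V\rangle\cap U$ and $X_W=\langle p,W\rangle\cap U$ (both containing $D=U\cap V$) and invoke the fact that a projective space is never the union of two proper subspaces; since $\dim X_V,\dim X_W\leq d+1$, whichever of them equals $U$ forces $d=a-1$ \emph{and} simultaneously hands you $U\subseteq\langle p,V\rangle$, hence $a'=a$ and $V\subseteq\langle p,U\rangle$ in one stroke. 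That is a fair trade: your version gets the ``moreover'' clause almost for free, while the paper's argument is more local and never forms a span of $p$ with a subspace not known to be collinear with $p$. Your treatment of the last clause ($W\cap\langle p,U\rangle=D$, so $pq\cap W\subseteq pq\cap D=\emptyset$ since $pq\cap U=\{q\}$ and $q\notin D$) is correct and is the contrapositive of the paper's closing remark.

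Three loose ends to tidy. First, $\langle p,V\rangle$ is only a legitimate object when $p\perp V$ or when $\Delta$ sits in an ambient projective space; the hypotheses give $p\in U^\perp$ only, and the lemma must also cover the non-embeddable space $\Delta(\mathbb{O})$. The fix is cheap: every intersection point $q'=pq\cap(V\cup W)$ lies on the singular line $pq$, hence in $\proj_V(p)\cup\proj_W(p)$, so run the covering with $V_0=\proj_V(p)$ and $W_0=\proj_W(p)$ in place of $V,W$; the final dimension count then forces $V_0=V$ anyway. Second, your first-pass count (``cannot be covered by two subspaces of dimension $\leq a-2$, so one has dimension exactly $a-1$'') only yields $d\geq a-2$, which is not enough; the correct statement, which you do reach in your second paragraph, is that one of $X_V,X_W$ is all of $U$, whence $a\leq d+1$. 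Third, $a''=a$ needs no second covering argument and no spanning of $W$ by the points $q'$: once $d=a-1$, the hypothesis $W\neq U\cap W$ gives $D\subsetneq W$, so $a''\geq a$, and $a''\leq a$ by assumption.
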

\begin{proof}
Put $S=U \cap V$. Assume  for a contradiction that $\dim(S) < a-1$. Then there is some line $L$ contained in $U\setminus S$. Let $q_1,q_2,q_3$ be three points on $L$. At least two of the lines $pq_1,pq_2,pq_3$ must then intersect either $V$ or $W$, say $V$, by the condition. Hence the plane $\<p,L\>$ intersects $V$ in a line $L'$. But then the point $L\cap L'$ belongs to $U \cap V$, contradicting the fact that $L$ is disjoint from $S$. We conclude $\dim(S)=a-1$. 

The line joining $p$ and a point of $U\setminus S$ intersects $V\cup W$, clearly in a point not belonging to $S$. Hence at most one of $V,W$, say $W$, does not belong to $\<p,U\>$. One can easily see that if $pq$ intersects $W$ for some $q \in U\setminus S$, then $W$ belongs to $\<p,U\>$ as well and vice versa.
\end{proof}

\begin{lemma}\label{opp} Let $U,V,W,X$ be subspaces of the same dimension, with $V$, $W$ and $X$ opposite $U$. If each point collinear with $U$ and $V$ is also collinear with $W$ or $X$, then each point collinear with $U$ and $V$ is collinear with all four of them.  
\end{lemma}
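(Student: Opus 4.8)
The plan is to work with the set $\Sigma:=\{U,V\}^\perp=U^\perp\cap V^\perp$ of points collinear with both $U$ and $V$, and with the double perp $\cH:=\{U,V\}^{\perp\!\!\!\perp}$; note that $U$ and $V$ are contained in $\cH$, since every point of $U$ (resp.\ $V$) is collinear with every point of $\Sigma$. Writing $d$ for the common dimension of $U,V,W,X$ and using that $U$ and $V$ are opposite, $\Sigma$ carries the structure of a non-degenerate polar space of rank $n-d-1$; I would first dispose of the small-rank cases $n-d-1\le 1$ (which force $d\ge n-2$) by a direct argument, and otherwise assume $\Sigma$ has rank at least $2$. The statement to be proved then amounts to: from $\Sigma\subseteq W^\perp\cup X^\perp$ one must deduce $\Sigma\subseteq W^\perp$ \emph{and} $\Sigma\subseteq X^\perp$, equivalently $W\subseteq\cH$ and $X\subseteq\cH$.

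The main tool is a covering argument. For a point $z$ of $\Delta$ not lying in $\Sigma$, the set $z^\perp\cap\Sigma$ is a subspace of $\Sigma$ that is either all of $\Sigma$ or a geometric hyperplane of it; hence $W^\perp\cap\Sigma=\bigcap_{w\in W}(w^\perp\cap\Sigma)$ and $X^\perp\cap\Sigma$ are each either all of $\Sigma$ or a geometric hyperplane of $\Sigma$. The hypothesis says these two subspaces cover $\Sigma$, and a non-degenerate polar space of rank at least $2$ is not the union of two proper geometric hyperplanes (the complement of a geometric hyperplane spans; alternatively, a line through a point outside one of the two hyperplanes must lie entirely in the other, which then contains a full point-perp and so is improper). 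Therefore one of them, say $W^\perp\cap\Sigma$, equals $\Sigma$; then every point of $W$ is collinear with every point of $U^\perp\cap V^\perp$, so by the property recalled right after the definition of the double perp, $W\subseteq\cH$. Granting for the moment that $X\subseteq\cH$ as well, the proof finishes cleanly: any point $p$ collinear with $U$ and $V$ lies in $\Sigma$, hence is collinear with the two opposite $d$-spaces $U$ and $V$ of $\cH$, and therefore — by the fact that such a point is collinear with every element of $\cH$ — it is collinear with all of $\cH$, in particular with the points of $W$ and of $X$; together with $p\in U^\perp\cap V^\perp$ this gives collinearity with all four of $U,V,W,X$.

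The step I expect to be the real obstacle is exactly the passage from the ``$W^\perp\cap\Sigma=\Sigma$ \emph{or} $X^\perp\cap\Sigma=\Sigma$'' produced by the covering argument to the ``\emph{and}'' that the conclusion requires: the hypothesis singles out $V$ (it is paired with $U$) and so is not literally symmetric in $W$ and $X$, and one must rule out the borderline configuration in which, say, $\Sigma\subseteq W^\perp$ while $X^\perp\cap\Sigma$ is a proper geometric hyperplane. My intended way in is the line argument — if $p\in\Sigma$ is collinear with $X$ but not $W$, and $p'\in\Sigma$ with $W$ but not $X$, and $p\sim p'$, then the line $pp'$ lies in $\Sigma$ yet meets $W^\perp$ only in $p'$ and $X^\perp$ only in $p$, contradicting that a line has at least three points; the opposite case is reduced to this one by connectedness of $\Sigma$. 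I expect this to go through only once one invokes the standing assumptions available in the applications (e.g.\ that the four subspaces are in suitably general position, or pairwise opposite), under which $W\subseteq\cH$ and $X\subseteq\cH$ become symmetric conditions that the hypothesis cannot satisfy only ``half-way''. Besides this, checking ``not a union of two geometric hyperplanes'' by hand in the low-rank and non-thick (hyperbolic) instances of $\Sigma$ is the other place where some care is required.
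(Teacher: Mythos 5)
There is a genuine gap, and it sits exactly where you predicted. Your covering step already has a technical flaw: $W^\perp\cap\Sigma$ is the intersection of the $\dim(W)+1$ geometric hyperplanes $w^\perp\cap\Sigma$, and an intersection of geometric hyperplanes is a subspace but in general \emph{not} a geometric hyperplane (already for $W$ a line and $\Sigma$ of rank $2$, the perp of two points of $W$ misses most lines of $\Sigma$); what you actually need is that a nondegenerate polar space of rank at least $2$ is not the union of two proper \emph{subspaces}, which is true but requires a slightly different argument from the one you sketch. The fatal problem, however, is the passage from ``or'' to ``and'': with only the literal hypothesis of the lemma this implication is \emph{false}. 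Take $\Delta$ with hyperbolic lines, $U,V$ opposite points, $W$ a third point of the hyperbolic line $\{U,V\}^{\perp\!\!\!\perp}$, and $X$ any point opposite $U$ not lying on that hyperbolic line: every point of $\Sigma$ is collinear with $W$, so the hypothesis holds, yet $\Sigma\not\subseteq X^\perp$. So no amount of cleverness closes your gap from the stated hypothesis alone; what is needed (and what is available in every application, since the hypothesis always comes from $\mathsf{(RU1)}$, which is symmetric in all four subspaces) is the permuted hypothesis ``each point collinear with $U$ and $X$ is collinear with $V$ or $W$'', and similarly for the other pairs.

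The paper's own proof is a short direct argument that uses precisely this permuted form. Given $p\in\Sigma$ with, say, $p\perp W$, it projects $X$ into the $(d+1)$-dimensional singular subspace $\langle p,U\rangle$ to obtain a point $q$ there collinear with $X$; if $q\neq p$, then $q$ is collinear with $U$ and $X$, hence (permuted hypothesis) with $V$ or $W$, and then the point $pq\cap U$ is collinear with $V$ or with $W$ (both $p$ and $q$ are collinear with one of them), contradicting that $V$ and $W$ are opposite $U$. Hence $q=p$ and $p\perp X$. Your double-perp strategy can in fact be completed once the symmetrised hypothesis is added (rerun the covering argument for the pair $(U,W)$, and in the remaining case $U^\perp\cap V^\perp=U^\perp\cap W^\perp$ use the hypothesis for $(U,X)$ together with the quoted property of double perps), but this is considerably longer than the paper's few-line argument and you have not supplied it. As written, the proposal does not prove the lemma.
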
 
\begin{proof} Take any point $p$ collinear with $U$ and $V$.  As $V$, $W$ and $X$ are opposite $U$, we have $p \notin U \cup V \cup W \cup X$. Our assumptions imply that $p$ is collinear with $W$ or $X$, say $p \perp W$. The subspace $\<p,U\>$ then contains a point $q$ collinear with $X$. If $p=q$ we are done, so suppose $p \neq q$. As $q$ is collinear with $U$ and $X$, it has to be collinear with $V$ or $W$. But then the point $pq \cap U$ is collinear with $V$ or $W$, contradicting that they are opposite $U$. Hence $p=q$ after all and the lemma is proven.\end{proof}

In the next section, we deal with a special case that needs to be treated separately.

\subsection{Adjacency given by incidence}\label{incidence}
There are two types of graphs where adjacency is given by incidence: 
\begin{compactenum}[$-$]
\item When $\Delta$ is hyperbolic, the adjacency of the graph $\Gamma_{(n-1)',(n-1)'';(n-2),(n-1)'}^n(\Delta)$ coincides with the notion of being incident in the building $\Delta^b$ associated to $\Delta$.
\item The graphs $\Gamma_k^{\ell}$, $\Gamma_k$ and $\Gamma_{\geq k}$ with $|k|=\min\{|i|,|j|\}$ (hence in the first case also $\max\{|i|,|j|\}=|\ell|$), are identical, and their adjacency is given by incidence, i.e., containment made symmetric. This means that they are equal to $\mathsf{C}_{i,j}$ (recall that this is a restriction of the incidence graph of $\Delta$ to the types $i$ and $j$). In this special case  we can safely ignore our convention on $|i|$ and $|j|$ and just assume $|i| \leq |j|$.
\end{compactenum}

We readily have the following proposition.
\begin{prop}\label{propind}
Suppose $\Delta$ is a hyperbolic polar space and let $\Gamma=\Gamma_{(n-1)',(n-1)'';(n-2),(n-1)'}^n(\Delta)$. Then each automorphism of $\Gamma$ is induced by an automorphism of $\Delta^b$. Moreover, each automorphisms of $\Delta^b$ inducing an automorphism of $\Gamma$ is either type-preserving or a duality (in which case the bipartition classes of $\Gamma$ are switched). 
\end{prop}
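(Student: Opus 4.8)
The plan is to identify the graph $\Gamma=\Gamma_{(n-1)',(n-1)'';(n-2),(n-1)'}^n(\Delta)$ with a classical incidence structure on $\Delta^b$ and then invoke the structure theory already developed (Proposition~\ref{gras1} / Lemma~\ref{gras3} and the lemmas following them). Concretely, the bipartition classes $C_1$ and $C_2$ are the two families $\mathsf{M}_1,\mathsf{M}_2$ of maximal singular subspaces of $\Delta$, and $U\in\mathsf{M}_1$, $V\in\mathsf{M}_2$ are adjacent precisely if $\dim(U\cap V)=n-2$, i.e. $U$ and $V$ are incident as elements of the oriflamme building $\Delta^b$. So $\Gamma$ is exactly the incidence graph $\mathsf{C}_{\{(n-1)',(n-1)''\}}(\Delta)$ between the two top types of $\Delta^b$. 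The first step is therefore purely bookkeeping: verify that ``$\dim(U\cap V)=n-2$ and $\dim(U^V)=n-1$'' collapses to ``$U,V$ meet in an $(n-2)$-space'' for $U,V$ in opposite MSS-families, which is immediate since two MSS of opposite type in a hyperbolic polar space either meet in an $(n-2)$-space or are disjoint.

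Next I would show how to reconstruct $\Delta^b$ from $\Gamma$ alone. The key observation is that the maximal cliques of $\Gamma$ (in the sense of maximal sets of pairwise adjacent vertices, necessarily lying in a single bipart's ``closed neighbourhood'' structure) together with the submaximal cliques recover the $(n-2)$- and $(n-3)$-spaces: for a fixed $(n-2)$-space $W$, the set $\{U\in\mathsf{M}_1 : W\subset U\}\cup\{V\in\mathsf{M}_2 : W\subset V\}$ is a complete bipartite subgraph, and these are exactly the maximal bicliques. From two such maximal bicliques one recovers incidence between $(n-2)$-spaces (``sharing a common MSS of each type'' — equivalently having an $(n-3)$-space in common), and then one descends through the types exactly as in the proof of Lemma~\ref{step1} and the lemma immediately following it (the one reconstructing $\mathsf{C}_{[i-1,i]}(\Delta)$ from $\mathsf{C}_{[i,i+1]}(\Delta)$), until the whole building $\Delta^b$ is recovered. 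Hence an automorphism $\sigma$ of $\Gamma$ extends to an automorphism $\overline\sigma$ of $\Delta^b$, and by the neighbourhood-separation argument in Section~\ref{specialcase} ($\mathsf{N}_\Gamma(I)=\mathsf{N}_\Gamma(I')$ iff $I=I'$) the extension is unique and restricts to $\sigma$ on $C_1\cup C_2$.

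Finally I would settle which automorphisms of $\Delta^b$ induce automorphisms of $\Gamma$. Since the vertex set of $\Gamma$ is the set of all MSS of $\Delta$ and adjacency is incidence in $\Delta^b$ between the two top types, \emph{every} automorphism of $\Delta^b$ preserves this relation; the only question is whether the bipartition $\{C_1,C_2\}$ is respected. A type-preserving automorphism fixes each of $\mathsf{M}_1,\mathsf{M}_2$ setwise, so it lies in $\Aut_c(\Gamma)$. A duality of $\Delta^b$ interchanges the two top types, hence swaps $\mathsf{M}_1$ and $\mathsf{M}_2$, so it induces an automorphism of $\Gamma$ exchanging the bipartition classes. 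A triality (when $n=4$) does \emph{not} preserve the set $\mathsf{M}_1\cup\mathsf{M}_2$ — it moves some MSS to points — so it does not act on the vertex set of $\Gamma$ at all; likewise a $t$-duality. Thus the automorphisms of $\Delta^b$ inducing automorphisms of $\Gamma$ are exactly the type-preserving ones and the dualities, as claimed. The main obstacle is the combinatorial reconstruction step (recovering $\Delta^b$ from the biclique/clique structure of $\Gamma$); but this is essentially a specialisation of arguments already carried out in Section~\ref{grassmann}, so it reduces to checking that those arguments apply verbatim when we start from the two top types of a $\mathsf{D}_n$-building rather than from a single Grassmann graph.
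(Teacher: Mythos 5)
Your overall plan (identify $\Gamma$ with the incidence graph between the two top types, reconstruct $\Delta^b$, then sort out which automorphisms of $\Delta^b$ act on $\Gamma$) is the right one, and your final paragraph on type-preserving automorphisms, dualities and trialities is correct. But the reconstruction step, as written, fails. In a hyperbolic polar space the residue of an $(n-2)$-space has rank $1$ and contains exactly two points, one of each type; so for a fixed $(n-2)$-space $W$ the set $\{U\in\mathsf{M}_1: W\subset U\}\cup\{V\in\mathsf{M}_2:W\subset V\}$ consists of a \emph{single} vertex on each side, i.e.\ it is one edge of $\Gamma$, not a maximal biclique. (The maximal complete bipartite subgraphs of $\Gamma$ in fact correspond to $(n-3)$-spaces: all MSS through a fixed $(n-3)$-space $X$ form a biclique, since in the grid $\Res_\Delta(X)$ every line of one regulus meets every line of the other.) The subsequent step is then vacuous: two \emph{distinct} $(n-2)$-spaces can never ``share a common MSS of each type'', because if $W,W'\subseteq U\cap V$ with $U\in\mathsf{M}_1$, $V\in\mathsf{M}_2$, then $W=W'=U\cap V$. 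So the incidence between $(n-2)$-spaces cannot be recovered the way you describe. A smaller symptom of the same issue is your claim that two MSS of opposite families meet in an $(n-2)$-space or are disjoint; in fact they meet in a subspace of dimension congruent to $n$ modulo $2$, so for $n\geq 4$ intermediate dimensions occur (for $n=4$ they always meet, in a plane or a point). Your identification of $\Gamma$ with the incidence graph is still correct, but for a different reason: once $\dim(I\cap J)=n-2$ one has $\proj_J(I)=I\cap J$ and hence $I^J=I$, so the condition $\mathsf{t}(I^J)=(n-1)'$ is automatic.

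The paper's own proof sidesteps all of this with one observation: restrict to the bipartition class $\Omega_{(n-1)'}$ and declare two of its elements adjacent when they are at distance two in $\Gamma$. A common neighbour $V\in\mathsf{M}_2$ forces $\dim(U\cap U')\geq n-3$, hence $=n-3$ by the parity constraint for same-type MSS, which is exactly the Grassmann graph $\mathsf{G}_{(n-1)'}(\Delta)$; Proposition~\ref{gras1} then finishes. If you want to keep your biclique route you would have to redo it with $(n-3)$-spaces in place of $(n-2)$-spaces and supply a genuine characterisation of incidence among them, which is more work than the distance-two argument for no gain.
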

\begin{proof}
Given $\Gamma$, we can construct the Grassmann graph $\mathsf{G}_{(n-1)'}(\Delta)$ by considering the bipartition class containing the $(n-1)'$-spaces and declaring two of them adjacent when they are at distance two in $\Gamma$. The proposition now follows from  Proposition~\ref{gras1}. 
\end{proof}

We now prove that any automorphism of $\mathsf{C}_{i,j}$ is induced by an element of $\Aut~\Delta$. The non-triviality of the graphs implies that $|i|\neq |j|$,  so as we assume $|i| \leq |j|$, we may assume $|i|<|j|$.
 
\begin{prop}\label{propk=i}
For all $i,j \in \mathsf{T}$ with  $|i| < |j|$, each automorphism of the graph $\mathsf{C}_{i,j}^n(\Delta)$ is induced by an element of  $\Aut~\Delta^b$. Moreover, each automorphisms of $\Delta^b$ inducing an automorphism of $\mathsf{C}_{i,j}^n(\Delta)$ is either type-preserving or a duality if $\Delta$ is hyperbolic and $|j|<n-1$, or, if $\Delta$ is of type $\mathsf{D}_4$, the automorphism can also be a $t$-duality if $\{i,j,t\}=\{0,3',3''\}$ (the biparts are switched) or a $t$-duality if $\{1,t\}=\{i,j\}$ (the biparts are not switched).\end{prop}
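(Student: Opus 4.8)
The plan is to reduce the statement about the bipartite incidence graph $\mathsf{C}_{i,j}^n(\Delta)$ to the Grassmann-graph characterisation already established in Proposition~\ref{gras1}, exactly as in the proof of Proposition~\ref{propind}. The key observation is that in $\mathsf{C}_{i,j}^n(\Delta)$ two $j$-spaces $J,J'$ lie at distance $2$ (through a common $i$-space) precisely when $\mathsf{t}(J\cap J')$ is large enough to contain an $i$-space, and among all such pairs the ones with $\dim(J\cap J')$ \emph{maximal} are exactly those adjacent in $\mathsf{G}'_j$ (or, when $|j|<n-1$, one can also reach $\mathsf{G}'_{j}$-adjacency by the standard clique bookkeeping). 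So the first step is: starting from an automorphism $\varphi$ of $\mathsf{C}_{i,j}^n(\Delta)$ that preserves the bipartition, restrict attention to the bipartition class $\Omega_j$ and reconstruct on it the graph $\mathsf{G}'_j$ purely in graph-theoretic terms (distance-$2$ pairs, then passing to the pairs realising the maximal intersection dimension via counting common neighbours). Since $\varphi$ is a graph automorphism it induces an automorphism of this reconstructed $\mathsf{G}'_j$, hence by Corollary~\ref{gras2} (applicable since $|j|\geq 1$ as $|i|<|j|$) an automorphism $\overline{\varphi}$ of $\Delta^b$, up to the exceptional dualities/trialities listed there.

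**Transferring the action to the $i$-spaces.** The second step is to check that $\overline{\varphi}$ really induces $\varphi$ on the \emph{whole} graph, i.e.\ also on $\Omega_i$, and not merely on $\Omega_j$. This is the argument already sketched in Section~\ref{specialcase}: in a nontrivial bipartite graph the neighbourhood map is injective on each bipart, $\mathsf{N}_\Gamma(I)=\mathsf{N}_\Gamma(I')\iff I=I'$, so the action of $\varphi$ on $\Omega_j$ determines its action on $\Omega_i$; and $\overline{\varphi}$, being an automorphism of $\Delta^b$, preserves incidence, hence preserves $\mathsf{N}$, hence agrees with $\varphi$ on $\Omega_i$ too. (Injectivity of $I\mapsto\mathsf{N}_\Gamma(I)$ needs a small verification: two distinct $i$-spaces are distinguished by some $j$-space incident with one but not the other, which holds because $|i|<|j|$ and any two distinct $i$-spaces can be extended differently.)

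**Pinning down which building automorphisms qualify.** The third step is the bookkeeping that converts the generic list in Corollary~\ref{gras2} into the precise list in the statement. If $\Delta$ is not hyperbolic, $\Aut(\Delta^b)=\Aut^o(\Delta^b)$ and there is nothing further to do. If $\Delta$ is hyperbolic and $|j|<n-1$ (so $j\notin\{(n-1)',(n-1)''\}$), a duality of $\Delta^b$ sends $i$- and $j$-spaces to $i$- and $j$-spaces and preserves incidence, hence descends to $\mathsf{C}_{i,j}^n(\Delta)$ while swapping nothing of the type set $\{i,j\}$ — wait, one must note whether $i,j$ are both $<n-1$; indeed under the hypothesis $|i|<|j|<n-1$ the biparts are \emph{not} switched by a duality (a duality only moves the maximal types), so the claim ``the biparts are switched'' in the statement must refer to the $\mathsf{D}_4$ $t$-duality case — I would restate carefully: for $n=4$, a $t$-duality ($t\in\{3',3''\}$) cyclically moves $0,3',3''$, so it preserves $\{i,j\}$ iff $\{i,j\}\subseteq\{0,3',3''\}$ with $\{i,j,t\}=\{0,3',3''\}$ (interchanging $0\leftrightarrow t'$, hence swapping the biparts when $\{i,j\}=\{0,t'\}$), and it fixes both types when one of $i,j$ equals $1$ and the other equals $t$ (type $1$ is fixed by every $t$-duality), giving the non-swapping case $\{i,j\}=\{1,t\}$. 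Conversely, every type-preserving automorphism of $\Delta^b$ obviously induces an automorphism of $\mathsf{C}_{i,j}^n(\Delta)$, and one checks that a duality or $t$-duality induces one \emph{only} in the enumerated situations by testing a single adjacent pair (an $i$-space contained in a $j$-space) and observing that otherwise the image pair has incompatible types.

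**Main obstacle.** The genuinely delicate part is not the reduction — that is routine given Proposition~\ref{gras1} — but the exhaustive and correct case analysis in the hyperbolic and especially the $\mathsf{D}_4$ situation: one must verify for each candidate outer automorphism of $\Delta^b$ exactly when it maps the pair of types $\{i,j\}$ to itself (so that it acts on $\mathsf{C}_{i,j}^n(\Delta)$ at all) and whether it swaps the biparts, and simultaneously confirm that no \emph{other} outer automorphism survives the reconstruction. This requires keeping straight the action of dualities and trialities on the type set $\{0,1,\dots,n-3,(n-1)',(n-1)''\}$ and matching it against the parametric constraints $|i|<|j|$ that make the graph nontrivial. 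Everything else follows formally from the Grassmann-graph results and the neighbourhood-injectivity principle already recorded in the excerpt.
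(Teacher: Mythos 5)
Your proposal is correct in substance and lands on the same skeleton as the paper's proof: reduce to the Grassmann-type graph $\mathsf{G}'_j$, invoke Corollary~\ref{gras2}, transfer the action to $\Omega_i$ via injectivity of neighbourhoods, and then do the type bookkeeping for the hyperbolic and $\mathsf{D}_4$ exceptions (your resolution of the ``biparts are switched'' annotation is exactly right). Where you genuinely differ is in how $\mathsf{G}'_j$ is extracted. You filter the distance-$2$ pairs in $\Omega_j$ directly by maximality of the common neighbourhood, in one step. The paper instead splits into two cases: when there is no type strictly between $i$ and $j$, distance $2$ already \emph{is} $\mathsf{G}'_j$-adjacency and no filtering is needed; when intermediate types exist, it builds the poset $P_v=\{\mathsf{N}(v,w_1,\ldots,w_t)\}$ of common-neighbour sets ordered by inclusion, recognises that maximal chains have length $j-i$ from \emph{either} bipart, reconstructs the whole incidence graph $\mathsf{C}_{[i,j]}^n(\Delta)$ on all intermediate types, and then restricts to two consecutive types to fall back on the first case. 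Your route is shorter; the paper's buys something you currently lack, namely symmetry in the two biparts.

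Two points to tighten. First, ``counting common neighbours'' cannot be meant literally: the polar space is infinite (cf.\ Remark~\ref{infinite}), so all these neighbour sets have the same cardinality. What works, and what the paper's poset argument actually uses, is comparison by \emph{inclusion}: for $|i|\geq 0$ one has $\mathsf{N}(J,J')\subseteq\mathsf{N}(J,J'')$ iff $J\cap J'\subseteq J\cap J''$, and the inclusion-maximal sets correspond exactly to $\mathsf{G}'_j$-adjacent pairs. Second, you restrict from the outset to bipartition-preserving automorphisms, but the proposition covers \emph{all} automorphisms, and bipart-switching ones genuinely occur (the $t$-duality with $\{i,j\}=\{0,t'\}$ in type $\mathsf{D}_4$). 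Your maximal-common-neighbourhood criterion reads differently on $\Omega_i$ (where it produces a graph on $i$-spaces governed by spans rather than intersections), so a bipart-switching automorphism is not handled by your step 1 as written; you would need either to compose with a known switching automorphism when one exists and to rule out switching automorphisms when none does, or to replace your filter by a construction that is manifestly symmetric in the two biparts --- which is precisely what the paper's poset $P_v$ achieves.
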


\begin{proof}
Put $\mathsf{C}=\mathsf{C}^n_{i,j}(\Delta)$. First assume that there is no type between $i$ and $j$, i.e., for no type $t$ there can be a $t$-space $T$ such that $I$, $J$ and $T$ are all incident (for clarity: this does include the case where $i=n-3$ and $j \in\{(n-1)',(n-1)''\}$). We define $\mathsf{C}'$ as a graph with vertex set $\Omega_j$ where two vertices are adjacent if they have a common neighbour in $\mathsf{C}$. Clearly,  $\mathsf{C}' \cong \mathsf{G}'_j$ and hence the assertion follows from Corollary~\ref{gras2}.

Next, assume there are types between $i$ and $j$. For any vertex $v$, let $\mathsf{B}(v)$ denote the set of vertices of $\mathsf{C}$ in the same bipart as $v$. Consider the poset $P_v=\{\mathsf{N}(v,w_1,\ldots,w_t) \,|\, w_1,\ldots,w_t \in \mathsf{B}(v), t\in\N\}$, ordered by inclusion. The length of a maximal chain in $P_v$ is precisely $j-i$, regardless of the bipart where $v$ is in. Indeed, an element in such a chain corresponds to a set of $i$-spaces or to a set of $j$-spaces incident with $v$ and some $m$-space, with $i \leq m \leq j$. We define $\mathsf{C}'$ as the graph having the elements of $P_v$ as vertices and adjacency given by containment made symmetric. Clearly, $\mathsf{C}' \cong \mathsf{C}_{[i,j]}^n(\Delta)$. Therefore it is clear that $\mathsf{C}'$ has a subgraph isomorphic to $\mathsf{C}^n_{j',j}(\Delta)$, where $j'$ is the biggest type smaller than $j$, which brings us back to the first case and hence concludes the proof. 
 \end{proof}
 
We now embark on the rest of the proof, with the extra conditions $-1 < k < \min\{|i|,|j|\}$ and, if $\Delta$ is hyperbolic, $|k| \neq n-2$. Note that the first condition implies $\min\{|i|,|j|\} \geq 1$ as $k \geq 0$.
 
\subsection{Properties of the round-up triples and quadruples} 
 
Let $\{J_1,J_2,J_3,J_4\}$ be a quadruple. We narrow down the possibilities for the mutual position of its members. We start by showing that at least one pair of them intersect in a subspace of dimension at least $k$ and then continue by proving that they all have one common intersection. These two steps are the crux of the proof. Though the intuitive idea behind them is easy, the proofs are quite long. We keep using the earlier introduced notation.

\begin{lemma}\label{dimdoorsnede} 
Up to renumbering, $\dim(J_1 \cap J_2) \geq k$.
\end{lemma}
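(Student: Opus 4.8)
The plan is to argue by contradiction: suppose that every pair $J_a, J_b$ (with $1 \le a < b \le 4$) intersects in a subspace of dimension strictly less than $k$, i.e.\ $\dim(J_a \cap J_b) \le k-1$ for all pairs. I would then exploit the construction of Section~\ref{constructie}: since every pair $\{J_a,J_b\}$ lies at distance $2$ in $\Gamma$ and has $s_{ab} := \dim(J_a\cap J_b) < k$, Construction~\ref{con} (applied with $x = s_{ab}$) produces $i$-spaces in $\mathsf{N}_{(s_{ab})}(J_a, J_b)$ whose intersection $\<K_1,K_2\>$ with $J_a \cup J_b$ has dimension exactly $k$, and crucially—by the $(K_1,K_2)$-avoiding property (Construction~\ref{con2}) together with the $(A\cap P)$-, $(A\setminus P)$- and $(B_1,B_2)$-avoiding properties (\ref{con4}--\ref{con9})—these $i$-spaces can be chosen so that they intersect any prescribed finite set of ``bad'' $j$-spaces in a subspace of dimension $< k$, provided no such bad $j$-space contains $S_{ab} = J_a \cap J_b$. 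The guiding idea is: take such an $I$ adjacent to $J_1$ and $J_2$ but, by the avoiding properties, \emph{not} adjacent to $J_3$ or $J_4$ (choosing $\mathcal F = \{J_3, J_4\}$); this contradicts the quadruple condition, which forces any vertex adjacent to two of the $J_a$ to be adjacent to at least three.

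The first step is therefore to set up the contradiction hypothesis and pick a pair, say $\{J_1,J_2\}$, and run the construction with $\mathcal F = \{J_3,J_4\}$. The avoiding properties give an $I \in \mathsf{N}(J_1,J_2)$ with $\dim(I \cap J_3), \dim(I\cap J_4) < k$, hence $I \not\sim J_3$ and $I\not\sim J_4$ — \emph{unless} one of the listed exceptional sub-cases occurs: either $S_{12} \subseteq J_3$ or $S_{12}\subseteq J_4$, or the parameters force $(p_c,q_c)_c = (p^*,b)$ (forcing $I$ to contain $P = \<S_{12},P_1,P_2\>$), or the various hyperbolic exceptions where a dimension can rise by one, or the exceptional configurations in Construction~\ref{con4} where $\dim(F\cap P) = p^*+s+1$. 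Each of these exceptional cases must then be analysed separately, and this is where the bulk of the work (and the main obstacle) lies. The key recurring tool for these cases is Lemma~\ref{lem0}: if some $j$-space, say $J_3$, cannot be avoided because every line from a point of $I$ through (the relevant part of) $J_1$ or $J_2$ meets $J_3$, then Lemma~\ref{lem0} pins down $\dim$ of the relevant intersections to be maximal and forces $J_3$ into a span $\<p, U\>$, which in turn forces $\dim(J_1\cap J_3)$ or $\dim(J_2\cap J_3)$ to be large — eventually $\ge k$ — contradicting the hypothesis; Lemma~\ref{opp} plays the analogous role when the relevant subspaces are mutually opposite (the case $s = -1$, $k = 0$-adjacent configurations).

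The main obstacle I expect is the bookkeeping in the exceptional sub-cases, particularly the one where the construction is forced to produce $i$-spaces all containing a fixed subspace $P$ (or $\<S,P_1,P_2\>$), because then avoiding $J_3$ and $J_4$ is genuinely impossible and one must instead derive a contradiction directly from the containment: if every $I \in \mathsf{N}(J_1,J_2)$ contains $P$, then $P$ itself is collinear with (and related appropriately to) both $J_1$ and $J_2$, which constrains $s_{12}, p^*, q^*$ severely, and one shows that under these constraints some pair among $\{J_1,J_2,J_3,J_4\}$ must after all intersect in dimension $\ge k$, or else the quadruple axioms (nonemptiness of $\mathsf{N}(J_a,J_b,J_c)\setminus\mathsf{N}(J_1,J_2,J_3,J_4)$ for every permutation) are violated. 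The hyperbolic exceptions (Constructions~\ref{con8}, \ref{con9}) require the extra observation that a one-unit rise in intersection dimension still keeps us below $k$ as long as $k \ge 1$, which holds here since $k \ge 0$ and $k < \min\{|i|,|j|\}$ forces $\min\{|i|,|j|\}\ge 1$; a genuinely separate short argument is needed only when $|i|=|j|=n-1$ in the hyperbolic case, handled via Lemma~\ref{opp} applied to MSS. Assembling these, in every case we reach a contradiction, so after renumbering $\dim(J_1\cap J_2)\ge k$.
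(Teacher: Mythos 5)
Your overall strategy is the paper's: assume for contradiction that the maximal pairwise intersection has dimension $s<k$, run Construction~\ref{con} on a suitable pair with $\mathcal F=\{J_3,J_4\}$, and use the avoiding properties to produce an $I\in\mathsf N(J_1,J_2)$ with $\dim(I\cap J_3),\dim(I\cap J_4)<k$, contradicting the quadruple definition; the paper's ``Case 0'' is exactly this. However, the two exceptional cases, which make up most of the paper's proof, are not actually resolved by your sketch, and the tool you propose for them would fail. In the case where $(p_c,q_c)_c=(p^*,b)$ is the only allowed value, so that every $I\in\mathsf N(J_1,J_2)$ contains $P=\<S,P_1,P_2\>$, the contradiction is \emph{not} that some pairwise intersection is forced up to dimension $\geq k$ (under the standing hypotheses one still has $\dim(J_1\cap J_3)=s<k$, and every constructed $I$ meets $J_3$ in exactly a $k$-space inside $\<K_1,K_2\>$). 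Lemma~\ref{lem0} is therefore not the relevant instrument here; the paper instead first proves $P_{13}=\<\proj_{J_1}(J_3),\proj_{J_3}(J_1)\>=P$, passes to $\Res_\Delta(P)$ where $J_1,\dots,J_4$ become $q^*$-spaces $Q_1,\dots,Q_4$, and splits on $q^*>2b+1$, $q^*=2b+1$, $b=-1$, using a regulus-type dimension count on the spans $\<B_1,B_2\>$ (and Lemma~\ref{opp} when $b=-1$) to conclude that $\mathsf N(J_1,J_2)\setminus\mathsf N(J_3)=\emptyset$, which is what violates the quadruple axioms. None of this is recoverable from the assertion that the constraints on $s,p^*,q^*$ are ``severe''. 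Similarly, in the case $S\subseteq J_3$ the paper does not invoke Lemma~\ref{lem0} but varies the allowed pair $(K_1,K_2)$ to $(K_1,K_2')$ with $\dim(K_2\cap K_2')=k-1$ and derives $\dim(\<K_2,K_2'\>\cap\<K_3,K_3'\>)>s$ inside $\<K_1,K_2,K_2'\>$, contradicting $J_2\cap J_3=S$.

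A second, smaller error: your justification that $k\geq1$ in the hyperbolic case with $|i|=|j|=n-1$ does not follow from ``$k\geq0$ and $k<\min\{|i|,|j|\}$'' (that only gives $\min\{|i|,|j|\}\geq1$). The paper obtains $k>0$ there by a separate reduction: for $k=0$ the graphs $\Gamma_0^\ell=\Gamma_0$ are isomorphic to the bipartite complement of $\Gamma_{\geq2}$ (and $\Gamma_{\geq0}$ is trivial or replaced by $\Gamma_{\geq1}$), so that case is excluded beforehand. Without this, the one-unit parity rise in Avoiding Property~\ref{con9} cannot be absorbed.
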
 
 
\begin{proof} Renumbering if necessary, the dimension $s$ of $S=J_1 \cap J_2$ is maximal amongst the dimensions of the intersections of all distinct pairs of the quadruple. By way of contradiction, suppose $s<k$. Let $c$ denote $1$ and $2$ again. 
According to property~\ref{con2}, there are allowed $k$-spaces $K_1$ and $K_2$ such that $\dim(\<K_1,K_2\> \cap F) <k$ for each $F \in \mathcal{F}:=\{J_3,J_4\}$ (\textbf{case 0}) \emph{unless} either  $(p_c,q_c)_c=(p^*,b)$ (\textbf{case 1}) or $S \subseteq F$ for some $F \in \mathcal{F}$ (\textbf{case 2}), as in those cases possibly  $\dim(\<K_1,K_2\> \cap F) =k$ for all choices of $K_1$ and $K_2$ (note that $\dim(\<K_1,K_2\> \cap F) >k$ is not possible since $\dim(F \cap K_c) \leq \dim(F \cap J_c) \leq s$ by assumption). Of course, case 1 only yields problems if $\Gamma=\Gamma_k^\ell$, as it involves the parameter $b$ which is only relevant in this case. The reader should keep this in mind, we will not make an explicit distinction between the three types of graphs during this proof. For clarity, the end of each case is marked by a black square. 

\par\medskip
\textbf{\boldmath Case 0: There exist $K_1$ and $K_2$ such that $\dim(\<K_1,K_2\> \cap F) <k$ for all $F \in \mathcal{F}$. \unboldmath}
Following Construction~\ref{con}, we below construct an $i$-space $I \in \mathsf{N}(J_1,J_2)$ with $\<K_1,K_2\>\subseteq I$ for which $\dim(I \cap F)<k$ for all $F \in \mathcal{F}$ (note that $k \geq 0$). Then $I$ would be adjacent to exactly two members of the quadruple, a contradiction to  the latter's definition.

Since $\dim(\<K_1,K_2\> \cap F)<k$ for all $F \in \mathcal{F}$, Property~\ref{con4}$(ii)$ says that we can choose $A^- \cap P$ such that $\dim(\<K_1,K_2, A^-\cap P\> \cap F) <k$ for each $F \in \mathcal{F}$, unless $a^p=p^*-p_2-1 \geq 0$, $-1 \in \{b,q^*\}$ and $\dim(F \cap P)=p^*+s+1$. This, however, is no problem: if $|\ell| < n-1$ we can choose $a^p < p^*-p_2-1$; if $|\ell|=n-1$ then $|i|=|j|=n-1$, in which case $a=-1$ (so $A^- \cap P$ is empty). 

Next, Properties~\ref{con5} and~\ref{con7} state that we can choose the remaining parts of $A_1,A_2,B_1,B_2$ such that for the resulting $i$-space $I:=\<K_1,K_2,A_1,A_2,B_1,B_2\>$ holds that $\dim(I \cap F) = \dim(\<K_1,K_2,A^-\cap P\> \cap F)$  for all  $F \in \mathcal{F}$, except when $\Delta$ is hyperbolic and we are in one of the below situations, in which possibly $\dim(I \cap F) = \dim(\<K_1,K_2, A^- \cap P\> \cap F) +1$ for some $F \in \mathcal{F}$.
\begin{itemize}
\item \textit{Case 0.1:  $\dim({J_1}^{J_2})=n-2$ and $\cod_{A^-}(A^-\cap P)=0$.} In this case, it is the selection of the part of $A^-$ outside $P$ that could cause a problem. Suppose $F \in \mathcal{F}$ is such that $\dim(\<K_1,K_2,A^-\>\cap F)=\dim(\<K_1,K_2,A^- \cap P\> \cap F)+1$. According to Property~\ref{con8}, then $\dim({J_1}^{J_2})=j+p^*+1=n-2$ and $\dim(\<P,A^- \setminus P\> \cap F)=p^*+s+2$, so in particular, $\dim(P \cap F)=p^*+s+1$. Then a dimension argument implies that $\dim(F \cap J_1) \geq s$, so the maximality of $s$ implies  $\dim(F \cap J_1)=s$.  Note that $\<P,A^-\setminus P\> \cap F$ is a $(p^*+s+2)$-space collinear with $J_1$, which implies that $\dim(J_1^F)=j+(p^*+1)+1=n-1$. Hence we can work with the pair $(J_1,F)$ instead, without ending up in Case 0.1 again (minor remark: later in this proof we sometimes switch the roles of the $j$-spaces again, but $\dim(J_1^F) =j+p^*+2$ will assure us that we can keep working with this pair). 

\item \textit{Case 0.2: $|i|=|j|=n-1$.} Since $a=-1$, it is only the selection of $B^-$ which could be a problem. 
First note that, as mentioned in Section 3, the graphs $\Gamma_0^\ell$ and $\Gamma_0$ are equal and, as we assume they are non-empty, they are isomorphic to $\overline{\Gamma_{\geq 2}}$, which we work with instead; moreover, if $\Gamma_{\geq 0}$ contains no adjacent pair $(I,J)$ with $\dim(I \cap J) = 0$, then we agreed to work with $\Gamma_{\geq 1}$ instead, and if it does contain such a pair then $\Gamma_{\geq 0}$ is a complete bipartite graph, which we excluded. Hence we may suppose that $k > 0$. This enables us to choose a hyperplane $H$ of $\<K_1,K_2\>$ such that $\dim(H \cap F) < k-1$ for all $F \in \mathcal{F}$.  Then we choose $B^-$ such that $\dim(\<H,B^-\> \cap F) = \dim(H \cap F)$. Let $I$ be the unique $i$-space through the $(n-2)$-space $\<H,B^-\>$. As $\<H,B^-\>$ is a hyperplane of $I$ and since the parity of the dimensions of intersection is fixed, it is easily verified that $\dim(I \cap J_c)=k$ and $\dim(I \cap F) \leq k-2$.

\end{itemize}
We obtained an $i$-space $I \in \mathsf{N}(J_1,J_2)$ with $\dim(I \cap F) < k$ for all $F \in \mathcal{F}$, and as explained in the beginning of this case, this is a contradiction.
\hfill $\blacksquare$
\par\medskip
For the sequel of this proof we may thus assume that for every pair of allowed $k$-spaces $K_1$, $K_2$ holds that $\dim(\<K_1,K_2\> \cap F) = k$ for some $F \in \mathcal{F}$, likewise for any permutation $\sigma$ of $\{1,2,3,4\}$ for which $\dim(J_{\sigma(1)} \cap J_{\sigma(2)})=s$ (the permutation also affects $\mathcal{F}$ of course).

Note that either all pairs of $j$-spaces of the quadruple intersecting  in an $s$-space are such that the only allowed values are $(p^*,b)$, or there is a pair, say $(J_1,J_2)$, for which there are allowed values $(p_c,q_c)_c \neq (p^*,b)$. In the latter situation we will suppose that we are in Case 2, since clearly Case 1 is not applicable and Case 0 is excluded by the previous paragraph. Therefore, when we are in Case 1, we may assume the first situation occurs. 

\par\bigskip
\textbf{\boldmath Case 1: Suppose, for every pair of distinct $j$-spaces $J_e$, $J_{e'}$ from the quadruple, that \emph{if} $\dim(J_e \cap J_{e'})=s$, then $(p_c,q_c)_c=(p^*,b)$ for $c\in\{e,e'\}$ (which implies $\dim({J_e}^{J_{e'}})=j+p^*+1$). \unboldmath} Consider the pair $(J_1,J_2)$, from which we know that $\dim(J_1 \cap J_2)=s$ and hence $(p_c,q_c)_c=(p^*,b)$. This implies that each $I \in \mathsf{N}(J_1,J_2)$ contains $P=\<S, P_1, P_2\>$ (cf.\ Property~\ref{con2}). Let $K_1$ and $K_2$ be any pair of allowed $k$-spaces. Analogously as in Case 0 and using the fact that $A^- \cap P$ is empty now (since $p_1=p_2=p^*$), we can take an $i$-space $I \in \mathsf{N}(J_1,J_2)$ through $\<K_1,K_2\>$ such that $\<K_1,K_2\> \cap F=I \cap F$ for all $F \in \mathcal{F}$: in Case 0.1, we obtain that the pair $(J_1,F)$ is such that $\dim(J_1 \cap F)=s$ and $\dim(J_1^F)=\dim({J_1}^{J_2}) +1=j+p^*+2$, contradicting our assumption; in Case 0.2 it is the selection of $B^-$ which is a problem whereas in the current case, $B^-$ is empty since $b=q_1$, so this does not occur.  Since $I$ is adjacent to $J_1$ and $J_2$, the definition of a quadruple implies that $I$ is adjacent to $J_3$ or $J_4$ as well. Suppose $I \sim J_3$. This has the following consequences.
\begin{itemize}
\item[(A)] By construction, $I \cap J_3$ is a $k$-space $K_3$ contained in $\<K_1,K_2\>$. Since $\dim(K_1 \cap K_2) =s$, we know that $\dim(K_1 \cap K_3) \geq s$, whereas $\dim(K_1 \cap K_3 )\leq \dim(J_1 \cap J_3) \leq s$ by assumption. We conclude that $K_1 \cap K_3=J_1 \cap J_3$ and has dimension $s$. By our assumption, $\dim(\proj_{J_1}(J_3)) = p^*+s+1$.

\item[(B)] Noting that $K_3 \cap J_1^\perp = K_3 \cap {J_1}^{J_2}$, we see that $\cod_{K_3}(K_3 \cap {J_1}^{J_2}) \leq b$ (as $I \sim J_1$, $I$ cannot contain a subspace of dimension bigger than $b$ which is semi-opposite $J_1$). Hence $\dim(K_3 \cap {J_1}^{J_2}) \geq p^*+s+1$. However, $K_3 \cap {J_1}^{J_2} \subseteq J_3 \cap {J_1}^{J_2} \subseteq  \proj_{J_3}(J_1)$, and the latter's dimension is $p^*+s+1$. We conclude that $K_3 \cap {J_1}^{J_2} =  J_3 \cap {J_1}^{J_2} = \proj_{J_3}(J_1)$ and that  $\cod_{J_3 \cap P}(J_3 \cap {J_1}^{J_2}) =b$; likewise with the indices $1$ and $2$ switched. 
\end{itemize}

Put $P_{1e}=\<\proj_{J_1}(J_e),\proj_{J_e}(J_1)\>$ for $e\in\{3,4\}$.  We show that $P_{13}=P$ (possibly by changing the roles of $J_3$ and $J_4$ -- which is only sensible to do if (A) and (B) also apply when $J_3$ is replaced by $J_4$). By (A), we already know that $\dim(P_{13})=\dim(P)$, so if we can show that $\proj_{J_1}(J_3) \cup \proj_{J_3}(J_1) \subseteq P$, then $P_{13}=P$. By definition, $\proj_{J_1}(J_3)$ is contained in $J_1$, so it belongs to $P$ if and only if it belongs to $\<S,P_1\>$. Furthermore, in the above we deduced that $\proj_{J_3}(J_1)$ is contained in ${J_1}^{J_2}(=\<J_1,P_2\>)$, so this subspace belongs to $P$ if and only if $\<\proj_{J_3}(J_1),P_2\> \cap J_1 \subseteq \<S,P_1\>$. So we try to show that $\proj_{J_1}(J_3) \cup (\<\proj_{J_3}(J_1),P_2\> \cap J_1) \subseteq \<S,P_1\>$. Observe that, since $I \in \mathsf{N}(J_1,J_2,J_3)$ and $(J_1,J_3)$ has the same mutual position as $(J_1,J_2)$, it follows as in the beginning of the proof that $P_{13}\subseteq I$ and hence we also know that $\proj_{J_1}(J_3) \subseteq K_1$ and $\proj_{J_3}(J_1) \subseteq K_3$ so also $\<\proj_{J_3}(J_1),P_2\> \cap J_1 \subseteq K_1$. 
\begin{itemize}
\item Firstly, let $b=-1$. Then $k=p^*+s+1$, and hence $\proj_{J_1}(J_3)=K_1$ and $\proj_{J_3}(J_1)=K_3$ by the above observation and equality of dimensions. Since $K_3 \subseteq \<K_1,K_2\>=P$ in this case, $P=P_{13}$ indeed.

\item Next, suppose $b \geq 0$. Put $X=\proj_{J_1}(J_3) \cup (\<\proj_{J_3}(J_1),P_2\> \cap J_1)$ and suppose for a contradiction that $X \nsubseteq \<S,P_1\>$. From the above observation we know $X \subseteq K_1$. Yet, using property $\mathsf{(SQ)}$, we can choose other $k$-spaces $K'_1$ and $K'_2$ such that $X \nsubseteq K'_1$. Similarly as before, we take an $i$-space $I' \in \mathsf{N}(J_1,J_2)$ through $\<K'_1,K'_2\>$ such that $\<K'_1,K'_2\> \cap F=I' \cap F$ for all $F \in \mathcal{F}$. Since $X \nsubseteq K'_1$, $I'$ is not adjacent to $J_3$, hence $I' \sim J_4$ and the above conclusions also hold for $J_4$. In particular, $X':=\proj_{J_1}(J_4) \cup (\<\proj_{J_4}(J_1),P_2\> \cap J_1) \subseteq K'_1$. Either $X'\subseteq \<S,P_1\>$ and then $P_{14}=P$, or $X' \nsubseteq \<S,P_1\>$ and then we can again choose $k$-spaces $K''_1$ and $K''_2$ such that $K''_1$ contains neither $X$ nor $X'$. This however leads to a contradiction, as a corresponding $i$-space $I''$ would not be adjacent to $J_3$, neither to $J_4$. The assertion follows.
\end{itemize}
Note that, even when both $J_3$ and $J_4$ satisfy (A) and (B), we cannot (yet) conclude that $P=P_{13}=P_{14}$.  

We obtain that, in $\Res_\Delta(P)$, $J_1$, $J_2$ and $J_3$ correspond to $q^*$-spaces $Q_1$, $Q_2$ and $Q_3$, with $Q_2$ and $Q_3$ opposite $Q_1$. As the notation suggests, we can identify $Q_1$ and $Q_2$ in $\Res_\Delta(P)$ with $Q_1$ and $Q_2$ in $\Delta$. Recall that $q^* \geq q_1+q_2+1 = 2b+1$. We distinguish the following three cases.
\par\medskip
$\bullet$  \textit{Suppose $q^* >2b+1$ and $b \geq 0$.} Let $K_1$ and $K_2$ be allowed $k$-spaces.  In $\Res_{\Delta}(P)$, $K_1$ and $K_2$ correspond to collinear $b$-spaces $B_1 \subseteq Q_1$ and $B_2\subseteq Q_2$.  First suppose that $\dim(\<K_1,K^*_2\> \cap J_3) =k$ for all $k$-spaces $K^*_2$ in $J_2$ such that $(K_1,K^*_2)$ are allowed $k$-spaces. Note that, in $\Res_\Delta(P)$, any $b$-space $B^*_2$ in $Q_2$ collinear with $B_1$ would yield a $k$-space $K^*_2$ such that $(K_1,K^*_2)$ are allowed $k$-spaces. So let $B'_2$ be a $b$-space in $Q_2$ collinear with $B_1$ and intersecting $B_2$ in a $(b-1)$-space (here we use $q^* > 2b+1$ and $b \geq 0$). Then $\<B_1,B_2\> \cap J_3$ and $\<B_1,B'_2\> \cap J_3$ are $b$-spaces $B_3$ and $B'_3$ in $Q_3$, respectively. The subspaces $\<B_2,B'_2\>$ and $\<B_3,B'_3\>$ have dimension (at least) $(b+1)$ and are contained in the $(2b+2)$-space $\<B_1,B_2,B'_2\>$, implying that $\dim(\<B_2,B'_2\> \cap \<B_3,B'_3\>)  \geq 0$. This however contradicts $Q_2 \cap Q_3= \emptyset$. 

Next suppose that there is a $k$-space $K^*_2$ such that $(K_1,K^*_2)$ are allowed $k$-spaces for which $\dim(\<K_1,K^*_2\> \cap J_3)<k$, and so $\dim(\<K_1,K^*_2\> \cap J_4) =k$ (as otherwise we are back to Case 0). We claim that $P=P_{14}$. If not, then $X'=\proj_{J_1}(J_4) \cup (\<\proj_{J_4}(J_1),P_2\>\cap J_1) \nsubseteq \<S,P_1\>$, according to the paragraph in which we showed $P=P_{13}$. But then we could re-choose $K_1$ such that $X' \nsubseteq K_1$, which would imply that $\dim(\<K_1,K^*_2\> \cap J_4)<k$, and hence $\dim(\<K_1,K^*_2\> \cap J_3)=k$ again, for all $k$-spaces $K^*_2$ in $J_2$ for which $(K_1,K^*_2)$ are allowed $k$-spaces, bringing us back to the previous paragraph. Hence indeed $P_{14}=P$, and thus $J_4$ plays the same role w.r.t. $J_1$ and $J_2$ as $J_3$. We extend our reasoning of the previous paragraph. Let $Q_4$ be the $q^*$-space in $\Res_\Delta(P)$ corresponding to $J_4$. Since $J_4$ plays the same role as $J_3$, $Q_4$ is also opposite $Q_1$. In $\Res_\Delta(P)$, we now take a third $b$-space $B''_2$ through $B_2 \cap B'_2$ that is collinear with $B_1$. We know that $\<B_1,B_2\>$, $\<B_1,B'_2\>$ and $\<B_1,B''_2\>$ intersect $Q_3$ or $Q_4$ in respective $b$-spaces $B$, $B'$, $B''$. Since $J_3$ and $J_4$ play the same role, we may assume that at least two of those $b$-spaces are contained in $Q_3$. We then obtain the same contradiction as in the previous paragraph.

We conclude that $q^*=2b+1$ or $b=-1$.

$\bullet$  \textit{Suppose $q^* =2b+1$ and $b \geq 0$.} We claim that, for all allowed $k$-spaces $K_1$ and $K_2$,  $\<K_1,K_2\> \cap J_3$ is a $k$-space. Suppose first that $P \neq P_{14}$ and suppose for a contradiction that there are $k$-spaces $K^*_1$ and $K^*_2$ such that $\dim(\<K^*_1,K^*_2\> \cap J_3)<k$. Then $\dim(\<K^*_1,K^*_2\> \cap J_4)=k$ (otherwise we are back in Case 0) and, as in the beginning of Case 1, we can again deduce consequences (A) and (B) with $J_3$ replaced by $J_4$. Recall that we then have $X':=\proj_{J_1}(J_4) \cup (\<\proj_{J_4}(J_1),P_2\> \cap J_1)\subseteq K^*_1$. As before, when we proved $P=P_{13}$, $P \neq P_{14}$ is equivalent with $X' \nsubseteq \<S,P_1\>$. Consequently, there are (many) $k$-spaces $K_1$ with $X' \nsubseteq K_1$, and for every pair of $k$-spaces $K_1$ and $K_2$ with $X' \nsubseteq K_1$, we know that $\<K_1,K_2\> \cap J_3$ is a $k$-space $K_3$. In $\Res_\Delta(P)$, we obtain that, for \emph{every} $b$-space $B_1$ disjoint from $B^*_1$ and for $B_2 = \proj_{Q_2}(B_1)$, the subspace $\<B_1,B_2\>$ intersects $Q_3$ in a $b$-space $B_3$. But then one can verify that also $\<B^*_1,B^*_2\> \cap Q_3$ has to be a $b$-space, contradicting $\dim(\<K^*_1,K^*_2\> \cap J_3) <k$. Next, suppose that $P=P_{14}$. Then $J_3$ and $J_4$ play the same role and in $\Res_{\Delta}(P)$, $J_4$ corresponds to a $q^*$-space $Q_4$ opposite $Q_1$. Moreover, since $J_2$, $J_3$ and $J_4$ play the same role w.r.t.\ $J_1$, we may assume that each pair of collinear $b$-spaces in $Q_1$ and $Q_e$, for $e=2,3,4$ generates a subspace intersecting at least one of the two remaining $q^*$-spaces in a $b$-space. Let $B_1$ be any $b$-space in $Q_1$ and put $B_e = \proj_{Q_e}(B_1)$. Then we may suppose that $\<B_1,B_2\> \cap Q_3 = B_3$. But then we may also assume that $\<B_1,B_4\> \cap Q_2 = B_2$, from which it follows that $\<B_1,B_2\>$ intersects \emph{both} $Q_3$ and $Q_4$ in a $b$-space. Our claim is shown.

Let $I$ be any $i$-space adjacent to $J_1$ and $J_2$. We show that $I \sim J_3$. First note that $q^*=2b+1$ implies $i=(s+2p^*+2b+4)+(a-p^*-1)+1 = j + a + 1 \geq j$. Our convention on $i$ and $j$ yields that either $a=-1$ or $i > j$. In both cases, $I \sim J_3$ if and only if $\dim(I \cap J_3)=k$ and $J_3 \setminus I$ contains no points collinear to $I$. Put $K_1=I \cap J_1$ and $K_2=I\cap J_2$. By the previous paragraph, we obtain that $\<K_1,K_2\> \cap J_3$ is a $k$-space $K_3$. Moreover, looking in $\Res_\Delta(P)$ again, it is easily seen that $J_3\setminus K_3$ cannot contain points of $I$, nor points collinear to $I$, since those points would be points of $Q_3 \setminus B_3$ collinear with $B_1$, contradicting $B_3=\proj_{Q_3}(B_1)$. But then $\mathsf{N}(J_1,J_2)\setminus\mathsf{N}(J_3)=\emptyset$, contradicting the definition of a quadruple. This case is ruled out as well.

$\bullet$ \textit{Suppose $b=-1$.} We show that $J_4$ plays the same role w.r.t.\ $J_1$ and $J_2$ as $J_3$, i.e., $\dim(J_1 \cap J_4)=s$ and $P=P_{14}$. We first claim that $\dim(J_4 \cap P)=k=p^*+s+1$. By way of contradiction, suppose $\dim(J_4 \cap P) < k$. 
The definition of a quadruple yields an $i$-space $I \in \mathsf{N}(J_1,J_2,J_4)\setminus\mathsf{N}(J_3)$, which necessarily contains $P$. In particular, $I$ contains $P \cap J_3$, which has dimension $p^*+s+1=k$. Since no point of $J_3 \setminus P$ is collinear with $J_1$ and $I \perp J_1$ because $b=-1$,  we have $I \cap J_3 = P \cap J_3$. Hence $\dim(J_3 \cap I)=k$. However, $I \nsim J_3$, so there has to be a point $p_3 \notin P$  which is collinear to $J_1$ and $J_2$ but not to $J_3$. Note that $a-p^*-1 > -1$, since if $a=p^*$ then, together with $b=-1$, this yields $I=P$, but then $P$ would be the only element in $\mathsf{N}(J_1,J_2)$, contradicting the definition of a quadruple.  So let $A$ be an $(a-p^*-2)$-space collinear with $J'_1:=\<J_1,p_3\>$ and $J'_2:=\<J_2,p_3\>$ such that $\dim(\<P,p_3\> \cap J_4) = \dim(\<P,p_3,A\> \cap J_4)$ (note that $A_c = \<P_{c'}, A, p_3\>$ then) and that such a subspace exists by Property~\ref{con5}, even if $\Delta$ is hyperbolic: if $\dim({J'_1}^{J'_2})=n-2$ and $\dim(A)=0$, then, since $p_3 \notin P$,  $\dim({J_1}^{J_2})=|j|+p^*+1=n-3$ and $a=\dim(P_{c'})+\dim(A)+\dim(p_3)+2=p^*+2$, and as $a \geq 0$, $|\ell|=|j|+a+1$, which is at its turn equal to $|j| + p^*+3 = n-1$, contradicting $|\ell|<n-1$ when $|j|<n-1$). Put $I_{p_3} := \<P,p_3,A\>$. By construction, $J_1 \sim I_{p_3} \sim J_2$. As $p_3 \in I_{p_3}$ and $p_3 \notin J_3^\perp$, necessarily $I_{p_3} \nsim J_3$, so the definition of a quadruple implies that $I_{p_3} \sim J_4$. Consequently, $J_4 \cap I_{p_3}$ is a $k$-space $K_4$ contained in $\<P,p_3\>$. Our assumption on $\dim(J_4 \cap P)$ implies that $\dim(K_4 \cap P)=k-1$, hence $K_4 \setminus P$ contains a point $q_3$ collinear with $J_1$ and $J_2$ and non-collinear with $J_3$. Likewise, there exists a point $q_2 \in J_4$ which is collinear to $J_1$ and $J_3$ but not to $J_2$. On the line $q_2q_3$, any point $q$ distinct from $q_2$ and $q_3$ is collinear with $J_1$, but not with $J_2$ nor with $J_3$. Now take an $i$-space $I_q \in \mathsf{N}(J_1,J_4)$ through $J_1 \cap J_4$ and $q$ (note that $q \in \proj_{J_4}(J_1)$). But then $I_q$ is not adjacent to $J_2$, neither to $J_3$. This contradiction shows the claim. Then, since $\dim(J_4 \cap P) = p^*+s+1$, we also have $\dim(J_1 \cap J_4)=s$, so by our assumptions we know $\dim({J_1}^{J_4})=j+p^*+1$, or equivalently, $\dim(\proj_{J_4}(J_1))=p^*+s+1=k$. Now any $i$-space $I$ adjacent with $J_1$, $J_2$ and $J_4$ contains $P$ and is collinear with $J_1$ and $J_4$. Consequently, $J_4 \cap P \perp J_1$ and $\<S,P_1\> \perp J_4$, so $J_4 \cap P \subseteq \proj_{J_4}(J_1)$ and  $\<S,P_1\> \subseteq \proj_{J_1}(J_4)$, respectively. As those subspaces are all $k$-dimensional, inclusion is in fact equality and $P_{14}=P$ follows.

Furthermore, each point $p\notin P$ collinear with $J_1$ and $J_2$ has to be collinear with $J_3$ or $J_4$, for otherwise we could find an $i$-space $\<P,p,A\>$ like in the previous paragraph which is not adjacent to $J_3$ nor to $J_4$, a contradiction.  Applying Lemma~\ref{opp} in $\Res_\Delta(P)$ on the respective subspaces corresponding to $J_1$, $J_2$, $J_3$ and $J_4$, it follows from Lemma~\ref{opp}, $p$ in fact has to be collinear with \emph{both} $J_3$ and $J_4$. Let $I$ be any member of  $\mathsf{N}(J_1,J_2)$. Then $I$ shares exactly a $k$-space with each of the four $j$-spaces, since it cannot contain points of $J_2\setminus P$, $J_3\setminus P$ or $J_4\setminus P$. As each point collinear with $J_1$ and $J_2$ is also collinear with $J_3$ and $J_4$, both are adjacent with $I$. We obtain the same contradiction as before: $\mathsf{N}(J_1,J_2)\setminus\mathsf{N}(J_3)=\emptyset$. \hfill $\blacksquare$
\par\medskip
One case remains. By Case 1 we may assume that there is a pair of $j$-spaces in the quadruple, without loss $J_1$ and $J_2$, such that $\dim(J_1 \cap J_2) = s$ and with allowed values $(p_c,q_c)_c \neq (p^*,b)$ (note that if there are allowed values distinct from $(p^*,b)$, then $(p^*,b)$ cannot be an allowed value).
\par\medskip
\textbf{\boldmath Case 2: suppose $J_3$ contains $J_1 \cap J_2$ and that there are allowed values $(p_c,q_c)_c \neq (p^*,b)$. \unboldmath} 
%The fact that Case 0 does not apply here means that for each valid pair of $k$-spaces $K_1$ and $K_2$ there is an $F \in \mathcal{F}$ with  $\dim(\<K_1,K_2\> \cap F) =k$. The fact that Case 1 does not apply means, by the above made remark: for each pair $(J_e,J_d)$ of distinct elements of the quadruple with the property that $\dim(J_e \cap J_d) \geq \dim(J_e \cap J')$ with $J' \in \{J_1,J_2,J_3,J_4\}\setminus\{J_e,J_d\}$, we have that if the corresponding values $(p_e,q_e)_e = (\dim({J_e}^{J_d})-j-1,b)$, then for at least one $J$ in  $\{J \in \{J_1,J_2,J_3,J_4\} : \dim(J_e \cap J_d)=\dim(J_e \cap J)\}$ we have $\dim({J_e}^J) > \dim({J_e}^{J_d})-j-1$. 
The maximality of $s$ implies that $J_1 \cap J_2 = J_2 \cap J_3 = J_3 \cap J_1$. 

Suppose first that $S \subsetneq J_4$. Then either $\dim(\<K_1,K_2\> \cap J_3) =k$ for all allowed $k$-spaces $K_1$ and $K_2$, or there is a pair for which $\dim(\<K_1,K_2\> \cap J_4)=k$, in which case we know that $\dim(K_c \cap J_4)=\dim(J_c \cap J_4)=s$. Since $(p_c,q_c)_c \neq (p^*,b)$ and $S \subsetneq J_4$, we can re-choose $K_1$ and $K_2$ such that they still are a pair of allowed $k$-spaces but now with $\dim(K_c \cap J_4) < s$. If we fix $K_1$ and take another $k$-space $K_2$ in $J_2$ for which $(K_1,K_2)$ is an allowed pair of $k$-spaces, we have that $\dim(\<K_1,K_2\> \cap J_3)=k$, likewise if we fix $K_2$ and vary the $k$-space $K_1$ in $J_1$. We claim that there are either multiple options for $K_2$ while fixing $K_1$, or multiple options of $K_1$ while fixing $K_2$. 
If not, then necessarily $p_1=p_2=p^*$ (as otherwise we can change $K_c\cap P_c$ in $P_c$) and $q_1+q_2+1=q^*$ (as otherwise we can change $K_c \cap Q_c$ in $\proj_{J_c}(Q_{c'} \cap K_{c'})$). But then $i=(j+a+1)+(b-q_1)$ and hence $q_1=b$, so $(p_1,q_1)=(p^*,b)$, contradicting our assumptions. This shows the claim, and without loss we may assume that there are multiple options for $K_2$ while fixing $K_1$. So let $K'_2$ be such a $k$-space with $\dim(K_2 \cap K'_2)=k-1$.  Completely similarly as in Case 1, $\<K_1,K_2\>$ and $\<K_1,K'_2\>$ contain $k$-spaces $K_3$ and $K'_3$ in $J_3$ and $\dim(\<K_2,K'_2\> \cap \<K_3,K'_3\>) > s$ since they are contained in the $(2k-s+1)$-space $\<K_1,K_2,K'_2\>$. This contradicts $J_2 \cap J_3 = S$.

If $S \subseteq J_4$, a similar argument applies: for all allowed $k$-spaces $K_1$ and $K_2$, we have $\dim(\<K_1,K_2\> \cap J_3) =k$ or $\dim(\<K_1,K_2,\> \cap J_4)=k$, and taking three $k$-spaces $K_2, K'_2, K''_2$ in $J_2$, as before, we may assume that $\dim(\<K_1,K_2\> \cap J_3)=\dim(\<K_1,K'_2\> \cap J_3)=k$, leading to the same contradiction. \hfill $\blacksquare$

\par\medskip
In all cases, we reached a contradiction, allowing us to conclude $\dim(J_1 \cap J_2) \geq k$.
\end{proof} 
Knowing this, we can show that all pairwise intersections coincide by considering well-chosen members of $\mathsf{N}_{(k)}(J_1,J_2)$. 
 \begin{lemma}\label{lem7a} 
All pairwise intersections of distinct members of the quadruple coincide.
\end{lemma}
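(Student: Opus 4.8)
Having established in Lemma~\ref{dimdoorsnede} that, up to renumbering, $\dim(J_1\cap J_2)\geq k$, the plan is to exploit this together with the flexibility in choosing members of $\mathsf{N}_{(k)}(J_1,J_2)$ provided by Construction~\ref{con} and the avoiding properties. Write $S=J_1\cap J_2$, $s=\dim S\geq k$. Since $s\geq k$, we may choose $K_1=K_2=K$ to be \emph{any} $k$-space inside $S$, and then complete it to an $i$-space $I\in\mathsf{N}_{(k)}(J_1,J_2)$ by selecting $A^-$, $C$ and $B^-$ as in Construction~\ref{con}. The decisive observation is that such an $I$ is then adjacent to $J_1$ and $J_2$ regardless of how $K$ is positioned within $S$, so by the definition of a quadruple $I$ must be adjacent to $J_3$ or $J_4$; the adjacency of $I$ with $J_3$ (say) forces $\dim(I\cap J_3)=k$, and since $I\cap J_1\subseteq S$ we get a $k$-space of $J_3$ lying ``close to'' $S$. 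The game is to vary $K$ inside $S$ (and to re-route $A^-,B^-$ by the avoiding properties so that $\dim(I\cap F)$ does not grow beyond $\dim(\langle K,\ldots\rangle\cap F)$ for $F\in\{J_3,J_4\}$) in order to conclude first that $J_3\cap J_1\supseteq S$, or $J_4\cap J_1\supseteq S$, and similarly for all remaining pairs.

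Concretely, I would first show $S\subseteq J_3$ or $S\subseteq J_4$. Suppose not. Then $\dim(S\cap J_3)\leq s-1$ and $\dim(S\cap J_4)\leq s-1$, so one can pick a $k$-space $K\subseteq S$ with $\dim(K\cap J_3)<k$ and $\dim(K\cap J_4)<k$ (a generic $k$-subspace of the projective space $S$ avoids the fixed subspaces $S\cap J_3$ and $S\cap J_4$ sufficiently, using $\dim S\geq k$). Applying Construction~\ref{con} with $K_1=K_2=K$ together with the $(A\cap P)$-avoiding, $(A\setminus P)$-avoiding and $(B_1,B_2)$-avoiding properties (Subconstructions~\ref{con4}, \ref{con5}, \ref{con7}, and their hyperbolic variants \ref{con8}, \ref{con9}; the exceptional hyperbolic sub-cases are handled exactly as in Case~0 of Lemma~\ref{dimdoorsnede}, i.e.\ either $|\ell|<n-1$ lets us shrink $a^p$, or $|i|=|j|=n-1$ forces $k>0$ and we build $I$ through a hyperplane of $\langle K_1,K_2\rangle$ chosen to meet $F$ in dimension $<k-1$), we obtain $I\in\mathsf{N}(J_1,J_2)$ with $\dim(I\cap J_3)<k$ and $\dim(I\cap J_4)<k$, hence $I$ is adjacent to exactly two members of the quadruple, contradicting its definition. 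Therefore $S\subseteq J_3$ or $S\subseteq J_4$; renumbering if needed, $S\subseteq J_3$.

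Now $J_1\cap J_3\supseteq S$ and $J_2\cap J_3\supseteq S$, and by the maximality built into the proof of Lemma~\ref{dimdoorsnede} (where $s$ was chosen maximal among all pairwise intersection dimensions) we get $J_1\cap J_3=J_2\cap J_3=S$, so the three spaces $J_1,J_2,J_3$ share the common intersection $S$. It remains to pull $J_4$ into this picture. Repeat the argument with the pair $(J_1,J_3)$ (which also intersects in $S$, of dimension $s\geq k$) in place of $(J_1,J_2)$: choosing $K_1=K_2=K\subseteq S$ and completing to $I\in\mathsf{N}(J_1,J_3)$, the definition of the quadruple forces $I$ adjacent to $J_2$ or $J_4$; since $\dim(I\cap J_2)=\dim(I\cap S)=k$ automatically (as $I\cap J_1\subseteq S\subseteq J_2$ and a dimension count, via the avoiding properties, keeps $\dim(I\cap J_2)=k$), $I$ is already adjacent to $J_2$, so this alone does not help — instead one argues, exactly as in the previous paragraph but now avoiding only $J_4$, that $S\subseteq J_4$, whence $J_1\cap J_4=J_2\cap J_4=J_3\cap J_4=S$ by maximality of $s$. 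Thus all six pairwise intersections equal $S$.

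The main obstacle I anticipate is bookkeeping the hyperbolic exceptional cases of the avoiding properties: in Subconstructions~\ref{con8} and~\ref{con9}, completing $A_c$ or $B_c$ can raise $\dim(I\cap F)$ by one, which is fatal precisely when the bound we need is tight (i.e.\ when $\dim(\langle K_1,K_2,A^-\cap P\rangle\cap F)=k-1$). As in Lemma~\ref{dimdoorsnede}, these must be dispatched by the same two escape routes — if $|\ell|<n-1$ (or the relevant type is non-maximal) we have enough room to lower $a^p$ or $\cod_{A^-}(A^-\cap P)$, and if $|i|=|j|=n-1$ then $a=-1$ and we reduce to $\Gamma_{\geq 2}$ or its complement with $k>0$, constructing $I$ through a suitable hyperplane of $\langle K_1,K_2\rangle$ so that the forced parity jump still lands in dimension $\leq k-2$. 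Everything else is a routine dimension count in projective space together with the choice of a generic $k$-subspace of $S$, which is possible since $\Delta$ (hence $S$) is infinite by Remark~\ref{infinite}.
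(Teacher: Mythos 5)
Your first step (showing $S\subseteq J_3$ after suitable renumbering, hence $J_1\cap J_2=J_1\cap J_3=J_2\cap J_3=S$ by maximality of $s$) matches the paper's argument and is fine. The genuine gap is in your final step, where you try to pull $J_4$ in by ``repeating the argument with the pair $(J_1,J_3)$ \ldots avoiding only $J_4$.'' This produces no contradiction. If you choose $K\subseteq S$ with $\dim(K\cap J_4)<k$ and build $I\in\mathsf{N}(J_1,J_2)$ (or $\mathsf{N}(J_1,J_3)$) through $K$ avoiding $J_4$, you get $I\nsim J_4$ — but the round-up condition is then already satisfied by $I$ being adjacent to $J_1$, $J_2$ and (since $K\subseteq S\subseteq J_3$, with all adjacency conditions typically intact) $J_3$. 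Nothing forces $J_4$ to interact with $I$ at all, so you cannot conclude $S\subseteq J_4$. You half-notice this (``this alone does not help'') but the proposed fix is exactly the move that fails.

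To extract information about $J_4$ one must construct an $i$-space adjacent to $J_1$ and $J_2$ but \emph{not} to $J_3$, so that the quadruple condition forces adjacency with $J_4$; the natural way is to force $\dim(I\cap J_3)=k+1$ by inserting a point $p\in J_3\setminus S$ into $I$. This is where the real work lies: $p$ must be placed in $A$ (requiring $p$ collinear with $J_1$ and $J_2$, hence $a\geq 0$) or in $B$ (requiring $p$ semi-opposite, hence $b\geq 0$), and the degenerate configurations where neither is possible ($a=-1$ with $J_3\setminus S$ covered by the projections, or $b=-1$ with no point of $J_3\setminus S$ in $U^1_3\cap U^2_3$) need a separate collinearity/opposition analysis culminating in $\mathsf{N}(J_1,J_2)\setminus\mathsf{N}(J_3)=\emptyset$, contradicting the definition of a quadruple. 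This is the content of the paper's Cases 0--3, which occupy the bulk of the proof and which your proposal replaces by an argument that does not close.
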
 
\begin{proof} 
Renumbering if necessary, the dimension $s$ of $S=J_1 \cap J_2$ is maximal amongst the dimensions of the intersections of all distinct pairs of the quadruple. By Lemma~\ref{dimdoorsnede}, we already know $s \geq k$. Again put $\mathcal{F}=\{J_3,J_4\}$. Suppose for a contradiction that, for each $F \in \mathcal{F}$, $S \nsubseteq F$. Then there is a $k$-space $K\subseteq S$ with $\dim(K \cap F) <k$ for all $F \in \mathcal{F}$. Completely analogously as in Case 0 of the previous lemma (though now using property~\ref{con4}$(i)$ instead of $(ii)$), Construction~\ref{con} yields an $i$-space $I \in \mathsf{N}_{(k)}(J_1,J_2)$ through $K$ such that $\dim(I \cap F) = \dim(\<K,A^- \cap P\> \cap F)<k$ for all $F \in \mathcal{F}$. This $i$-space is adjacent to exactly two members of the quadruple, a contradiction. So we may assume $S \subseteq J_3$ and by maximality of $s$ we obtain $J_1 \cap J_2 =J_1 \cap J_3 = J_2 \cap J_3$. 

If $\Gamma=\Gamma_{\geq k}$, recall that we actually work with triples, so in this case we assume $J_3=J_4$ and hence the lemma is proven here. If not, we have to show that $J_4$ contains $S$ as well. So assume for a contradiction that $ S\nsubseteq J_4$.

$\bullet$ \textbf{\boldmath Case 0:  Suppose that there is a point $p \in J_3 \setminus S$ such that $\dim(\<K,p\> \cap J_4) =k$ for every $K \subseteq S\setminus J_4$. \unboldmath} Put $K_4=\<K,p\> \cap J_4$.  Clearly,  $K_4 \subseteq J_3 \cap J_4$ and $\dim(K \cap K_4) =k-1$.
% As $K_4$ is a $k$-space in $J_3 \cap J_4$, repeating the argument of the first paragraph implies that $J_1$ or $J_2$ has to contain $K_4$, so $K_4=K \subseteq S$. As $K \subseteq S$ was arbitrary, the existence of $I_p$ implies $S \subseteq J_4$.
Firstly, let $k >0$. If $\dim(S \cap J_4)< s-1$, we can choose $K$ such that $\dim(K\cap J_4) \leq k-2$, contradicting $\dim(K \cap K_4) = k-1$. Hence $\dim(S\cap J_4)= s-1$, as we assume that $S \nsubseteq J_4$. Since $\<K,p\> \cap J_4$ is a $k$-space not entirely contained in $S$, it contains a point in $\<K,p\>\setminus K \subseteq  J_3 \setminus S$. Hence $\dim(J_3 \cap J_4) \geq s$ and by the maximality of $s$, $\dim(J_3\cap J_4)=s$. Repeating the argument used in the beginning of the proof, we conclude that $J_1$ or $J_2$ must contain $J_3\cap J_4$, a contradiction.  So if $k >0$, then $S \subseteq J_4$.
Next, let $k=0$. Then each line $\<p,K\>$ with $K$ a point in $S$ contains a point of $J_4$, so either $p \in J_4$ or $\dim(\<p,S\> \cap J_4) =s$. In the latter case $\dim(J_3 \cap J_4)  = s$ and, as above,  $S \subseteq J_4$. So, if $S \nsubseteq J_4$, then $p \in J_4$ for any point $p \in J_3 \setminus S$; however, this gives  $J_3=J_4$, a contradiction. \hfill $\blacksquare$

We now use Construction~\ref{con} in trying to show that there is a point $p \in J_3 \setminus S$ such that $\dim(\<K,p\> \cap J_4)=k$ for each $k$-space $K \subseteq S \setminus J_4$.  Put $U^x_y := \proj_{J_y}(J_x)\setminus S$ for $\{x,y\} \subseteq \{1,2,3\}$. We claim that we can always choose a point $p \in J_3 \setminus S$ such that either $p \in U^1_3 \cap U^2_3$ (\textbf{case 1}) or $p \notin U^1_3 \cup U^2_3$ (\textbf{case 2}), possibly by interchanging the roles of the $j$-spaces. Indeed, the only possibility where $U^1_3$ and $U^2_3$ have empty  intersection while their union is $J_3 \setminus S$, occurs when $\{U^1_3,U^2_3\} = \{\emptyset,J_3\setminus S\}$. In this case we can interchange the roles of the $j$-spaces to end up in either case 1 or case 2. The claim follows.

$\bullet$ \textbf{\boldmath Case 1: $p \in U^1_3 \cap U^2_3$.\unboldmath} On the condition that $a \geq 0$, we show that $\dim(\<K,p\> \cap J_4)=k$  for each $K \subseteq S \setminus J_4$. Noting that $\dim(\<K,p\> \cap J_4) >k$ violates $K \nsubseteq J_4$), we assume by way of contradiction that there is a $k$-space $K \subseteq S \setminus J_4$ with $\dim(\<K,p\> \cap J_4)<k$. We now construct an $i$-space $I_p$ in $\mathsf{N}_{(k)}(J_1,J_2)$ with $p \in A$ (hence the requirement $a \geq 0$) such that $\dim(I_p \cap J_4) <k$ and then, as $\dim(I_p \cap J_3) \geq \dim(\<K,p\>) =k+1$, we obtain that $I_p$ is adjacent to exactly two members of the quadruple, a contradiction to the latter's definition. 

Suppose first that $p \in P$. Property~\ref{con4}$(i)$ implies that we can choose $A \cap P$ such that $p \in A \cap P$ and $\dim(\<K,A\cap P\> \cap J_4)<k$, unless possibly if $a^p= p^* \geq 0$ and $\dim(J_4 \cap P) = p^*+s+1$. However, if $\dim(J_4 \cap P)=p^*+s+1$, then $\dim(J_4 \cap J_1)=s$ and, by the first part of the proof, $J_2$ or $J_3$ has to contain $J_4 \cap J_1$, implying $S \subseteq J_4$ after all.  
Next, suppose $p \notin P$. Then Properties~\ref{con4}$(i)$,~\ref{con5}~and~\ref{con8} imply that we can choose $A$ such that $p \in A$ and  $\dim(\<K,A\> \cap J_4)<k$, unless the conditions in Property~\ref{con8} are not met (those in Property~\ref{con4}$(i)$ we can deal with as before): if $\Delta$ is hyperbolic, $\<P,A\> = \<P,p\>$ (this expresses that $\cod_A(A\cap P)=0$) and $\dim(J_4 \cap \<P,p\>)=p^*+s+2$. Since $\dim(J_1 \cap J_4)\leq s$ by assumption, we obtain that $\dim(J_4 \cap P)=p^*+s+1$, which as before leads us to $S \subseteq J_4$. 

We can now select $B$  such that $\<K,A,B\> \cap J_4=\<K,A\> \cap J_4$, since $a \geq 0$ means that we are not in the case where $|i|=|j|=n-1$. Then $I_p:=\<K,A,B\>$ is such that $\dim(I_p \cap J_4)<k$. As explained above, this allows us to get back to Case 0 and conclude $S \subseteq J_4$.
\hfill $\blacksquare$

\par\medskip
$\bullet$ \textbf{\boldmath Case 2: $p \notin U^1_3 \cup U^2_3$.\unboldmath} On the condition that $b \geq 0$, we show that $\dim(\<K,p\> \cap J_4)=k$ for each $K \subseteq S \setminus J_4$ and therefore we again assume $\dim(\<K,p\> \cap J_4)<k$. We apply the same technique as in the previous case, but now with $p \in B$.
However, if $a\geq 0$, we first need to find an $a$-space $A$ such that $p \in A^{\perp}$ ($A$ and $B$ are selected consecutively). Note that $a \geq 0$ implies $|\ell| < n-1$. Like in the previous case, there is a subspace $A^*$ with $j+\dim(A^*)+1=n-1$ collinear with both $J_1$ and $J_2$ such that $\dim(\<K,A^*\> \cap J_4)< k$ (recall that the possible cases of exceptions imply $S \subseteq J_4$). Since $|\ell|<n-1$ we have $a < \dim(A^*)$, and as $p$ is collinear with at least a hyperplane of $A^*$, we can choose $A \subseteq A^* \cap p^\perp$.

Now we want to choose $B$ such that $p \in B$. If $B$ is moreover such that $\dim(\<K,A,B\> \cap J_4)<k$, then $I_p:=\<K,A,B\>$ is as required, and as before, Case 0 now implies $S \subseteq J_4$. 
According to Property~\ref{con9}, such a $b$-space $B$ exists, unless possibly if $\Delta$ is hyperbolic and $|i|=|j|=n-1$ (so $A = \emptyset$), as then it could be that $\dim(\<K,B\> \cap J_4) = \dim(\<K,p\> \cap J_4)+1=k$. So suppose this happens. We aim for a contradiction. Since $\<K,B\> \sim J_4$, $\dim(\<K,B\> \cap J_4)=k$ and so $\<K,p\> \cap J_4$ is a $(k-1)$-space $H_4$ and hence $\dim(H_4 \cap K) \geq k-2$. As discussed in Case 0.2 of the previous lemma, we may assume $k > 0$ because $\Delta$ is hyperbolic. The argument is completely similar as the one in Case 0: if $k>1$ we can vary $K \subseteq S$ to obtain $\dim(J_4 \cap S) \geq s-2$ and hence $\dim(J_4 \cap J_3) > s-2$, which then implies $\dim(J_3 \cap J_4)=s$ since $\Delta$ is hyperbolic; and if $k=1$, we consider the planes $\<p,K\>$ with $K$ a line in $S$ to conclude that either $\dim(J_4 \cap \<K,p\>) \geq s-1$ (and hence $\dim(J_3 \cap J_4)\geq s-1$) or $p \in J_4$ (and then we vary $p$), both leading to $S \subseteq J_4$. \hfill $\blacksquare$

\par\medskip
$\bullet$ \textbf{Case 3: Suppose the requirements in the above cases are not met.}
If this happens then, since not both $a$ and $b$ can be $-1$ (recall $k < \min\{i,j\}$), one of the following holds. 
\begin{itemize} 
\item[\textbf{(3.1)}] $a=-1$ and there is no point $p\in J_u\setminus S$ such that $p \notin U^t_u \cup U^v_u$ for $\{u,t,v\}=\{1,2,3\}$, 
\item[\textbf{(3.2)}] $b=-1$ and there is no point $p\in J_u\setminus S$ such that $p \in U^t_u \cap U^v_u$ for $\{u,t,v\}=\{1,2,3\}$. 
\end{itemize} 
In these cases we use another method to show $S\subseteq J_4$. Assume for a contradiction that $\dim(S\cap J_4)<s$.
 
\textbf{Case (3.1)} The assumptions imply $U^t_u \cup U^v_u=J_u \setminus S$, for all $\{u,t,v\}=\{1,2,3\}$. Since no two proper subspaces can cover $J_u \setminus S$, we have, without loss of generality, that $J_1\setminus S = U^2_1$ (and hence $J_2\setminus S =U^1_2$) and $J_3\setminus S = U^1_3$, i.e., $J_2 \perp J_1 \perp J_3$.

Let $p$ be any point semi-opposite both $J_1$ and $J_2$, not contained in $J_1 \cup J_2 \cup J_3$. We show that $p$ is also semi-opposite $J_3$. If $|j|=n-1$, this is trivial, so suppose $|j|<n-1$ and $p \perp J_3$.  Let $K$ be any $k$-space in $S\setminus J_4$. As before, we assume $\dim(\<K,p\> \cap J_4)<k$. Then we can select  $I=\<K,B\>\in \mathsf{N}_{(k)}(J_1,J_2)$ with $p \in B$ with $\dim(\<K,B\> \cap J_4) < k$ (by Property~\ref{con9}, this is always possible since $a=-1$ and $|j| <n-1$). As $p \in I$, we have $I \nsim J_3$, and $\dim(\<K,B\> \cap J_4) <k$ implies $I \nsim J_4$. Hence it follows that $\dim(\<K,p\> \cap J_4)$ is a $k$-space $K_4$ for each $k$-space $K \subseteq S \setminus J_4$. Furthermore, since $K_4 \nsubseteq S$, there is a point $p_4$ in $K_4 \setminus K$, which is just like $p$ contained in $J_3^\perp \setminus J_3$ and semi-opposite $J_1$ and $J_2$.

We claim that $\dim(J_4 \cap S)=s-1$. The argument is similar as the one used in Case 0, so we omit some details. If $k >0$ then varying $K \subseteq S\setminus J_4$ implies $\dim(J_4 \cap S)=s-1$. So suppose $k=0$. Then each line $\<p,K\>$ with $K$ a point in $S$ contains a point of $J_4$, so either $\dim(\<p,S\> \cap J_4) =s$, in which case $\dim(S \cap J_4) =s-1$ indeed, or $p \in J_4$. So if $\dim(S\cap J_4) < s-1$,  then varying $p \in \<p,S\> \setminus (p \cup S)$ (note that each point in $\<p,S\>\setminus (p \cup S)$ has the same collinearity relations w.r.t.\ $J_1$, $J_2$ and $J_3$ as $p$) yields $\<S,p\> \subseteq J_4$, violating $S \nsubseteq J_4$). The claim follows.

Next, we first suppose $k \leq s-1$. Then we take a $k$-space $K \subseteq J_4 \cap S$ and consider an element $I = \<K,B\> \in \mathsf{N}_{(k)}(J_1,J_2)$ with $p \in B$. Clearly, $\dim(J_4 \cap I) >k$ and hence $I \nsim J_4$; moreover, $I\nsim J_3$ because $p \in I$. This is a contradiction to the definition of a quadruple. Consequently, $k=s$. Since we already obtained that $\dim(J_3 \cap J_4) \geq k$ (recall $K_4 \subseteq J_3 \cap J_4$), this means $\dim(J_3 \cap J_4)=s$ and as before the latter implies $S \subseteq J_4$. As this violates our assumptions, we get that $p \notin J_3^\perp$.

%Next, we take a point $p'$ in $\<p,J_3\> \setminus J_3$ such that $p' \notin \<S,p\> \cup J_1^\perp \cup J_2^\perp$: since $J_3 \perp J_1$, no point of $\<J_3,p\> \setminus J_3$ is collinear with $J_1$ (as $p$ is not); possibly $J_2$ is not collinear with $J_3$ but even then, $\dim(J_2^\perp \cap \<J_3,p\>) \leq \dim(J_2^\perp \cap J_3) +1$. Just like $\<p,S\> \setminus S$ contained a point $p_4 \in J_4$, $\<p',S\>\setminus S$ contains a point $p'_4 \in J_4$. Clearly, $p_4 \neq p'_4$ and hence $p_4p'_4 \cap J_3$ is a point in $(J_3 \cap J_4)\setminus S$. Together with $\dim(J_4 \cap S)=s-1$, it follows that $\dim(J_3 \cap J_4)=s$, and as before this implies $S \subseteq J_4$. However, this contradicts our assumption $\dim(J_4 \cap S) <s$. We conclude that $p$ is also semi-opposite $J_3$.

%Now, if $k \leq s-1$, let $K'$ be a $k$-space in $S \cap J_4$ and take $I \in \mathsf{N}_{(k)}(J_1,J_2)$ containing $\<K',p_4\>$ (like before). Then $I \nsim J_4$ since $\dim(I\cap J_4)>k$ and $I \nsim J_3$ since $p_4 \perp J_3$, a contradiction. If $k=s$, we take $I \in \mathsf{N}_{(s-1)}(J_1,J_4)$. Such an $i$-space needs to be adjacent with $J_2$ or $J_3$, and therefore it cannot contain points of $J_1 \setminus S$, as those are collinear with $J_2$ and $J_3$ whereas $a=-1$.  It follows that $I \cap J_1 = S$ and that $S \nsubseteq \proj_{J_1}(J_4)$. The latter fact implies that we can use property $\mathsf{(SQ)}$ to choose $I \in \mathsf{N}(J_1,J_4)$ such that $I \cap J_1 \neq S$. This is a contradiction. We have shown that $p$ is semi-opposite $J_3$.

Let $I \in \mathsf{N}(J_1,J_2)$ be arbitrary. From the above we can deduce that $I$ is also adjacent with $J_3$: firstly, $I\cap J_2$ is a $k$-space $K$ inside $S$ as $J_2 \setminus S$ contains points collinear with $J_1$, and by the same token,  $I\cap J_3 = K$; secondly, each point in $I \setminus K$ is semi-opposite both $J_1$ and $J_2$ and by the above, those points are also semi-opposite $J_3$. This contradiction to the definition of a quadruple shows that $S \subseteq J_4$.

\textbf{Case (3.2)} In a similar way as in Case 3.1, one can show that each point $p \notin J_1 \cup J_2 \cup J_3$ which is collinear with $J_1$ and $J_2$ is also collinear with $J_3$: first show that $\dim(\<K,p\> \cap J_4)=k$ for all $k$-spaces $K \subseteq S\setminus J_4$ (note that if not we can find $I=\<K,A\> \in \mathsf{N}_{(k)}(J_1,J_2)$ with $p \in A$ and $\dim(I \cap J_4)<k$, similarly as in Case 1 above), continue by showing that $\dim(J_4 \cap S)=s-1$ in exactly the same way as above, and then concluding in the same way as above (with $A$ instead of $B$) that $p \in J_3^\perp$ after all. 

Knowing this, we can show that $U^1_2$ and $U^2_1$ are empty (note that $\dim(\<S,U^1_2\>)=\dim(\<S,U^2_1\>)$): if not, each point $p$ in $\< U^1_2,U^2_1 \> \setminus (U^1_2 \cup U^2_1)$  is collinear with both $J_1$ and $J_2$ and hence also with $J_3$. As $p$ was arbitrary, the entire space  $\<U^1_2,U^2_1 \>$ has to be collinear with $J_3$, but then $U^1_2 \cap U^3_2 \neq \emptyset$, which contradicts our assumptions. This holds for all permutations of $1,2,3$, so in $\Res_\Delta(S)$, the $j$-spaces are pairwise opposite. 

As above, we now deduce that each $I \in \mathsf{N}(J_1,J_2)$ is also adjacent with $J_3$. We conclude $S \subseteq J_4$. \hfill $\blacksquare$
\par\medskip 
Finally, if $\Gamma=\Gamma_k$, the existence of $I_p$ is easily shown, as it does not matter whether $p \in A$ or $p \in B$.  \end{proof}

\textbf{Notation} $-$ We keep referring to $J_1 \cap J_2$ by $S$. We also keep using $U^x_y = \proj_{J_y}(J_x) \setminus S$.

\begin{rem} \em 
Note that Lemma~\ref{dimdoorsnede} is trivial when $k=-1$, but Lemma~\ref{lem7a} is not. Not only does the latter's proof rely on $k \geq 0$, as we encountered $i$-spaces with $\dim(I \cap J) <k$ for some $j$-space $J$,  also, when $k=-1$ we at first only have a weaker version of this Lemma (cf.\ Section~\ref{k=-1}). Hence we have to proceed in a different way than we will do now, which is the reason why we have devoted a section on $k=-1$. 

In the proofs of the previous two lemmas, we always carefully verified whether we can select $A$ and $B$ such that they do not intersect $J_3$ and $J_4$. In the sequel, we will no longer explicitly do this, since all techniques needed have been discussed above and hence it would only make the proofs longer than necessary. 
\end{rem}

The following property was used in a special case of the proof of the previous lemma. We state it with respect to $J_1$ and $J_2$ but it is valid for any pair of (distinct) $j$-spaces in a quadruple $\{J_1,J_2,J_3,J_4\}$.
\begin{itemize} 
\item[$\mathsf{(RU1)}$] Let $p$ be a point contained in at most one member of a quadruple $\{J_1,J_2,J_3,J_4\}$. If $p \in J_1^\perp \cap J_2^\perp$, then $p \in J_3^\perp \cup J_4^\perp$.
\end{itemize} 

Now that we know that all $j$-spaces have one common intersection, we can show that this property holds when $\Gamma=\Gamma_k^\ell$. Despite the above made remark, this is one of the few occasions that a lemma also holds for $k=-1$ as well.

\begin{lemma}\label{ru1} If $\Gamma$ equals $\Gamma_k^{\ell}$, possibly $k=-1$, then $\mathsf{(RU1)}$ holds. Moreover, $\mathsf{(RU1)}$ remains valid in $\Res_{\Delta}(S')$ for any subspace $S' \subseteq J_1 \cap J_2 \cap J_3 \cap J_4$. \end{lemma}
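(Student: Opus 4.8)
The plan is to prove the statement for $\Gamma = \Gamma_k^\ell$ by exploiting the fact, established in Lemma~\ref{lem7a}, that all pairwise intersections of the members of the quadruple $\{J_1,J_2,J_3,J_4\}$ coincide; call this common intersection $S$. Let $p$ be a point lying in at most one member of the quadruple and suppose $p \in J_1^\perp \cap J_2^\perp$. If $p$ lies in one of the $J_r$, say, then since $S$ lies in each $J_r$ and $p \in J_1^\perp \cap J_2^\perp$, one checks directly (using that $S \subseteq J_1 \cap J_2$ and $p$ collinear with all of $J_1$) that $p^\perp$ already contains $S$, and the claim reduces to showing $p \in J_3^\perp \cup J_4^\perp$ which will follow as in the main case; so I may assume $p$ is in none of the $J_r$. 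The key idea is to argue by contradiction: if $p \notin J_3^\perp$ and $p \notin J_4^\perp$, I want to build an element $I \in \mathsf{N}(J_1,J_2)$ which contains $p$ (or which has $p$ in its perp in a controlled way) but is not adjacent to either $J_3$ or $J_4$, contradicting the definition of a round-up quadruple.

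Concretely, I would use Construction~\ref{con} to produce an element of $\mathsf{N}_{(k)}(J_1,J_2)$ of the form $I = \langle K, A, B\rangle$ where $K$ is a $k$-space inside $S$, and where the point $p$ is absorbed into either the collinear part $A$ (if $p$ lies, after passing to the appropriate residue, in the "collinear" position with respect to both $J_1$ and $J_2$) or the semi-opposite part $B$. Since $p \in J_1^\perp \cap J_2^\perp$, the point $p$ is collinear with $J_1$ and $J_2$, so it is a legitimate candidate for the role of (part of) $A$, provided $a \geq 0$; if $a = -1$ one instead works in $\mathrm{Res}_\Delta(P)$ and places $p$ into $B$, using $b \geq 0$ (note $a=b=-1$ is impossible since $k < \min\{|i|,|j|\}$, exactly as in Case (3.1)/(3.2) of the previous proof). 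The avoiding properties~\ref{con4},~\ref{con5},~\ref{con7},~\ref{con8},~\ref{con9} let me simultaneously arrange that $I$ meets neither $J_3$ nor $J_4$ in a $k$-space — the exceptional cases in those properties force a projection of dimension $\geq |j|+p^*+2$, which as in the previous proofs pushes $S$ into one of $J_3, J_4$, contradicting that $S$ is the common intersection (or contradicting that $p$ lies in at most one $J_r$). Because $p \in I$ and $p \notin J_3^\perp \cup J_4^\perp$, the element $I$ is adjacent to neither $J_3$ nor $J_4$, so $\mathsf{N}(J_1,J_2) \setminus (\mathsf{N}(J_3) \cup \mathsf{N}(J_4))$ meets $\mathsf{N}(J_1,J_2)$ in a nonempty set — i.e., $I$ is adjacent to exactly two members of the quadruple, the desired contradiction. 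Hence $p \in J_3^\perp \cup J_4^\perp$.

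For the "moreover" clause, I would observe that passing to the residue $\mathrm{Res}_\Delta(S')$ for $S' \subseteq J_1 \cap J_2 \cap J_3 \cap J_4 = S$ preserves the relevant structure: the images of $J_1, \dots, J_4$ in $\mathrm{Res}_\Delta(S')$ still form a configuration of singular subspaces whose images form (the analogue of) a round-up quadruple for the induced graph, since adjacency and the "adjacent to at least two implies at least three" condition are inherited through the residue, and $\mathrm{Res}_\Delta(S')$ is again a polar space of the same kind. Then $\mathsf{(RU1)}$ applied in $\mathrm{Res}_\Delta(S')$ is literally the same statement one level down. The main obstacle I anticipate is the careful bookkeeping of the exceptional cases of the avoiding properties (especially the hyperbolic ones where a dimension of intersection can jump by one): in each such case I will need to re-run the dimension-count argument showing that the exception forces $S \subseteq J_3$ or $S \subseteq J_4$, which is incompatible with the hypothesis that $p$ lies in at most one member of the quadruple together with $S$ being the exact common intersection. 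Everything else is a direct application of Construction~\ref{con} and the definition of a round-up quadruple.
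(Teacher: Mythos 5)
Your overall strategy (contradict the round-up property by producing an element of $\mathsf{N}$ of two members of the quadruple that fails to be adjacent to the other two) is the right one, but two of your key steps do not go through. First, you cannot ``arrange that $I$ meets neither $J_3$ nor $J_4$ in a $k$-space'': since $K$ is chosen inside $S=J_1\cap J_2$ and (for $k\geq 0$) $S$ is the \emph{common} intersection of all four $j$-spaces, every $I\in\mathsf{N}_{(k)}(J_1,J_2)$ you build contains the $k$-space $K\subseteq J_3\cap J_4$ -- this is exactly the excepted case ``$K\subseteq F$'' in Property~\ref{con4}, which only guarantees $I\cap F=K$. Consequently your central inference ``$p\in I$ and $p\notin J_3^\perp\cup J_4^\perp$ implies $I\nsim J_3,J_4$'' is a non sequitur: adjacency of $I$ to $J_3$ in $\Gamma_k^\ell$ only requires $\dim(I\cap J_3)=k$ and a $(k+a+1)$-dimensional collinear part $\proj_I(J_3)$, and nothing forces $p$ to lie in that part. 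The argument must be run in the opposite direction, as the paper does: put $p$ into $A$ and choose $B$ in $\Res_\Delta(\<K,A\>)$ semi-opposite (the residues of) all four $j$-spaces; then \emph{if} $I\sim J_3$, the collinear part $\proj_I(J_3)$ has dimension $k+a+1$ and is forced to equal $\<K,A\>\ni p$, whence $p\perp J_3$ -- the desired contradiction.

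Second, your treatment of $a=-1$ is unworkable. If $a=-1$ then $|\ell|=|j|$, so for any $I\sim J_1$ one has $\proj_I(J_1)=I\cap J_1$: no point of $I$ outside $I\cap J_1$ is collinear with $J_1$. Since $p\in J_1^\perp\cap J_2^\perp$ and $p\notin J_1\cap J_2$, \emph{no} element of $\mathsf{N}(J_1,J_2)$ can contain $p$ at all, and in particular $p$ cannot be placed in the semi-opposite part $B$ (which by definition must be semi-opposite $J_1$ and $J_2$, whereas $p$ is collinear with both). The paper's proof reverses the roles in the whole case $|i|=|j|+a+1$: it constructs $I\in\mathsf{N}_{(k)}(J_3,J_4)$ with $p$ in the part semi-opposite $J_3,J_4$ (legitimate precisely because $p\notin J_3^\perp\cup J_4^\perp$) and shows $I$ is adjacent to $J_3,J_4$ but not to $J_1,J_2$, since each of the latter then contains a point collinear with all of $I$. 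Two further points need attention: the lemma is asserted also for $k=-1$, where Lemma~\ref{lem7a} (the common intersection) is not yet available, so you cannot lean on ``the common intersection $S$'' from the outset; and the sub-case $p\in J_1\setminus S$ is not a trivial reduction -- the paper handles it separately by taking a generic point $q$ of $\<U^1_2,U^2_1\>\setminus(U^1_2\cup U^2_1)$, applying the main case to $q$, and using that $J_3^\perp$ and $J_4^\perp$ are subspaces.
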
 
 
\begin{proof}   
Let $p$ be an arbitrary point collinear with $J_1$ and $J_2$, not contained in $J_1 \cap J_2$. Recall that, if $k \neq -1$,  Lemma~\ref{lem7a} states that the $j$-spaces intersect each other in $S$. Suppose for a contradiction that $p \notin J_3^\perp \cup J_4^\perp$. In particular, $p \notin J_3 \cup J_4$. 

\textbf{\boldmath First suppose that $a \geq 0$,  $|i| \leq |j|$ and $p \notin J_1 \cup J_2$. \unboldmath} Note that $|j| <n-1$ as otherwise there would be no point $p \in J_1^\perp\setminus J_1$. In this case, we take an element $I=\<K,A,B\>$ of $\mathsf{N}_{(k)}(J_1,J_2)$ such that:
\begin{compactenum}[$-$]
\item The $k$-space $K$ is empty if $k=-1$ and belongs to $S$ if $k \geq 0$.
\item The $a$-space $A$ collinear to $J_1$ and $J_2$ contains the point $p$ and is such that $\<K,A\> \cap J_e = K$ for $e=1,2,3,4$ (as in the proof of Lemma~\ref{lem7a}).
\item The $b$-space $B$ is chosen in $\Res_\Delta(\<K,A\>)$ such that it is semi-opposite the subspaces corresponding to $J_1$, $J_2$, $J_3$ and $J_4$ which are all of dimension at least $(|j|-k-a-2)$, and as $|i| \leq |j|$, we have that $|j|-k-a-2 \geq b$, so in each of those subspaces we can take $b$-spaces, and by Fact~\ref{lem4a}$(ii)$, there is a $b$-space opposite them and avoiding $J_3 \cup J_4$ (note that $|j|<n-1$). 
\end{compactenum}
As $I \sim J_1,J_2$, we may assume that $I \sim J_3$. However, by our choice of $B$ in $\Res_\Delta(\<K,A\>)$ we have that $\proj_{J_3}(I)=\<K,A\>$ and in particular $p$ is collinear with $J_3$, as we wanted to show.

\textbf{\boldmath Next, suppose that $|i| = |j| +a +1$ and $p \notin J_1 \cup J_2$. \unboldmath}
Note that this case comprises $a =-1$, since if $a=-1$ then $\max\{|i|,|j|\} =|\ell|$, so our convention (cf.\ beginning of Section~\ref{constructie}) on $|i|$ and $|j|$ implies that $|i|=|\ell|=|j|+a+1=|j|$. Moreover, we may assume that $|i| < n-1$ since otherwise $|j|=n-1$ too and like above, this conflicts with $p \in J_1^\perp\setminus J_1$. Furthermore, we know $b \geq 0$ as $k < |j|$. Note that an adjacent pair $(I,J)$ is such that no point of $J\setminus(I\cap J)$ is collinear with $I$. We now take an element $I=\<K,A,B\>$ of $\mathsf{N}_{(k)}(J_3,J_4)$ such that:
\begin{compactenum}[$-$]
\item The $k$-space $K$ is empty if $k=-1$ and belongs to $S$ if $k \geq 0$.
\item The $b$-space $B$ equals $\<p,B^-\>$, where $B^-$ is a $(b-1)$-space semi-opposite $J_1$, $J_2$, $J_3$ and $J_4$ and avoiding $J_1$ and $J_2$: we choose $B^-$ in $\Res_\Delta(\<K,p\>)$, in which $J_1$ and $J_2$ correspond to $b$-spaces, and $J_3$ and $J_4$ to $(b-1)$-spaces (seeing $|j|-k-1=b$), so like above this is possible. 
\item  If $a \geq 0$, the $a$-space $A$ is chosen in $\Res_\Delta(\<K,p,B^-\>)$, in which $J_1$ and $J_2$ now correspond to points $p_1$ and $p_2$, and in which $J_3$ and $J_4$ do not correspond with anything. Let $A$ be collinear with $p_1$ and $p_2$ and avoiding $J_3$ and $J_4$, which is possible by Fact~\ref{lem4a}$(i)^*$ (note that the rank of $\Res_\Delta(\<K,p,B^-\>)$ is $n-j-1$ and $a$ is such that $|j|+a+1=|\ell|<n-1$ and even $|\ell| \leq n-3$ if $\Delta$ is hyperbolic since $i=\ell$).
\end{compactenum}
The resulting $i$-space $\<K,p,B^-,A\>$ is adjacent with $J_3$ and $J_4$ because, for $e\in\{3,4\}$, $I\cap J_e$ is the $k$-space $K$ and no point of $J\setminus K$ is collinear with $I$. However, $I$ is not adjacent with $J_1$ and $J_2$ because both contain a (unique) point collinear with $I$. This contradiction to the definition of a quadruple shows that $p \in J_3^\perp \cup J_4^\perp$.

\textbf{\boldmath Suppose  $p \in J_1 \setminus S$, i.e., $p \in U^2_1$.\unboldmath} We consider an arbitrary point $q$ of $\<U^1_2,U^2_1\> \setminus (U^1_2 \cup U^2_1)$. If $q \in J_3 \cup J_4$ then clearly $q \in J_3^\perp \cup J_4^\perp$; if $q \notin J_3 \cup J_4$, the previous cases imply $q \in J_3^\perp \cup J_4^\perp$. As $q$ was arbitrary, $\<U^1_2,U^2_1\>\subseteq J_3^\perp \cup J_4^\perp$, and as $J_3^\perp$ and $J_4^\perp$ are subspaces (even though not singular), we may assume that  $\<U^1_2,U^2_1\>\subseteq J_3^\perp$. In particular, $p\in J_3^\perp$.  
 
The fact that $\mathsf{(RU1)}$ is a residual property is easily verified.  \end{proof}  
 
If the quadruple has one common intersection (which is the case if $k>-1$, by Lemma~\ref{lem7a}), it is no restriction to require that $p$ is contained is at most one member of the quadruple, as for each point of the intersection, $\mathsf{(RU1)}$ is trivially fulfilled. In case $a \neq -1$, we can say more. The following lemma improves Lemma~\ref{ru1} in the case where $p \in J_1 \cup J_2$ in $\mathsf{(RU1)}$.
 
\begin{lemma}\label{gemprojectie} Let $\Gamma$ equal $\Gamma_k^{\ell}$ and suppose $a\geq 0$, possibly $k=-1$. Then $U^2_1=U^3_1=U^4_1$ and this for all permutations of $\{1,2,3,4\}$. \end{lemma}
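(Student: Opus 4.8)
The plan is to exploit Lemma~\ref{ru1} together with the construction from Section~\ref{constructie}, now steering the collinear part $A$ of the constructed $i$-space so that it "touches" exactly one member of the quadruple along its projection. Concretely, since $a\geq 0$, each element $I\in\mathsf{N}_{(k)}(J_1,J_2)$ has a nonempty collinear part, and for $c\in\{1,2\}$ the subspace $\proj_{J_c}(I)\supseteq \<K_c,\,(A\cap P)\cap J_c\>$ can be made to meet $J_c$ in a subspace strictly larger than $K_c$. The key is to show that whenever a point $q$ lies in $U^2_1=\proj_{J_1}(J_2)\setminus S$ but (for contradiction) $q\notin U^3_1\cup U^4_1$, we can build an $i$-space $I\in\mathsf{N}_{(k)}(J_1,J_2)$ whose collinear part $A$ contains $q$ and which avoids $J_3$ and $J_4$ in the strong sense required for adjacency: $I\cap J_e=K$ for $e=3,4$ (as in the proof of Lemma~\ref{lem7a}) and $\proj_{J_e}(I)$ chosen so that $I\not\sim J_3$ and $I\not\sim J_4$ — a contradiction to the definition of a quadruple.

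First I would set up the notation and observe, as in Lemma~\ref{lem7a}, that by that lemma all pairwise intersections equal $S$ when $k>-1$ (and for $k=-1$ we still have Lemma~\ref{ru1} available, which is stated to hold for $k=-1$ as well). Fix a permutation, say we examine whether $U^2_1\subseteq U^3_1$. Take a point $q\in U^2_1$; I want to show $q\in U^3_1$, i.e.\ $q\in\proj_{J_1}(J_3)$. Suppose not. Since $a\geq 0$, apply Construction~\ref{con} (case $x=k$ if $k\geq 0$, with $K\subseteq S$, or $x$-empty if $k=-1$) to produce $I=\<K,A,B\>\in\mathsf{N}_{(k)}(J_1,J_2)$ with the point $q$ placed inside $A$: this is possible because $q$ is collinear with both $J_1$ and $J_2$ (as $q\in\proj_{J_1}(J_2)$ forces $q\in J_2^\perp$, and $q\in J_1$). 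Using the avoiding properties~\ref{con4}, \ref{con5}, \ref{con7} (and \ref{con8}, \ref{con9} in the hyperbolic case, whose exceptional configurations force $S\subseteq J_3$ or $S\subseteq J_4$, contradicting that the $J_e$'s meet pairwise in $S$ only), we arrange $\<K,A,B\>\cap J_e=K$ for $e=3,4$ and, crucially, that the $b$-part $B$ is semi-opposite $J_3$ and $J_4$, so that $\proj_{J_3}(I)=\<K,A\>$ and $\proj_{J_4}(I)=\<K,A\>$.

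Now the contradiction: $I\sim J_1$ and $I\sim J_2$, so by the quadruple axiom $I$ is adjacent to $J_3$ or $J_4$, say $I\sim J_3$. Then $\mathsf{t}(I^{J_3})=\ell$, which in particular says $\proj_{J_3}(I)$ has dimension $a$, consistent with $\proj_{J_3}(I)=\<K,A\>$ — but the point $q\in A$ then lies in $\proj_{J_1}(J_3)$ after reflecting through the projection, because $q\perp J_3$ and $q\in J_1$ give exactly $q\in\proj_{J_1}(J_3)=U^3_1\cup S$, and $q\notin S$ since $q\in U^2_1$; so $q\in U^3_1$, contradicting our assumption. The case $I\sim J_4$ is identical with $3$ replaced by $4$. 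Hence $U^2_1\subseteq U^3_1$, and by symmetry of the argument in the indices $2,3,4$ all these sets coincide; the same reasoning with the base pair $\{J_1,J_4\}$ or $\{J_1,J_3\}$ in place of $\{J_1,J_2\}$ (legitimate by Lemma~\ref{ru1}'s statement that $\mathsf{(RU1)}$ holds for any pair in the quadruple, and by the symmetry of the hypotheses) gives the conclusion for every permutation of $\{1,2,3,4\}$.

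The main obstacle I anticipate is the bookkeeping in the hyperbolic exceptional cases: Properties~\ref{con8} and~\ref{con9} allow the intersection dimension with some $F\in\{J_3,J_4\}$ to jump by one, which could a priori spoil "$I\cap J_e=K$" or the semi-oppositeness of $B$. The resolution, as in the proof of Lemma~\ref{lem7a}, is that each such exceptional configuration forces $\dim(J_1\cap J_e)=s$ together with $\dim(J_1^{J_e})$ large, which (by the opening paragraph of that proof, invoking maximality of $s$) forces $S\subseteq J_e$ — impossible here since the $J_m$ meet pairwise exactly in $S$ and $J_e\neq J_1$. A secondary delicate point is ensuring that when $a\geq 0$ but $|i|=|j|+a+1$ (the non-convention-standard case), the construction still has enough room to place $q$ in $A$ and keep $B$ semi-opposite all four $j$-spaces; this is handled exactly as in the second displayed case of the proof of Lemma~\ref{ru1}, working in a residue $\Res_\Delta(\<K,q\>)$ or $\Res_\Delta(\<K,q,B^-\>)$ and using Fact~\ref{lem4a}$(ii)$ and $(i)^*$. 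Everything else is a routine instantiation of the machinery already established.
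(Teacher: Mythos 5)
There is a genuine gap, and it sits at the heart of your construction. You take $q\in U^2_1=\proj_{J_1}(J_2)\setminus S$ and propose to build $I=\<K,A,B\>\in\mathsf{N}_{(k)}(J_1,J_2)$ with $q\in A$. But $q\in J_1\setminus S$ while $K\subseteq S$, so such an $I$ would satisfy $I\cap J_1\supseteq\<K,q\>$, a $(k+1)$-space, which contradicts $I\sim J_1$ (adjacency in $\Gamma_k^\ell$ forces $\dim(I\cap J_1)=|k|$). Indeed, Construction~\ref{con} explicitly takes the collinear part $A$ inside $(J_1^\perp\cap J_2^\perp)\setminus(J_1\cup J_2)$, so a point of $J_1$ can never be placed in $A$ for the base pair $(J_1,J_2)$. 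The object you want to build does not exist, and the argument collapses at its first step. Moreover, even if one could produce some $I\in\mathsf{N}(J_1,J_2)$ "seeing" $q$, the dichotomy you then invoke ($I\sim J_3$ or $I\sim J_4$) only yields $q\in U^3_1\cup U^4_1$, which is exactly property $\mathsf{(RU1)}$ from Lemma~\ref{ru1} and strictly weaker than the claimed equality $U^2_1=U^3_1=U^4_1$: to conclude you would need to know \emph{which} of $J_3,J_4$ the space $I$ is adjacent to.

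The paper's proof fixes both problems with one move: by $\mathsf{(RU1)}$ one may assume $p\in U^2_1\cap U^3_1$, and then one builds $I\in\mathsf{N}_{(k)}(J_2,J_3)$ (not $\mathsf{N}(J_1,J_2)$) with $p$ in its collinear part --- legitimate because $p$ is collinear with, but not contained in, $J_2$ and $J_3$. Now the fact that $p\in J_1$ works \emph{for} you instead of against you: $\<K,A\>\cap J_1=\<K,p\>$ has dimension $k+1$, so $I\not\sim J_1$, and the quadruple axiom forces $I\sim J_4$, whence (via the semi-oppositeness of $B$ and the dimension count on $I\cap J_4^\perp$) $p\perp J_4$. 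I would also flag that your treatment of the case $|i|=|j|+a+1$ as "handled exactly as in the second case of Lemma~\ref{ru1}" is too optimistic: in that regime adjacency additionally requires that no point of $J\setminus(I\cap J)$ be collinear with $I$, and the paper's Case 2 is a separate, substantially longer analysis of the possible configurations of the sets $U^x_y$, not a routine transplant.
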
 
\begin{proof}  
Again, by Lemma~\ref{lem7a}, the $j$-spaces intersect each other in $S$ if $k>-1$. The condition $a \geq 0$ implies $|j|<n-1$.

\textbf{\boldmath First suppose $|i| \leq |j|$.\unboldmath}   If $p \in U^2_1$ then we may assume $p \in U^3_1$ in view of the previous lemma. We show that $p \in U^4_1$ too. Suppose for a contradiction that $p \notin J_4^\perp$.  We choose an $i$-space $I=\<K,A,B\> \in \mathsf{N}_{(k)}(J_2,J_3)$ like in the first case of the previous lemma, i.e., with $p \in A$, only now $\<K,A\> \cap J_e = K$ for $e=2,3,4$ and $\<K,A\> \cap J_1 = \<K,p\>$. The latter implies that  $I$ cannot be adjacent to $J_1$ and hence has to be adjacent to  $J_4$, forcing $p$ to be collinear with $J_4$.

\textbf{\boldmath Next suppose $|i| = |j|+a+1$.\unboldmath}  Assume for a contradiction that $U^4_1 = U^2_1 = U^3_1$ does not hold. In view of the previous lemma we may assume that $U^4_1\subsetneq U^2_1 = U^3_1$. Possibly by switching the roles of $J_2$ and $J_3$, we may also assume that  $U^2_4 \subsetneq  U^3_4 = U^1_4$ or  $U^1_4 \subsetneq U^2_4=U^3_4$ or $U^1_4=U^2_4=U^3_4$. In the first case, we find a line $p_1p_4$ such that $p_1 \in U^3_1\setminus U^4_1$ and $p_4 \in U^3_4 \setminus U^2_4$. Clearly, $p_1p_4$ is collinear with $J_1$ and $J_3$. It follows from the previous lemma that $p_1p_4$ should be collinear with $J_2$ or $J_4$ as well, a contradiction. Similarly in the second case. Hence we are in the third case: $U^1_4=U^2_4=U^3_4$. Moreover, it follows that $U^4_2\subsetneq U^3_2 = U^1_2$ and $U^4_3\subsetneq U^2_3 = U^1_3$, so $J_1$, $J_2$ and $J_3$ play the same role w.r.t.\ each other and w.r.t.\ $J_4$.

If $k \geq 0$, let $K$ be any $k$-space inside $S$; if $k=-1$, let $K =\emptyset$. Now take any $(a+1)$-space $A^*$ collinear with $J_2$ and $J_4$ such that $\<K,A^*\> \cap J_e = K$ for $e=1,3$, which is possible by Fact~\ref{lem4a}$(i)^*$ and since $|i| <n-1$ (and if $\Delta$ is hyperbolic even $|i| \leq n-3$). Then by the previous lemma, $A^* \cap J_1^\perp$ and $A^* \cap J_3^\perp$ are subspaces of $A^*$ that together cover $A^*$, which is only possible if one of them coincides with $A^*$. As $J_1$ and $J_3$ play the same role w.r.t.\ $J_2$ and $J_4$, we may assume that $A^*$ is collinear with $J_3$. Now let $A$ be an $a$-space inside $A^*$ collinear with a point $p \in J_1 \setminus S$. In $\Res_\Delta(\<K,p,A\>)$, the $j$-spaces $J_2$ and $J_3$ correspond to $b$-spaces $J'_2$ and $J'_3$ (recall $|j|-k-1=b$), hence there is a $b$-space opposite $J'_2$ and $J'_3$, that corresponds to a $b$-space $B$ in $\Delta$ semi-opposite $J_2$ and $J_3$. The corresponding $i$-space $I=\<K,A,B\> \in \mathsf{N}_{(k)}(J_2,J_3)$ is then collinear with the point $p \in J_1\setminus K$ and hence $I \nsim J_1$ (recall that no point of $J\setminus I$ is collinear with $I$ for an adjacent pair $(I,J)$, when $|i|=|j|+a+1$). Consequently, $I \sim J_4$ and hence, as $A \perp J_4$, we obtain that $B$ is semi-opposite $J_4$. 

Knowing this, we can reach a contradiction as follows. Let $A^-$ be an $(a-1)$-space of $A$ (recall $a \geq 0$) and let $p$ be a point in $U^2_1 \setminus U^4_1$. Then $I=\<K,p,A^-,B\>$ also belongs to $\mathsf{N}_{(k)}(J_2,J_3)$, since $p \in J_2^\perp \cap J_3^\perp$. Since $\dim(I \cap J_1)=k+1$, $I\nsim J_1$. But now $I \nsim J_4$ too, because $\<B,p\>$ is a $(b+1)$-space of $I$ semi-opposite $J_4$, a contradiction. We conclude that $U^4_1 = U^2_1 = U^3_1$. The lemma is proven.
\end{proof} 

 \par\bigskip
\textbf{Notation} $-$ We write $U_y$ instead of $U^x_y$, for all $x\neq y \in \{1,2,3,4\}$, if the latter does not depend on $x$.

The following lemma will be very useful in combination with Lemma~\ref{lem0}, as it states that, under certain conditions, lines intersecting two members of the round-up quadruple, have to intersect a third member of the round-up quadruple. Again, we state it w.r.t.\ $J_1$ and $J_2$ but it holds for any two (distinct) members of the quadruple.
 
\begin{lemma}\label{lem1} \begin{itemize}
\item If $\Delta$ is hyperbolic and $|i|=|j|=n-1$, then each pair of collinear lines $L_1 \subseteq J_1 \setminus S$ and $L_2 \subseteq J_2 \setminus S$ is such that $\<L_1,L_2\>$ intersects either $J_3$ or $J_4$ in a line.
\item In all other cases, each pair of collinear points $x_1 \in J_1 \setminus S$ and $x_2 \in J_2 \setminus S$ (where we, if $\Gamma=\Gamma_k^\ell$, require that $x_c \in \mathsf{Q}_c$ if $a=-1$ and $x_c \in \mathsf{P}_c$ if $b=-1$) is such that $x_1x_2$ intersects $J_3$ or $J_4$ in a point.
\end{itemize} 
\end{lemma}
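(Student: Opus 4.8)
The statement is about a round-up quadruple $\{J_1,J_2,J_3,J_4\}$. By Lemma~\ref{lem7a}, for $k \geq 0$ all four members share the common subspace $S = J_1\cap J_2$ (and in the case $k=-1$, which is deferred to a later section, one has the weaker statement that will be established there). The plan is to take a pair of collinear lines (resp. points) $L_1 \subseteq J_1\setminus S$, $L_2\subseteq J_2\setminus S$ with $L_1\perp L_2$ (resp. points $x_1,x_2$ as in the statement), form $\Lambda := \langle L_1,L_2\rangle$ (a plane, resp. $\langle x_1,x_2\rangle$, a line), and argue that if $\Lambda$ misses both $J_3$ and $J_4$ in the required dimension, then we can build an $i$-space in $\mathsf{N}(J_1,J_2)$ which is adjacent to \emph{neither} $J_3$ \emph{nor} $J_4$ — or is adjacent to exactly one of them — contradicting the defining property of a round-up quadruple (every vertex adjacent to at least two of the $J_r$ is adjacent to at least three). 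The construction of this $i$-space is exactly Construction~\ref{con} together with the avoiding properties \ref{con2}, \ref{con4}, \ref{con5}, \ref{con7} (and their hyperbolic variants \ref{con8}, \ref{con9}), applied with $\mathcal{F} = \{J_3,J_4\}$.

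More concretely: first I would set $x=k$ (so $K_1=K_2=K$ is a $k$-space inside $S$; this is legitimate since $s=\dim S\geq k$ by Lemma~\ref{dimdoorsnede}). The idea is to choose $K$ so that $\dim(K\cap J_3),\dim(K\cap J_4) < k$ — possible unless some $J_r$ contains $S$, and if $S\subseteq J_r$ we handle that degenerate case separately (then $J_r\cap J_{r'}\supseteq S$ forces, via maximality of $s$, that the relevant projections are large and one gets the line-intersection conclusion directly). Then I would build $A_1,A_2$ and $B_1,B_2$ using the construction. The crucial point is to incorporate $L_1\cup L_2$ (resp. $x_1,x_2$) into the $i$-space: when $a\geq 0$, put the "new" collinear directions into the $A$-part, i.e. arrange that $L_1,L_2$ lie in the collinear part of $I$ with respect to $J_1,J_2$ (this is where the hypothesis $x_c\in\mathsf{P}_c$ when $b=-1$, resp. $x_c\in\mathsf{Q}_c$ when $a=-1$, is used — it guarantees the point can be absorbed into the available building brick); when the parameters force it, use the $B$-part instead. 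Once $\langle L_1,L_2\rangle$ (resp. $x_1x_2$) sits inside $I$, adjacency of $I$ with $J_1$ and $J_2$ is automatic by construction, so $I\sim J_3$ or $I\sim J_4$; but the avoiding properties let us also guarantee $\dim(I\cap J_3)<k$ and $\dim(I\cap J_4)<k$ (using $\mathcal{F}=\{J_3,J_4\}$, each $F$ meeting $J_c$ in at most an $s$-space by maximality of $s$), hence $I\sim$ neither — contradiction. Running this through shows that $\langle L_1,L_2\rangle$ (resp. $x_1x_2$) must in fact meet $J_3\cup J_4$, and a dimension count upgrades "meets" to "meets in a line" (resp. "in a point").

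The first bullet (the hyperbolic $|i|=|j|=n-1$ case) is the genuinely different one: there $a=-1$ and $b$ is whatever it is, and the exceptional behaviour of the avoiding properties (\ref{con9}, and the parity-of-intersection phenomenon noted in Case 0.2 of Lemma~\ref{dimdoorsnede}) means that one cannot always keep $\dim(I\cap J_r)<k$; the dimension of intersection may jump by one for parity reasons. This is why the conclusion is stated for \emph{lines} $L_1,L_2$ rather than points — working with a line $\langle L_1,L_2\rangle$ of dimension (at least) $3$ inside $I$ gives the slack needed to absorb the parity jump. So in that case I would redo the above with $x=k$, $K$ a $k$-space of $S$ chosen (using $k>0$, which holds after the reductions in Case 0.2 of Lemma~\ref{dimdoorsnede}) so that $\dim(K\cap J_r)<k-1$, put the lines $L_1,L_2$ into the construction, and choose the completing hyperplane/$B$-part so that $\dim(I\cap J_r)\leq k-2+1 = k-1 < k$ despite the jump.

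The main obstacle, then, is bookkeeping in the hyperbolic $n-1$-dimensional case: ensuring that after all the forced choices (empty $A$-part, parity-constrained $B$-part) one still lands with $\dim(I\cap J_3)$ and $\dim(I\cap J_4)$ strictly below $k$, \emph{and} with $\langle L_1,L_2\rangle\subseteq I$, simultaneously. Everything else is a routine dimension count plus a careful invocation of the already-proven avoiding properties, with the degenerate sub-cases ($S\subseteq J_r$ for some $r$; the forced values $(p_c,q_c)_c=(p^*,b)$) split off and dispatched by the maximality of $s$ exactly as in the proofs of Lemmas~\ref{dimdoorsnede} and~\ref{lem7a}.
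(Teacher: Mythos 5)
Your overall template (Construction~\ref{con} plus the avoiding properties, then the round-up condition) is the right family of tools, but the way you set up the construction cannot work. By Lemma~\ref{lem7a} \emph{every} member of the quadruple contains $S=J_1\cap J_2$, so your ``main case'' is vacuous and your ``degenerate case $S\subseteq J_r$'' is the only case: any $k$-space $K\subseteq S$ automatically satisfies $K\subseteq J_3\cap J_4$, hence $\dim(K\cap J_3)=\dim(K\cap J_4)=k$, and you can never arrange $\dim(I\cap J_3)<k$ for an $I$ containing $K$. Moreover, with $x=k$ the part $I\cap J_1=K_1=K$ lies inside $S$, so such an $I$ cannot contain $x_1\in J_1\setminus S$ at all; and $x_1$ cannot be ``absorbed into the $A$- or $B$-brick'' either, since $x_1\in J_1\cap I$ forces $x_1\in K_1$, which would give $\dim(I\cap J_1)\geq k+1$ and destroy $I\sim J_1$. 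The paper's proof works instead in $\mathsf{N}_{(k-1)}(J_1,J_2)$: take a $(k-1)$-space $K^-\subseteq S$, set $K_c=\<K^-,x_c\>$, and build $I$ with $I\setminus\<K_1,K_2\>$ avoiding $J_3\cup J_4$. The conclusion is then extracted not from showing $I$ is adjacent to neither $J_3$ nor $J_4$ (which is impossible), but from the fact that $I$ \emph{must} be adjacent to one of them, say $J_3$: then $I\cap J_3$ is a $k$-space confined to the $(k+1)$-space $\<K^-,x_1,x_2\>$ and containing $K^-$, so it meets the line $x_1x_2$ in a point of $J_3$. (One also has to rule out the hyperbolic exception to the avoiding properties via Lemma~\ref{gemprojectie}, which you do not address.)

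The first bullet has the same structural problem (the paper uses $K_c=\<K,L_c\>$ with $K$ a $(k-2)$-space in $S$ and a $(b-2)$-space semi-opposite all four members, obtaining a genuine contradiction and hence that $\<L_1,L_2\>$ meets $J_3\cup J_4$ in at least a \emph{point}), and in addition your closing remark that ``a dimension count upgrades meets to meets in a line'' hides the real content of that bullet: one must prove $\max\{\dim(J_3\cap L_1^{J_2}),\dim(J_4\cap L_1^{J_2})\}=n-3$ by varying the second line $L_2'$ inside $\proj_{J_2}(L_1)$, and only then does $\<L_1,L_2\>$ meet one of $J_3,J_4$ in a line. As written, your argument would not produce either assertion of the lemma.
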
 
\begin{proof} First suppose that $\Delta$ hyperbolic and $|i|=|j|=n-1$. Let $L_1 \subseteq J_1 \setminus S$ and $L_2 \subseteq J_2\setminus S$ be collinear lines (hence $\dim(S) \leq n-5$). First note that, as before, when $\Delta$ is hyperbolic and $|i|=|j|=n-1$, we may in this case suppose that $k \geq 1$. Assume first that $\<L_1,L_2\>$ has nothing in common with $J_3 \cup J_4$. Let $K$ be a $(k-2)$-space in $S$ and put $K_1:=\<K,L_1\>$ and $K_2:=\<K,L_2\>$. Then $J_1$, $J_2$, $J_3$ and $J_4$ correspond to subspaces of the same type in $\Res_\Delta(\<K_1,K_2\>)$. Consequently, we can take a $(b-2)$-space $B$ in this residue semi-opposite all of them. The corresponding $i$-space $I:=\<K_1,K_2,B\>$ belongs to $\mathsf{N}_{(k-2)}(J_1,J_2)$ but is not adjacent to $J_3$ nor to $J_4$, a contradiction. Hence, for each pair of collinear lines $L_1$ and $L_2$, $\<L_1,L_2\>$ has to intersects at least one of $J_3$, $J_4$ in a point (which is in particular collinear with $L_1$). Consider the $j$-space $L_1^{J_2}$ and put $X_3:=J_3 \cap L_1^{J_2}$ and $X_4:=J_4 \cap L_1^{J_2}$. We claim that $\max\{\dim(X_3),\dim(X_4)\}=n-3$. Suppose for a contradiction that $\dim(X_3) \leq n-5$ and $\dim(X_4) \leq n-5$. Then there is a line $L'_2 \subseteq \proj_{J_2}(L_1)$ disjoint from $(\<L_1,X_3\> \cap J_2) \cup (\<L_1,X_4\> \cap J_2)$ (as this is the union of two subspaces of $J_2$ of dimension smaller or equal to $n-5$). But then it is impossible that $\<L_1,L'_2\>$ contains a point from $J_3 \cup J_4$, a contradiction. This proves the claim. Without loss, $\dim(X_3)=n-3$. Since $\dim(\<L_1,L_2\> \cap X_3) \geq 1$ we obtain that $\<L_1,L_2\> \cap J_3$ is a line after all, proving the first assertion.

Now suppose that we are not in the previous case. Let $x_1$ and $x_2$ be as in the statement of the lemma.
We want $I\in \mathsf{N}_{(k-1)}(J_1,J_2)$ such that $x_c \in K_c$ and with $I\setminus\<K_1,K_2\>$ avoiding $J_3 \cup J_4$. If such an $i$-space exists, then, without loss of generality, we have $I \sim J_3$, implying  $\dim(J_3 \cap I)= k$. As $J_3 \cap I \subseteq \<K_1,K_2\>$, the line $x_1x_2$ intersects $J_3$ in a point $x$.  Like before, the existence of such an $i$-space could only be a problem when $\Gamma=\Gamma_k^\ell$ and $\Delta$ is hyperbolic, and either  $|i|=|j|=n-1$ (which we excluded) or $|\ell|=n-2$, $\dim({J_1}^{J_2})=n-2$, $\dim(J_e \cap P) = p^*+s+1$ and $J_e\setminus P$ contains a point collinear with $J_1$ and $J_2$, for some $e\in\{3,4\}$. However, the latter situation does not occur. Indeed, the fact that $|j|<|\ell|$ implies $a \geq 0$, and then Lemma~\ref{gemprojectie} tells us that, for $e\in\{3,4\}$, $\dim(\proj_{J_e}(J_1)) = \dim(\proj_{J_2}(J_1))=p^*+s+1$,  which means that $\dim(J_e \cap P) = p^*+s+1$ implies that $\proj_{J_e}(J_1) \subseteq P$ and hence $J_e \setminus P$ contains no points collinear with $J_1$. 
\end{proof}

\subsection{Classification of the round-up triples and quadruples} 
We narrow down the possibilities for the quadruples to one of the five below types. 
 \begin{defi} \em   Let $\{J_1,J_2,J_3,J_4\}$ be a $4$-tuple (with $J_3=J_4$ when $\Gamma=\Gamma_{\geq k}$, as then we only need $3$-tuples) such that all pairwise intersections equal a fixed subspace $S$ and denote by $\Delta'$ the residue $\Res_{\Delta}(S)$ and by $J'_a$ the subspace of $\Delta'$ corresponding to $J_d$, $d=1,2,3,4$. Consider the following configurations. The definitions of a hyperbolic line and a hyperbolic $3$-space can be found in Section 2.1. Let  $t$ be an integer with $1 \leq t \leq j-k-1$. 

\begin{itemize} 
\item[$\mathsf{I}\;$]   $\dim(S)=j-1$ and $J'_1,J'_2,J'_3,J'_4$ are on a line in $\Delta'$;
 
\item[$\mathsf{II}$]  $\dim(S)=j-1$ and $J'_1,J'_2,J'_3,J'_4$ are pairwise opposite points in $\Delta'$. If, moreover, these points are on a hyperbolic line, we say that the quadruple is of type $\mathsf{II}^{*}$;

\item[$\mathsf{III}\;$]  $\dim(S)=j-2$ and $J'_1,J'_2,J'_3,J'_4$ are pairwise opposite lines in $\Delta'$  with the property that any line in $\Delta'$ meeting two of them, meets them all. If, moreover, these lines span a hyperbolic $3$-space, we say the quadruple is of type $\mathsf{III}^*$;

\item[$\mathsf{IV}\;$]  $\dim(S)=j-1$-space and $J'_1,J'_2,J'_3,J'_4$ are points of $\Delta'$,  three of which are on a line and opposite the remaining point.

\item[$\mathsf{V(t)}$\;] $\dim(S) \geq k$ and $J'_1,J'_2,J'_3,J'_4$ are $t$-spaces  in $\Delta'$. The subspaces $S \cup U_1,S \cup U_2,S \cup U_3,S \cup U_4$ correspond in $\Delta'$ to points on a line $L$ and in $\Res_{\Delta'}(L)$, the $j$-spaces correspond to pairwise opposite $(t-1)$-spaces defining a hyperbolic $(2t-1)$-space. 
\end{itemize} 
 \end{defi}
 
\begin{rem} \em If $J_3=J_4$, then a type $\mathsf{IV}$ coincides with type $\mathsf{I}$; if not, they are different. \end{rem}
 
Whether or not these $4$-tuples occur in $\Delta$ depends on $\Delta$, or more precisely, on the existence of hyperbolic lines in $\Delta$ and of the presence of hyperbolic quadrangles as hyperbolic subspaces. Hyperbolic lines do not occur precisely if $\Delta$ is a strictly orthogonal polar space (``orth.'' for short in the table below). On the other hand, the strictly orthogonal polar spaces are the only ones containing grids as hyperbolic subspaces (the lines of quadruples of types $\mathsf{III}$ and $\mathsf{III}^*$ are contained in one regulus of a grid). Furthermore, it will also depend on $j$ whether or not the $4$-tuples occur in $\Delta$. We summarise this in the table below, where ``$\mathsf{X}$'' means that the $4$-tuple occurs and ``$\mathsf{x}$'' means that it occurs \emph{if} $|j|<n-1$. Recall that $|j| > 0$. We do not include $\Delta$ hyperbolic and $|i|=|j|=n-1$ here, as this will be a special case anyway (see below).

\begin{table}[h]
\begin{center}
\begin{tabular}{|l||c|c|c|c|c|c|c|c|c|}
\hline
 							& $\mathsf{I}$ 		& $\mathsf{II}$ 		&$\mathsf{II}^*$	&$\mathsf{III}$		&$\mathsf{III}^*$	&$\mathsf{IV}$ 		& $\mathsf{V(1)}$	& $\mathsf{V(>1)}$	 \\ \hline\hline
$\Delta$ orth., not hyperbolic		& $\mathsf{x}$		&  $\mathsf{X}$		&  				& $\mathsf{X}$		&$\mathsf{x}$ 		&$\mathsf{x}$		& 				& $\mathsf{x}$ 			 \\\hline 
$\Delta$ hyperbolic, |i|,|j|<n-1 		& $\mathsf{x}$		& $\mathsf{x}$  	&				&$\mathsf{x}$		&$\mathsf{x}$		& $\mathsf{x}$		& 			 	&$\mathsf{x}$			\\\hline
other $\Delta$ 					& $\mathsf{x}$		& $\mathsf{X}$		& $\mathsf{x}$ 		& 				&				& $\mathsf{x}$ 		& $\mathsf{X}$ 		& $\mathsf{x}$			\\\hline

\end{tabular}
\end{center}
\vspace{-1em}
\caption{Occurrence of $4$-tuples in $\Delta$ in function of $\Delta$.}
\end{table}

It also depends on $\Gamma$ which of the in $\Delta$ occurring $4$-tuples actually occur as a quadruple. To keep track of the different cases, we already give a summary of our results now in the table below, this time not taking into account that some of those types possibly do not occur in $\Delta$. So for a given graph and a given polar space, one has to combine the two tables to know which are the occurring quadruples.

\begin{table}[h]
\begin{center}
\begin{tabular}{|l||c|c|c|c|c|c|c|}
\hline
 									& $\mathsf{I}$ 		& $\mathsf{II}$ 		& $\mathsf{II}^*$ 	& $\mathsf{III}$ 	& $\mathsf{III}^*$	&$\mathsf{IV}$ 		& $\mathsf{V(t)}$ 	 \\ \hline\hline
$\Gamma^\ell_k$: $|j|=n-1$ 				& 				& $\mathsf{X}$ 		&   				& $\mathsf{X}$ 		&				& 				& 					\\\hline  
$\Gamma^\ell_k$: $a,b \geq 0$,  $|j|<n-1$ 	& $\mathsf{X}$		&				&  $\mathsf{X}$		& 				&$\mathsf{X}$ 		&  				&					 \\\hline 
$\Gamma^\ell_k$: $a=-1$, $|i|=|j|<n-1$ 		& $\mathsf{X}$ 		&    				& $\mathsf{X}$ 		&				&				&  				&					 \\\hline
$\Gamma^\ell_k$: $b=-1$ (so $|j|<n-1$)		& $\mathsf{X}$ 		&    				& 				&  				&				&  				& $\mathsf{X}$ 			 \\\hline
$\Gamma_k$ 							& $\mathsf{X}$ 		& $\mathsf{X}$  	&  				& 				& $\mathsf{X}$		&$\mathsf{X}$  		&				 	 \\\hline
$\Gamma_{\geq k}$						& $\mathsf{X}$ 		& $\mathsf{X}$   	& 				& 				&$\mathsf{X}$ 		& $\mathsf{X}$ 		& 					  \\\hline
\end{tabular}
\end{center}
\vspace{1em}
\caption{Occurrence of $4$-tuples as round-up quadruples in function of $\Gamma$.}
\end{table}

We now prove that each of our quadruples is of one of these types. If $\Gamma=\Gamma_k^{\ell}$, it appears that some cases in which $-1 \in \{a,b\}$ behave differently.  So we start with the ``generic case'' in which we do not take special cases into account.

\subsubsection{Most general case}  
 
\begin{lemma}\label{type1234} 
Let $\Gamma$ be one of $\Gamma_{\geq k}$, $\Gamma_k$, $\Gamma_k^{\ell}$. If $\Delta$ is hyperbolic, we assume that $|i|,|j|<n-1$. If $\Gamma=\Gamma_k^\ell$, we  assume $b \geq 0$ and, if moreover $|j|<n-1$, we also assume $a \geq 0$. Then every $\Gamma$-round-up quadruple is of type I, II, III or IV. Type IV does not occur if $\Gamma = \Gamma_k^\ell$ (and coincides with type I if $\Gamma=\Gamma_{\geq k}$). Type II occurs for all graphs, and if $\Gamma = \Gamma_k^{\ell}$ and $|j|<n-1$ then each quadruple of type II is of type II$^{*}$. If $|j|<n-1$, then a quadruple of type III is of type III$^*$.
\end{lemma}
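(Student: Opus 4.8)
Let $\{J_1,J_2,J_3,J_4\}$ be a round-up quadruple. By Lemma~\ref{lem7a}, all pairwise intersections coincide with a fixed subspace $S$ of dimension $s \geq k$; passing to $\Delta' = \Res_\Delta(S)$ and writing $J'_d$ for the subspace corresponding to $J_d$, we have that the $J'_d$ are pairwise opposite subspaces of $\Delta'$ of dimension $t := |j|-s-1 \geq 0$. The first step is to bound $t$: I claim $t \leq 1$, i.e.\ $s \in \{j-1, j-2\}$. Indeed, if $t \geq 2$ then we may choose two collinear points $x_1 \in J_1\setminus S$ and $x_2 \in J_2\setminus S$ with $x_1x_2$ avoiding $S$-complementary directions of $J_3, J_4$ as needed (respecting the constraints of Lemma~\ref{lem1}: $x_c \in \mathsf{Q}_c$ if $a=-1$, $x_c\in\mathsf{P}_c$ if $b=-1$ — but here $a,b\geq0$ so there is no constraint when $|j|<n-1$, and when $|j|=n-1$ we argue with the line version). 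Then Lemma~\ref{lem1} forces $\<x_1,x_2\>$ to meet $J_3$ or $J_4$, say $J_3$, in a point $x_3 \notin S$; but varying $x_1$ over a line of $J_1$ disjoint from $S$ (possible since $t\geq 2$) and applying Lemma~\ref{lem0} with $U = J_1$, $V,W$ the intersections arising, one forces $\dim(J_1\cap J_3) = |j|-1 > s$, contradicting that $J_1\cap J_3 = S$. So $t \in \{0,1\}$.

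\textbf{The case $t = 0$ (giving types I, II, IV).} Here $\dim(S) = j-1$ and $J'_1,\dots,J'_4$ are points of the polar space $\Delta'$. Each pair is either collinear or opposite in $\Delta'$. Using the round-up condition together with a construction of an $i$-space meeting exactly two of the $J_d$ (as in Lemma~\ref{dimdoorsnede}, Case~0) rules out the ``mixed'' configurations except the three listed: if all four are collinear we get type I; if all four are pairwise opposite we get type II; and the remaining possibility, three on a line with the fourth opposite all three, is type IV. I expect the bookkeeping here to consist of: given a configuration that is none of I, II, IV, exhibit two collinear points $p_1 \in J_a$, $p_2\in J_b$ whose joining line misses $J_c, J_d$, extend $\<p_1,p_2\>$ via Construction~\ref{con} to an $i$-space adjacent to $J_a,J_b$ only, contradicting the round-up property; the various cases are distinguished by which pairs among the four points are collinear. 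When $\Gamma = \Gamma_k^\ell$, type IV is eliminated because the adjacency there is rigid (the $\ell$-condition), so one of the ``three-on-a-line'' spaces would fail the projection-dimension requirement — concretely, an $i$-space adjacent to the three collinear ones cannot be adjacent to the opposite one with the correct $\mathsf{t}(I^J)$, and conversely; since $J_3=J_4$ forces IV $=$ I in the $\Gamma_{\geq k}$ case, the statement is consistent.

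\textbf{The case $t=1$ and the hyperbolicity refinements (types III, II$^*$, III$^*$).} When $t=1$, $\dim(S) = j-2$ and $J'_1,\dots,J'_4$ are pairwise opposite lines of $\Delta'$. The round-up condition, via Lemma~\ref{lem1} and Lemma~\ref{lem0}, forces: any line of $\Delta'$ meeting two of the $J'_d$ meets all of them. This is exactly the defining property of type III. For the starred refinements: when $\Gamma = \Gamma_k^\ell$ and $|j| < n-1$ (so $a \geq 0$), the extra flexibility in choosing the $A$-part of an element of $\mathsf{N}(J_1,J_2)$ — using the avoiding properties of Construction~\ref{con} and the ``common projection'' Lemma~\ref{gemprojectie} — lets one produce, for a type~II quadruple that is \emph{not} on a hyperbolic line, an $i$-space adjacent to only two of the four $j$-spaces, a contradiction; hence every type~II quadruple is type~II$^*$. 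Analogously, for a type~III quadruple with $|j|<n-1$ the four lines must span a hyperbolic $3$-space (type~III$^*$): if not, by the discussion of hyperbolic subspaces in Section~2.1 a maximal set of pairwise opposite lines with the ``meets-two-meets-all'' property sits in a regulus of a hyperbolic quadrangle, which happens only in strictly orthogonal $\Delta$, and there one checks directly (again via a two-adjacency construction) that the round-up property fails unless the spanning subspace is hyperbolic. The main obstacle I anticipate is the last point — matching up the combinatorial ``meets-two-meets-all'' condition with the geometric classification of reguli in hyperbolic $3$-spaces versus hyperbolic quadrangles, and being careful that the constructions of Section~\ref{constructie} genuinely apply (the avoiding properties degrade when $\Delta$ is hyperbolic and $|i|$ or $|j|$ equals $n-1$, which is precisely why that case is excluded from this lemma).
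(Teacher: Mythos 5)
Your high-level plan --- pass to $\Res_\Delta(S)$, feed Lemma~\ref{lem1} into Lemma~\ref{lem0}, and classify the resulting configurations of points or lines --- is the right one, but the opening step contains a genuine error that propagates. After Lemma~\ref{lem7a} the residual subspaces $J'_1,\dots,J'_4$ are pairwise \emph{disjoint}, not pairwise \emph{opposite}: in a type~I quadruple they are collinear points. Because of this your preliminary bound ``$t\leq 1$'' cannot be run uniformly, and the application of Lemma~\ref{lem0} you sketch for it, with $U=J_1$ and ``$V,W$ the intersections arising'', violates that lemma's hypothesis $p\in U^\perp$ (the auxiliary point $x_2$ is collinear with $x_1$ but not with all of $J_1$). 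The argument must split \emph{first} on whether some pair of the $J'_d$ fails to be opposite. If, say, $J'_1$ and $J'_2$ are not opposite, one applies Lemma~\ref{lem0} to $(x_1,J'_2,J'_3,J'_4)$ with $x_1\in J'_1$ collinear with $J'_2$; this gives $s=j-1$ at once and places a third point on the line $J'_1J'_2$, leading to types I and IV. If all pairs are opposite, one applies it instead to $(x_1,\proj_{J'_2}(x_1),\proj_{J'_3}(x_1),\proj_{J'_4}(x_1))$ --- here $p\in U^\perp$ does hold --- and obtains $\dim(\proj_{J'_d}(x_1))=0$, hence $s\in\{j-1,j-2\}$ and types II or III. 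Your ordering (dimension first, configuration second) can only be repaired by essentially this case split.

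The $t=0$ ``bookkeeping'' you defer is where the remaining content sits: to rule out mixed configurations other than IV one must show that if $J'_4$ is collinear with $J'_1$ then $J'_4$ lies on the line $J'_1J'_2$; this is a second application of Lemma~\ref{lem0} to the non-opposite pair $\{J'_1,J'_4\}$, after which the lines $J'_1J'_2$ and $J'_1J'_4$ share two points and coincide. The exclusion of type IV for $\Gamma_k^\ell$ is not a vague ``rigidity'' of the $\ell$-condition but precisely Lemma~\ref{gemprojectie}: with $a\geq 0$ all the sets $U^x_y$ coincide, which is incompatible with three collinear residual points and one opposite to them. Finally, the starred refinements are cheaper than you anticipate: II$^*$ is immediate from $\mathsf{(RU1)}$ (Lemma~\ref{ru1}) together with Lemma~\ref{opp}, and III$^*$ for $|j|<n-1$ follows because a point collinear with two of the four lines is collinear with all their transversals and hence with the remaining two --- no appeal to the classification of reguli versus hyperbolic quadrangles, and no auxiliary two-adjacency construction, is needed there.
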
 
 
\begin{proof} Let $\{J_1,J_2,J_3,J_4\}$ be a quadruple. By  Lemmas~\ref{dimdoorsnede} and~\ref{lem7a}, we already know that the $j$-spaces have one common intersection $S$ of dimension $s$ with $s\geq k$. Suppose $x_t$ and $x_u$ are collinear points in $J_t \setminus S$ and $J_u\setminus S$, respectively, for $u,t \in \{1,2,3,4\}$ with $J_u \neq J_t$. 
We may apply Lemma~\ref{lem1} without extra conditions on $x_t$ and $x_u$ because, if $a=-1$, we assume $|j|=n-1$, which implies that $\mathsf{P}_t$ and $\mathsf{P}_u$ are empty, so automatically $x_t \in \mathsf{Q}_t$ and $x_u \in \mathsf{Q}_u$; furthermore we assume $b \geq 0$. This lemma then implies that $x_ux_t$ intersects a third member of the quadruple, unless $\Gamma=\Gamma_k^\ell$, $\Delta$ is hyperbolic and $|i|=|j|=n-1$ Let $Q':=\{J'_1,J'_2,J'_3,J'_4\}$ be the set of $(j-s-1)$-spaces in $\Res_\Delta(S)$ corresponding to the quadruple. There are two cases.

\textbf{\boldmath Case 1: There is a pair in $Q'$ which is not opposite.\unboldmath} Suppose $J_1'$ and $J'_2$ are not opposite. This implies that $|j|<n-1$ and then our assumptions are that $a\geq 0$. Moreover, there is a point $x_1 \in J'_1$ collinear with $J'_2$. These previous facts together with the above, imply that $x_1x_2$ intersects $J_3$ or $J_4$ in a point, for any $x_2 \in J_2 \setminus S$. This allow us to apply Lemma~\ref{lem0} on $(x_1,J'_2,J'_3,J'_4)$. We obtain that $s=j-1$, so $J'_1$, $J'_2$, $J'_3$ and $J'_4$ are just points in $\Res_{\Delta}(S)$ (with $J'_1=x_1$); moreover, without loss, $J'_3$ is a point on the line $J'_1J'_2$.

If $\Gamma=\Gamma_{\geq k}$ then $J_3=J_4$ and we are done. If not, there are two possibilities. Firstly, $J'_4$ can be non-collinear with any of $J'_1,J'_2,J'_3$, and then the quadruple is of type IV. Suppose now that $J'_4$ is collinear with $J'_1$. In particular, $J'_1$ and $J'_4$ are not opposite, so we can apply the reasoning of the beginning of the first paragraph on them, and obtain that $J'_2$ or $J'_3$ is on the line $J'_1J'_4$. Anyhow, the lines $J'_1J'_2$ and $J'_1J'_4$ have at least two points in common, so they coincide and the quadruple is of type I. By Lemma~\ref{gemprojectie} (recall $a \geq 0$ in this case), there are no quadruples of type IV if $\Gamma=\Gamma_k^{\ell}$. 

\textbf{\boldmath Case 2: All pairs in $Q'$ are opposite. \unboldmath} We reason in $\Res_{\Delta}(S)$. Let $x_1 \in J'_1$ be arbitrary and let $d=2,3,4$. Consider $P'_d=\proj_{J'_d}(x_1)$. If $P'_2$ is empty, then $s=|j|-1$ and the quadruple is of type II. So suppose $P'_2$ is nonempty.  Suppose first that we are not in the special case of Lemma~\ref{lem1}. As above, it follows that we can apply Lemma~\ref{lem0} on $(x_1,P'_2,P'_3,P'_4)$, which implies that $\dim(P'_d)=0$ and, without loss, $P'_3$ is on the line $x_1P'_2$. So, as $P_d$ is a hyperplane of $J'_d$, $s=|j|-2$ and $J'_1, J'_2, J'_3,J'_4$ are pairwise opposite lines.

Now suppose that we are in the special case, i.e., $\Gamma=\Gamma_k^\ell$, $\Delta$ is hyperbolic and $|i|=|j|=n-1$. Then suppose or a contradiction that $\dim(S) < |j|-2=n-3$, i.e., $\dim(S) \leq n-5$ since $\Delta$ is hyperbolic. Let  $x_1$ be a point in $J_1\setminus S$. Note that $\cod_{S}(x_1^\perp \cap J_2) \geq 2$. We take a hyperplane $H_2$ of $J_2$ through $S$ distinct from $x_1^\perp \cap J_2$. Then there is a point $x_2$ in $(x_1^\perp \cap J_2)\setminus H_2$ such that the line $x_1x_2$ contains a point of $J_3 \cup J_4$. Now taking a hyperplane $H'_2$ in $J_2$ through $\<S,x_2\>$ and distinct from $x_1^\perp \cap J_2$, we likewise obtain a point $x'_2 \in (x_1^\perp \cap J_2)\setminus H'_2$ such that $x_1x'_2$ contains a point of $J_3 \cup J_4$. By our choice of $H'_2$, $x_2 \neq x'_2$ and, moreover, $\<x_2,x'_2\>$ does not meet $S$ (otherwise $x'_2 \in H'_2$ after all). Lastly, we take a hyperplane $H''_2$  in $J_2$ through $\<S,x_2,x'_2\>$ and distinct from $x_1^\perp \cap J_2$, to obtain a point $x''_2 \in (x_1^\perp \cap J_2)\setminus H''_2$ such that $x_1x''_2$ contains a point of $J_3 \cup J_4$. The choice of $H''_2$ implies that  $\<x_2,x'_2,x''_2\>$ is a plane in $x_1^\perp \cap J_2$ which is disjoint from $S$. 
Now, without loss, the lines $x_1x_2$ and $x_1x'_2$ both contain a point $x_3$ and $x'_3$ from $J_3$. But then $x_3x'_3$ and $x_2x'_2$ have to intersect as they are contained in the plane $\<x_1,x_2,x'_2\>$, contradicting that $x_2x'_2$ does not meet $S$. Hence also in this case, $\dim(S)=|j|-2$. As $|j|-2=n-3$ and $\Delta$ is hyperbolic, it follows immediately that the quadruple is of type III.

If $\Gamma=\Gamma_{\geq k}$ then $J_3=J_4$ and we are done. If not, we still need to show that the line $x_1P'_2$ intersects both $J'_3$ and $J'_4$. By the above, we may already assume that $P'_3$ is on this line.  If $P''_4$ is the unique point on $J'_4$ collinear with $P'_3$, then the same arguments as used just above imply that $P_3P''_4$ contains $x_1$ or $P'_2$, so $x_1P'_2 = P'_3P''_4$ and hence $P''_4$ is collinear with $x_1$, implying $P'_4 = P''_4$. This shows that the quadruple is of type III.  

If $\Gamma = \Gamma_k^{\ell}$ and $|j|<n-1$, each quadruple of type II is of type II$^*$, by $\mathsf{(RU1)}$ and Lemma~\ref{opp}. Also, if $|j|<n-1$, then each quadruple of type III is of type III$^*$ because if a point is collinear to two of those lines in $\Delta'$, then it is also collinear with the two other lines as it is collinear to all transversals. 

One can verify that each of the $4$-tuples obtained above indeed satisfies the definition of a round-up quadruple.
\end{proof} 

Lemmas~\ref{type1234} does not yet cover all cases if $\Gamma=\Gamma_k^\ell$.  We deal with the remaining cases separately.

\subsubsection{$\Delta$ hyperbolic and $|i|=|j|=n-1$}\label{hyperbolicn-1} 

\begin{lemma}\label{hypn-1}
If $\Delta$ is hyperbolic and $|i|=|j|=n-1$ then each quadruple $\{J_1,J_2,J_3,J_4\}$ consists of four $j$-spaces intersecting each other in a common subspace $S$ of dimension $n-3$ (in $\Res_\Delta(S)$ they hence correspond to four pairwise opposite lines). 
\end{lemma}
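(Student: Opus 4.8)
The plan is to argue exactly along the lines of the generic argument in Lemma~\ref{type1234} (Case~2, the special sub-case), but tailored to the situation $\Delta$ hyperbolic, $|i|=|j|=n-1$, where we must use the ``collinear lines'' version of Lemma~\ref{lem1} rather than the ``collinear points'' version. First I would recall the set-up: by Lemmas~\ref{dimdoorsnede} and~\ref{lem7a} the four members $J_1,J_2,J_3,J_4$ share a common intersection $S$ of dimension $s\ge k$, and by the convention in Section~\ref{constructie} we are in the regime $a=-1$, $b=i+j-k-\ell-1\ge 0$ (using $k<\min\{|i|,|j|\}$). Passing to $\Delta':=\Res_\Delta(S)$, which is again a hyperbolic polar space, the $J_d$ correspond to pairwise \emph{opposite} subspaces $J'_d$ of dimension $j-s-1$ — opposite because any non-opposite pair would, via the point-collinearity argument in Case~1 of Lemma~\ref{type1234}, force $a\ge 0$, contradicting $a=-1$. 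So it remains to show $s=n-3$, i.e.\ that the $J'_d$ are lines (hence pairwise opposite lines), and then ``type~III'' is automatic because in a hyperbolic polar space of rank $\ge 2$ any pair of opposite lines spans a hyperbolic $3$-space whose transversals form one regulus of a grid, so the third and fourth lines (lying in that same regulus by the same transversal argument) satisfy the defining ``any transversal of two of them is a transversal of all'' property.

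The core step is therefore the dimension bound $\dim(S)\ge n-3$, and this is exactly the computation already carried out inside the proof of Lemma~\ref{type1234}: I would reproduce it. Suppose for contradiction $\dim(S)\le n-5$ (one less than $n-3$ is impossible by parity in the hyperbolic case, so $\le n-5$ is the relevant bound). Pick $x_1\in J_1\setminus S$. Since $\operatorname{cod}_S(x_1^\perp\cap J_2)\ge 2$, I can choose successively three hyperplanes $H_2,H'_2,H''_2$ of $J_2$, each containing $S$ (and the previously chosen points) and each distinct from $x_1^\perp\cap J_2$, producing points $x_2,x'_2,x''_2\in (x_1^\perp\cap J_2)$ with $\langle x_2,x'_2,x''_2\rangle$ a plane disjoint from $S$. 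By the first bullet of Lemma~\ref{lem1} (applied to the collinear lines $\langle K,x_1\rangle$-type configurations, i.e.\ the hyperbolic-$|i|=|j|=n-1$ version), each line $x_1x_2$, $x_1x'_2$, $x_1x''_2$ meets $J_3\cup J_4$ in a point; by pigeonhole two of these meeting points, say $x_3,x'_3$, lie in the same $J_e$, $e\in\{3,4\}$. Then $x_3x'_3\subseteq J_e$ and $x_2x'_2$ both lie in the plane $\langle x_1,x_2,x'_2\rangle$, so they intersect; but $x_3x'_3\subseteq J_e$ meets $J_1\cap J_2=S$ only in $S$, forcing $x_2x'_2$ to meet $S$, contradicting disjointness. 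Hence $\dim(S)=n-3$.

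The delicate point — and the one I would be most careful about — is the very first reduction: that we may assume $k\ge 1$ in the hyperbolic $|i|=|j|=n-1$ setting (needed so that the ``collinear lines'' version of Lemma~\ref{lem1} has a $(k-2)$-space $K$ inside $S$ to work with, and so that the hyperplane-codimension-$2$ argument has room). This is justified exactly as in Case~0.2 of Lemma~\ref{dimdoorsnede}: the graphs $\Gamma^\ell_0$ and $\Gamma_0$ coincide and equal (up to bipartite complement / the isomorphism $\Gamma_{\ge 0}\cong\overline{\Gamma_{-1}}$ discussed in Section~\ref{statements}) a graph we have agreed to replace by one with $k>0$, and the $k=0$ ``intersecting in exactly a point'' case does not genuinely arise. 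I would state this reduction explicitly at the start. The remaining verifications — that such a $4$-tuple really is a round-up quadruple (no vertex adjacent to exactly two, and the nonemptiness of $\mathsf{N}(J_1,J_2,J_3,J_4)$ and of $\mathsf{N}(J_1,J_2,J_3)\setminus\mathsf{N}(J_1,J_2,J_3,J_4)$ for every relabelling) — follow from Construction~\ref{con} in $\Res_\Delta(S)$ exactly as in the last sentence of the proof of Lemma~\ref{type1234}, and I would simply cite that.
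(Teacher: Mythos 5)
Your proposal is correct in substance but reaches the key dimension bound by a genuinely different route from the paper's own proof of this lemma. The paper works entirely with the line version of Lemma~\ref{lem1}: it runs a Lemma~\ref{lem0}-style argument on $(L_1,\proj_{J'_2}(L_1),\proj_{J'_3}(L_1),\proj_{J'_4}(L_1))$ to conclude only that $s\geq n-5$ (so the residual subspaces are at most $3$-spaces), and then must separately exclude $s=n-5$: there one lands in a $\mathsf{D}_4$ residue with four pairwise opposite $3$-spaces such that every $3'$-space meeting two of them in a line meets all of them in a line, and this is killed by applying triality and the absence of hyperbolic lines in a hyperbolic polar space. You instead transplant the hyperplane-and-pigeonhole computation that the paper carries out in the special-case paragraph of Case~2 of Lemma~\ref{type1234} (note that, strictly speaking, that paragraph sits inside a proof whose statement excludes the hyperbolic $|i|=|j|=n-1$ case, so you are citing the proof rather than the lemma). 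Your route gets $\dim(S)=n-3$ in one stroke and avoids the $\mathsf{D}_4$/triality case distinction entirely; the remaining steps (common intersection via Lemmas~\ref{dimdoorsnede} and~\ref{lem7a}, the reduction to $k\geq 1$, parity ruling out $n-4$, and pairwise oppositeness of disjoint maximal singular subspaces in the rank-$2$ residue) all match the paper.

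One step is stated too strongly and needs repair before the argument is airtight. You first produce $x_2,x'_2,x''_2$ outside successive hyperplanes and \emph{then} assert that ``by the first bullet of Lemma~\ref{lem1}, each line $x_1x_2$, $x_1x'_2$, $x_1x''_2$ meets $J_3\cup J_4$.'' That universal claim does not follow from Lemma~\ref{lem1}, whose first bullet only says that the $3$-space $\langle L_1,L_2\rangle$ spanned by a pair of collinear \emph{lines} meets $J_3$ or $J_4$ in a line; an individual line $x_1x_2$ with $x_2\in x_1^\perp\cap J_2$ arbitrary need not meet $J_3\cup J_4$. What the argument actually requires (and what the paper asserts in the corresponding paragraph of Lemma~\ref{type1234}) is the \emph{existence}, for each prescribed hyperplane $H$ of $J_2$ through $S$ with $H\neq x_1^\perp\cap J_2$, of some $x_2\in(x_1^\perp\cap J_2)\setminus H$ with $x_1x_2\cap(J_3\cup J_4)\neq\emptyset$; this existence version does suffice for your pigeonhole and can be extracted from the intermediate claim in the proof of Lemma~\ref{lem1} (every $\langle L_1,L_2\rangle$ contains a point of $J_3\cup J_4$), but it deserves an explicit sentence. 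With that quantifier fixed, your proof goes through and is arguably the more economical of the two.
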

\begin{proof}
Let $\{J_1,J_2,J_3,J_4\}$ be a quadruple. By  Lemmas~\ref{dimdoorsnede} and~\ref{lem7a}, we already know that the $j$-spaces have one common intersection $S$ of dimension $s$ with $s\geq k$.  If $s=n-3$ then it is clear that the quadruple has hyperbolic type II (note that $\Res_\Delta(S)$ has rank 2 so disjoint lines are opposite).

So suppose $s \leq n-5$. Let $J'_1,J'_2,J'_3,J'_4$ be the set of $(j-s-1)$-spaces in $\Res_\Delta(S)$ corresponding to the quadruple. Let $L_1$ be a line in $J'_1$ and let $L_2$ be a line in $J'_2$ collinear with $L_1$ (which exists since $s \leq n-5)$.  By Lemma~\ref{lem1}, $\<L_1,L_2\>$ intersects a third member of the quadruple in a line. If we apply the same reasoning as in Lemma~\ref{lem0} on $(L_1,\proj_{J'_2}(L_1),\proj_{J'_3}(L_1),\proj_{J'_4}(L_1))$, we obtain that $\dim(\proj_{J'_2}(L_1))=1$, i.e.,  $s=n-5$, so $J'_1$, $J'_2$, $J'_3$ and $J'_4$ are pairwise disjoint $3$-spaces in $\Res_{\Delta}(S)$, moreover, without loss,  $\<L_1,L_2\>$ intersects $J'_3$ in a line $L_3$. Moreover, we can show that also $J'_4$ intersects $\<L_1,L_2\>$ in a line. Indeed, let $L_4$ be the unique line in $J'_4$ collinear with $L_1$. Then $\<L_1,L_4\>$ has to intersects at least one of $J_2$, $J_3$ in a line, say $J_2$. As $\proj_{J'_2}(L_1)=L_2$, we obtain that $\<L_1,L_4\>=\<L_1,L_2\>$ intersects each of $J'_1$, $J'_2$, $J'_3$ and $J'_4$ in a line. 

It remains to show that this last possibility does not occur. In $\Res_\Delta(S)$, which is of type $\mathsf{D}_4$, we obtain four pairwise disjoint $3$-dimensional subspaces, say of type $3'$,  such that each $3'$-space intersecting two of them in a line intersects all of them in a line. Applying the triality principle, this amounts to four pairwise opposite points such that each point collinear to two of them is collinear to all of them. As there are no hyperbolic lines in a hyperbolic polar space, this is impossible.
\end{proof}

\subsubsection{The projection of adjacent vertices of $\Gamma$ on each other is their intersection ($a=-1$) }\label{Aleeg} 
 
Let $\{J_1,J_2,J_3,J_4\}$ be a quadruple of $\Gamma=\Gamma_k^\ell$, where $a=|\ell|-|j|-1=-1$, i.e., $|\ell|=|j|$. However, we assumed that, if $\max\{|i|,|j|\} = |\ell|$ then $|j| \leq |i|$, hence also $|i|=|j|$. We will furthermore assume that $|i|=|j| < n-1$, as the case where $|j|=n-1$ is already covered by Lemma~\ref{type1234}. So for this subsection: $i=j < n-1$.

We can prove the following property.
\begin{itemize} 
\item[$\mathsf{(RU2)}$]  Let $L$ be a line containing distinct points $p$ and $p'$ such that $p \in J_u^\perp$ and $p' \in J_t^\perp$ for $u \neq t$. Then $L$ contains a point $q$ with $q \in J_v^\perp \cup J_w^\perp$, where $\{u,t,v,w\}=\{1,2,3,4\}$.
\end{itemize} 

\begin{lemma}\label{ru2} If $\Gamma$ equals $\Gamma_k^{\ell}$, $a=-1$ and $i=j<n-1$, possibly $k=-1$, then $\mathsf{(RU2)}$ is valid for any quadruple. Moreover, $\mathsf{(RU2)}$ remains valid in $\Res_{\Delta}(S')$ for $S' \subseteq J_1 \cap J_2 \cap J_3 \cap J_4$. \end{lemma}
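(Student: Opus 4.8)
The statement to prove is property $\mathsf{(RU2)}$ for quadruples of $\Gamma_k^{\ell}$ in the case $a = -1$, $i = j < n-1$: namely, a line $L$ carrying a point $p$ collinear with $J_u$ and a (distinct) point $p'$ collinear with $J_t$ must carry a point $q$ collinear with $J_v$ or $J_w$, where $\{u,t,v,w\} = \{1,2,3,4\}$; moreover this survives passing to $\Res_\Delta(S')$ for any $S' \subseteq \bigcap_d J_d$. The strategy mirrors the proof of Lemma~\ref{ru1}: assume for contradiction that \emph{no} point of $L$ is collinear with $J_v$ or $J_w$, and then build, using Construction~\ref{con}, an $i$-space $I \in \mathsf{N}_{(k)}(J_v, J_w)$ that meets $L$ in a point, forcing $I \nsim J_u$ and $I \nsim J_w$... more precisely forcing $I$ to be non-adjacent to both $J_u$ and $J_t$ (each contains a point of $L$, hence a point collinear to $I$, which for $a = -1$ means $\proj_{J}(I) \supsetneq I \cap J$, so $I \not\sim J$), contradicting the round-up property. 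So the first step is to set up the contradiction hypothesis: every point of $L$ lies outside $J_v^\perp \cup J_w^\perp$; in particular $L$ avoids $J_v \cup J_w$ entirely, and $L$ meets $J_u^\perp, J_t^\perp$ only in (at most) the points $p, p'$ respectively (any further point of $L$ in $J_u^\perp$ would, by $\mathsf{(RU1)}$ applied in the guise of Lemma~\ref{ru1}, have to lie in $J_v^\perp \cup J_w^\perp$, contradiction; this needs a small check but is exactly the flavour of the earlier lemmas).

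The heart of the argument is the construction of $I$. By Lemma~\ref{lem7a} the four $j$-spaces share a common $s$-space $S$ with $s \geq k$; I work in $\Res_\Delta(K)$ for a fixed $k$-space $K \subseteq S$ (or $K = \emptyset$ if $k = -1$). Since $a = -1$, an element of $\mathsf{N}_{(k)}(J_v, J_w)$ has the shape $I = \<K, B\>$ with $B$ a $b$-space ($b \geq 0$, because $k < |j|$) semi-opposite $J_v$ and $J_w$; the role of the collinear part $A$ is vacuous. I want to choose $B$ so that $B$ contains a prescribed point of $L \setminus K$: concretely, pick a point $r$ on $L$ with $r \notin J_u \cup J_t \cup J_v \cup J_w$ (possible since $L$ is a line over an infinite field and meets each $J_d$ in at most one point) and $r \notin \bigcap J_d$; then build $B = \<r, B^-\>$ with $B^-$ a $(b-1)$-space chosen in $\Res_\Delta(\<K, r\>)$ semi-opposite the images of all four $j$-spaces and avoiding $J_u \cup J_t$ — this is legitimate by Fact~\ref{lem4a}$(ii)$ together with Remark~\ref{b}, noting $|j| < n-1$ so we are not in the hyperbolic MSS exception. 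The resulting $I = \<K, r, B^-\>$ is then in $\mathsf{N}_{(k)}(J_v, J_w)$ by construction (it meets $J_v, J_w$ in exactly $K$, $B$ is semi-opposite both, and nothing of $J_v \setminus K$ or $J_w \setminus K$ is collinear with $I$). On the other hand $r \in I$ and $r \in L$, so $I$ contains a point collinear with $J_u$ (namely, some point of $L$ if $p$'s collinearity propagates — actually the cleaner way: $I$ contains $r$, and we must instead arrange that $I$ contains a point collinear to $J_u$, which is why we should place \emph{$p$ itself}, or a point of the plane $\<K, L\>$, into $B$). Let me restate: choose $B$ to contain $p$ rather than an arbitrary $r$; then $p \in I$, $p \perp J_u$, so $\proj_{J_u}(I) \supsetneq I \cap J_u$ and hence $I \nsim J_u$; symmetrically putting $p'$ into consideration — but $B$ can only absorb one extra point direction, so here I instead use that $\<K, L\>$ has dimension $\leq k+2$ and contains both $p, p'$; choosing $B \supseteq L$ (a $(b)$-space containing the line $L$, possible when $b \geq 1$) makes $I \supseteq L$, so $I$ is non-adjacent to \emph{both} $J_u$ and $J_t$, contradicting that every $I \in \mathsf{N}(J_v, J_w)$ is adjacent to one of $J_u, J_t$ — wait, that is not the round-up hypothesis either.

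**The right contradiction.** The round-up property says: any vertex adjacent to at least two of the $J_d$ is adjacent to at least three. We have arranged $I \sim J_v$ and $I \sim J_w$, so $I$ must be adjacent to a third, i.e. to $J_u$ or $J_t$. But $I$ meets $L$ (hence contains a point collinear with $J_u$ and, if $L \subseteq I$, also with $J_t$), so $I \nsim J_u$ and $I \nsim J_t$ — the desired contradiction. Thus the key case is $b \geq 1$, where we can take $B \supseteq L$; when $b = 0$ the space $B$ is a single point and $I = \<K, B\>$ has dimension $k+1$, so $\<K,L\>$ (dimension $k+2$ when $L \cap K = \emptyset$, or $k+1$ when $L$ shares a point with $K$) may not fit — I will handle $b = 0$ separately, reducing as in Case 0.2 / the $k > 0$ reductions of Lemma~\ref{lem7a}: when $b = 0$, we have $i = k+1$, and one checks that either $L$ already meets $S$ (so a suitable point of $L$ together with $K$ still gives room) or a direct dimension count on $\<K, L\>$ versus the $j$-spaces yields the contradiction using Lemma~\ref{lem1}. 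For points $p$ or $p'$ lying \emph{inside} some $J_d$, the reduction to the generic subcase is exactly as in the last paragraph of Lemma~\ref{ru1}: replace such a point by a generic point of $\<U^{\cdot}_{\cdot}, U^{\cdot}_{\cdot}\>$ off the two projection subspaces, whose collinearity pattern with all four $j$-spaces is the same. Finally, the residual statement: $\mathsf{(RU2)}$ passing to $\Res_\Delta(S')$ is immediate because collinearity, lines, and the quadruple structure all restrict cleanly, exactly as noted at the end of Lemma~\ref{ru1}.

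**Main obstacle.** The delicate point is the existence of the $b$-space $B$ with $L \subseteq B$ and $B$ semi-opposite all four $j$-spaces while avoiding $J_u \cup J_t$: this is not quite a direct invocation of Fact~\ref{lem4a}$(ii)$ because we are prescribing a line inside $B$, so I will instead work in $\Res_\Delta(\<K, L\>)$ (rank $n - k - 3 \geq 1$ since $|j| < n-1$ forces enough room), observe that each $J_d$ there has the appropriate dimension for a semi-opposite $(b-2)$-space to exist, and then lift. The bookkeeping — verifying that the lifted $I$ genuinely lies in $\mathsf{N}_{(k)}(J_v,J_w)$, i.e. that $I \cap J_v = I \cap J_w = K$ exactly and that the projection dimensions are right — is routine given Construction~\ref{con} and Remark~\ref{b}, but is where one must be careful about the hyperbolic exceptions (all excluded here by $|i| = |j| < n-1$). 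The only genuinely separate work is the small-$b$ boundary case $b = 0$, dispatched by the same $k$-varying dimension arguments used repeatedly in Lemmas~\ref{dimdoorsnede} and~\ref{lem7a}.
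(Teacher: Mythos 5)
Your main construction is, in substance, exactly the paper's: for $k+1<i$ (equivalently $b\geq 1$) one assumes no point of $L$ lies in $J_v^\perp\cup J_w^\perp$, notes that $L$ then avoids $J_v\cup J_w$ and is semi-opposite both, extends $\<K,L\>$ inside $\Res_\Delta(\<K,L\>)$ to an $i$-space $I=\<K,B\>\in \mathsf{N}_{(k)}(J_v,J_w)$ with $L\subseteq B$, and observes that $I$ cannot be adjacent to $J_u$ or $J_t$ because $I\setminus K$ contains $p$ and $p'$, which are collinear with $J_u$ and $J_t$ respectively --- impossible when $a=-1$. This part is sound, and your worry about prescribing a line inside $B$ is resolved exactly as you suggest, by working in $\Res_\Delta(\<K,L\>)$ (legitimate since $p,p'\perp K$ and $L\cap K=\emptyset$). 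A minor imprecision: for $k=-1$ you cannot invoke Lemma~\ref{lem7a}, but this is harmless since then $K=\emptyset$.

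The genuine gap is the boundary case $b=0$, i.e.\ $i=k+1$. There your construction has no room ($B$ is a single point, so $L\not\subseteq B$), and the fallbacks you sketch do not close it: $L$ need not meet $S$ at all, and Lemma~\ref{lem1} concerns lines joining a point of $J_1\setminus S$ to a point of $J_2\setminus S$, which the line of $\mathsf{(RU2)}$ is not, so its hypotheses simply do not apply; likewise the ``$k$-varying'' arguments of Lemmas~\ref{dimdoorsnede} and~\ref{lem7a} are of a different nature. The case is a real one (e.g.\ $i=j$, $k=j-1$, $\ell=j$ gives a nontrivial graph), and it is settled not by contradiction but by a short direct argument: for $k\geq 0$, Lemma~\ref{lem7a} forces $\dim(S)=k=j-1$, and since $p,p'\in S^\perp$ we get $L\subseteq S^\perp$; then for any $p_e\in J_e\setminus S$ with $e\in\{v,w\}$ the one-or-all axiom yields a point $q\in L\cap p_e^\perp$, and $q\perp \<S,p_e\>=J_e$ because $S$ is a hyperplane of $J_e$, so $q\in J_e^\perp$ as required. (The remaining sub-case $k=-1$, $i=j=0$ is the one-or-all axiom verbatim.) You need to supply this, or an equivalent, explicitly.
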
 
\begin{proof} 
Recall that $k < \min\{i,j\}$. If $k+1=i$ and $k \geq 0$, then by Lemma~\ref{lem7a} the $j$-spaces all intersect each other in $S$ and $\dim(S)=k=j-1$. If two distinct points of a line $L$ are collinear with $J_1$ and $J_2$ respectively, then $L$ is collinear with $S$. As any point $p_e$ of $J_e\setminus S$ with $e\in\{3,4\}$ is collinear to at least one point of $L$, $J_e$ is collinear with this point. If $k=-1$ and $i=j=0$, then this property is also trivial. 

Now suppose $k+1 <i$. Let $L=pp'$ be a line with $p \in J_1^{\perp}$ and $p' \in J_2^{\perp}$. Suppose for a contradiction that none of its points is collinear with $J_3$ or $J_4$. In particular, $L$ does not meet any of $J_3,J_4$. Hence, we can choose an element $I = \<K,B\> \in \mathsf{N}_{(k)}(J_3,J_4)$ such that $L \subseteq B$ (recall $k+1 <i$). Then $I\cap J_3 = I \cap J_4 = K \subseteq J_3 \cap J_4 = J_1 \cap J_2 \cap J_3 \cap J_4$. If $J_c$, $c\in\{1,2\}$, would be adjacent to $I$, then $I\cap J_c =K$. But then $I\setminus K$ contains $p$ and $p'$, which are collinear with $J_1$ and $J_2$, making $I \sim J_c$ impossible, a contradiction. 
 
Since $L$ is collinear with $S$, it follows that $\mathsf{(RU2)}$ is a residual property.  \end{proof} 
 
\begin{lemma}\label{sdopp} 
Let $\{J_1,J_2,J_3,J_4\}$ be a set of four $j$-spaces having one common intersection $S$ and satisfying $\mathsf{(RU1)}$ and $\mathsf{(RU2)}$. Then at least one pair of them is contained in a singular subspace or is such that their projections on each other equal their intersection. 
\end{lemma}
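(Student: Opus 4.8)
We work in the residue $\Delta' = \Res_{\Delta}(S)$, where $S$ is the common intersection of the four $j$-spaces. Each $J_d$ corresponds to a $(j-s-1)$-space $J'_d$ in $\Delta'$. The two properties $\mathsf{(RU1)}$ and $\mathsf{(RU2)}$ are residual (as established in Lemmas~\ref{ru1} and~\ref{ru2}), so they hold in $\Delta'$ with respect to the $J'_d$. Being contained in a common singular subspace of $\Delta$ is equivalent, for two $j$-spaces both containing $S$, to being collinear in $\Delta'$; and ``the projections on each other equal their intersection'' translates to the two $J'_d$ being \emph{opposite} in $\Delta'$ (in the sense that each has empty projection on the other). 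So the statement to prove is: \emph{among four subspaces $J'_1,J'_2,J'_3,J'_4$ of $\Delta'$ of the same dimension, satisfying $\mathsf{(RU1)}$ and $\mathsf{(RU2)}$, at least one pair is collinear or opposite.} We may henceforth drop the primes and argue directly in $\Delta'$, writing $J_d$ for $J'_d$ and keeping in mind $|i|=|j|<n-1$ holds in the ambient statement, which guarantees there are always points outside the $J_d$ in their perps.

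**The plan.** First I would assume, for a contradiction, that every pair $\{J_u,J_t\}$ is neither collinear nor opposite; so for every pair, $\proj_{J_u}(J_t)$ is a nonempty proper subspace of $J_u$, i.e.\ the subspaces are ``semi-generic'' with respect to each other. The strategy is to extract, from $\mathsf{(RU2)}$, a strong transversality condition on the projection subspaces, and then derive a numerical contradiction. Fix $J_1$ and pick a point $p_1 \in \proj_{J_1}(J_2)$ but not in $\proj_{J_1}(J_3) \cup \proj_{J_1}(J_4)$ — this is possible because, by the standing hypothesis, none of the projections $\proj_{J_1}(J_d)$ fills $J_1$, and a finite union of proper subspaces of the (infinite) projective space $J_1$ cannot cover it, unless some projection already contains another; I would handle the degenerate inclusion cases separately by renumbering. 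Now $p_1$ is collinear with $J_2$. For any point $p_2 \in J_2$, the line $p_1 p_2$ has two points collinear with $J_1$ and $J_2$ respectively (namely $p_1$ is even collinear with both, but I will instead pick $p_2$ collinear with $J_1$ along $\proj_{J_2}(J_1)$ so that $p_2 \in J_1^\perp$ and $p_1 \in J_2^\perp$): $\mathsf{(RU2)}$ then forces a point of $p_1p_2$ to be collinear with $J_3$ or $J_4$. This is exactly the hypothesis of Lemma~\ref{lem0} applied with $U = \proj_{J_2}(J_1)$ (or a suitable subspace of $J_2$), $V = \proj_{J_2}(J_3)$, $W = \proj_{J_2}(J_4)$, and the point $p_1$. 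Lemma~\ref{lem0} then yields $\cod_{U}(U \cap V) = 0$, i.e.\ $\proj_{J_2}(J_1) \cap \proj_{J_2}(J_3)$ is a hyperplane of $\proj_{J_2}(J_1)$, together with the conclusion that $\langle p_1, U\rangle$ contains $V$ or $W$ and that all three projection subspaces of $J_1,J_3,J_4$ on $J_2$ have equal dimension. Iterating this over all choices of the ``base'' index and using $\mathsf{(RU1)}$ to pin down which points are collinear with which $J_d$, I expect to force the $J_d$ into the very rigid configuration where, in the residue of a suitable common subspace, the $J_d$ project to pairwise opposite points — but then any two of them would be opposite, contradiction; or else one of the forced hyperplane-coincidences collapses to make two of them collinear, again a contradiction.

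**Main obstacle.** The delicate point is the bookkeeping of the inclusion/degenerate cases: Lemma~\ref{lem0} has exceptional branches ($pq$ meeting $W$ never, $W \not\subseteq \langle p, U\rangle$, etc.), and one must also worry whether the chosen points $p_1, p_2$ can always be taken outside enough subspaces — this is where $|i|=|j|<n-1$ and the infiniteness of $\Delta$ (Remark~\ref{infinite}) are used, via the counting argument ``a finite union of proper subspaces does not cover a projective space.'' The cleanest route is probably to first reduce to the case where all pairwise projections $\proj_{J_u}(J_t)$ have a \emph{common} dimension $r$ (this follows from applying the dimension-equality conclusion of Lemma~\ref{lem0} symmetrically in all indices), then observe that the hyperplane-coincidences $\proj_{J_2}(J_1) \cap \proj_{J_2}(J_3)$ being a hyperplane of $\proj_{J_2}(J_1)$, together with the same statement for $\proj_{J_2}(J_4)$, force $\proj_{J_2}(J_3) = \proj_{J_2}(J_4)$ or $\proj_{J_2}(J_1) \subseteq \langle \proj_{J_2}(J_3), \proj_{J_2}(J_4)\rangle$ with tight dimensions, and chase this until either $r = \dim(J_d)$ (collinear pair) or $r = -1$ (opposite pair). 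I expect the proof to split according to whether $\langle p_1, \proj_{J_2}(J_1)\rangle$ captures $\proj_{J_2}(J_3)$ or $\proj_{J_2}(J_4)$, and in each branch to run the same Lemma~\ref{lem0} argument one level down in a residue, terminating by induction on $\dim(J_d)$; the base case $\dim(J_d)=0$ (points) is immediate since two distinct points are either collinear or opposite.
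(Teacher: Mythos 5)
Your reduction to the residue $\Res_\Delta(S)$ and the contradiction hypothesis (no pair collinear, no pair opposite, so all mutual projections are nonempty and proper) match the paper's setup, but the opening move of your plan cannot be carried out. You want a point $p_1\in\proj_{J_1}(J_2)$ lying outside $\proj_{J_1}(J_3)\cup\proj_{J_1}(J_4)$. Since the four subspaces are pairwise disjoint in the residue, any point of $J_1$ lies in exactly one member of the quadruple, so $\mathsf{(RU1)}$ applies to it and yields precisely $\proj_{J_1}(J_2)\subseteq\proj_{J_1}(J_3)\cup\proj_{J_1}(J_4)$; as a subspace covered by two subspaces is contained in one of them, $\proj_{J_1}(J_2)$ is contained in $\proj_{J_1}(J_3)$ or in $\proj_{J_1}(J_4)$. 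So the ``degenerate inclusion case'' you propose to set aside by renumbering is in fact the only case, for every permutation of the indices, and your $p_1$ never exists. A second problem is the intended use of Lemma~\ref{lem0}: that lemma needs the lines $pq$ to actually \emph{meet} the subspaces $V$ or $W$, whereas $\mathsf{(RU2)}$ only provides a point of the line \emph{collinear} with $J_3$ or $J_4$ (and that point generally lies nowhere near $J_2$). In the paper the incidence version of this hypothesis comes from Lemma~\ref{lem1}, which is proved via the $i$-space construction and is not available here, where only $\mathsf{(RU1)}$ and $\mathsf{(RU2)}$ are assumed.

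The paper's proof goes in the opposite direction: instead of working with points inside the $J_d$ and their projections, it leaves the quadruple. After disposing of the trivial cases $|j|=n-1$ and $\dim(S)=j-1$ (so the residual subspaces $V_d$ have dimension $v\geq 1$), one extends $V_1$ to a singular $(v+1)$-space $\overline{V}_1$ all of whose new points are collinear with none of $V_2,V_3,V_4$ (possible because the projections are proper), similarly $\overline{V}_2$, and then forms the singular $(v+1)$-space $Z=\langle p_1,H\rangle$ with $p_1\in\overline{V}_1\setminus V_1$ and $H$ the hyperplane of $\overline{V}_2$ collinear with $p_1$. Using the inclusion $U^1_2\subseteq U^3_2=U^4_2$ forced by $\mathsf{(RU1)}$, one checks that $V_3^\perp\cap Z$ and $V_4^\perp\cap Z$ are proper subspaces of $Z$, so some line of $Z$ avoids both; since that line contains a point collinear with $V_1$ and a point collinear with $V_2$, this contradicts $\mathsf{(RU2)}$. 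If you want to salvage your approach, you would need to replace the nonexistent $p_1$ by a point outside $J_1\cup J_2\cup J_3\cup J_4$ and find a substitute for the incidence hypothesis of Lemma~\ref{lem0}; as written, the argument does not get off the ground.
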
  
\begin{proof} 
If $|j|=n-1$ this is trivial. So suppose $|j|<n-1$ and assume for a contradiction that no such pair exists. If $\dim(S) = j-1$, the lemma is trivial, so we may assume $\dim(S) < j-1$. In $\Delta':=\Res_\Delta(S)$, which has rank at least 3, the $j$-spaces correspond to subspaces $V_1, V_2, V_3$ and $V_4$ of dimension $v$ with $v \geq 1$. We denote the subspaces corresponding to $\<S,U^u_v\>$ by $U^u_v$ as well; note that $U^u_v=\proj_{V_v}(V_u)$, for $u,v \in\{1,2,3,4\}$. By assumption, these are all nonempty.
By looking in $\Res_{\Delta'} (V_1)$, we find a singular $(v+1)$-space $\overline{V}_1$ through $V_1$ in $\Delta'$ such that all points in $\overline{V}_1\setminus V_1$ are collinear with none of  $U^1_2, U^1_3$ and $U^1_4$, and hence collinear with none of $V_2,V_3$ and $V_4$. Likewise, we can find such a singular $(v+1)$-space $\overline{V}_2$ w.r.t.\ $V_2$.  Let $p_1 \in \overline{V}_1 \setminus V_1$ be arbitrary. As $V_2 \nsubseteq p_1^{\perp}$, there is a unique hyperplane $H$ of $\overline{V}_2$ collinear with $p_1$. Clearly, $H \neq V_2$. So let $p_2\in H\setminus V_2$. Denote by $Z$ the subspace $\<p_1,H\>$. %By construction it is also clear that $Z$ does not contain $U^1_2$.  
 
By $\mathsf{(RU1)}$ and possibly by switching the roles of the $j$-spaces (as the above holds for any permutation of $\{1,2,3,4\}$), we may assume that $U^1_2 \subseteq U^3_2=U^4_2$. As $U^1_2$ is not collinear with $p_1$, it is not contained in $H \cap V_2$, and so $\<H \cap V_2,U^v_2\>=V_2$, for all $v\in\{1,3,4\}$. It follows that none of $V_3,V_4$ is collinear with $H \cap V_2$, for this would mean that they are contained in a singular subspace with $V_2$. So both $V_3$ and $V_4$ are collinear with at most a hyperplane of $V_2\cap H$. As they are not collinear with any point of $\overline{V}_2\setminus V_2$,  they are collinear with at most a codimension 1 subspace of $Z$. This shows that there is a line $L$ in $Z$ which is disjoint from $V_3^{\perp} \cap V_4^{\perp}$. We may assume that $L=p_1p_2$ and so $\mathsf{(RU2)}$ is violated. 
\end{proof} 

The conditions (RU1) and (RU2) also appear in \cite{Kas-Mal:13}, though used for round-up triples, and the idea of the previous lemma is taken from the proof of Lemma 4.7 of the same article, and extended to quadruples.

\begin{lemma}\label{aempty} 
Assuming $|i|=|j| < n-1$, the quadruple is of type I or II$^*$. 
\end{lemma}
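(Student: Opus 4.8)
The plan is to run the same machine as in Lemma~\ref{type1234}, but with the crucial simplifications afforded by the hypotheses $a=-1$, $|i|=|j|<n-1$. First I would invoke Lemmas~\ref{dimdoorsnede} and~\ref{lem7a} to get that the four $j$-spaces $J_1,J_2,J_3,J_4$ of the quadruple share a common intersection $S$ of dimension $s\geq k$, and pass to the residue $\Delta'=\Res_\Delta(S)$, where the quadruple corresponds to four $v$-spaces $V_1,V_2,V_3,V_4$ with $v=j-s-1$. By Lemma~\ref{ru2} (and Lemma~\ref{ru1}) the configuration satisfies $\mathsf{(RU1)}$ and $\mathsf{(RU2)}$, both of which are residual, so I may pass freely to smaller residues. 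Lemma~\ref{sdopp} then tells us that at least one pair, say $\{J_1,J_2\}$, is either contained in a common singular subspace or has $\proj_{J_1}(J_2)=\proj_{J_2}(J_1)=S$; moreover, by replacing the pair if necessary and using that $\mathsf{(RU1)}$, $\mathsf{(RU2)}$ hold for every permutation of the indices, I can iterate Lemma~\ref{sdopp} on subconfigurations to control all pairs.

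Next I would split into the two cases coming from Lemma~\ref{sdopp}. \emph{Case A: $\proj_{J_1}(J_2)=S$.} Here $J_1',J_2'$ are opposite in $\Delta'$. I want to show $s=j-1$, i.e.\ the $V_d$ are points; suppose not, take a point $x_1\in V_1$ and consider $\proj_{V_2}(x_1)$, which is nonempty. Using that $x_1$ is collinear with a point of $V_2$, Lemma~\ref{lem1} (with $a=-1$ the required hypothesis is $x_c\in\mathsf{Q}_c$, which is automatic since $\mathsf{P}_c$ is the part inside $\proj_{J_c}(J_{c'})$ and the relevant collinearity forces the point into the semi-opposite part) gives that lines joining $x_1$ to points of $V_2$ meet a third member; then Lemma~\ref{lem0} forces $\dim\proj_{V_2}(x_1)=0$ and, since this is a hyperplane of $V_2$ only when $v=1$, I should get a contradiction unless $v\le 1$. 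A separate small argument handles $v=1$: if $V_1,V_2,V_3,V_4$ were pairwise opposite lines I would need them to span a hyperbolic $3$-space and satisfy the transversal condition (type III or III$^*$), but $a=-1$ rules this out — this is exactly where the hypothesis $a=-1$ kicks in, via $\mathsf{(RU2)}$ forcing the ``two proper subspaces cannot cover'' dichotomy, analogous to Case (3.1)/(3.2) of Lemma~\ref{lem7a}. So $s=j-1$ and the $V_d$ are pairwise opposite points; $\mathsf{(RU1)}$ together with Lemma~\ref{opp} (as at the end of Lemma~\ref{type1234}) upgrades type II to type II$^*$. \emph{Case B: $J_1\cup J_2$ spans a singular subspace,} i.e.\ $V_1\perp V_2$ in $\Delta'$: then there is a point of $V_1$ collinear with all of $V_2$; running Lemma~\ref{lem0} on $(x_1,V_2,V_3,V_4)$ forces $\dim(V_1\cap\text{-stuff})$ down and, after the same reduction, yields $s=j-1$ with $V_1',V_2',V_3',V_4'$ on a line — type I. The remaining logical possibilities (some pairs collinear, others opposite) are excluded by Lemma~\ref{gemprojectie} or by a direct $\mathsf{(RU2)}$ argument mirroring Case (3.1) of Lemma~\ref{lem7a}: if $V_4$ is collinear with $V_1$ but $V_2$ (or $V_3$) is on the line $V_1V_2$, the two lines $V_1V_2$ and $V_1V_4$ share two points and coincide.

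Finally I would check that the configurations actually produced — type I and type II$^*$ — genuinely satisfy the defining conditions of a round-up quadruple in this setting (every vertex adjacent to $\geq 2$ members is adjacent to $\geq 3$, and the two nonemptiness conditions on $\mathsf{N}$), which is the routine verification already invoked at the close of Lemma~\ref{type1234}; for type II$^*$ one uses Lemma~\ref{opp} to see the ``$\geq 2 \Rightarrow \geq 4$'' behaviour, and for type I the statement is immediate from collinearity. The main obstacle I anticipate is the bookkeeping in the low-dimensional residue cases ($v=1$, and $k=-1$ where Lemma~\ref{lem7a} is unavailable so one must lean entirely on $\mathsf{(RU1)}$/$\mathsf{(RU2)}$), where one has to carefully re-derive that the four points/lines in the residue are pairwise opposite and rule out the ``grid regulus'' possibility; this is precisely the point at which the hypothesis $a=-1$ (equivalently $i=j$, projections trivial) must be used, since without it type III/III$^*$ would survive.
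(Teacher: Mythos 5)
Your skeleton agrees with the paper's (common intersection via Lemmas~\ref{dimdoorsnede} and~\ref{lem7a}, the properties $\mathsf{(RU1)}$ and $\mathsf{(RU2)}$, and the dichotomy of Lemma~\ref{sdopp}), but the two steps that actually carry the weight of the lemma are missing or misapplied. In your Case A the hard part is not reducing to $v\le 1$ but eliminating the surviving configuration $v=1$: four pairwise opposite lines in $\Res_\Delta(S)$ forming the regulus of a hyperbolic quadrangle, i.e.\ a type III$^*$ quadruple. You assert this is ruled out ``via $\mathsf{(RU2)}$ forcing the two-proper-subspaces-cannot-cover dichotomy, analogous to Case (3.1)/(3.2) of Lemma~\ref{lem7a}''; that dichotomy is the content of Lemma~\ref{sdopp} and does not exclude reguli. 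The paper's exclusion is a concrete construction: take two transversals $L,L'$ of the regulus with $L\cap L_d=x_d$ and $L'\cap L_d=x'_d$, work inside $x_3^\perp\cap {x'_4}^\perp$ (a polar space of rank at least $2$ containing $x'_3$ and $x_4$), choose opposite lines $M\ni x'_3$ and $N\ni x_4$ there, and join $m\in M\setminus\{x'_3\}$ to $n\in N\setminus\{x_4\}$; then $m\perp L_3$ but $m\not\perp L_4$ and $n\perp L_4$ but $n\not\perp L_3$, so the point $r$ on $mn$ produced by $\mathsf{(RU2)}$ is forced to be collinear with both $L_3$ and $L_4$, whence $n\perp L_3$, a contradiction. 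Since type III$^*$ quadruples genuinely occur for $a\ge 0$, some such argument is indispensable; without it the lemma is not proved.

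In Case B your plan to ``run Lemma~\ref{lem0} on $(x_1,V_2,V_3,V_4)$'' presupposes that every line joining $x_1$ to a point of $V_2$ meets $V_3\cup V_4$, which you would have to get from Lemma~\ref{lem1}. But with $a=-1$ that lemma only applies to points in the semi-opposite parts $\mathsf{Q}_c$, and in the singular case $J_1\perp J_2$ one has $\mathsf{Q}_c=\emptyset$, so Lemma~\ref{lem1} gives nothing here. Similarly, Lemma~\ref{gemprojectie}, which you invoke to dispose of the mixed collinear/opposite possibilities, explicitly assumes $a\ge 0$ and is unavailable. The paper instead argues directly from $\mathsf{(RU1)}$ and $\mathsf{(RU2)}$: it first shows all four $j$-spaces lie in one singular subspace $Z$, then that $Z$ is spanned by any two of them, and finally that $\dim(S)=j-1$ by building an auxiliary singular $(2j+1)$-space $Z'$ meeting $Z$ in a suitable line and violating $\mathsf{(RU2)}$ otherwise. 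These substitutes for the Lemma~\ref{lem1}/\ref{lem0} machinery are exactly what your sketch omits.
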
  
 
\begin{proof}  
We already know that the four $j$-spaces intersect each other in a common subspace $S$ of dimension at least $k$ and they satisfy $\mathsf{(RU1)}$ and $\mathsf{(RU2)}$. By Lemma~\ref{sdopp}, there are only two cases to consider.
 
\textbf{Case 1: There is a pair of $j$-spaces contained in a singular subspace.} Suppose $J_1 \perp J_2$. By $\mathsf{(RU1)}$,  $\<J_1,J_2\> \subseteq J_3^\perp \cup J_4^\perp$, so we may assume that $\<J_1,J_2\> \subseteq J_3^\perp$. This implies that $J_1,J_2$ and $J_3$ are contained in a singular subspace $Z$. We will prove that they are all contained in a singular subspace spanned by any pair of the $j$-spaces, afterwards we show that $\dim(S)=j-1$. This is accomplished in the following steps.
\begin{itemize}
\item \textit{Claim 1: $J_4$ has to be collinear with $J_1,J_2$ and $J_3$.} In view of $\mathsf{(RU1)}$ and by switching the roles of $J_1$, $J_2$ and $J_3$ if necessary (they play the same role), we may assume $U^1_4\subseteq U^2_4 = U^3_4$. Assume for a contradiction that $U^1_4 \subsetneq J_4$. Then, in $\Res_{\Delta} (\<S,U^1_4\>)$, $J_1,J_2$ and $J_3$ correspond to collinear $(j-s-1)$-spaces $V_1,V_2$ and $V_3$, respectively, and $J_4$ corresponds to a subspace $V_4$ of dimension at most $(j-s-1)$, which is semi-opposite $V_1$. Take any $(j-s)$-space through $V_1$ which is not collinear with $V_2$ nor with $V_3$.  Clearly, this subspace contains a point which is collinear with $V_4$ but not collinear with $V_2$ or $V_3$. As this violates $\mathsf{(RU1)}$, the claim is proved. Now, put $Z=\<J_1,J_2,J_3,J_4\>$.

\item \textit{Claim 2: $Z$ is generated by any two members of the quadruple.} We first show that at least one of $J_3, J_4$ belongs to $\<J_1,J_2\>$.  Assume for a contradiction that $J_3$ and $J_4$ are both not contained in $\<J_1,J_2\>$. Then there is a hyperplane $H$ of $Z$ containing $\<J_1,J_2\>$ and not containing $J_3$ nor $J_4$. Let $p$ be a point collinear with $H$ but not with $Z$. As $p$ is collinear with $J_1$ and $J_2$ but not with $J_3$ nor with $J_4$, this contradicts $\mathsf{(RU1)}$, showing that one of $J_3, J_4$ is contained in $\<J_1,J_2\>$.

Now suppose that $J_4$ would not be contained in $\<J_1,J_2\>$. Then we apply the same arguments as above to $J_1$ and $J_4$ and obtain that $\<J_1,J_4\>$ contains one of $J_2, J_3$, say $J_2$. Then $\<J_1,J_4\>$ contains $\<J_1,J_2\>$, and as their dimension are equal, $\<J_1,J_2\>=\<J_1,J_4\>$. Since $J_3 \subseteq \<J_1,J_2\>$, we conclude that $Z=\<J_1,J_2\>$ and this proves the claim.

\item \textit{Claim 3: $\dim(S) =j-1$.} Suppose for a contradiction that $\dim(S)<j-1$.  We will exploit property $\mathsf{(RU2)}$. Since this is a residual property, we may assume that $S$ is empty. Our assumption implies $j\geq 1$ and from the previous claim, we know $\dim(Z)=2j+1$. Hence we can find a line $L$ in $Z$ intersecting $J_1$ and $J_2$, but disjoint from $J_3$ and $J_4$. Let $Z'$ be a singular $(2j+1)$-space that intersects $Z$ in $L$ and with $\proj_Z(Z')=L$. Then $\proj_{Z'}(J_1)$ and $\proj_{Z'}(J_2)$ both have dimension $j+1$ whereas $\proj_{Z'}(J_3)$ and $\proj_{Z'}(J_4)$ have dimension $j$. The pairwise intersection of these four subspaces is $L$, as no point of $Z'\setminus L$ is collinear with $Z$. Hence we can find a line $M$ inside $Z'$ intersecting both $\proj_{Z'}(J_1)$ and $\proj_{Z'}(J_2)$, but disjoint from $\proj_{Z'}(J_3)\cup \proj_{Z'}(J_4)$. This contradiction to $\mathsf{(RU2)}$ yields $\dim(S)=j-1$. \end{itemize}
We conclude that the quadruple is of type I. 
\par\medskip

\textbf{Case 2: There is a pair of $j$-spaces whose projections on each other coincide with their intersection.} Suppose $U^4_3$ (and hence also $U^3_4$) is empty. By Case 1, we know that no pair amongst the $j$-spaces is contained in a singular subspace, for otherwise they are all contained in a singular subspace. Consider the following two cases.

\begin{itemize}

\item  \textit{Case 2(a): $\dim(S)=j-1$.} It readily follows that the quadruple is of type II. Combining $\mathsf{(RU1)}$ and Lemma~\ref{opp}, we obtain that the quadruple is of type II$^*$.  

\item \textit{Case 2(b): $\dim(S)<j-1$.}  Let $x_3 \in J_3 \setminus S$ be arbitrary and note that $x_3 \in \mathsf{Q}_3$ as $U^3_4$ is empty. As $\dim(S) < j-1$, there is a point $x_4\in J_4 \setminus S$ (again, automatically, $x_4 \in \mathsf{Q}_4$) collinear with $x_3$, so it follows from  Lemma~\ref{lem1} (recall $i=j <n-1$) that $x_3x_4$ intersects $J_1$ or $J_2$.  Applying Lemma~\ref{lem0} on $(x_3,\proj_{J_4}(x_3),\proj_{J_1}(x_3),\proj_{J_2} (x_3))$, we obtain that $\dim(S) = |j|-2$. 

Let $L_1,L_2,L_3,L_4$ denote the lines in $\Res_{\Delta}(S)$ corresponding to $J_1,J_2,J_3,J_4$, respectively.   By assumption, $L_3$ and $L_4$ are opposite. We claim that they are all pairwise opposite.  Suppose for a contradiction that $y_3$ is a point of  $L_3$ collinear with $L_1$.  By $\mathsf{(RU1)}$, $y_3$ has to be collinear with $L_2$. Let $y_4$ be the unique point on $L_4$ collinear with $y_3$. Then there is an $i$-space $I \in \mathsf{N}_{(k-1)}(J_3,J_4)$ such that the subspace $I'$ corresponding to it in $\Res_{\Delta}(S)$ contains $y_3y_4$. As $I'$ contains $y_3$, a point collinear with $J_1$ and $J_2$, those two $j$-spaces are not adjacent to $I$. This contradiction to the definition of a quadruple proves the claim, as we can now switch the roles of the $j$-spaces.   The same arguments as in the proof of Lemma~\ref{type1234} imply that  the quadruple is of type III$^*$. 

We now show that these kind of quadruples do not occur when $a=-1$. Let $L$ and $L'$ be two distinct transversals of $L_1,L_2,L_3,L_4$, with $L\cap L_d=x_d$ and $L' \cap L_d = x'_d$ for $d\in\{1,2,3,4\}$. Consider $x_3^\perp \cap {x'}_4^\perp$, which is isomorphic to a polar space of rank $n-(j-1)-1\geq 2$ and which contains the points $x'_3$ and $x_4$. In there, take a line $M$ through $x'_3$ and a line $N$ through $x_4$ with $M$ and $N$ opposite, and let $R$ be a line joining a point $m \in M$ and a point $n \in N$, with $m \neq x'_3$ and $n \neq x_4$. Now note that $m$ is collinear with $L_3$ (since $m \in x_3^\perp$ is collinear with $x'_3$) and not with $L_4$ (since $m$ is not collinear with $x_4$ because $n \neq x_4$), likewise, $n$ is collinear with $L_4$ but not with $L_3$. Consequently, $\mathsf{(RU2)}$ implies that there is a point $r \in R$ collinear with $L_1$ or $L_2$, say $L_1$. Then $r$ is collinear with $L$ (as it is collinear with $x_3$ and $x_1 \in L_1$) and, likewise, with $L'$; and therefore $r$ is collinear with both $L_3$ and $L_4$. In particular it follows that $r \notin \{m,n\}$, but then $r,m \perp L_3$ implies $n \perp L_3$, a contradiction. This implies that there are no quadruples of type III$^*$.
\end{itemize}
Again, it is easily verified that each of those $4$-tuples obtained above indeed satisfies the definition of a round-up quadruple.
\end{proof} 

 Next, we deal with the case where $b=-1$.
 
\subsubsection{Adjacent vertices are contained in a singular subspace ($b=-1)$} 
 
Clearly, if $b=-1$ then $|i|,|j|<n-1$. The fifth type of $4$-tuple emerges here. 
\par\bigskip
\begin{lemma}\label{b-1} The quadruple is of type I or V$(t)$. \end{lemma}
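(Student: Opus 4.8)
The setting is $\Gamma=\Gamma_k^\ell$ with $b=i+j-k-\ell-1=-1$, hence $|i|,|j|<n-1$ and $a=|\ell|-|j|-1\geq 0$ (since $k<\min\{|i|,|j|\}$ and $b=-1$ force $a\geq 0$). By Lemmas~\ref{dimdoorsnede} and~\ref{lem7a} the four members of the quadruple intersect in a common subspace $S$ with $\dim(S)=s\geq k$, and by Lemma~\ref{gemprojectie} (applicable since $a\geq 0$) we have $U^2_1=U^3_1=U^4_1$ for every permutation of the indices; write $U_c$ for this common projection part of $J_c$. Because $b=-1$, adjacency forces $I\perp J$ for every adjacent pair $(I,J)$, so any $i$-space $I\in\mathsf N(J_c,J_{c'})$ is collinear with both and $I\cap J_c$ has codimension $a+1$ in $I$; in particular the ``semi-opposite'' part of $I$ is trivial. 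I would first record, as in the special case of Lemma~\ref{lem1}, that for any pair of collinear points $x_c\in\mathsf P_c$, $x_{c'}\in\mathsf P_{c'}$ (note $x_c$ must be chosen in $\mathsf P_c$ precisely because $b=-1$), the line $x_cx_{c'}$ meets a third member of the quadruple in a point.

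The main dichotomy runs along the dimension of $S$. \emph{First suppose $\dim(S)=j-1$.} Then in $\Delta'=\Res_\Delta(S)$ the $J'_d$ are points, and the line-meeting property from Lemma~\ref{lem1} together with Lemma~\ref{lem0} applied to $(J'_1,J'_2,J'_3,J'_4)$ forces three of them (after renumbering) to lie on a common line while the fourth lies on that line too or is opposite it; but since $b=-1$ one checks using Lemma~\ref{gemprojectie} that the fourth cannot be opposite (an opposite fourth point would make the quadruple of type $\mathsf{IV}$, which conflicts with $U^2_1=U^3_1=U^4_1$), so the quadruple is of type $\mathsf I$. \emph{Now suppose $\dim(S)<j-1$.} Pass to $\Delta'=\Res_\Delta(S)$, where the $J'_d$ are $v$-spaces with $v=j-s-1\geq 1$ and the common projections $U_d$ correspond to points-or-higher subspaces $U'_d$. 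The key structural step is that, because the projections $U^x_y$ do not depend on $x$, the subspaces $\langle S,U_d\rangle$ correspond in $\Delta'$ to a single point (the projection direction is the same for all partners), and these points all lie on a common line $L$: this follows by applying Lemma~\ref{lem0} in $\Delta'$ to the configuration of projections. Then I would look in $\Res_{\Delta'}(L)$: there the four $j$-spaces reduce to $(v-1)$-spaces that are pairwise opposite, and using Lemma~\ref{lem1} once more (the line-meeting condition descends to this residue) together with Lemma~\ref{lem0} one forces $v-1$ transversals to behave like those of a regulus, i.e.\ every line of $\Res_{\Delta'}(L)$ meeting two of the $(v-1)$-spaces meets all four; invoking the structure of hyperbolic subspaces recalled in Section~2.1 (a maximal such family lies in a hyperbolic $(2(v-1)+1)$-space), this is exactly the configuration defining type $\mathsf V(t)$ with $t=v$. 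Setting $t=j-k-1$ in the extreme case $s=k$ and $t<j-k-1$ otherwise gives the range $1\le t\le j-k-1$ claimed in the definition.

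To carry this out cleanly I would reuse the mechanism of the previous subsections almost verbatim: given a putative ``bad'' configuration one constructs, via Construction~\ref{con} (with the simplifications of the $b=-1$ case: $B^-$ empty, the construction reduces to choosing $K_1,K_2$ and the collinear parts $A_1,A_2$), an $i$-space $I\in\mathsf N_{(k)}(J_c,J_{c'})$ adjacent to exactly two members of the quadruple, contradicting the definition of a round-up quadruple. Concretely: if $\dim(S)<j-1$ and the four points $\langle S,U_d\rangle/S$ were not collinear, or if in $\Res_{\Delta'}(L)$ the transversal-closure property failed, one chooses the collinear parts $A_c$ to ``avoid'' the offending third/fourth member using Avoiding Properties~\ref{con4}, \ref{con5} and~\ref{con8}, producing the required $I$. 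The verification that the three surviving configurations ($\mathsf I$ and $\mathsf V(t)$, with $\mathsf V(t)$ further forced to span a hyperbolic $(2t-1)$-space by the absence of grids outside strictly orthogonal spaces) genuinely satisfy the definition of a round-up quadruple is the same routine check appended at the end of Lemmas~\ref{type1234} and~\ref{aempty}.

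\textbf{Anticipated main obstacle.} The delicate point is the passage to $\Res_{\Delta'}(L)$ and the proof that the projection points $\langle S,U_d\rangle/S$ are genuinely \emph{collinear} (rather than merely pairwise collinear or spanning a plane): one needs that the common-projection condition of Lemma~\ref{gemprojectie}, which only controls pairs, upgrades to a linear configuration for all four simultaneously. I expect this to require a careful double application of Lemma~\ref{lem0} — once to locate three of the projection points on a line and once more, after renumbering, to pin down the fourth — while all the time keeping track that the relevant lines one feeds into Lemma~\ref{lem1} have their defining points in the collinear parts $\mathsf P_c$ (the hypothesis of Lemma~\ref{lem1} in the $b=-1$ branch), and that the $i$-spaces produced by Construction~\ref{con} can be chosen to avoid the members not under consideration. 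The bookkeeping of which residue one is working in, and ensuring the avoiding properties still apply after descending twice, is where the proof is genuinely technical rather than routine.
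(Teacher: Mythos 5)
Your overall strategy --- reduce to $\Res_\Delta(S)$, use Lemma~\ref{gemprojectie} to get a partner-independent collinear part $U_d$, feed collinear point pairs into Lemma~\ref{lem1} and then Lemma~\ref{lem0} to force the projection points onto a common line, and finish in a further residue with the hyperbolic-subspace property --- is essentially the paper's, just organised by $\dim(S)$ instead of by the nature of $U_1$. There is, however, a genuine gap: you never treat the configuration in which $U_1$ (equivalently, every $U_d$) is \emph{empty}, i.e.\ the four members are pairwise semi-opposite in $\Res_\Delta(S)$. This can occur both with $\dim(S)=j-1$ (four pairwise opposite points, a type-II-like configuration) and with $\dim(S)<j-1$, and in each of your two cases the argument silently assumes it away: in the first case, Lemma~\ref{lem1} in the $b=-1$ branch only applies to points $x_c\in\mathsf{P}_c$, and $\mathsf{P}_c$ is empty here, so there are no lines to feed into Lemma~\ref{lem0} and you cannot ``force three of them onto a line''; in the second case, $\<S,U_d\>=S$ is not a point of $\Delta'$ and your line $L$ does not exist. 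Since the lemma asserts that \emph{only} types I and V$(t)$ occur, this configuration must be shown not to be a round-up quadruple at all. The paper does this as a separate case: since $b=-1$, every $I\in\mathsf{N}(J_1,J_2)$ satisfies $I\cap J_d\subseteq S$ and $I\subseteq J_1^\perp\cap J_2^\perp$, so combining $\mathsf{(RU1)}$ with Lemma~\ref{opp} gives $I\subseteq J_3^\perp\cap J_4^\perp$ as well, whence $\mathsf{N}(J_1,J_2)=\mathsf{N}(J_1,J_2,J_3,J_4)$, contradicting the requirement that $\mathsf{N}(J_1,J_2,J_3)\setminus\mathsf{N}(J_1,J_2,J_3,J_4)$ be nonempty. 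Without this step the classification is incomplete.

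Two smaller points. In your second case you do not need (and probably cannot easily obtain) the regulus-type transversal property in $\Res_{\Delta'}(L)$: the definition of type V$(t)$ only asks for pairwise opposite $(t-1)$-spaces spanning a hyperbolic space, and this follows directly from $\mathsf{(RU1)}$ and Lemma~\ref{opp} in that residue, as in the paper. Also, the paper checks that these subspaces are not maximal singular subspaces of that residue (otherwise $|\ell|=n-1$ would be forced, which is excluded here); that detail should appear in a complete write-up.
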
 
\begin{proof} 
Recall that the $j$-spaces intersect in a fixed subspace $S$ with $\dim(S) \geq k$.  According to Lemma~\ref{gemprojectie}, all pairs of them have the same mutual position, so all of them play the same role. Depending on $U_1$, there are three cases. We will apply Lemma~\ref{lem1} again (note that $|j|<n-1$). 
\begin{itemize}
\item \textit{Case 1: $U_1 = J_1 \setminus S$.} In this case, $J_1 \perp J_2$ and hence we can apply Lemma~\ref{lem1} on every pair of points $x_c \in J_c\setminus S$, $c=1,2$. Proceeding like in the proof of Lemma~\ref{type1234}, we obtain that the quadruple is of type I.

\item \textit{Case 2: $U_1$ is empty.} Suppose $I \in \mathsf{N}(J_1,J_2)$. Then $I \cap J_d \subseteq S$ for all $d \in \{1,2,3,4\}$, as $b=-1$. Combining $\mathsf{(RU1)}$ with Lemma~\ref{opp}, we conclude that $\mathsf{N}(J_1,J_2)=\mathsf{N}(J_1,J_2,J_3,J_4)$, a contradiction to the definition of a quadruple.

\item \textit{Case 3: $\emptyset \neq U_1 \subsetneq J_1 \setminus S$.} Let $x_1\in U_1$ and $x_2 \in U_2$ be points. By Lemma~\ref{lem1}, $x_1x_2$ meets $J_3 \cup J_4$, more precisely, $x_1x_2$ meets $U_3\cup U_4$. Applying Lemma~\ref{lem0} on $(x_1,S \cup U_2,S \cup U_3,S \cup U_4)$, we obtain $\dim(S)  = \dim(S \cup U_d) -1$  for all $d\in\{1,2,3,4\}$ and at least one of $S \cup U_3,S \cup U_4$ lies in $\<S,U_1,U_2\>$. Interchanging the roles of the $j$-spaces like before, we obtain that both $S \cup U_3$ and $S \cup U_4$ belong to $\<S,U_1,U_2\>$. In $\Res_{\Delta} (\<S,U_1,U_2\>)$ (which has rank $n-(s+2)-1$), this translates to four $(j-s-2)$-spaces which are pairwise opposite. Note that these are not maximal singular subspaces, for otherwise $j=n-2$, and together with $a \geq 0$ (as $b=-1$) this would imply $|\ell|=n-1$, which we only allow when $|i|=|j|=n-1$. Again by $\mathsf{(RU1)}$ and Lemma~\ref{opp}, every point collinear with two of them is collinear with all of them, i.e., they define a hyperbolic $(2t+1)$-space with $t= j-s-2$. Hence, the quadruple is of type V$(t)$.
\end{itemize}
Also here, it is easily verified that each of those $4$-tuples obtained above indeed satisfies the definition of a round-up quadruple.
 \end{proof}

\subsection{Constructing $\mathsf{G}_j$ or $\mathsf{G}'_j$} 
 
With the just obtained classification, we want to construct $\mathsf{G}_j$ or $\mathsf{G}'_j$. By Corollary~\ref{gras2} or  Proposition~\ref{gras1}, respectively, this would finish the proofs of Main Theorems~\ref{main1} and~\ref{main2} in the case where $k\geq0$.  We start from the graph $\Gamma'$, which we define as the  graph having $\Omega_j$ as vertices, adjacent whenever they are contained in a quadruple. We aim to identify the types of the quadruples. The following notion will be useful. 
 
\begin{definition} \em Let  $J_1,J_2$ be adjacent vertices of $\Gamma'$. A \emph{near-line (based at $\{J_1,J_2\}$)} is defined as the union of all quadruples containing $\{J_1, J_2\}$. We denote this set of $j$-spaces by $[J_1,J_2]$.
The \emph{type set} of $[J_1,J_2]$ is the set of types of the quadruples containing $\{J_1,J_2\}$. If this set contains only one element, we call this element the \emph{type} of $[J_1,J_2]$.
\end{definition}

\begin{lemma}\label{geom} Suppose $J_1$ and $J_2$ are contained in a quadruple and $\dim(J_1 \cap J_2) \geq j-2$. Let $J,J'$ be two members of $[J_1,J_2]$. Then $J \cap J' = J_1 \cap J_2$. Moreover, if $\dim(J_1 \cap J_2)=j-2$, then in $\Res_\Delta(J_1 \cap J_2)$, the lines $L$ and $L'$ are contained in the regulus determined by the lines $L_1$ and $L_2$ corresponding to $J_1$ and $J_2$. 
\end{lemma}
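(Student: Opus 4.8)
\textbf{Proof plan for Lemma~\ref{geom}.}

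The plan is to analyse, for each type of quadruple containing $\{J_1,J_2\}$, what the members of that quadruple must look like, and then to argue that all of them share the intersection $S := J_1 \cap J_2$. Since we assume $\dim(S) \geq j-2$, the classification (Lemmas~\ref{type1234}, \ref{hypn-1}, \ref{aempty}, \ref{b-1}) leaves only a short list of possibilities: quadruples of type $\mathsf{I}$, $\mathsf{II}$ (or $\mathsf{II}^*$), $\mathsf{III}$ (or $\mathsf{III}^*$), $\mathsf{IV}$, and $\mathsf{V}(1)$. In each of these, by definition all four $j$-spaces in the quadruple intersect in a \emph{common} subspace; I would first recall that this common subspace has dimension $j-1$ for types $\mathsf{I}$, $\mathsf{II}$, $\mathsf{IV}$ and $\mathsf{V}(1)$, and dimension $j-2$ for type $\mathsf{III}$. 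The content of the lemma is that the common subspace is the \emph{same} for every quadruple through $\{J_1, J_2\}$, namely $S$ itself, and that in the codimension-$2$ case the corresponding lines in the residue lie in one regulus.

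First I would handle the case $\dim(S) = j-1$. Any quadruple $\mathcal{Q}$ containing $\{J_1,J_2\}$ has all its members passing through a common $(j-1)$-space $S_{\mathcal{Q}} \subseteq J_1 \cap J_2 = S$; since $\dim(S_{\mathcal{Q}}) = j-1 = \dim(S)$, we get $S_{\mathcal{Q}} = S$. Hence every $J \in [J_1,J_2]$ contains $S$, and as $\dim(J) = j$ and $\dim(S) = j-1$, two distinct members $J, J'$ satisfy $S \subseteq J \cap J' \subsetneq J$, forcing $J \cap J' = S = J_1 \cap J_2$. Next, the case $\dim(S) = j-2$: here the only quadruple type available through $\{J_1,J_2\}$ is $\mathsf{III}$ (resp.\ $\mathsf{III}^*$), because all other admissible types have common intersection of dimension $j-1 > j-2 = \dim(J_1 \cap J_2)$, which is impossible. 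So every quadruple $\mathcal{Q}$ through $\{J_1,J_2\}$ is of type $\mathsf{III}$, and its common $(j-2)$-space $S_{\mathcal{Q}}$ is contained in $J_1 \cap J_2 = S$ with equal dimension, hence equals $S$. Passing to $\Res_\Delta(S)$, each such quadruple becomes a set of four pairwise opposite lines with the transversal property; in particular $J_1$ and $J_2$ correspond to fixed opposite lines $L_1, L_2$, and the defining property of type $\mathsf{III}$ forces every line of the quadruple (other than $L_1,L_2$) to meet every transversal of $L_1, L_2$ — which is exactly the characterisation of the regulus through $L_1$ and $L_2$ (recall from Section~2.1 that a maximal set of pairwise opposite lines of a hyperbolic $3$-space with the transversal property is a regulus). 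Thus every $J, J' \in [J_1,J_2]$ corresponds to lines $L, L'$ in that regulus, and since distinct lines of a regulus are pairwise disjoint in $\Res_\Delta(S)$, we get $J \cap J' = S = J_1 \cap J_2$ back in $\Delta$.

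The main obstacle I anticipate is the bookkeeping in the codimension-$2$ case: one must be careful that \emph{all} quadruples through $\{J_1,J_2\}$ are genuinely of type $\mathsf{III}$ (using the classification together with the dimension constraint $\dim(J_1\cap J_2) = j-2$ to exclude the other types, whose common intersections are too large), and that the ``regulus'' language is justified uniformly — i.e.\ that in $\Res_\Delta(S)$ we are indeed inside a hyperbolic $3$-space (for type $\mathsf{III}^*$) or at least inside a common regulus of a grid (type $\mathsf{III}$ in the strictly orthogonal case), so that ``the regulus determined by $L_1$ and $L_2$'' is well defined. Once that is pinned down, the disjointness of distinct regulus lines and the dimension count finish the argument; no further computation is needed.
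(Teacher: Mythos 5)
Your overall route coincides with the paper's: both use Lemma~\ref{lem7a} to see that every member of $[J_1,J_2]$ contains $S:=J_1\cap J_2$, dispose of the case $\dim(S)=j-1$ by a dimension count, and in the case $\dim(S)=j-2$ reduce to showing that $L$ and $L'$ lie in the regulus determined by $L_1$ and $L_2$. There are, however, two places where your argument is thinner than what is actually needed. First, your exclusion of the other quadruple types in the codimension-$2$ case by the dimension count alone does not eliminate type $\mathsf{V}(1)$, whose common intersection also has dimension $j-t-1=j-2$; for that type $L_1$ and $L_2$ are not opposite (each meets the auxiliary line of the definition), so ``the regulus determined by $L_1$ and $L_2$'' is not even defined. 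The paper silently restricts to type $\mathsf{III}$ here as well, and the lemma is only invoked where type $\mathsf{V}$ is irrelevant, but your stated justification (``all other admissible types have common intersection of dimension $j-1$'') is incorrect as written.

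The more substantive gap is in the subcase $|j|=n-1$. There $\Res_\Delta(S)$ is a generalized quadrangle, not a hyperbolic $3$-space, so ``the regulus determined by $L_1$ and $L_2$'' must be read as $\{L_1,L_2\}^{\perp\!\perp}$, and your final step --- ``distinct lines of a regulus are pairwise disjoint'' --- is precisely the assertion that has to be proved rather than a definition. The paper proves it: if $L\cap L'=\{p\}$, a transversal $M$ of $L_1,L_2$ avoiding $p$ (which exists since at most one transversal passes through $p$) meets $L$ and $L'$ in distinct points, producing a triangle in a generalized quadrangle. In addition, the paper shows that every line meeting $L$ and $L'$ also meets $L_1$ and $L_2$ (via uniqueness of the line through a point of $L$ meeting $L_1$, once $L$ and $L'$ are known to be opposite); this converse inclusion is what makes the regulus ``determined by any two of its members'', which is exactly how Lemma~\ref{singletype} later uses the present lemma to conclude $[J,J']=[J_1,J_2]$. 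Your proposal establishes only the forward containment $L,L'\in\{L_1,L_2\}^{\perp\!\perp}$ and so would need these two extra arguments to be complete. In the subcase $|j|<n-1$ your argument is fine, since there the quadruple is of type $\mathsf{III}^*$ and the residue really does contain a grid whose regulus has the required properties by construction.
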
 
\begin{proof}
 Observe that $J \cap J'$ always contains $J_1\cap J_2$ for any two distinct members $J,J'$ of $[J_1,J_2]$, since Lemma~\ref{lem7a} implies that $J$ and $J'$ both contain $J_1 \cap J_2$. If  $\dim(J_1 \cap J_2)=j-1$, then of course $J \cap J' =J_1 \cap J_2$. So suppose  $\dim(J_1 \cap J_2)=j-2$ from now on.

In $\Delta':=\Res_\Delta(S)$, the $j$-spaces correspond to respective lines $L_1$, $L_2$, $L$ and $L'$. If $|j|<n-1$, then $L_1$ and $L_2$ determine a hyperbolic $3$-space which has the structure of a hyperbolic quadrangle. As $L$ and $L'$ both belong to the regulus determined by $L_1$ and $L_2$, it follows that there is a quadruple of type $\mathsf{III}^*$ containing $J$ and $J'$. If $|j|=n-1$, we need to do some more work. Note that in that case, $\Delta'$ is a generalised quadrangle. If $L$ and $L'$ would intersect in a point $p$, then a line $M$ intersecting $L_1$ and $L_2$ and not containing $p$, will intersect $L$ and $L'$ in distinct points (since each of $L$, $L'$ is contained in some quadruple together with $L_1$ and $L_2$), clearly a contradiction. Hence $L$ and $L'$ are disjoint and hence opposite. We only need to show that each line intersecting $L$ and $L'$ also intersects $L_1$ and $L_2$. So let $M$ be a line intersecting $L$ and $L'$ in points $q$ and $q'$, respectively. Then the unique line $N$ through $q$ intersecting $J_1$ will also intersect $J_2$ since $J,J_1$ and $J_2$ are contained in a quadruple. But since also $J_1,J_2$ and $J'$ are contained in a quadruple, $N$ also intersects $J'$. As $L$ and $L'$ are opposite, $M=N$ and hence $M$ indeed intersects all four lines.  The lemma is proven. 
\end{proof}

\begin{rem}\label{not} \em Like before, we write $S= J_1 \cap J_2$. Let $S'\subseteq S$. If $\dim(S')=j-1$, the subspaces  in $\Res_{\Delta}(S')$ corresponding to the members $J_a$ of $[J_1,J_2]$  are points, denoted by $p_a$; likewise, if $\dim(S')=j-2$, the corresponding subspaces are lines, denoted by $L_a$.
\end{rem}

Note that no near-line will have type IV. Indeed, type IV quadruples occur when $\Gamma=\Gamma_k$ and $|j|<n-1$, and in this case there are quadruples of type I, II and IV (possibly also of type III is $\Delta$ is orthogonal). If $J_1$ and $J_2$ are $j$-spaces with $\dim(J_1 \cap J_2)=j-1$, then if $J_1 \perp J_2$, the type set of $[J_1,J_2]$ is $\{I,IV\}$, and if they are not collinear, it is $\{II,IV\}$. In this particular case, no near-line $[J_1,J_2]$ will have type I or type II either, since we will in both cases find a quadruple of type IV that contains $\{J_1,J_2\}$. 

We now focus on near-lines having a singleton as their type set. By the above, we do not need to consider near-lines of type IV (the above also implies that near-lines of type I or II would not occur if type IV quadruples occur). We neither consider near-lines of type V(t), existing or not, as we will not need them. 

\begin{lemma}\label{singletype}
Let $[J_1,J_2]$ be a near-line with type I, II$^*$ or III$^*$ if $|j|<n-1$, or, if $|j|=n-1$, type II or III. Then in $\Res_\Delta(J_1 \cap J_2)$, we get the following respective sets for $[J_1,J_2]$:
\begin{compactenum} 
\item[$(\mathsf{I})$] The set of points on the line spanned by $p_1$ and $p_2$, 
\item[$(\mathsf{II})$] the set of points which are opposite both $p_1$ and $p_2$, 
\item[$(\mathsf{II^*})$] the set of  points of the hyperbolic line spanned by $p_1$ and $p_2$, 
\item[$(\mathsf{III}, \mathsf{III}^{*})$] the set of lines of the regulus of the grid determined by $L_1$ and $L_2$, 
\end{compactenum} 
In particular, each four elements occurring in $[J_1,J_2]$ form a quadruple, which is of the same type as $[J_1,J_2]$, and each two distinct members $J'_1,J'_2$ of $[J_1,J_2]$ satisfy $[J_1,J_2]=[J'_1,J'_2]$. 
\end{lemma}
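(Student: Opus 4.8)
The plan is to fix a near-line $[J_1,J_2]$ of singleton type $\mathsf{I}$, $\mathsf{II}$ (hyperbolic version $\mathsf{II^*}$ when $|j|<n-1$) or $\mathsf{III}$ (hyperbolic version $\mathsf{III^*}$ when $|j|<n-1$), and work throughout in the residue $\Delta'=\Res_\Delta(S)$ with $S=J_1\cap J_2$; by Lemmas~\ref{dimdoorsnede} and~\ref{lem7a} every member of $[J_1,J_2]$ contains $S$, so $[J_1,J_2]$ is faithfully represented by a set $\mathcal{L}$ of subspaces of $\Delta'$ that all pass through no common extra point (type $\mathsf{I}$,$\mathsf{II}$,$\mathsf{II^*}$: points $p_a$; type $\mathsf{III}$,$\mathsf{III^*}$: lines $L_a$). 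The first task is the inclusion ``$\supseteq$'': given the ambient geometric object $\mathcal{G}$ in $\Delta'$ attached to $J_1,J_2$ (the line $p_1p_2$; the set of points opposite both $p_1,p_2$; the hyperbolic line through $p_1,p_2$; the regulus through $L_1,L_2$), I must show that any third element $J_3$ of $\mathcal{G}$, together with $J_1,J_2$, actually forms a quadruple. For this I use the classification Lemmas~\ref{type1234},~\ref{hypn-1},~\ref{aempty},~\ref{b-1}: they already tell us which configurations \emph{are} quadruples, so I just check that $\{J_1,J_2,J_3\}$ (read as $\{J_1,J_2,J_3,J_3\}$ per the convention, or completed to a genuine 4-element quadruple by picking a fourth point/line of $\mathcal{G}$) satisfies the defining conditions of a round-up triple/quadruple — no vertex adjacent to exactly two (resp.\ at least two but not three) of them, and the relevant $\mathsf{N}(\cdot)$ sets nonempty — which is exactly the content already verified in the ``one can verify'' remarks at the end of those lemmas.

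The second, harder task is the inclusion ``$\subseteq$'': every member $J_3\in[J_1,J_2]$ lies in $\mathcal{G}$. By definition $J_3$ lies in \emph{some} quadruple $Q$ with $\{J_1,J_2\}\subseteq Q$; since the type of $[J_1,J_2]$ is the given singleton, $Q$ has that type. Then I read off the shape of $Q$ from the classification: in the type-$\mathsf{I}$ case $Q$ forces $p_3$ onto the line $p_1p_2$; in type $\mathsf{II}/\mathsf{II^*}$ it forces $p_3$ opposite $p_1,p_2$, and in the $\mathsf{II^*}$ case additionally onto the hyperbolic line — here I invoke the ``double perp'' / $\{U,V\}^{\perp\!\!\!\perp}$ description from Section~2.1 together with property $\mathsf{(RU1)}$ and Lemma~\ref{opp} to pin $p_3$ to the hyperbolic line spanned by $p_1,p_2$ rather than just somewhere in the common opposite set; in type $\mathsf{III}/\mathsf{III^*}$, Lemma~\ref{geom} already does the work, placing $L_3$ in the regulus determined by $L_1,L_2$. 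The key subtlety is the dichotomy $|j|<n-1$ versus $|j|=n-1$: for $|j|=n-1$ the residue has rank $1$ or $2$, hyperbolic lines collapse (there is no $\mathsf{II^*}$, only $\mathsf{II}$, and $\Delta'$ of rank $2$ for type $\mathsf{III}$ is a generalised quadrangle rather than a hyperbolic quadrangle), and the ``set of points opposite both'' is simply the set of points on no common line — so I treat this case by the rank-$1$/rank-$2$ arguments already used in the second half of the proof of Lemma~\ref{geom}.

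Once both inclusions are in hand, the ``In particular'' clauses follow formally. Any four elements of $[J_1,J_2]$ are four points on a line / four points opposite a fixed pair / four points of a hyperbolic line / four lines of a regulus, and in each case the classification lemmas certify this $4$-tuple is a round-up quadruple of the stated type — so $[J_1,J_2]$ is ``closed'' in the sense that it consists of mutually-quadruple-forming elements. For the last assertion, take distinct $J'_1,J'_2\in[J_1,J_2]$. By Lemma~\ref{lem7a} applied to any quadruple through $\{J'_1,J'_2\}$ inside $[J_1,J_2]$, $J'_1\cap J'_2=S$ (for type $\mathsf{I}$ and $\mathsf{II},\mathsf{II^*}$ where $\dim S=j-1$ this is immediate; for type $\mathsf{III},\mathsf{III^*}$ where $\dim S=j-2$ it is the first sentence of Lemma~\ref{geom}). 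Hence $[J'_1,J'_2]$ is again a near-line based on the \emph{same} residue $\Delta'$, and the geometric object $\mathcal{G}$ it determines — line through $p'_1,p'_2$; points opposite $p'_1,p'_2$; hyperbolic line through $p'_1,p'_2$; regulus through $L'_1,L'_2$ — coincides with the original $\mathcal{G}$, because a line is determined by any two of its points, a regulus by any two of its lines, a hyperbolic line by any two of its points, and the ``opposite to both'' relation in a rank-$1$ or rank-$2$ residue is symmetric in the pair. Therefore $[J'_1,J'_2]=\mathcal{G}=[J_1,J_2]$, and also $[J'_1,J'_2]$ has the same singleton type. I expect the main obstacle to be the careful bookkeeping of the $|j|=n-1$ subcase in the $\mathsf{II}$/$\mathsf{III}$ situations (where ``opposite'' and ``hyperbolic line'' degenerate), and making sure that completing a triple $\{J_1,J_2,J_3\}$ to a genuine $4$-element quadruple inside $\mathcal{G}$ is always possible — which it is, since $\Delta$ is infinite and the relevant lines/reguli/hyperbolic lines carry infinitely many points/lines.
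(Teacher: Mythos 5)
Your proposal is correct and follows essentially the same route as the paper: both identify $[J_1,J_2]$ with the corresponding geometric object in $\Res_\Delta(J_1\cap J_2)$ via the two inclusions, using the classification lemmas (and Lemma~\ref{geom} for the intersection claim and the type III/III$^*$ regulus containment), and both deduce the final assertions from the fact that a line, hyperbolic line, regulus, or rank-$1$ residue is determined by any two of its elements.
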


\begin{proof} Let $J$ and $J'$ be two elements of $[J_1,J_2]$. By Lemma~\ref{geom}, $J_1 \cap J_2 = J'_1 \cap J'_2$.

If $[J_1,J_2]$ is of type I, then in $\Res_\Delta(J_1,J_2)$, those four $j$-spaces correspond to points $p_1$, $p_2$, $p'_1$, $p'_2$, such that both $p'_1$ and $p'_2$ are on the line $p_1p_2$. As such, $p'_1$ and $p'_2$ determine the same line. It follows that each point on $p_1p_2$ corresponds to a $j$-space in $[J_1,J_2]$ and vice versa, making it clear that $[J_1,J_2]=[J'_1,J'_2]$ and that each four points on this line correspond to four $j$-spaces in a quadruple of type I. If $[J_1,J_2]$ is of type II$^*$, the same argument applies, the only difference being that $p_1$ and $p_2$ determine a hyperbolic line instead of an ordinary line.  

If $[J_1,J_2]$ is of type II (so $|j|=n-1$) then $p_1$, $p_2$, $p'_1$ and $p'_2$ are all just points in $\Res_\Delta(J_1 \cap J_2)$, which is a polar space of rank 1, each point of which corresponds to a $j$-space in $[J_1,J_2]$ and vice versa. It is again clear that any four such points then determine a quadruple of type II.  

If $[J_1,J_2]$  has type III (so $|j|=n-1$) or $\text{III}^{*}$ ($|j|<n-1)$, it follows from Lemma~\ref{geom} that, in $\Res_\Delta(J_1 \cap J_2)$, the near-line $[J_1,J_2]$ corresponds to the regulus determined by the respective lines $L_1$ and $L_2$ corresponding to $J_1$ and $J_2$. As this regulus is determined by any two of its members, the assertion follows.  
\end{proof}

We start with the special case where $|i|=|j|=n-1$ and we encounter Main Theorem~\ref{main1}$(i)$. 

\subsubsection{Maximal singular subspaces ($|j|=n-1$)}
If $|j|=n-1$, then $|\ell|=|j|$ and since we assume that if $\max\{|i|,|j|\}=|\ell|$ then $|j|=\min\{|i|,|j|\}$, we have $|i|=|j|=n-1$. Note that $\Gamma_k^\ell = \Gamma_k$ in this case. If $\Delta$ is not hyperbolic, then for all $\Gamma\in\{\Gamma_k^\ell,\Gamma_k,\Gamma_{\geq k}\}$, the occurring triples/quadruples can only be of types II or III.  Consequently,  $\Gamma'$ is independent of $\Gamma$, so we can treat $\Gamma_k^\ell$, $\Gamma_k$ and $\Gamma_{\geq k}$ at the same time. We aim to separate type II from type III. Of course we only need to do this when type III really occurs, so we may assume that $\Delta$ is orthogonal.

\textbf{Case 1: $\Delta$ is neither parabolic nor hyperbolic.}  The following lemma distinguishes between quadruples of types II and III, making use of the corresponding near-lines. Note that, since we only have types II and III, each near-line has a type (cf.\ Lemma~\ref{geom}).

\begin{definition}\label{prec} \rm Let $\mathcal{L}$ and $\mathcal{L'}$ be two near-lines having one element $J_0$ in common. We say that $\mathcal{L} \preccurlyeq \mathcal{L'}$ if there is at least one $j$-space $J \notin \mathcal{L} \cup \mathcal{L'}$ which is $\Gamma'$-adjacent to all members of $\mathcal{L}$ and such that  each near-line through $J$ meeting $\mathcal{L}$ also meets $\mathcal{L'}$; if moreover, for any of those $j$-spaces $J$, there is a near-line through it that meets $\mathcal{L'}$ without meeting $\mathcal{L}$, then we write $\mathcal{L} \prec \mathcal{L'}$. \end{definition}

Despite the suggestive notation, we do not claim or intend to prove that $\preccurlyeq$ is an order relation. So in principle $\mathcal{L} \prec \mathcal{L}'$ and $\mathcal{L}' \prec \mathcal{L}$ is possible at the same time (this is because of the dependence on $J$). This will not matter for the next lemma. 

\begin{lemma}\label{onderscheidn-1} 
Suppose $\Delta$ is neither parabolic nor hyperbolic and $i=j=n-1$. A near-line $\mathcal{L}$ is of type III if and only if there is a near-line $\mathcal{L'}$ such that $\mathcal{L} \prec \mathcal{L'}$. 
\end{lemma}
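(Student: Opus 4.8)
The strategy is to give a purely combinatorial characterisation (in terms of near-lines and the relation $\prec$) of which near-lines come from reguli of grids and which come from sets of pairwise opposite MSS. The basic geometric picture is this: in $\Res_\Delta(W)$ for a suitable $(n-3)$-space $W$, a type III near-line $\mathcal L$ lives as one regulus of a grid (a hyperbolic quadrangle), while a type II near-line lives as a set of pairwise opposite points in a rank-$1$ residue. One would first translate $\mathcal L \prec \mathcal L'$ into a statement in these residues, and then exploit the asymmetry between the two geometric configurations: a regulus of a grid is ``transversal-rich'' (every line meeting two members meets all), whereas pairwise opposite MSS in a rank-$1$ residue are not embedded in any such transversal structure.

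\textbf{The two directions.} For the ``if'' direction, suppose $\mathcal L$ has type II; I would show no $\mathcal L'$ can satisfy $\mathcal L \prec \mathcal L'$. Take any candidate $j$-space $J$ adjacent to all of $\mathcal L$ with every near-line through $J$ meeting $\mathcal L$ also meeting $\mathcal L'$. Using Lemma~\ref{singletype} one knows exactly what the near-lines through $J$ look like (sets of points opposite a fixed pair, or hyperbolic lines, in the appropriate residue), and one argues that one can always find a near-line through $J$ meeting $\mathcal L$ but \emph{not} $\mathcal L'$, or conversely that if all near-lines through $J$ meeting $\mathcal L$ also meet $\mathcal L'$ then in fact $\mathcal L' $ is forced to contain $\mathcal L$, contradicting $\mathcal L \neq \mathcal L'$; this kills the strict-$\prec$ requirement (which insists on a near-line through $J$ meeting $\mathcal L'$ but not $\mathcal L$). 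Concretely, for type II one uses that $\Delta$ orthogonal but not parabolic/hyperbolic forces $\dim(J_1\cap J_2)=n-2$ for a type II near-line and the residue of that $(n-2)$-space is a (thick, non-hyperbolic) rank-$1$ polar space with no hyperbolic lines, so the ``transversal'' coherence that the definition of $\prec$ demands cannot materialise. For the ``only if'' direction, let $\mathcal L$ be of type III, so it is a regulus of a grid $\mathcal G$ in $\Res_\Delta(W)$, $\dim W = n-3$. The complementary regulus $\mathcal G \setminus \mathcal L$ consists of $j$-spaces each adjacent (in $\Gamma'$) to every member of $\mathcal L$. Pick $J$ in the opposite regulus. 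The near-lines through $J$ that meet $\mathcal L$ are exactly the reguli of grids determined by $J$ and a member of $\mathcal L$; I would show each such near-line also meets a fixed second near-line $\mathcal L'$ — the natural candidate being the opposite regulus $\mathcal G\setminus\mathcal L$ itself, viewed as a near-line based at two of its members — and that one can exhibit a near-line through $J$ meeting $\mathcal L'$ but not $\mathcal L$ (e.g. $[J, J'']$ for $J''$ in a different grid through $W$, or simply a near-line of type II through $J$). This establishes $\mathcal L \prec \mathcal L'$.

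\textbf{Key steps in order.} (1) Recall from Lemma~\ref{geom} and Lemma~\ref{singletype} the exact description of near-lines of types II and III and that for $\Delta$ orthogonal, neither parabolic nor hyperbolic, and $i=j=n-1$, only types II and III occur, so every near-line has a well-defined type. (2) Express $\mathcal L\preccurlyeq\mathcal L'$ and $\mathcal L\prec\mathcal L'$ inside the relevant residue, identifying the set of admissible witnesses $J$. (3) Prove the ``only if'': for $\mathcal L$ of type III, take $J$ in the opposite regulus, verify the two clauses of $\prec$ against a carefully chosen $\mathcal L'$, using that in a grid every line meeting two lines of one regulus meets all of them. (4) Prove the ``if'' contrapositive: for $\mathcal L$ of type II, show no witness $J$ can satisfy both clauses of $\prec$ simultaneously, using the non-existence of hyperbolic lines and the lack of transversal coherence in the rank-$1$ residue.

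\textbf{Main obstacle.} The delicate point is step (4): ruling out \emph{every} candidate $J$ and \emph{every} $\mathcal L'$ for a type II near-line. One has to show that if a $j$-space $J$ is adjacent to all members of a type II near-line $\mathcal L$, then the near-lines through $J$ that meet $\mathcal L$ are too ``spread out'' to all be captured by a single other near-line $\mathcal L'$ unless $\mathcal L'\supseteq\mathcal L$, which then violates the strict part of $\prec$. This requires a clean analysis of how two type II near-lines through a common point sit in the rank-$1$ residue (they are essentially arbitrary cofinite-like sets of pairwise opposite points), and an argument that the intersection pattern of near-lines through $J$ meeting $\mathcal L$ cannot be funnelled into one $\mathcal L'$. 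I expect this to be where most of the genuine work lies; the type III direction is comparatively mechanical once the grid picture is in place.
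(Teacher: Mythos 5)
Your overall shape (work in a residue, exploit the asymmetry between a regulus and a set of pairwise opposite points) is right, but the concrete execution of the ``only if'' direction breaks down, and the sketch misses the one place where the hypothesis ``$\Delta$ not parabolic'' does real work. First, your candidate $\mathcal{L}'$ --- the opposite regulus of the grid --- is not admissible: by Definition~\ref{prec} the two near-lines must have an element $J_0$ in common, and the two reguli of a grid are disjoint as sets of MSS (two lines of opposite reguli are distinct MSS meeting in an $(n-2)$-space, so neither regulus contains a member of the other). Worse, your witness $J$ is taken \emph{in} the opposite regulus, so $J\in\mathcal{L}'$, violating the requirement $J\notin\mathcal{L}\cup\mathcal{L}'$. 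And the near-lines $[J,J^*]$ for $J^*\in\mathcal{L}$ are not ``reguli determined by $J$ and a member of $\mathcal{L}$'': since $\dim(J\cap J^*)=n-2$ they are type II pencils through $J\cap J^*$, and no member of the opposite regulus other than $J$ contains that $(n-2)$-space, so they do not meet your $\mathcal{L}'$ in an admissible way. The correct choice is a type II near-line $\mathcal{L}'=[J_0,J_2]$ with $J_0\in\mathcal{L}$ and $J_0\cap J_1\subseteq J_0\cap J_2$, and the witness $J$ is found in the \emph{dual} generalized quadrangle $Q^*$ of $Q=\Res_\Delta(J_0\cap J_1)$, where $\mathcal{L}$ becomes a hyperbolic line $L$ and $\mathcal{L}'$ an ordinary line $L'$ meeting $L$ in a point.

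Second, producing that witness is exactly where non-parabolicity enters, and your sketch does not account for it. One looks at the pencil $\mathcal{P}_x$ of ordinary and hyperbolic lines through the point $x\in L'$ collinear with all of $L$; every hyperbolic line of $\mathcal{P}_x$ meets $L'$, and one needs a hyperbolic line $h\in\mathcal{P}_x$ \emph{missing} $L$ in order to place the witness $q$ on $h$ off $L'$. If no such $h$ existed, $\mathcal{P}_x$ would be a projective plane and, by Schroth's theorem, $Q^*$ would be symplectic, i.e.\ $\Delta$ parabolic --- which is excluded by hypothesis. This is not a technicality: in the parabolic case the criterion genuinely fails (type II and type III near-lines become indistinguishable), which is why that case is split off and leads to Main Theorem~\ref{main1}$(i)$. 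A proof that never invokes this dichotomy cannot be correct. Finally, in the ``if'' direction your proposed contradiction ``$\mathcal{L}'\supseteq\mathcal{L}$'' is not what happens: the actual argument shows that any witness $J$ satisfies $\dim(J\cap S)=n-3$ and that every member of $\mathcal{L}'$ contains $J\cap S$, after which, back in $Q^*$, the configuration $q\cup L\cup L'\subseteq\mathcal{P}_x$ forces every near-line through $J$ meeting $\mathcal{L}'$ to meet $\mathcal{L}$ as well, so the \emph{strict} clause of $\prec$ fails; the relation $\preccurlyeq$ itself may well hold.
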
 
\begin{proof} \textbf{Suppose first that $\mathcal{L}:=[J_0,J_1]$ is of type III. We show that there is a near-line $\mathcal{L}'$ such that $\mathcal{L} \prec \mathcal{L}'$.} By definition, $\dim(J_0 \cap J_1)=n-3$. Take any $j$-space $J_2$ such that $J_0 \cap J_2$ is an $(n-2)$-space containing $J_0 \cap J_1$ and put $\mathcal{L'}:=[J_0,J_2]$. In $\Res_{\Delta}(J_0 \cap J_1)$, which is a generalized quadrangle $Q$, the lines $\mathcal{L}$ and $\mathcal{L'}$ correspond to a regulus and a pencil, respectively, sharing a line. In the dual generalized quadrangle $Q^*$, they correspond to a hyperbolic line $L$ and an ordinary line $L'$, respectively, meeting in a point $p_0$.

Let $p$ be a point of $L \setminus \{p_0\}$  and $x$ the unique point on $L'$ collinear with $p$. As $x$ is then collinear with two points of $L$, $x$ is collinear to each point of $L$. Recall that (since $\Delta$ is orthogonal) we have that each hyperbolic line is the common perp of two non-collinear points, in particular, $L= \{x,x'\}^\perp$ for some point $x'$ not collinear to $x$. 
Now consider the structure $\mathcal{P}_x$ induced by the ordinary and hyperbolic lines in the pencil $x^\perp$. 
Each of its ordinary lines contains $x$ as otherwise we have a triangle. Since each hyperbolic line $h$ in $\mathcal{P}_x$ intersects two lines of $x^\perp$ (in points distinct from $x$), $h=\{x,x''\}^\perp$ for some point $x''$ not collinear to $x$. In particular, $h$ intersects $L'$.
Suppose for a contradiction that each hyperbolic line meets $L$. Seeing that $\Delta$ is Moufang (as its rank is at least three), it follows by transitivity that any two hyperbolic lines in $\mathcal{P}_x$ meet, implying that $\mathcal{P}_x$ is a projective plane. By a result of Schroth (\cite{schroth}), this means that $Q^*$ is a symplectic quadrangle. But then $Q$, and therefore also $\Delta$, is parabolic. This contradiction implies that there is a hyperbolic line $h$ in $\mathcal{P}_x$ not intersecting $L$. Let $q \in h$ be a point not on $L'$, then each hyperbolic line through $q$ that meets $L$ is contained in $\mathcal{P}_x$ and therefore it also meets $L'$, so $\mathcal{L} \preccurlyeq \mathcal{L'}$. As $h$ meets $L'$ but does not meet $L$, $\mathcal{L} \prec \mathcal{L'}$, as required. 

\textbf{For the converse, suppose $\mathcal{L}:=[J_0,J_1]$ is of type II. We show that there is no near-line $\mathcal{L'}$ with $\mathcal{L} \prec \mathcal{L}'$.} Assume for a contradiction that there is a near-line $\mathcal{L}'$ with $\mathcal{L}\prec\mathcal{L}'$. Let $J$ be a $j$-space as in Definition~\ref{prec}. Put $S=J_0 \cap J_1$. We claim that $\dim(J\cap S)=n-3$ and that each $j$-space in $\mathcal{L}'$ contains $J \cap S$.

Firstly, $J \notin \mathcal{L}$ implies $S \nsubseteq J$ (i.e., $\dim(S \cap J) < n-2$); secondly, $J$ is $\Gamma'$-adjacent to each member of $[J_0,J_1]$ so in particular it has to intersect both $J_0$ and $J_1$ in at least an $(n-3)$-space, and since $J$ cannot contain points from both $J_0 \setminus S$ and $J_1 \setminus S$ (as those are not collinear), so $\dim(S \cap J)=n-3$.
Now $J\setminus S$ contains a line which has a unique point collinear with $S$ (if all its points were collinear with $S$ then $S \subseteq J$), so $\mathcal{L}$ contains a unique element, say $J^*$, such that $\dim(J \cap J^*)=n-2$.
By definition, $\mathcal{L'}$ does not contain $J$ and intersects $\mathcal{L}$ and all near-lines $[J,J_L]$ with $J_L \in \mathcal{L}$ in a unique $j$-space.
Take $J_L \in \mathcal{L}\setminus\{J^*\}$. Then $J \cap J_L = J \cap S$, and hence $[J,J_L]$ is of type III, but more importantly, each $j$-space in $[J,J_L]$ contains $J \cap S$. Consequently, at least two members of $\mathcal{L}'$ contain $J \cap S$, hence, so does each member of $\mathcal{L}'$. This shows the claim. This allows us to restrict ourselves again to the generalised quadrangle $Q^*$ which is the dual of $Q=\Res_\Delta(J \cap S)$. 

Suppose first that $\mathcal{L'}$ has type III. In $Q^*$, $\mathcal{L}$ again corresponds to an ordinary line $L$, the $j$-space $J$ corresponds to a point $q \notin L$ and the line $L'$ corresponding to $\mathcal{L}'$ does not contain $q$  and meets each line $qp_L$ with $p_L \in L$. Let $x$ be the unique point on $L$ on an ordinary line with $q$ (this points corresponds to $J^*$). Then $\mathcal{P}_x$ contains $L$ and all lines $qp_L$ with $p_L \in L$, and hence $L' \subseteq \mathcal{P}_x$ too. But then each line through $q$ that meets $L'$ also belongs to $\mathcal{P}_x$ and, as such, it intersects $L$ too (as we deduced before). This contradicts $\mathcal{L}\prec\mathcal{L}'$. 

Next, suppose $\mathcal{L'}$ has type II. Then, in $Q^*$ and with the same notation as above, $L'$ is an ordinary line, which hence contains $x$. Again, $q\cup L \cup L' \subseteq \mathcal{P}_x$. Then it is clear that each line through $q$ that intersects $L'$ in a point distinct from $x$ is a hyperbolic line, and each such hyperbolic line has to intersect $L$ as well, again contradicting $\mathcal{L} \prec \mathcal{L'}$. 
\end{proof} 
 
As this allows us to recognise quadruples of type III, we can remove the edges in $\Gamma'$ between $j$-spaces that are contained in such a quadruple, hereby obtaining that all remaining edges join $j$-spaces that intersect each other in a $(j-1)$-space, as they are contained in a quadruple of type II. The resulting graph is $\mathsf{G}_{n-1}$; the result follows.

 \textbf{Case 2: $\Delta$ is hyperbolic.} By Lemma~\ref{hypn-1}, there is only one type of quadruple here, and these are such that $\dim(S)=n-3$. It follows that $\Gamma' = \mathsf{G_{n-1}}$. Therefore each element of $\Aut(\Gamma)$ is induced by an automorphism of $\Delta$, possibly up to a duality. The duality occurs precisely if either $\{i,j\}=\{(n-1)',(n-1)''\}$ (interchanging the biparts) or, if $n=4$ and $i=j$, then an $i$-duality also induces an automorphism of $\Gamma$ (not interchanging the biparts).

\textbf{Case 3: $\Delta$ is parabolic.}   We exploit the natural embedding of $\Delta$ in a hyperbolic polar space $\Delta'$ of rank $(n+1)$ (defined over the same field as $\Delta$). Let $\Gamma=\Gamma_k = \Gamma_k^\ell$ or $\Gamma = \Gamma_{\geq k}$. In Example~\ref{special1} at the very beginning of this paper, we explained that, if $k=-1$, $\Gamma$ is isomorphic to some graph $\Gamma'$ defined over $\Delta'$ and hence $\Aut(\Gamma) \cong \Aut(\Gamma')$, and by the previous case, it then follows that each automorphism of $\Delta'^b$ induces an automorphism of $\Gamma'$ and vice versa. 

\textit{Claim: for $k \geq 0$, the automorphisms of $\Gamma$ are also induced by automorphisms of $\Delta'^b$, but only those automorphisms of $\Delta'^b$ preserving $\Delta$  (i.e., the automorphisms of $\Delta$) will induce automorphisms of $\Gamma$.}

Inspired by Special case~\ref{special1}, we first define a graph $\Gamma'$, associated to $\Delta'$, such that there is a bijection between the vertices of $\Gamma$ and those of $\Gamma'$. Let $\mathsf{M}_1$ be one family of MSS of $\Delta'$ and let $\mathsf{M}_2$ be the family of MSS such that $\mathsf{M}_1=\mathsf{M}_2$ if $n-k$ is even and $\mathsf{M}_1 \neq \mathsf{M}_2$ if $n-k$ is odd. Let $m_c$ denote the type of the elements in $\mathsf{M}_c$. If $\Gamma=\Gamma_k$, then we define for each $k \geq 0$ the graph $\Gamma'$ as $\Gamma_{m_1,m_2;k}^{n+1}(\Delta')$; if $\Gamma=\Gamma_{\geq k}$, then, also for each $k \geq 0$, we define $\Gamma'$ as $\Gamma_{m_1,m_2,\geq k}^{n+1}(\Delta')$. 

For each member $X$ of a bipartition class $C_c$ of $\Gamma$, we denote by $\beta_c(X)$ the unique element of $\mathsf{M}_c$ going through it, $c=1,2$.  Then $\beta_1 \times \beta_2$ gives a bijection between the vertices of $\Gamma$ and $\Gamma'$, and $\Gamma'$ is chosen such that if $(I,J)$ is an adjacent pair in $\Gamma$, then $(\beta_1(I),\beta_2(J))$ is an adjacent pair in $\Gamma'$ and moreover, such that there are adjacent pairs $(I',J')$ in $\Gamma'$ that intersect in a $k$-space (the definition of $\Gamma'$ is not entirely canonical, but other sensible choices for a graph isomorphic to $\Gamma$ would behave similarly so we take this one as an example).

Yet, we next show that the fact that $k \geq 0$ will imply that there are adjacent pairs $(I',J')$ in $\Gamma'$, for which $(\beta_1^{-1}(I'),\beta_2^{-1}(J'))$ is not an adjacent pair of $\Gamma$. To see this, suppose $I'$ and $J'$ are such that $I' \cap J'$ is a $k$-space in $\Delta'$ which is not contained in $\Delta$ and note that $I:=\beta_1^{-1}(I')=I' \cap \Delta$, likewise $J:=\beta_2^{-1}(J)=J' \cap \Delta$. But then $I \cap J=I' \cap J' \cap \Delta$ is only a $(k-1)$-space, and hence $(I,J)$ is not an adjacent pair in $\Gamma$ (for all $\Gamma$ under consideration). The mapping $\beta_1 \times \beta_2$ is an embedding of $\Gamma$ in $\Gamma'$ (since adjacency is preserved in one direction). 

Now suppose that $\sigma$ is an automorphism of $\Delta'^b$ with $\sigma(\Delta) \neq \Delta$, then we still need to show that $\sigma$ cannot induce an automorphism of $\Gamma$. As $\sigma$ does not preserve $\Delta$, there is a $k$-space  $K$ in  $\Delta$ such that $\sigma(K) \nsubseteq \Delta$. Thus, if $(I',J')$ is an adjacent pair of $\Gamma'$ with $I' \cap J'  =K$, then $\sigma(I') \cap \sigma(J') = \sigma(K) \nsubseteq \Delta$ and so $\beta_1^{-1}(\sigma(I')) \cap\beta_2^{-1}(\sigma(J')) = \sigma(I') \cap \sigma(J') \cap \Delta$ is only a $(k-1)$-space, implying that $(\beta_1^{-1}(\sigma(I')),\beta_2^{-1}(\sigma(J')))$ is not adjacent in $\Gamma$.  This shows the claim.
 
\textbf{Conclusion.} We know that each automorphism $\alpha$ of $\Gamma'$ is induced by an automorphism $\tilde{\alpha}$ of $\Delta'^b$ (see previous case). The above implies that $\alpha$ preserves $\Gamma$-adjacency (we view $\Gamma$ as embedded in $\Gamma'$) if and only if $\tilde{\alpha}$ preserves $\Delta$. Hence $\Aut(\Gamma) \cong \{ \alpha \in \Aut(\Gamma') \mid \tilde{\alpha}(\Delta)=\Delta\}$. With other words, each automorphism of $\Gamma$ is induced by an automorphism of $\Delta$, as required.

\subsubsection{The $(k,\ell)$-Weyl graphs: $|j|<n-1$ and $b\neq -1$} 
 
In this case, the type sets of the quadruples are the singletons I, II$^*$ or III$^*$. Let $J_1$ and $J_2$ be adjacent vertices of $\Gamma'$.  

 The following lemma will allow us to separate type III$^*$ from the others, allowing us to remove the edges in $\Gamma'$ between $j$-spaces that intersect each other in a $(j-2)$-space,  hereby obtaining $\mathsf{G}_j$ or $\mathsf{G}'_j$. We may hence assume that quadruples of type III$^*$ occur, otherwise we immediately obtain $\mathsf{G}_{j}$ or $\mathsf{G}'_j$. Note that type I quadruples always occur (since $|j|<n-1$). Furthermore, only in mixed polar spaces quadruples of type II$^*$ and of type III$^*$ can both occur; other polar spaces admit at most one of these types.
 
\begin{lemma}\label{III} Let $J$ be a $j$-space, not contained in $[J_1,J_2]$ and $\Gamma'$-adjacent to all members of $[J_1,J_2]$, except one or two members $J'$ and $J''$ (possibly $J'=J''$). Then $\dim(J \cap J^*) = j-2$ for all $J^* \in [J_1,J_2]\setminus\{J',J''\}$. Moreover, for each pair of $j$-spaces $J$ and $J^*$ intersecting each other in a $(j-2)$-space, we can always find a near-line containing $J^*$ and not containing $J$, such that this situation occurs. 
\end{lemma}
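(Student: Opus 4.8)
Write $S=J_1\cap J_2$. By Lemmas~\ref{dimdoorsnede},~\ref{lem7a} and~\ref{singletype} every member of $[J_1,J_2]$ contains $S$, and in $\Res_\Delta(S)$ the near-line $[J_1,J_2]$ is either the set of points of an ordinary or hyperbolic line (when $\dim S=j-1$ and the type is $\mathsf{I}$ or $\mathsf{II}^*$) or a regulus of a grid (when $\dim S=j-2$ and the type is $\mathsf{III}^*$). Fix a non-exceptional $J^*\in[J_1,J_2]\setminus\{J',J''\}$. Since $J$ and $J^*$ are $\Gamma'$-adjacent they lie in a common round-up quadruple, which by Lemmas~\ref{type1234},~\ref{aempty} and~\ref{b-1} (we are in the range $|j|<n-1$, $b\neq-1$) has type $\mathsf{I}$, $\mathsf{II}^*$ or $\mathsf{III}^*$; as all its pairwise intersections coincide (Lemma~\ref{lem7a}), $\dim(J\cap J^*)$ equals $j-1$ in the first two cases and $j-2$ in the third. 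Hence the first assertion is equivalent to excluding the possibility $\dim(J\cap J^*)=j-1$ for every non-exceptional $J^*$.

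The plan is to rule this out by contradiction, following the pattern of the proof of Lemma~\ref{type1234}. Suppose $\dim(J\cap J^*)=j-1$ for some non-exceptional $J^*$; a dimension count on the hyperplanes $J\cap J^*$ and $S$ of $J^*$ forces $\dim(J\cap S)=j-2$ when $S\not\subseteq J$, while $\dim(J\cap S)=j-1$ when $S\subseteq J$. Passing to $\Res_\Delta(J\cap S)$ (which is $\Res_\Delta(S)$ when $S\subseteq J$), the near-line $[J_1,J_2]$ becomes a set of subspaces all through the image of $S$, forming a planar pencil (respectively a line of points, or a regulus), while $J$ becomes an object $\widehat J$ \emph{not} through that image but $\Gamma'$-adjacent to at least three members of $[J_1,J_2]$ (a near-line being infinite and at most two members excluded). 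The hypothesis that $J$ is $\Gamma'$-adjacent to these members translates, through the construction of Section~\ref{constructie} and Lemma~\ref{lem1}, into the statement that every transversal line from $\widehat J$ to one of them meets another; Lemma~\ref{lem0} then forces $\widehat J$ into $[J_1,J_2]$, i.e. $J\in[J_1,J_2]$, a contradiction. The point restrictions appearing in Lemma~\ref{lem1} when $a=-1$ or $b=-1$, and the case $\Delta$ hyperbolic, are handled exactly as in Lemmas~\ref{aempty} and~\ref{b-1}. This proves $\dim(J\cap J^*)=j-2$ for all non-exceptional $J^*$, so every quadruple through $J$ and such a $J^*$ has type $\mathsf{III}^*$.

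For the ``moreover'', let $J,J^*$ be $j$-spaces with $\dim(J\cap J^*)=j-2$ realising the situation above, so that they lie in a type-$\mathsf{III}^*$ quadruple; in particular $\Delta$, hence $\Res_\Delta(T)$ with $T:=J\cap J^*$, is orthogonal, and in $\Res_\Delta(T)$ the lines $J$ and $J^*$ are opposite. Pick a line $N\neq J^*$ through a point $\bar x$ of $J^*$, let $J^{**}$ be the $j$-space through $T$ it determines, and set $\mathcal N:=[J^*,J^{**}]$. Since $N$ meets $J^*$, the subspace $J^*\cap J^{**}$ is a $(j-1)$-space containing $\bar x\notin J$, so it is not contained in $J$; hence by Lemma~\ref{geom} and~\ref{singletype}, $\mathcal N$ is a near-line of type $\mathsf{I}$ or $\mathsf{II}^*$ with $J^*\in\mathcal N$ and $J\notin\mathcal N$, whose members correspond in $\Res_\Delta(T)$ to the pencil of lines through $\bar x$ in the plane $\pi=\langle J^*,N\rangle$. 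Since $J$ and $J^*$ are opposite, $J\not\subseteq\pi$, so $\pi$ meets the line $J$ in at most a point and $\proj_\pi(J)=\pi\cap J^\perp$, being disjoint from $J^*\subseteq\pi$, is at most a point; thus all but at most two pencil lines are opposite $J$, and for each such line the hyperbolic $3$-space it spans with $J$ yields, via Section~\ref{constructie}, a type-$\mathsf{III}^*$ round-up quadruple witnessing $J\sim_{\Gamma'}$(the corresponding member of $\mathcal N$), which moreover meets $J$ in exactly $T$. Hence $J$ is $\Gamma'$-adjacent to all members of $\mathcal N$ except at most two, as required.

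The main obstacle will be the bookkeeping in the second paragraph: organising the case split on the position of $J$ relative to $S$ and on the types of the near-lines involved, and verifying in each configuration that the hypotheses of Lemmas~\ref{lem0} and~\ref{lem1} are genuinely met -- especially in the boundary cases $a=-1$, $b=-1$ and $\Delta$ hyperbolic -- so that the reduction ``$\widehat J$ lies in $[J_1,J_2]$'' goes through uniformly; in particular one must be careful that the quadruples invoked to assert $\Gamma'$-adjacency are genuinely \emph{round-up} (nonemptiness of the relevant sets $\mathsf{N}(\cdot)$), which is where the asserted ``$\dim(J\cap J^*)=j-1$'' must be shown to be incompatible with all three quadruple types at once. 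A secondary, routine point is the analogous round-up verification for the opposite-line quadruples used in the ``moreover'', handled via Construction~\ref{con}.
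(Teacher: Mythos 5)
Your first paragraph sets the problem up correctly, but the core of the first assertion (your second paragraph) has a genuine gap, and you have in fact pointed at the right place yourself. The reduction you propose does not go through: Lemma~\ref{lem1} is a statement about round-up quadruples, and the $4$-tuple consisting of $J$ together with three members of $[J_1,J_2]$ is not known to be (and in general is not) a round-up quadruple, so you cannot conclude that transversal lines from $J$ to one member meet another. More importantly, the contradiction you aim for ($J\in[J_1,J_2]$) is simply not available in the critical sub-case: it is perfectly consistent for $J$ to meet \emph{exactly one} member of $[J_1,J_2]$ in a $(j-1)$-space and all the others in $(j-2)$-spaces (for instance $[J_1,J_2]$ of type $\mathsf{III}^*$ with the line corresponding to $J$ in the residue meeting one line of the regulus and opposite the rest). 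In that configuration $J\notin[J_1,J_2]$ and no contradiction arises; what must be proved instead is that this unique member lies in $\{J',J''\}$, i.e.\ that $J$ is \emph{not} $\Gamma'$-adjacent to it --- and that is the delicate part, requiring a case split on whether type $\mathsf{II}^*$ quadruples exist in $\Delta$ (when they do, two $j$-spaces meeting in a $(j-1)$-space are automatically adjacent, and the configuration has to be excluded by a separate collinearity argument in the residue). A workable route is: first show $\dim(J\cap S)\geq j-2$ (note that for a type $\mathsf{III}^*$ near-line, where $\dim S=j-2$, your dimension count only yields $j-3$, and one must actually prove $S\subseteq J$ there); then pass to $\Res_\Delta(S')$ for a $(j-2)$-space $S'\subseteq J\cap S$ and treat separately the cases where $J$ meets at least two, exactly one, or no members in a $(j-1)$-space --- the first case forcing $J$ to meet all of them and hence to be adjacent to none, one, or all, contradicting ``all but one or two''.

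The ``moreover'' part also falls short of the statement. You only treat pairs $J,J^*$ whose residual lines are opposite, but the lemma asserts the existence of a suitable near-line for \emph{every} pair meeting in a $(j-2)$-space, including pairs that are collinear or semi-opposite in $\Res_\Delta(J\cap J^*)$. There your pencil construction fails (if $J\perp J^*$ then $\proj_\pi(L)$ contains the whole line corresponding to $J^*$, not at most a point, and no pencil line is opposite $L$); one instead takes a line $L_2$ opposite both residual lines and uses the type $\mathsf{III}^*$ near-line it determines with $J^*$, in which $J^*$ itself becomes an exceptional member. In the opposite case your pencil-based construction is a plausible alternative to the regulus-based one, but as written the proposal does not prove the lemma.
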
  
 
\begin{proof} 
Let $J$ be as stated. We may assume that $J_1 \sim J \sim J_2$, as $[J_1,J_2] = [J_3,J_4]$ for all distinct $J_3$, $J_4$ in $[J_1,J_2]$, as noted before.

\textbf{\boldmath Claim 1: The dimension of $J\cap S$ is at least $j-2$.\unboldmath} Suppose for a contradiction that $\dim(J\cap S) \leq j-3$. 
Firstly, let $[J_1,J_2]$ be of type I or II$^*$.  As $\dim(J \cap S) \leq j-3$, it follows from $\dim(S)=j-1$ that  $\dim(J_3 \cap J_c) \leq j-2$ for $c=1,2$. However, $J \sim J_c$ so $\dim(J \cap J_c)=j-2$ (and hence $J$ and $J_c$ are contained in a quadruple of type III$^*$) and $\dim(J\cap S)=j-3$. This also means that $J_3$ contains a point $p_c$ from $J_c \setminus S$. But then the point $p_2$ is collinear with $S$ and with $p_1$, and hence with $\<S,p_1\>=J_1$. As $p_2 \in J\setminus J_1$, this contradicts that $J$ and $J_1$ are contained in a quadruple of type III$^*$, as the lines corresponding to $J$ and $J_1$ in $\Res_\Delta(J \cap J_1)$ are clearly not opposite. This contradiction implies that $\dim(J \cap  S) \geq j-2$ in this case. 

Secondly, suppose $[J_1,J_2]$ is of type III$^*$. First note that $\dim(J \cap J_c) \geq j-2$ for $c=1,2$. Then $\dim(J \cap S) \leq j-3$ implies that $J$ contains at least a point in $J_1 \setminus S$ and in $J_2 \setminus S$, say $p_1$ and $p_2$. Moreover, $J$ cannot contain a line in $J_1 \setminus S$, since no line of $J_1 \setminus S$ is collinear with $p_2$. But that means that $\dim(J \cap S)=j-3$ and that $\dim(J\cap J_c)=j-2$. In $\Res_{\Delta}(J \cap S)$, the members $J_r$ of $[J_1,J_2]$ correspond to planes $\pi_r$ through some point $x$ and $J$ corresponds to a plane $\pi$ (not containing $x$) intersecting the planes $\pi_r$ in respective points $p_r$. Let $J_3$ be the member of  $[J_1,J_2]$ with $J \nsim J_3$. Then the lines corresponding to  $\pi$ and $\pi_3$ in $\Res_{\Delta}(\<J\cap S,p_3\>)$ are not opposite, so there is some line $M$ in $\pi$ through $p_3$ collinear with $\pi_3$. As the point $p_1$ is not collinear with $\pi_3$, it is not on $M$. Consequently, $x$ is collinear with $\<p_1,M\>=\pi$. However, this implies that $J_1$ and $J$ do not correspond to opposite lines in $\Res_{\Delta}(\<J\cap S,p_1\>)$, contradicting the fact that $J \sim J_1$. Hence, in this case, $J$ even contains $S$.

\textbf{\boldmath Claim 2: For all $J_r \in [J_1,J_2]\setminus\{J',J''\}$, we have $\dim(J \cap J_r)=j-2$.\unboldmath} 
Now we know that $\dim(S \cap J) \geq j-2$, we take a $(j-2)$-space $S' \subseteq J \cap S$ and consider $\Res_{\Delta}(S')$. Firstly, if $[J_1,J_2]$ is of type I, then in $\Res_{\Delta}(S')$ it corresponds to a set of lines through a point $x$ and contained in a plane $\pi$. Secondly,  if $[J_1,J_2]$ is of type II$^*$,  then in $\Res_{\Delta}(S')$ it corresponds to  the set of lines through a point $x$ such that, in $\Res_{\Delta}(\<S',x\>)$, this set corresponds to a hyperbolic line. Lastly, if $[J_1,J_2]$ is of type III$^*$, then in $\Res_{\Delta}(S')$ it corresponds to one regulus of a hyperbolic quadric. Denote by $L$ the line corresponding to $J$ and denote by $J'$ and $J''$ the member(s) of $[J_1,J_2]$ not adjacent to $J$. 

Suppose that $\dim(J\cap J^*) =j-1$ for at least two members $J^* \in [J_1,J_2]$, then for all members. We reason in $\Res(S')$ (using the notation settled in Remark~\ref{not}).
\begin{compactenum}[$-$]
\item If $[J_1,J_2]$ is of type I, then $L$ either contains $x$ or is contained in $\pi$. Either way, we conclude that $\dim(J \cap J^*)=j-1$ for all $J^* \in [J_1,J_2]$. But then $J \sim J^*$ for one or for all $J^* \in [J_1,J_2]$ (`one' occurs if $L$ contains $x$, is collinear with a unique line of $\pi$ and when there are no quadruples of type II$^*$).
\item  If $[J_1,J_2]$ is of type II$^*$, then $L$ goes through $x$. Since quadruples of type I and II$^*$ both occur now,  $J \sim J^*$ for all $J^* \in [J_1,J_2]$.
\item  If $[J_1,J_2]$ is of type III$^*$, then $L$, intersecting two lines of the regulus, intersects them all. Depending on whether II$^*$ quadruples occur,  $J$ is either adjacent to none or to all members of $[J_1,J_2]$.
\end{compactenum}
As in none of the previous cases, $J$ is adjacent to all members of $[J_1,J_2]$ except one or two, we conclude that $J$ cannot intersect more than one members of $[J_1,J_2]$ in a $(j-1)$-space. 

Next, suppose $\dim(J \cap J^*)=j-1$ for a unique member $J^*$ of $[J_1,J_2]$ (if no such member exists, nothing needs to be shown). Note that this situation does not occur if $[J_1,J_2]$ is of type I, since no line meeting $L^*$ can be opposite the lines corresponding to the other members of $[J_1,J_2]$.  We show that $J^*\in\{J',J''\}$. We will reason in $\Res(S)$; recall that we settled our notation already in Remark~\ref{not}.

\begin{itemize}
\item \textit{Suppose there are quadruples of type II$^*$.} In this case, $J \sim J^*$, hence $J^*\notin\{J',J''\}$. Then there is a point $z \in L'$ collinear to $L$. If $[J_1,J_2]$ is of type II$^*$, then $z=x$ as $L'$ and $L^*$ are not collinear, but then no member of $[J_1,J_2]\setminus\{L^*\}$ is adjacent to $J$, a contradiction. If $[J_1,J_2]$ is of type III$^*$, then all points of $\<L \cap L^*, z\>$ are collinear to $L$ so each line corresponding to a member of $[J_1,J_2]$ contains a point collinear to $L$, implying that no member of $[J_1,J_2] \setminus \{J^*\}$ is adjacent to $J$, a contradiction.

\item \textit{Suppose there are no quadruples of type II$^*$.}
Then $[J_1,J_2]$ is of type III$^*$. If $J \perp J^*$ then $J\sim J^*$ and hence $J^*\notin\{J',J''\}$. As above, $L'$ contains a point collinear to $L$ and we obtain that $J$ is not adjacent to any member of $[J_1,J_2]\setminus\{J^*\}$, a contradiction.
If $J$ and $J^*$ are not collinear, then $J \nsim J^*$, so indeed, $J^*\in\{J',J''\}$.\end{itemize}

 \textbf{\boldmath Claim 3: For each pair $J,J_1$ of $j$-spaces intersecting each other in a $(j-2)$-space, there is a near-line containing $J_1$ and not containing $J$ such that $J$ is adjacent to all its members except one or two.\unboldmath} 

We first look for a near-line, and then show that $J$ is adjacent to all but one or two of its members.
Consider $\Res_\Delta(J \cap J_1)$. If $L$ and $L_1$ are not opposite, we take a line $L_2$ opposite both of them. If $L$ and $L_1$ are opposite, take a plane $\pi$ through $L$ and note that $\pi$ is semi-opposite $L_1$. Then $\pi$ contains a point $p \notin L \cup L_1^\perp$. Through $p$, we can then find a line $L_2\nsubseteq \pi$ opposite $L_1$ by taking a point in $\Res_\Delta(\<J \cap J_1, p\>)$ opposite the point corresponding to $L_1$ and avoiding the line corresponding to $\pi$. All of this is possible by Fact~\ref{lem4a}$(ii)$ and~$(iii)$. 

Let $J_2$ be the $j$-space through $J \cap J_1$ corresponding to $L_2$. Then $[J_1,J_2]$ is a near-line of type III$^*$. In the first case, $J$ is adjacent to $J_2$ and not adjacent to $J_1$, in the second case, $J$ is adjacent to $J_1$ and not adjacent to $J_2$. In both cases, we show that there is at most one member of $[J_1,J_2]\setminus\{J_1,J_2\}$ not adjacent to $J$. Suppose $J$ would not be adjacent to a third member $J_3  \in [J_1,J_2]$. Then $L_3$ would be collinear with a point $z \in L$. Now $L$ also contains a point $z'$ collinear with $L_c$ ($c$ equals $1$ or $2$, depending on the case we are in). If $z=z'$, then $z$ is collinear to all lines of the regulus determined by $L_1$ and $L_2$, contradicting the fact that $L$ is opposite $L_{c'}$ (with $\{c,c'\}=\{1,2\}$). If $z \neq z'$, then there are exactly two points (namely, those on $L_c$ and $L_3$) of the hyperbolic $3$-space spanned by $L_1$ and $L_2$ that are collinear with $L$ (this is easily verified when $\Delta$ is embeddable since it has to be orthogonal, and also holds true if $\Delta$ is not embeddable). This implies that $J$ is collinear to all members of $[J_1,J_2]\setminus\{J_c,J_3\}$.
 
The lemma is proven.
\end{proof}

\subsubsection{The $(k,\ell)$-Weyl graphs: $|j|<n-1$ and $b= -1$} 
 
 Now, the quadruples are of type I or V$(t)$ (cf.\ Lemma~\ref{b-1}) and  the following lemma enables us recognise the type I quadruples, by which means we obtain~$\mathsf{G}_j$. 

\begin{lemma} A quadruple  $\{J_1,J_2,J_3,J_4\}$ is of type V$(t)$ (with $1 \leq t \leq j-k-1$) if and only if there is a $j$-space $J^* \neq J_4$ such that $\{J_1,J_2,J_3,J^*\}$ is a quadruple, whereas $\{J_1,J_2,J_4,J^*\}$ is not. \end{lemma}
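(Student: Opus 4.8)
The plan is to exploit the structural difference between the two types of quadruples that survive in this regime (type $\mathsf{I}$ and type $\mathsf{V}(t)$, by Lemma~\ref{b-1}), namely the dimension of the common intersection $S$. Recall that for a type $\mathsf{I}$ quadruple $\dim(S)=j-1$, whereas for a type $\mathsf{V}(t)$ quadruple we have $\dim(S)=j-t-2 < j-1$ since $t\geq 1$; moreover, in $\Res_\Delta(S)$ the subspaces $\langle S,U_d\rangle$ ($d=1,2,3,4$) correspond to four distinct points on a line $L$, and the $j$-spaces correspond to pairwise opposite $(t-1)$-spaces in $\Res_{\Delta'}(L)$ spanning a hyperbolic $(2t-1)$-space. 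The key point is that a type $\mathsf{V}(t)$ quadruple lives, after passing to the residue $\Res_{\Delta'}(L)$, inside a hyperbolic subspace, and hyperbolic subspaces contain ``many'' pairwise opposite subspaces satisfying the common-transversal property — in particular more than four of them — whereas a type $\mathsf{I}$ quadruple is ``rigid'' in the sense that its members already span all the room there is.

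First I would prove the forward direction: assume $\{J_1,J_2,J_3,J_4\}$ is of type $\mathsf{V}(t)$. Work in $\Res_{\Delta'}(L)$, where $L$ is the line of $\Res_\Delta(S)$ determined by $\langle S,U_1\rangle,\dots,\langle S,U_4\rangle$; there the $j$-spaces correspond to four pairwise opposite $(t-1)$-spaces $W_1,\dots,W_4$ spanning a hyperbolic $(2t-1)$-space $\Sigma$. A hyperbolic polar space of rank $t$ contains infinitely many MSS of each of its two families (as $\Delta$ is infinite, cf.\ Remark~\ref{infinite} and Fact~\ref{lem4a}$(ii)$), so I can pick a fifth $(t-1)$-space $W^*$ in $\Sigma$, in the same family as $W_4$, opposite $W_1,W_2,W_3$. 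Pulling back gives a $j$-space $J^*\neq J_4$ with the property that $\{J_1,J_2,J_3,J^*\}$ is again a type $\mathsf{V}(t)$ quadruple (one checks the defining conditions: common intersection $S$, the $\langle S,U\rangle$'s still on $L$, the $(t-1)$-spaces $W_1,W_2,W_3,W^*$ pairwise opposite inside $\Sigma$, hence defining the same hyperbolic $(2t-1)$-space). On the other hand $\{J_1,J_2,J_4,J^*\}$ cannot be a quadruple: since $W_4$ and $W^*$ lie in the same family of $\Sigma$, they are \emph{not} opposite (their intersection has dimension of the wrong parity, namely $\geq 0$), so $J_4$ and $J^*$ are not semi-opposite in the required way — concretely $\dim(J_4\cap J^*) > \dim(S)$, which already rules out type $\mathsf{V}(t)$, and $\dim(J_4\cap J^*) < j-1$ rules out type $\mathsf{I}$; as these are the only two possible quadruple types (Lemma~\ref{b-1}), $\{J_1,J_2,J_4,J^*\}$ is not a quadruple. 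Here I will need to check carefully that $W^*$ can indeed be chosen \emph{avoiding} collinearity/opposition obstructions with $J_4$; this is exactly the content of Fact~\ref{lem4a}$(ii)$ applied in the residue, so it is available.

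For the converse, assume $\{J_1,J_2,J_3,J_4\}$ is of type $\mathsf{I}$; I must show no such $J^*$ exists. By Lemma~\ref{geom} and Lemma~\ref{singletype}$(\mathsf{I})$, the near-line $[J_1,J_2]=[J_1,J_3]$ consists precisely of the $j$-spaces corresponding, in $\Res_\Delta(S)$, to the points of the line $p_1p_2$; in particular every $j$-space forming a quadruple with $J_1,J_2,J_3$ lies on this near-line and shares the same intersection $S$ with each of them, so $\{J_1,J_2,J_3,J^*\}$ a quadruple forces $J^*\in[J_1,J_2]$, i.e.\ $J^*$ corresponds to a point on $p_1p_2$. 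But $J_4$ is also on this near-line, so $J_4$ corresponds to a point on $p_1p_2$ as well; then $J^*$ and $J_4$ correspond to points of the same line $p_1p_2$, and any four points on a line form a type $\mathsf{I}$ quadruple (Lemma~\ref{singletype}), so $\{J_1,J_2,J_4,J^*\}$ \emph{is} a quadruple — contradicting the assumed property of $J^*$. Hence no such $J^*$ exists, which is what we wanted.

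The main obstacle I anticipate is the bookkeeping in the forward direction: verifying that the pulled-back $J^*$ genuinely yields a quadruple $\{J_1,J_2,J_3,J^*\}$ of type $\mathsf{V}(t)$ (one must re-run the defining configuration, checking the common intersection is still exactly $S$ and not larger, and that $\langle S,U^*\rangle$ lands on the \emph{same} line $L$), and dually that $\{J_1,J_2,J_4,J^*\}$ fails \emph{every} quadruple axiom rather than just the type-$\mathsf{V}$ one — for this I will lean on the dichotomy of Lemma~\ref{b-1} so that ruling out types $\mathsf{I}$ and $\mathsf{V}(t)$ suffices. The parity argument distinguishing the two families of MSS inside the hyperbolic subspace $\Sigma$ (so that $W^*$ in $W_4$'s family is genuinely non-opposite to $W_4$) is the conceptual heart, and it is standard for hyperbolic polar spaces, cf.\ the discussion of the oriflamme complex in Section~2.
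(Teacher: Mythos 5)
Your converse direction (type $\mathsf{I}$ implies no such $J^*$) is fine and matches the paper's argument: any $J^*$ completing $\{J_1,J_2,J_3\}$ to a quadruple must correspond to a point of the line $p_1p_2$ in $\Res_\Delta(S)$, and then $\{J_1,J_2,J_4,J^*\}$ is again of type $\mathsf{I}$.

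The forward direction, however, has a genuine gap. Your mechanism for making $\{J_1,J_2,J_4,J^*\}$ fail is to choose $W^*$ ``in the same family as $W_4$'' inside the hyperbolic $(2t-1)$-space $\Sigma$ and to invoke a parity argument to conclude $W^*$ is not opposite $W_4$. But $\Sigma$ is a \emph{double perp}, i.e.\ a rank-$t$ polar space that is generally \emph{not} of hyperbolic type (the paper is explicit that a hyperbolic $3$-space is a hyperbolic quadrangle only when $\Delta$ is orthogonal), so there are no ``two families of MSS'' to speak of and the parity argument has nothing to act on. The failure is already visible at $t=1$: there $\Sigma$ is a hyperbolic line, the $W_d$ are points, any $W^*$ distinct from $W_1,W_2,W_3,W_4$ is opposite all four of them, and then $\{J_1,J_2,J_4,J^*\}$ \emph{is} a quadruple of type $\mathsf{V}(1)$ --- exactly what you must avoid. (Even for genuinely hyperbolic $\Sigma$, two MSS of the same family can be opposite when $t$ is even.) A secondary issue is that you never fix the point of $L$ through which $J^*$ passes, so the ``pullback'' of $W^*$ is not determined. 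The paper sidesteps all of this by perturbing the \emph{other} coordinate: it keeps the $(t-1)$-space of $J_4$ in $\Res_\Delta(\langle S,L\rangle)$ fixed and only moves the point $J^*\cap L$ to a new point of $L$ outside $J_1\cup J_2\cup J_3\cup J_4$. Then $\{J_1,J_2,J_3,J^*\}$ is manifestly still of type $\mathsf{V}(t)$, while $J_4$ and $J^*$ are distinct hyperplanes of the $(j+1)$-space $\langle J_4,L\rangle$ through a common $(j-1)$-space, so $\dim(J_4\cap J^*)=j-1>\dim(S)=j-t-1$, and Lemma~\ref{lem7a} (all pairwise intersections of a quadruple coincide) kills $\{J_1,J_2,J_4,J^*\}$ at once. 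You would need to replace your choice of $W^*$ by this (or an equivalent) construction for the forward direction to go through.
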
 
 
\begin{proof} 
Suppose the quadruple is of type V$(t)$ and let $S$ and $L$ be as described in the definition. Let $J^*$ be a $j$-space through $S$ which is collinear with $L$ such that $J^* \cap L \notin J_1 \cup J_2 \cup J_3 \cup J_4$ and $J^*$ and $J_4$ correspond to the same $(t-1)$-space in $\Res_{\Delta}(\<S,L\>)$. Clearly, $\{J_1,J_2,J_3,J^*\}$ is still a quadruple, as opposed to $\{J_1,J_2,J_4,J^*\}$. Next, suppose the quadruple is of type I. If $\{J_1,J_2,J_3,J^*\}$ is a quadruple, then $J^*$ contains $J_1\cap J_2$ and is contained in $\<J_1,J_2\>$. As $\{J_1,J_2,J_3,J^*\}$ is a quadruple, $J^* \notin \{J_1,J_2,J_3\}$, it follows $\{J_1,J_2,J_4,J^*\}$ is a quadruple too. 
\end{proof}

\subsubsection{The $k_\geq$-intersection graph: $|j|<n-1$} 
 
In this case, there are triples of types I, II and III$^*$ (since the triples of type IV are the same as those of type I).  
\begin{lemma}\label{lem1a} 
Suppose $\{J_1,J_2,J_3\}$ and $\{J_1,J_2,J_4\}$ are triples, while $\{J_1,J_3,J_4\}$ is not. Then $\dim (J_3 \cap J_4) = j-1$ and $J_3$ and $J_4$ are contained in a singular subspace. Moreover, for $j$-spaces $J_3$ and $J_4$ with $\dim(J_3 \cap J_4) =j-1$ and $J_3 \perp J_4$ we can find $j$-spaces $J_1$ and $J_2$ such that $\{J_1,J_2,J_3\}$ and $\{J_1,J_2,J_4\}$ are triples whereas $\{J_1,J_3,J_4\}$ is not.
\end{lemma}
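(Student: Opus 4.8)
\textbf{Proof plan for Lemma~\ref{lem1a}.}

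The plan is to work in $\Delta' := \Res_{\Delta}(S)$, where $S$ is an appropriately chosen common subspace, exactly as in the proofs of Lemmas~\ref{type1234} and~\ref{III}. First I would recall that, for the $k_{\geq}$-intersection graph with $|j| < n-1$, the triples are of types $\mathsf{I}$, $\mathsf{II}$, or $\mathsf{III}^*$ (since type $\mathsf{IV}$ triples coincide with type $\mathsf{I}$). Applying Lemma~\ref{lem7a} to the triple $\{J_1,J_2,J_3\}$, the three $j$-spaces share a common intersection, and likewise $\{J_1,J_2,J_4\}$. Setting $S = J_1 \cap J_2$, one deduces that $J_3$ and $J_4$ both contain $S$, so $J_3 \cap J_4 \supseteq S$. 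Thus $\dim(J_3 \cap J_4) \geq j-1$, and since $\{J_1,J_3,J_4\}$ is \emph{not} a triple, $J_3$ and $J_4$ do not both contain any common $j$-space extending in the ``wrong'' direction; the point is to show $\dim(J_3 \cap J_4) = j-1$ exactly (not $j$, i.e.\ $J_3 \ne J_4$, which is automatic, and not larger) and that $J_3 \perp J_4$, i.e.\ they are collinear in $\Delta$.

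The key step is the following. Since $\{J_1,J_2,J_3\}$ is a triple of type $\mathsf{I}$, $\mathsf{II}$, or $\mathsf{III}^*$, in $\Res_\Delta(S)$ the subspaces $J_1', J_2', J_3'$ correspond to collinear points on a line, pairwise opposite points, or pairwise opposite lines of a regulus, respectively; similarly for $\{J_1,J_2,J_4\}$, which forces $J_4' $ into the \emph{same} configuration determined by $J_1'$ and $J_2'$ (recall that a near-line is determined by any two of its members, Lemma~\ref{singletype}). So $J_3$ and $J_4$ both lie in $[J_1,J_2]$, hence in the same near-line, and by Lemma~\ref{geom} they satisfy $J_3 \cap J_4 = J_1 \cap J_2 = S$. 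If $[J_1,J_2]$ had type $\mathsf{II}$ or $\mathsf{III}^*$, then $J_3'$ and $J_4'$ would be opposite (points or lines) in $\Delta'$; since $|j|<n-1$ the residue $\Res_\Delta(S)$ has rank $\geq 2$ in the type $\mathsf{II}$ case (rank $\geq 3$ for $\mathsf{III}^*$), and a short direct check shows that in either case $\{J_1,J_3,J_4\}$ would again be a triple (of type $\mathsf{II}$, resp.\ $\mathsf{III}^*$) — contradicting the hypothesis. Hence $[J_1,J_2]$ has type $\mathsf{I}$: $J_1', J_2', J_3', J_4'$ are distinct points on a common line of $\Delta'$, which means precisely that $J_3$ and $J_4$ are both contained in the singular subspace $\langle J_1, J_2 \rangle$ (whose dimension is $j+1$, as $\dim(J_1\cap J_2)=j-1$). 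In particular $J_3 \perp J_4$ and $\dim(J_3 \cap J_4) = j-1$.

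For the converse, given $j$-spaces $J_3, J_4$ with $\dim(J_3 \cap J_4) = j-1$ and $J_3 \perp J_4$, I would set $W = \langle J_3, J_4 \rangle$, a singular $(j+1)$-space, and pick two further $j$-spaces $J_1, J_2 \subseteq W$ through $J_3 \cap J_4$, distinct from $J_3$ and $J_4$ — this is possible since in $\Res_\Delta(J_3\cap J_4)$ the relevant line has at least three points (actually infinitely many, as $\Delta$ is infinite by Remark~\ref{infinite}). Then $\{J_1,J_2,J_3,J_4\}$ are four distinct $j$-spaces through $J_3\cap J_4$, all in $W$, forming a type $\mathsf{I}$ configuration; so any three of them with all pairwise intersections equal to $J_3\cap J_4$ — in particular $\{J_1,J_2,J_3\}$ and $\{J_1,J_2,J_4\}$ — form a triple of type $\mathsf{I}$ (this requires checking the defining nonemptiness conditions of a round-up triple, which is routine using Construction~\ref{con}). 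On the other hand, $\{J_1,J_3,J_4\}$ is \emph{not} a triple: there exist $j$-spaces adjacent to exactly two of them, e.g.\ take an $i$-space $I$ adjacent to $J_3$ and $J_4$ but not $J_1$ — concretely, choose $I$ inside $W$ through a $k$-space $K \subseteq J_3 \cap J_4$ together with a point of $W \setminus (J_1 \cup J_3 \cup J_4)$ — so the common-neighbour/adjacency pattern fails the round-up triple definition. I expect the main obstacle to be the careful case analysis showing that type $\mathsf{II}$ and type $\mathsf{III}^*$ near-lines cannot occur here, i.e.\ that opposition of $J_3'$ and $J_4'$ genuinely forces $\{J_1,J_3,J_4\}$ to be a triple; this needs the low-rank verifications in $\Res_\Delta(S)$ and invoking Lemma~\ref{opp} together with $\mathsf{(RU1)}$ when $|j|<n-1$, analogous to the arguments already used in Lemma~\ref{III}.
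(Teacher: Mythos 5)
Your classification step is inverted, and this breaks both directions of the proof. For $\Gamma_{\geq k}$ a near-line of type $\mathsf{I}$ or $\mathsf{III}^*$ has the property that \emph{every} $3$-subset of it is again a triple of the same type (all points of a line, respectively all lines of a regulus, through the common intersection). Hence the hypothesis that $\{J_1,J_3,J_4\}$ is \emph{not} a triple rules \emph{out} types $\mathsf{I}$ and $\mathsf{III}^*$ and forces $[J_1,J_2]$ to have type $\mathsf{II}$ --- the opposite of what you conclude. The error is your claim that a type $\mathsf{II}$ near-line would make $J_3'$ and $J_4'$ opposite: that holds only when $|j|=n-1$ (rank-one residue), whereas here $|j|<n-1$ and the type $\mathsf{II}$ near-line consists of \emph{all} points of $\Res_\Delta(S)$ opposite both $p_1$ and $p_2$; two such points can perfectly well be collinear, and it is exactly this collinearity of $p_3$ and $p_4$ that prevents $\{J_1,J_3,J_4\}$ from being a (type $\mathsf{II}$) triple. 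That is where the conclusion $J_3\cap J_4=S$, $\dim(J_3\cap J_4)=j-1$ and $J_3\perp J_4$ comes from. Your intermediate assertion that $J_3,J_4\subseteq\langle J_1,J_2\rangle$ with $\langle J_1,J_2\rangle$ singular of dimension $j+1$ cannot be right, since in a type $\mathsf{II}$ configuration $J_1$ and $J_2$ correspond to \emph{opposite} points of $\Res_\Delta(S)$, so $\langle J_1,J_2\rangle$ is not singular at all.

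The converse construction fails for the same reason. If you choose $J_1,J_2$ inside $W=\langle J_3,J_4\rangle$ through $J_3\cap J_4$, then $\{J_1,J_2,J_3,J_4\}$ is a type $\mathsf{I}$ configuration and $\{J_1,J_3,J_4\}$ \emph{is} then a (type $\mathsf{I}$) triple, which is precisely what you must avoid. Your proposed witness does not work either: for $\Gamma_{\geq k}$ any $i$-space $I$ containing a $k$-space $K\subseteq J_3\cap J_4=J_1\cap J_3$ satisfies $\dim(I\cap J_1)\geq |k|$ and is therefore adjacent to $J_1$ as well, so it does not separate $J_1$ from $J_3,J_4$. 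The correct construction goes the other way: in $\Res_\Delta(J_3\cap J_4)$ take a third point $p$ on the line $L=p_3p_4$, take two mutually non-collinear lines $L_1,L_2$ through $p$, each non-collinear with $L$, and pick $p_c\in L_c\setminus\{p\}$. Then $p_1,p_2$ are opposite each other and opposite $p_3,p_4$, so $\{J_1,J_2,J_3\}$ and $\{J_1,J_2,J_4\}$ are type $\mathsf{II}$ triples, while $p_3\perp p_4$ ensures $\{J_1,J_3,J_4\}$ is not a triple.
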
 
\begin{proof} 
Note that all $3$-tuples in a near-line of type I or III$^*$ need to be triples themselves (of the same type, in contrast to $3$-tuples occurring in a near-line of type II). This observation shows that the near-line $[J_1,J_2]$ is of type II. As $\{J_1,J_3,J_4\}$ is not a triple, $J_3$ and $J_4$ are collinear. This shows the first part of the lemma. For the second part, consider $\Res_\Delta(J_3 \cap J_4)$, in which $J_3$ and $J_4$ correspond to points $p_3$ and $p_4$ on a line $L$. Let $p$ be a third point on this line, and take two non-collinear lines $L_1$ and $L_2$ through $p$ which both are non-collinear with $L$. Points $p_c \in L_c \setminus \{p\}$ ($c=1,2$) then correspond to $j$-spaces $J_1$ and $J_2$ satisfying our needs.
\end{proof} 

Consequently, we can deduce $\mathsf{G}_j$ from $\Gamma'$. 
 
\subsubsection{The $k$-intersection graph: $|j|<n-1$ }\label{pm3} 
This time, the quadruples are of types I, II, III$^*$ and IV. The presence of quadruples of type IV implies that $j$-spaces intersecting each other in a $(j-1)$-space fit into two types of quadruples. On the other hand, two $j$-spaces intersecting each other in a $(j-2)$-space only fit in a type III$^*$ quadruple. This is the idea behind the following lemma.

\begin{lemma}\label{lem3a}  
Let $J_1$ and $J_2$ be adjacent vertices in $\Gamma'$. They are contained in a quadruple of type III$^*$ if and only if each $4$-tuple in $[J_1,J_2]$ is a quadruple too. 
\end{lemma}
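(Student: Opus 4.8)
The statement is an ``if and only if'' characterising, inside a near-line $[J_1,J_2]$ of the $k$-intersection graph with $|j|<n-1$, when $J_1,J_2$ lie in a type $\mathsf{III}^*$ quadruple by a purely combinatorial condition on $4$-tuples. I would prove the two directions separately, using the classification of quadruples (Lemma~\ref{type1234}) and the description of near-lines with a single type (Lemma~\ref{singletype}).

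\emph{The easy direction.} Suppose $\{J_1,J_2\}$ is contained in a quadruple of type $\mathsf{III}^*$. Then $\dim(J_1\cap J_2)=j-2$, and by Lemma~\ref{geom} every member of $[J_1,J_2]$ contains $S:=J_1\cap J_2$ and corresponds in $\Res_\Delta(S)$ to a line of the regulus of the grid determined by the lines $L_1,L_2$ associated to $J_1,J_2$. By Lemma~\ref{singletype}, applied with $|j|<n-1$, the near-line $[J_1,J_2]$ is then exactly the set of lines of that regulus, and each $4$-tuple of its members corresponds to four pairwise opposite lines of the regulus, hence (checking the definition of $\mathsf{III}^*$, which only requires that the four lines span a hyperbolic $3$-space — automatic inside a grid/hyperbolic quadrangle since $|j|<n-1$) is a quadruple of type $\mathsf{III}^*$. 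So every $4$-tuple in $[J_1,J_2]$ is a quadruple.

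\emph{The substantive direction.} Conversely, assume $\{J_1,J_2\}$ is \emph{not} contained in a type $\mathsf{III}^*$ quadruple. By Lemma~\ref{type1234} the quadruples of $\Gamma_k$ (with $|j|<n-1$) are of types $\mathsf{I}$, $\mathsf{II}$, $\mathsf{III}^*$ and $\mathsf{IV}$, so $\{J_1,J_2\}$ lies only in quadruples of types among $\{\mathsf{I},\mathsf{II},\mathsf{IV}\}$; in particular $\dim(J_1\cap J_2)=j-1$ and, as noted in the text right before Lemma~\ref{singletype}, the type set of $[J_1,J_2]$ is $\{\mathsf{I},\mathsf{IV}\}$ if $J_1\perp J_2$ and $\{\mathsf{II},\mathsf{IV}\}$ otherwise. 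Writing $S=J_1\cap J_2$ and passing to $\Res_\Delta(S)$ (rank $\geq 2$), the members of $[J_1,J_2]$ correspond to points: if $J_1\perp J_2$ these are all the points of the line $p_1p_2$ \emph{together with} points opposite it that complete some type $\mathsf{IV}$ quadruple; if $J_1\not\perp J_2$ they are the points opposite both $p_1,p_2$ together with the third points needed for type $\mathsf{IV}$ quadruples. In either case I would exhibit a concrete $4$-tuple inside $[J_1,J_2]$ which is \emph{not} a quadruple: for the collinear case, pick three distinct points $q_1,q_2,q_3$ on the line $p_1p_2$ and one point $q_4$ opposite this line (which exists and is in $[J_1,J_2]$ because $\{J_1,J_2,q_1,q_4\}$ is a type $\mathsf{IV}$ quadruple); then $\{q_1,q_2,q_3,q_4\}$ fails the round-up condition — a vertex of $\Gamma$ adjacent to $q_1$ and $q_4$ (these correspond to $j$-spaces meeting in $S$, non-collinear, so of ``type $\mathsf{II}$'' mutual position) need not be adjacent to $q_2$ or $q_3$, and the auxiliary sets $\mathsf{N}$ in the definition behave wrongly — so it is not a round-up quadruple. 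For the non-collinear case, symmetrically take three pairwise-opposite points all on the line through two type $\mathsf{IV}$-completions plus one more opposite point, producing a $4$-tuple that mixes collinearity relations in a way that violates the definition.

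\emph{Main obstacle.} The delicate point is the converse: one must be sure that whenever $[J_1,J_2]$ is \emph{not} of pure type $\mathsf{III}^*$ there genuinely is a bad $4$-tuple, i.e.\ one has to understand $[J_1,J_2]$ precisely as a set of $j$-spaces (equivalently, points in $\Res_\Delta(S)$) and verify that the candidate $4$-tuple fails \emph{some} clause of the round-up quadruple definition — either the ``adjacent to $\geq 2$ implies adjacent to $\geq 3$'' clause or the nonemptiness of $\mathsf{N}(J_1,J_2,J_3)\setminus\mathsf{N}(J_1,J_2,J_3,J_4)$. This requires invoking Construction~\ref{con} (and the avoiding properties) to build an $i$-space adjacent to exactly two of the four chosen $j$-spaces; the arguments are of the same flavour as those already used in the proofs of Lemmas~\ref{dimdoorsnede}, \ref{lem7a} and Lemma~\ref{lem1a}, so I would model this step on Lemma~\ref{lem1a}'s proof, where a near-line of ``mixed'' type is detected exactly by the existence of a non-triple $3$-tuple inside it. The remaining checks — that in the $\mathsf{III}^*$ case \emph{every} $4$-tuple is a quadruple, and that the constructed bad $4$-tuple really lies inside $[J_1,J_2]$ — are routine applications of Lemmas~\ref{geom} and~\ref{singletype} and the occurrence tables.
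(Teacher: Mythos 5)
Your forward direction is fine and matches the paper. The converse, however, contains a genuine error in the collinear case: the $4$-tuple you propose as a witness --- three points $q_1,q_2,q_3$ on the line $p_1p_2$ together with a point $q_4$ opposite them --- is exactly the configuration of a type $\mathsf{IV}$ quadruple, and the paper verifies (end of the proof of Lemma~\ref{type1234}) that such $4$-tuples \emph{are} round-up quadruples. (Read literally, a point ``opposite the line'' does not even exist, by the one-or-all axiom; read charitably, $q_4$ is opposite each of $q_1,q_2,q_3$, which is precisely type $\mathsf{IV}$.) So this $4$-tuple does not fail the round-up condition, and your assertion that an $i$-space adjacent to $q_1$ and $q_4$ ``need not be adjacent to $q_2$ or $q_3$'' contradicts the very classification you invoke. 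The correct witness exploits the \emph{unique} point $p'_4$ of the line $p_1p_2$ that is collinear with the off-line point $p_4$ of a type $\mathsf{IV}$ quadruple $\{J_1,J_2,J_3,J_4\}$: since $\{J_1,J_2,J_3,J'_4\}$ is a quadruple of type $\mathsf{I}$, the space $J'_4$ lies in $[J_1,J_2]$, and the $4$-tuple $\{J_1,J_2,J_4,J'_4\}$ consists of three collinear points plus a fourth point that is opposite two of them but \emph{collinear} with the third --- a configuration matching none of the types $\mathsf{I}$, $\mathsf{II}$, $\mathsf{III}^*$, $\mathsf{IV}$, hence not a quadruple.

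Your non-collinear case is too vague to check and as written is self-contradictory (``three pairwise-opposite points all on the line'': points on a line are pairwise collinear). The paper's argument there is again purely configurational: starting from a type $\mathsf{IV}$ quadruple with $p_2,p_3,p_4$ on a line $M$ and $p_1$ opposite them, let $p'_1$ be the unique point of $M$ collinear with $p_1$ and pick $p_5,p_6$ on $p_1p'_1$ distinct from $p_1,p'_1$; then $\{J_1,J_2,J_5,J_6\}$ is a type $\mathsf{IV}$ quadruple (so $J_5,J_6\in[J_1,J_2]$), while $\{J_3,J_4,J_5,J_6\}$ consists of two collinear pairs with all cross-pairs opposite, which again matches no quadruple type. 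Note that in neither case does one need to re-run Construction~\ref{con} or the avoiding properties, contrary to what you anticipate as the main obstacle: once the classification of quadruples is established, it suffices to exhibit a $4$-tuple of members of $[J_1,J_2]$ whose configuration in $\Res_\Delta(J_1\cap J_2)$ is not among the listed types.
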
 
\begin{proof} Suppose $J_1$ and $J_2$ are contained in a quadruple of type III$^*$. Then it is clear that all $4$-tuples in $[J_1,J_2]$ are quadruples (of type III$^*$) themselves. 

Conversely, suppose that $J_1$ and $J_2$ are contained in a quadruple of type I,II or IV.  Then $\dim(J_1 \cap J_2)=j-1$ and we can always find $J_3$ and $J_4$ such that $\{J_1,J_2,J_3,J_4\}$ is a quadruple of type IV.  We continue in $\Res_\Delta(J_1 \cap J_2)$. There are two cases, depending on whether or not $p_1$ and $p_2$ are collinear. 
\begin{compactenum}[$-$]
\item \textit{Suppose first that $p_1 \perp p_2$ and $p_3 \in p_1p_2$.} Then $p_4$ is collinear to a point $p'_4$ on $p_1p_2$, distinct from $p_1, p_2$ and $p_3$. Let $J'_4$ be the $j$-space through $J_1 \cap J_2$ that corresponds to $p'_4$. As $\{J_1,J_2,J_3,J'_4\}$ is also a quadruple, $J'_4$ belongs to $[J_1,J_2]$. But then $\{J_1,J_2,J_4,J'_4\}$ is a $4$-tuple of $[J_1,J_2]$ which is not a quadruple. 
\item \textit{Next, suppose $p_1$ is opposite $p_2$}. We may moreover assume that $p_2$, $p_3$ and $p_4$ are on one line. Then $p_1$ is collinear with a unique point $p'_1$ on this line, distinct from $p_2$, $p_3$ and $p_4$.  Let $p_5$ and $p_6$ be two distinct points in $p_1p'_1\setminus\{p_1,p'_1\}$. Then $p_2$ is not collinear with $p_5$ nor with $p_3$. If $J_e$ are the $j$-spaces through $J_1 \cap J_2$ corresponding to $p_e$, for $e =5,6$, then $\{J_1,J_2,J_5,J_6\}$ are also a quadruple, and hence $J_5$ and $J_6$ belong to $[J_1,J_2]$. Yet, $\{J_3,J_4,J_5,J_6\}$ is not a quadruple.
\end{compactenum}
In both cases we found a $4$-tuple in $[J_1,J_2]$ which is not a quadruple, which proves the lemma.
\end{proof} 

Hence we can remove the edges in $\Gamma'$ between the $j$-spaces that are contained in a quadruple of type III$^*$. We obtain $\mathsf{G}'_j$. 
\par\bigskip
In all cases we were able to deduce $\mathsf{G}_j$ or $\mathsf{G}'_j$ from $\Gamma'$. This finishes the proofs of Main Theorems~\ref{main1} and~\ref{main2} in case $k \neq -1$. In the next section, we handle the case where $k=-1$.
 
\section{The $(-1,\ell)$-Weyl graph}\label{k=-1}
We may assume that $\Gamma=\Gamma_{-1}^\ell$, since $\Gamma_{\geq -1}$ is a complete bipartite graph and $\Gamma_{-1}$ is the bipartite complement of $\Gamma_{\geq 0}$. We try to apply the same strategy as before though some cases require an alternative approach or lead to the counter examples described in cases $(i)$ and $(ii)$ of Main Theorem~\ref{main1}. 

Again, let $\{J_1,J_2,J_3,J_4\}$ be a quadruple. Note that $J_3 \neq J_4$ now as we are working with Weyl-graphs only. Note also, that now it is possible that $|j|=0$, which is not always useful, because $\mathsf{G}'_0$ is complete graph and hence if we obtain this one, this does not help. When $|j|=0$, this implies that we are in one of the following cases:
\begin{enumerate}[$(i)$]
\item $|i|=|j|=0$, $k=-1$, $a=0$ and $b=-1$:  two vertices (i.e., points) are adjacent whenever they are distinct but collinear. 
\item 
$|i|=|j|=0$, $k=-1$, $a=-1$ and $b=0$: two vertices (i.e., points) are adjacent whenever they are opposite. This has been dealt with in \cite{Kas-Mal:13}.
\item $|i|=|\ell|>0$, $k=-1$, $a=i-1$, $b=0$: An adjacent pair $(I,J)$ consists of a point $J$ and an $i$-space $I$ with $J \notin I^\perp$. We deal with this in subsection~\ref{semiopposite}.
\end{enumerate}

\subsection{Adjacent vertices are in a general position ($a,b\geq0$)}
Assume first that $a,b \geq 0$. The following lemma is a weaker version of Lemma~\ref{lem7a}.

\begin{lemma}\label{-1, gemdoorsnede}  Each point contained in two members of a quadruple is contained in a third member. By renumbering if necessary,  $J_1 \cap J_2 = J_2 \cap J_3 = J_3 \cap J_1$. \end{lemma}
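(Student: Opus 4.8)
The statement to prove is Lemma~\ref{-1, gemdoorsnede}: in the graph $\Gamma=\Gamma_{-1}^\ell$ with $a,b\geq 0$, any point lying in two members of a round-up quadruple lies in a third, and consequently three of the four $j$-spaces share a common intersection. The plan is to mimic the strategy of Lemma~\ref{lem7a}, but replacing the dimension-of-intersection bookkeeping (which used $k\geq 0$) by a collinearity argument built on Construction~\ref{con}. First I would fix a point $p$ lying in $J_1\cap J_2$ (so $S=J_1\cap J_2$ is nonempty) and suppose for contradiction that $p\notin J_3$ and $p\notin J_4$. The idea is to construct an $i$-space $I\in\mathsf{N}(J_1,J_2)$ that contains $p$ but meets neither $J_3$ nor $J_4$ (in the sense appropriate to a $k=-1$ Weyl graph: $I$ is semi-opposite none of $J_3,J_4$, hence $I\not\sim J_3$ and $I\not\sim J_4$), contradicting the definition of a round-up quadruple.

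The key steps are as follows. Since $k=-1$, the building brick $K$ in Construction~\ref{con} is empty, so an element of $\mathsf{N}(J_1,J_2)$ is built solely from the $A$- and $B$-bricks; since $a\geq 0$ and $b\geq 0$ there is genuine freedom in both. I would put $p$ into the collinear part: choose the $a$-space $A$ collinear with $J_1$ and $J_2$ so that $p\in A$, which is possible because $p\in J_1\cap J_2\subseteq J_1^\perp\cap J_2^\perp$ and $a\leq n-|j|-2$ (Fact~\ref{lem4a}$(i)^*$). By the ``$(A\setminus P)$-avoiding'' property~\ref{con5} (and its hyperbolic counterpart~\ref{con8}) I can arrange that $\<A\>$ avoids $\mathcal{F}=\{J_3,J_4\}$ except possibly for the lone hyperbolic exception, which here does not bite because in the relevant exceptional configuration one is forced into $\dim(J_1^{J_2})=n-2$, and a dimension count together with $a\geq 0$ would push $|\ell|$ up to $n-1$, contradicting our standing assumption $|\ell|<n-1$ when $|j|<n-1$. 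Then I would choose the $b$-space $B$ in $\Res_\Delta(\<A\>)$ opposite the subspaces corresponding to $J_1,J_2,J_3,J_4$ (all of dimension $\geq |j|-a-1\geq b$ by the convention $|i|\leq|j|$ or $|i|=|j|+a+1$), using Fact~\ref{lem4a}$(ii)$, and avoiding $J_3\cup J_4$ via property~\ref{con7} (resp.~\ref{con9}; the exceptional case $|i|=|j|=n-1$ is excluded since $a\geq 0$). The resulting $i$-space $I=\<A,B\>$ is semi-opposite both $J_1$ and $J_2$ with the correct projection type, hence $I\sim J_1$ and $I\sim J_2$; but $p\in I$ while $p$ is collinear with neither $J_3$ nor $J_4$ forces $\proj_{J_3}(I)$ and $\proj_{J_4}(I)$ to have the ``wrong'' type (too small), so $I\not\sim J_3$ and $I\not\sim J_4$. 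This contradicts that $\{J_1,J_2,J_3,J_4\}$ is a round-up quadruple.

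Having shown every point in two members lies in a third, I would finish exactly as the second sentence requires: take $S=J_1\cap J_2$; every point of $S$ lies in at least one of $J_3,J_4$, so $S$ is covered by the two subspaces $J_3\cap S$ and $J_4\cap S$. Since a projective space (here $S$, as a singular subspace it is one) cannot be the union of two proper subspaces, one of $J_3\cap S$, $J_4\cap S$ equals $S$; renumbering so that $S\subseteq J_3$, we get $J_1\cap J_2\subseteq J_3$, and then maximality/symmetry of the intersection among the pair gives $J_1\cap J_2=J_2\cap J_3=J_3\cap J_1$.

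The main obstacle I expect is the careful handling of the hyperbolic exceptional cases in properties~\ref{con5}/\ref{con8} and~\ref{con7}/\ref{con9}: one must check that, under the running hypotheses $a,b\geq 0$ and $|\ell|<n-1$ when $|j|<n-1$, none of these exceptions actually prevents the construction. The rest is a routine application of Construction~\ref{con} together with the avoiding properties, and the covering-of-a-projective-space argument at the end is immediate. A minor subtlety worth flagging is that, unlike in Lemma~\ref{lem7a}, here we genuinely only get the conclusion for three of the four $j$-spaces and not (yet) for all four — which is precisely why the section on $k=-1$ must proceed differently from Section~\ref{k>-1}.
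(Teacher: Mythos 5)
Your construction cannot get off the ground: you fix $p\in J_1\cap J_2$ and then try to build $I\in\mathsf{N}(J_1,J_2)$ with $p\in I$. But in $\Gamma_{-1}^{\ell}$ adjacency of $I$ and $J_1$ means precisely $I\cap J_1=\emptyset$, so a common neighbour of $J_1$ and $J_2$ can never contain a point of $J_1\cap J_2$; correspondingly, Fact~\ref{lem4a}$(i)^*$ only supplies an $A$-brick inside $(J_1^\perp\cap J_2^\perp)\setminus(J_1\cup J_2)$, which excludes your $p$. You have the pairs the wrong way round. The witness must be a common neighbour of the pair \emph{not} containing $p$: assuming $p\in J_1\cap J_2$ but $p\notin J_3\cup J_4$, one builds $I_p\in\mathsf{N}_{(-1)}(J_3,J_4)$ with $p\in I_p$; then $I_p\sim J_3,J_4$ while $I_p\cap J_1\supseteq\{p\}\neq\emptyset$ forces $I_p\nsim J_1$ and likewise $I_p\nsim J_2$, contradicting the quadruple definition. (The paper phrases this as: for every permutation, $J_3\cap J_4\subseteq J_1\cup J_2$.)

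Even after this repair a second gap remains: you place $p$ in the collinear brick $A$, which requires $p$ to be collinear with both members of the relevant pair. Once the pair is $\{J_3,J_4\}$ you no longer have $p\in J_3^\perp\cap J_4^\perp$ for free, and indeed $p$ may be collinear with exactly one of them, or with neither. The paper's proof splits into these three cases; the mixed case ($p\in J_3^\perp\setminus J_4^\perp$, say) is the delicate one and needs a residue argument to complete the second $A$-brick with a point collinear with $\langle A^-,p\rangle$ and $J_4$ but not with $J_3$, and the remaining case puts $p$ into the $B$-brick instead. Your concluding step (a singular subspace cannot be covered by two proper subspaces, then start from a pair of maximal intersection) is fine and matches the paper's one-line finish.
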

\begin{proof} We show that, for any permutation of $\{1,2,3,4\}$, $J_3 \cap J_4 \subseteq J_1 \cup J_2$. So suppose for a contradiction that a point $p\in J_3 \cap J_4$ is not contained in $J_1 \cup J_2$. We claim that there is an  $I_p:=\<A_1,A_2,B_1,B_2\> \in \mathsf{N}_{(-1)}(J_1,J_2)$  containing $p$. To this end we apply Construction~\ref{con}. Note that there is no need to avoid any subspace now but $J_1$ and $J_2$ themselves, which makes things easier. There are three cases. Firstly, if $p \in J_1^\perp \cap J_2^\perp$, we can apply Construction~\ref{con} such that $p \in A_1=A_2$. Secondly, if $p \in J_1^\perp\setminus J_2^\perp$, we take any $a$-space $A$ in $(J_1^\perp \cap J_2^\perp)\setminus(J_1 \cup J_2)$, which certainly contains a hyperplane $A^-$ collinear with $p$, so we can put $A_1=\<A^-,p\>$. We still need a point $q \in (A_1^\perp \cup J_2^\perp)\setminus J_1^\perp$. We look for this point in $\Res_\Delta(\<A^-,\proj_{J_2}(p)\>)$. In here, $J_1$ corresponds to a subspace $p_1$ of dimension at least $0$ (since $\proj_{J_2}(p)$ is collinear with at least a point of $J_1 \setminus J_2$) and $J_2$ to a point $p_2$, the point $p$ corresponds to a point that we keep denoting by $p$. Then there is a point $q$ in $p^\perp \cap p_2^\perp$ which is opposite $p_1$ (note that by now we are looking in a polar space of rank $n-|\ell|-1\geq 1$ since $|\ell|<n-1$ if $|j|<n-1$). In $\Delta$, a point in the subspace corresponding to $q$ and disjoint from $J_2$ satisfies our needs. We can now select $B$ in the standard way. Finally, suppose $p\notin J_1^\perp \cup J_2^\perp$. As above we can find an $a$-space $A$ collinear with $p$ and then we only need to select $B$ such that it contains $p$. The claim is proven.

As before, this leads to a contradiction, since $I_p$ not adjacent to $J_3$ nor to $J_4$. Therefore, the intersection of two members of the quadruple is contained in a third member. If we start with a pair having maximal dimension of intersection, the lemma follows. \end{proof}

%The method depends on the position of $p$ w.r.t. $J_1$ and $J_2$. The case where $p$ is collinear with $J_1$ and $J_2$ is most easy: we only have to choose $A$ such that it contains $p$. If $p$ is not collinear with $J_1$ nor $J_2$, we want to choose $I$ such that $B$ contains $p$. In order to do so, we have to make sure $A$ is collinear with $p$. To see this is possible, choose some $i$-space adjacent to $J_1$ and $J_2$ such that $B$ is semi-opposite all four of them. Then $A$ has to be collinear with at least one of $J_3,J_4$, hence it is certainly collinear with $J_3 \cap J_4$ and hence with $p$. Finally, if $p$ is collinear with, say, $J_3$ while not collinear with $J_4$, then we choose $A_1$ and $A_2$ as follows. First take an arbitrary $a$-space collinear with both $J_1$ and $J_2$ (whose existence is guaranteed just like before). Then take an $(a-1)$-space $A^- \subset A$ collinear with $p$ and put $A_1=\<A^-,p\>$. We still need a point $q$ which is collinear with $J_2$ and $A_1$ while not collinear with $J_1$, to complete $A_2$. Therefore, look at $\Res_{\Delta} A_1$ and take any $j$-space $J$ opposite the one corresponding to $J_1$. Back in $\Delta$, the $(j+a+1)$-space corresponding to $J'$ has to contain a point $q$ outside $A_1$ which is collinear with $J_2$ and but not with $J_1$. This way, we have found an $i$-space adjacent to exactly two members of the quadruple, a contradiction. We conclude that $p$ has to be contained in $J_1 \cup J_2$. \end{proof}

There is also a weaker version of $\mathsf{(RU2)}$, that holds whenever $0=\min\{a,b\}$.

\begin{itemize} 
\item[$\mathsf{(RU2')}$]  Let $L$ be a line containing distinct points $p$ and $p'$ such that $p \in J_u^\perp\setminus J_u$ and $p' \in J_r^\perp\setminus J_r$ for $u \neq r$. Then $L$ contains a point $q$ with $q \in J_v^\perp \cup J_w^\perp$, where $\{u,r,v,w\}=\{1,2,3,4\}$.
\end{itemize} 

\begin{lemma}\label{ru2'} If $\Gamma$ equals $\Gamma_{-1}^{\ell}$ and $0 \in \{a,b\}$, then $\mathsf{(RU2')}$ is valid for any quadruple. Moreover, $\mathsf{(RU2')}$ remains valid in $\Res_{\Delta}(S')$ for $S' \subseteq J_1 \cap J_2 \cap J_3 \cap J_4$. \end{lemma}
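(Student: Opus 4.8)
The statement $\mathsf{(RU2')}$ is the analogue, for the $(-1,\ell)$-Weyl graph, of property $\mathsf{(RU2)}$ proved in Lemma~\ref{ru2}, and the proof will follow the same template: assume for a contradiction that some line $L=pp'$ with $p\in J_u^\perp\setminus J_u$ and $p'\in J_r^\perp\setminus J_r$ has no point collinear with $J_v$ or $J_w$, and then build an $i$-space $I$ in $\mathsf{N}(J_v,J_w)$ (or in $\mathsf{N}(J_u,J_r)$, whichever is more convenient) that contains $L$, hence cannot be adjacent to the remaining two members of the quadruple, contradicting the definition of a round-up quadruple. The key point is that, since $\Gamma=\Gamma_{-1}^\ell$, adjacency forces $I\cap J=\emptyset$ (the $k$-space is empty) and $\mathsf{t}(I^J)=\ell$; in particular, when $b\geq 0$ an adjacent pair $(I,J)$ is such that $I$ contains a $b$-space semi-opposite $J$ but no point of $J$, and when $a\geq 0$ it is such that $I$ is collinear with a full $a$-space but (again, $k=-1$) not with all of $J$. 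We will split according to which of $a,b$ equals $0$.

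\textbf{Key steps.} First, take $\{u,r,v,w\}=\{1,2,3,4\}$; by Lemma~\ref{-1, gemdoorsnede} all points lying in two members of the quadruple lie in a third, so no point of $L$ already lies in two of $J_v,J_w$ (else $\mathsf{(RU2')}$ is trivial). Second, suppose $b\geq 0$; I would construct $I=\<p,p',B^-,A\> \in \mathsf{N}_{(-1)}(J_v,J_w)$ with $\<p,p'\>\subseteq B$, exactly as in the analogous case of Lemma~\ref{ru2}: put $B=\<p,p',B^-\>$ where $B^-$ is a $(b-2)$-space chosen in $\Res_\Delta(\<p,p'\>)$ semi-opposite the subspaces corresponding to all four $j$-spaces and avoiding them, then (if $a\geq 0$) pick the $a$-space $A$ in $\Res_\Delta(\<p,p',B^-\>)$ collinear with the points corresponding to $J_v,J_w$ and avoiding $J_u,J_r$, using Fact~\ref{lem4a}$(i)^*$ and $(ii)$--$(iii)$; since $|\ell|<n-1$ here (as $a\geq 0$ or, if $a=-1$, we are in the $b\geq 0$ branch with $|i|=|j|+a+1$ and $|\ell|=|i|<n-1$), the required residues have positive rank and the construction goes through. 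The resulting $I$ is adjacent to $J_v$ and $J_w$ but, because $p\in I$ is collinear with $J_u$ and $p'$ is collinear with $J_r$ (and $p\notin J_u$, $p'\notin J_r$), $I$ cannot be adjacent to $J_u$ nor to $J_r$ — contradiction. Third, suppose instead $a\geq 0$ and $b=0$ (so $|i|=|j|$, cf.\ the $a\geq0$, $b=-1$ discussion is excluded here since we need $b\geq 0$; actually with $\min\{a,b\}=0$ and $b=0$ we have $a\geq0$): dually, construct $I=\<A^-,p,p',B\>\in\mathsf{N}_{(-1)}(J_u,J_r)$ with $p,p'\in A$ — take $A^-$ a hyperplane of an $a$-space collinear with $J_u$ and $J_r$ and avoiding all four, chosen so that $\<A^-,p,p'\>$ has the right dimension $a$, using that $p\perp p'$ and both are collinear with $A^-$ after a suitable choice in the relevant residue (again via Fact~\ref{lem4a}), and then select $B$ (a point) in $\Res_\Delta(\<A^-,p,p'\>)$ semi-opposite the four $j$-spaces; then $I$ is adjacent to $J_u,J_r$ but not to $J_v,J_w$, again a contradiction. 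Finally, the residual statement: if $S'\subseteq J_1\cap J_2\cap J_3\cap J_4$, then since $p\in J_u^\perp$ and $p'\in J_r^\perp$ forces $L=pp'$ to be collinear with $S'$ (each $J$ contains $S'$, so $S'\subseteq p^\perp\cap{p'}^\perp$ once $p,p'$ are collinear with the respective $j$-spaces), the line $L$ descends to a line of $\Res_\Delta(S')$, and the whole configuration passes to the residue; hence $\mathsf{(RU2')}$ in $\Res_\Delta(S')$ follows from $\mathsf{(RU2')}$ applied there.

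\textbf{Main obstacle.} The delicate part is verifying, in the case $p\notin J_u^\perp\cup J_r^\perp\ldots$ — wait, that cannot happen by hypothesis — rather, the delicate part is the bookkeeping when $p$ or $p'$ happens to lie \emph{inside} one of the other two $j$-spaces $J_v$ or $J_w$, and the exceptional behaviour of Fact~\ref{lem4a} when $\Delta$ is hyperbolic and $|i|=|j|=n-1$ (which is why that sub-case was carved out earlier). Here, however, $a\geq 0$ or $b\geq 0$ keeps us away from the $|i|=|j|=n-1$ hyperbolic case, so the avoiding properties of Construction~\ref{con} and Fact~\ref{lem4a} apply without the extra ``intersect in a point'' weakening; the only genuine care needed is to choose the building bricks $A,B$ so that $I\setminus\<p,p'\>$ really avoids $J_v\cup J_w$ (so that $I\cap J_v=I\cap J_w=\emptyset$, as required for $\Gamma_{-1}^\ell$-adjacency), which is exactly what the $(B_1,B_2)$- and $(A\setminus P)$-avoiding properties guarantee. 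I expect the argument to be short once the right residue is identified in each branch, mirroring Lemma~\ref{ru2} almost verbatim.
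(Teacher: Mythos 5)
Your overall template (put the bad line $L$ inside an $i$-space and contradict the quadruple property) is the right one, but the step that is supposed to produce the contradiction is wrong. You build $I\in\mathsf{N}_{(-1)}(J_v,J_w)$ containing $L$ and then claim that $I$ cannot be adjacent to $J_u$ or $J_r$ ``because $p\in I$ is collinear with $J_u$ and $p\notin J_u$''. That inference is only valid when $a=-1$ (which is exactly why it works in Lemma~\ref{ru2}, where adjacency forces $\proj_I(J)=\emptyset$). Here we are in the regime $a,b\geq 0$ of Subsection~7.1: adjacency of $(I,J_u)$ in $\Gamma_{-1}^{\ell}$ \emph{requires} $\proj_I(J_u)$ to be a (nonempty) $a$-space, so a point of $I$ collinear with $J_u$ is no obstruction at all. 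Concretely, when $a=0$ your $I$ typically satisfies $\proj_I(J_1)=\{p\}$ and $\proj_I(J_2)=\{p'\}$, which makes $I$ adjacent to \emph{all four} members of the quadruple, and no contradiction arises. (When $a\geq 1$ one could try to rescue the argument via $\mathsf{(RU1)}$ and a covering argument showing $\dim(\proj_I(J_1))>a$ for one of $J_1,J_2$, but that still leaves the other one, and you give no such argument.) Your ``third'' branch also has a dimension slip: with $A^-$ an $(a-1)$-space and $p,p'$ both adjoined, $\proj_I(J_u)=\langle A^-,p\rangle$ has the wrong dimension, and in any case you never justify $I\nsim J_v,J_w$.

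The paper's proof runs in the opposite direction: it constructs $I\supseteq L$ adjacent to $J_1$ and $J_2$ (the spaces that $p$, $p'$ are respectively collinear with), arranged so that every point of $I$ collinear with any member of the quadruple is forced onto $L$. The quadruple property then forces $I\sim J_3$ or $I\sim J_4$, and the required $a$-space $\proj_I(J_3)$ must meet $L$ — by a hyperplane/dimension count when $b=0$ (there $I=\langle L,A^-\rangle$ has dimension $a+1$ and $\proj_I(J_3)$ is a hyperplane of it), or by choosing $B^-$ in $\Res_\Delta(L)$ opposite the residual images of all four $j$-spaces when $a=0$. This yields a point of $L$ in $J_3^\perp\cup J_4^\perp$, the desired contradiction. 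Note also that the $a=0$ case splits further: when $b=j$ the residual images of $J_3,J_4$ are too small to host a $(b-1)$-space and a separate construction (extending $\langle L,B^-\rangle$ to a $(b+1)$-space collinear with specific points of $J_1,J_2$) is needed; your proposal does not address this sub-case. The residual statement at the end of your proposal is fine.
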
 

\begin{proof} 
Let $L=pp'$ be a line with $p\in J_1^\perp\setminus J_1$ and $p' \in J_2^\perp \setminus J_2$ and suppose for a contradiction that none of its points is contained in $J_3^\perp \cup J_4^\perp$. By $\mathsf{(RU1)}$, $p \notin J_2^\perp$ as then $p \in J_3^\perp \cup J_4^\perp$; likewise, $p' \notin J_1^\perp$. 

\textbf{Suppose first that $b=0$.} Let $A^-$ be an $(a-1)$-space collinear with $\<J_1,p\>$ and $\<J_2,p'\>$ (cf.\ Fact~\ref{lem4a}$(i)^*$), again, there is no need to avoid $J_3$ and $J_4$. Then we take $I=\<L,A^-\> \in \mathsf{N}_{(-1)}(J_1,J_2)$ such that $A_1=\<p,A^-\>$ and $A_2=\<p',A^-\>$. As we may assume $I \sim J_3$, there has to be an $a$-space $A_3$ in $I$ collinear with $J_3$, and hence $A_3$ intersects $L$ in at least a point, i.e.\ $L$ contains a point of $J_3^\perp$ after all, a contradiction.

\textbf{Next, suppose $a=0$.} By the previous case we may assume that $b > 0$. In $\Res_\Delta(L)$, $J_1$ and $J_2$ correspond to  $(j-1)$-spaces $J'_1$ and $J'_2$ and $J_3$ and $J_4$ to $(j-2)$-spaces $J'_3$ and $J'_4$. If $b-1 \leq j-2$ (recall $b-1\geq 0$), we take a $(b-1)$-space $B'_x$ in $J'_x$ for each $x\in \{1,2,3,4\}$. Fact~\ref{lem4a}$(ii)$ then says that there is a $(b-1)$-space $B^-$ which is opposite all of them, i.e., such that no point of $B^-$ belongs to  ${J'_1}^\perp \cup {J'_2}^\perp \cup {J'_3}^\perp \cup {J'_4}^\perp$. The corresponding $i$-space $\<L,B^-\>$ in $\Delta$ is adjacent to $J_1$ and $J_2$ (since $J_1^\perp \cap I=\{p\}$ and $J_2^
\perp \cap I =\{p'\}$) and hence $I$ needs to be adjacent with one of $J_3$, $J_4$ too. If $I \sim J_3$, then by our choice of $B^-$ in the residue, $J_3$ is collinear with a point of $L$; likewise if $I\sim J_4$. This violates our assumptions.

If $b=j \geq 1$, then $i=j+a+1=b+1$.  If $b \geq 2$, we take $(b-2)$-spaces $B'_x \subseteq J'_x$ for each $x\in\{1,2,3,4\}$ and let $B^-$ be a $(b-2)$-space opposite each of these. If $b =1$, we put $B^-=\emptyset$. In both cases, $I^-:=\<L,B^-\>$ is a $b$-space opposite $J_3$ and $J_4$ containing unique points ($p$ and $p'$, respectively) collinear with $J_1$ and $J_2$. Then $J_1$ and $J_2$ also contain unique respective points $p_1$ and $p_2$ collinear with $I^-$. We claim that there is a $(b+1)$-space $I$ (recall $b+1=i$) through $I^-$  collinear with $p_1$ and $p_2$. If so, then $I \in \mathsf{N}_{(-1)}(J_3,J_4)$ because $I \cap J_3 = I \cap J_4 = \emptyset$, since no point of $J_3 \cup J_4$ is collinear with $L$; and $I \notin \mathsf{N}(J_1) \cup \mathsf{N}(J_2)$ since $J_1$ and $J_2$ both contain a point collinear with $I$. This is a contradiction. 

Note that we may assume that $n \geq b+3$, for otherwise $\Gamma_{b+1,b;-1,b+1}^n$ is isomorphic to the bipartite complement of $\Gamma_{b+1,b;\geq 0}^n$. 
Firstly, let $p_1$ and $p_2$ be non-collinear points. Then $I^-$ is a non-maximal singular subspace in  $p_1^\perp \cap p_2^\perp$, from which it follows that $I$ exists.
Secondly, let $p_1$ and $p_2$ be collinear. Then $\<p_1,p_2,I^-\>$ is contained in a singular $(b+2)$-space $D$, which intersects $J_1$ in precisely $p_1$ since each $(b-1)$-space in $J_1$ complimentary to $p_1$ is opposite $\<B^-,p'\>$, likewise, $D \cap J_2 = \{p_2\}$. Now any $(b+1)$-space $I$ in $D$ through $\<p,p',B^-\>$ satisfies our needs. The claim is proven and as mentioned above, this leads to a contradiction.

This works for all permutations of $\{1,2,3,4\}$. The fact that $\mathsf{(RU2')}$ is a residual property is again easily verified.
\end{proof}

\begin{lemma}\label{AenBlijn} If $a,b \geq 0$, each quadruple is of type I, II$^*$ or III$^*$. Moreover, if $0\in\{a,b\}$, then there are no quadruples of type III$^*$.\end{lemma}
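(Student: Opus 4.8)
\textbf{Proof plan for Lemma~\ref{AenBlijn}.} The plan is to follow the same pattern as the proof of Lemma~\ref{type1234}, but working with the weaker tools available when $k=-1$, namely Lemma~\ref{-1, gemdoorsnede} (which gives a common intersection $S$ of a suitably renumbered triple $J_1,J_2,J_3$, not a priori of the whole quadruple), property $\mathsf{(RU1)}$ (Lemma~\ref{ru1}, which holds for $\Gamma_{-1}^\ell$) and, when $0\in\{a,b\}$, property $\mathsf{(RU2')}$ (Lemma~\ref{ru2'}). First I would fix the renumbering so that $S=J_1\cap J_2=J_2\cap J_3=J_3\cap J_1$ and pass to $\Delta':=\Res_\Delta(S)$, in which $J_1,J_2,J_3$ correspond to subspaces $J_1',J_2',J_3'$ with $\dim(J_1'\cap J_2')=\dim(J_2'\cap J_3')=\dim(J_3'\cap J_1')=-1$, i.e.\ pairwise disjoint. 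The role of $J_4$ is the delicate point and I will come back to it.

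Next I would run the two-case analysis of Lemma~\ref{type1234} inside $\Delta'$. In the case where some pair, say $J_1',J_2'$, is \emph{not} opposite, I would produce (using the construction of Section~\ref{constructie}, applied to $\mathsf{N}_{(-1)}(J_1,J_2)$, which needs $a\geq 0$ to place a point in $A$) an $i$-space adjacent to exactly two members of the quadruple unless the projections collapse as in Lemma~\ref{lem1} and Lemma~\ref{lem0}; this forces $\dim(S)=|j|-1$ and puts all $J_d'$ on a line, giving type $\mathsf{I}$. In the case where $J_1',J_2',J_3'$ are pairwise opposite, the same argument (now using $b\geq 0$ to place points in $B$, via Lemma~\ref{lem1}) forces either $\dim(S)=|j|-1$ with the $J_d'$ pairwise opposite points, or $\dim(S)=|j|-2$ with the $J_d'$ pairwise opposite lines such that every transversal line meets all of them. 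Property $\mathsf{(RU1)}$ together with Lemma~\ref{opp} then upgrades ``pairwise opposite points'' to ``points on a hyperbolic line'', i.e.\ type $\mathsf{II}^*$, and the transversal condition plus $\mathsf{(RU1)}$ upgrades the second situation to type $\mathsf{III}^*$. Throughout I would need to first establish, exactly as in Lemmas~\ref{dimdoorsnede} and~\ref{lem7a} were used in Lemma~\ref{type1234}, that $J_4$ also contains $S$ and has the same pairwise behaviour; here I would invoke Lemma~\ref{-1, gemdoorsnede} for each relevant triple and argue that if $J_4$ did not fit the pattern one could build an $i$-space in $\mathsf{N}_{(-1)}(J_u,J_v)$ (for a well-chosen pair) missing the other two, contradicting the definition of a quadruple.

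For the final assertion, suppose $0\in\{a,b\}$ and assume for contradiction a quadruple of type $\mathsf{III}^*$; then in $\Delta'$ we have four pairwise opposite lines $L_1,\dots,L_4$ spanning a hyperbolic $3$-space, all transversals common. I would then reproduce the concluding argument of Lemma~\ref{aempty}, Case 2(b): pick two transversals $L,L'$ meeting $L_d$ in $x_d,x_d'$, work inside $x_3^{\perp}\cap x_4'^{\perp}$ (a polar space of rank $\geq 2$ since $|j|<n-1$ as $a\geq 0$ or $b\geq 0$ forces this once we are in the $\mathsf{III}^*$ regime), choose opposite lines $M\ni x_3'$, $N\ni x_4$ and a transversal $R$ of them, and apply $\mathsf{(RU2')}$ to a line through a point of $M\setminus\{x_3'\}$ collinear with $L_3$ but not $L_4$ and a point of $N$ collinear with $L_4$ but not $L_3$; this yields a point $r$ collinear with $L_1$ (say), hence with both $L$ and $L'$, hence with $L_3$ and $L_4$, and the usual ``two points of $R$ collinear with $L_3$ implies the third is too'' gives the contradiction.

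\textbf{Main obstacle.} The hard part will be the bookkeeping around $J_4$: unlike the $k\geq 0$ situation, Lemma~\ref{lem7a} is unavailable, so I cannot assume at the outset that $J_4$ shares the common intersection $S$ or shares the pairwise mutual position of $J_1,J_2,J_3$. Securing this — essentially re-deriving the analogues of Lemmas~\ref{dimdoorsnede}, \ref{lem7a} and \ref{gemprojectie} for $k=-1$ on the fly, using only $\mathsf{(RU1)}$, $\mathsf{(RU2')}$, Lemma~\ref{-1, gemdoorsnede} and the avoiding properties of Construction~\ref{con} in the residue $\Res_\Delta(S)$ — is where the real work lies, and is the reason a separate section is devoted to $k=-1$. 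I also expect minor friction in the hyperbolic sub-case of Lemma~\ref{lem1} (collinear lines rather than collinear points), but since $a\geq 0$ or $b\geq 0$ rules out $|i|=|j|=n-1$ here, that exceptional branch does not actually intervene.
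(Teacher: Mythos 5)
Your overall architecture matches the paper's: pass to $\Res_\Delta(S)$ with $S$ supplied by Lemma~\ref{-1, gemdoorsnede}, use Lemma~\ref{gemprojectie} (which, note, is already stated for ``possibly $k=-1$'', so you need not re-derive it) to control $J_4$, split on whether the residual subspaces are collinear or pairwise opposite, upgrade ``pairwise opposite'' to II$^*$/III$^*$ via $\mathsf{(RU1)}$ and Lemma~\ref{opp}, and kill III$^*$ when $0\in\{a,b\}$ by the $\mathsf{(RU2')}$ argument of Lemma~\ref{aempty}, Case 2(b). That last part is exactly what the paper does.

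There is, however, a genuine gap in the middle: you invoke Lemma~\ref{lem1} twice, and that lemma is simply not available for $k=-1$. Its proof builds an $i$-space in $\mathsf{N}_{(k-1)}(J_1,J_2)$ with $x_c\in K_c=I\cap J_c$; when $k=-1$ an adjacent $i$-space must be \emph{disjoint} from $J_c$, so you cannot place a point of $J_1$ or $J_2$ inside a common neighbour of $J_1$ and $J_2$. The correct substitute (which is what the paper does) reverses the roles: to show a line through $q\in J_1\setminus S$ and a point $p$ of another member meets a third member, you build an $i$-space in $\mathsf{N}_{(-1)}(J_u,J_v)$ for the \emph{other} pair $\{u,v\}$ that \emph{contains} $p$ and $q$, so that adjacency to $J_1$ and to the member containing $p$ is spoiled by containment rather than by intersection dimension. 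This substitute carries its own numerical constraint: to fit a whole line (or two independent points) into the $A$- or $B$-part you need $a\geq1$ or $b\geq1$ respectively. Consequently, when $0\in\{a,b\}$ the transversal-line argument cannot be run at all, and $\mathsf{(RU2')}$ must already be invoked in the proof of the \emph{first} assertion --- both to bound $\dim(S)\geq j-2$ in the pairwise-opposite case (where the paper's Claim~1 needs $a,b\geq1$ and its Claim~2 falls back on $\mathsf{(RU2')}$) and to force $\dim(S)=j-1$ in the collinear case when $a=0$ (via Claim~3 of Lemma~\ref{aempty}). Your plan defers $\mathsf{(RU2')}$ entirely to the ``Moreover'' part, so as written the first assertion is unproven for $0\in\{a,b\}$, and for $a,b\geq1$ it rests on an inapplicable lemma.
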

\begin{proof}
By Lemma~\ref{-1, gemdoorsnede}, we may assume that $J_1,J_2,J_3$ intersect each other in one common subspace $S$ and that $J_4$ intersects them in a subspace $S'$ of $S$. By Lemma~\ref{gemprojectie} and $a \geq 0$, we may write $U_y$ instead of $U^x_y$ for $x,y \in \{1,2,3,4\}$ with $x\neq y$. Put $u=\cod_{\<S,U_1\>}(S) = \cod_{\<S,U_2\>}(S) = \cod_{\<S,U_3\>}(S)$ and $u' = \cod_{\<S',U_4\>}(S')$. Since $S' \subseteq S$, we have $u \leq u'$. Observe that $S=S'$ precisely when $u=u'$.

\textbf{Case 1: \boldmath $u'=u=-1$.\unboldmath} In $\Res_{\Delta}(S)$, we obtain four opposite subspaces $T_1,T_2,T_3$ and $T_4$. By $\mathsf{(RU1)}$ and Lemma~\ref{opp}, they form a hyperbolic $(2t+1)$-space for $t=j-s-1$. There are two cases.
\begin{itemize}
\item \textit{Claim 1: If $a,b \geq 1$, the quadruple is of type II$^*$ or III$^*$.} Assume for a contradiction that $t>1$ and take any point $p$ in  $T_1$ and any line $L$ in $\proj_{T_2}(p)$. If every line joining $p$ and a point of $L$ intersects $T_3 \cup T_4$, then the plane $\<p,L\>$ meets one of $T_3,T_4$ in a line $L'$. But then $L$ and $L'$ have to intersect, a contradiction. We conclude that there is a line $M$ through $p$ meeting $L$ which is disjoint from $T_3 \cup T_4$. Since the $t$-spaces form a hyperbolic $2t+1$-space, they correspond to maximal singular subspaces of a polar space of rank $t+1$ (that also contains $M$). It follows that no point of $M$ can be collinear with $J_3$ or $J_4$. Take an $a$-space $A \subseteq (J_3^\perp \cap J_4^\perp)\setminus(J_3 \cup J_4)$. As $T_1, T_2, T_3$ and $T_4$ are in a hyperbolic subspace, it follows that $A$ is also collinear with $T_1$ and $T_2$ (so in particular, $A \perp M$) and is not contained in $\<T_3,T_4\>$ (so $\<A,M\> \cap \<T_3,T_4\> = M$ and hence $\<A,M\> \cap (J_3 \cup J_4)$ is empty). In $\Res_\Delta(\<A,M\>)$,  we take a $(b-2)$-space $B^-$ opposite the subspaces corresponding to $J_3$ and $J_4$. Let $I$ be the subspace in $\Delta$ corresponding to $B^-$, i.e.,  $I=\<A,M,B^-\>$. Then $I \in \mathsf{N}_{(-1)}(J_3,J_4)$, but as $I$ intersects $J_1$ and $J_2$ non-trivially, $I \notin \mathsf{N}(J_1) \cup \mathsf{N}(J_2)$. This contradiction implies that $t \leq 1$. If $t=1$, one shows, similarly as in the proof of Lemma~\ref{type1234}, that any line joining two collinear points of $T_1$ and $T_2$ will also intersect $T_3$ and $T_4$, so the quadruple is of type III$^*$. If $t=0$, the quadruple is of type II$^*$ since the $t$-spaces are on a hyperbolic line, as mentioned above.

\item \textit{Claim 2: If $0\in\{a,b\}$, the quadruple is of type II$^*$.} 
In this case, $\mathsf{(RU2)'}$ holds and similarly as in the proof of Lemma~\ref{aempty}, we can show that there are no quadruples of type III$^*$.

\end{itemize}

\textbf{\boldmath Case 2: $u' \geq 0$.\unboldmath} Let $p$ be a point of $U_4$ and $q$ an arbitrary point of $J_1\setminus S$.  If $q \notin U_1$, we claim that the line $pq$ has to intersect a third member of the quadruple. Suppose the contrary. Let $A^*$ be an $a$-space which is collinear with $\<J_2,p\>$ and with $\<J_3,p\>$ (by Fact~\ref{lem4a}$(i)^*$, this is possible as $|\ell|<n-1$ as $a \geq 0$ implies $\max\{|i|,|j|\}<n-1$). Let $A^-$ be an $(a-1)$-space of $A^*$ collinear with $q$. Next, let $B^-$ be a $(b-1)$-space chosen in $\Res_{\Delta}(\<A^-,p,q\>)$ semi-opposite the subspaces corresponding to $J_2$ and $J_3$. Then $I=\<A^-,p,B^-,q\> \in \mathsf{N}_{(-1)}(J_2,J_3)$. However, $I$ cannot be adjacent to $J_1$ or $J_4$ as it contains a point of both of them (the points $q$ and $p$, respectively).  This contradiction to the definition of a quadruple implies that $pq$ intersects a third member of the quadruple after all. If we vary $q$ over $J_1 \setminus(S\cup U_1)$ and as all lines $pq$ have to intersect $J_2$ or $J_3$, it follows just as in the proof of Lemma~\ref{lem0} that each line in $J_1$ which is not contained in $S \cup U_1$ has to intersect $S$. But then  either $S \cup U_1 =J_1$ or $U_1$ is empty and $\dim(S)=j-1$. But the latter case does not occur, for another consequence of Lemma~\ref{lem0} implies that $J_1$ has to be collinear with at least one of $J_2,J_3$, which would violate the fact that $U_1$ is empty. Hence $S \cup U_1 =J_1$ and we conclude that $J_1,J_2,J_3$ and $J_4$ are contained in a singular subspace.  

If $a\geq 1$, then the previous arguments also apply if $q \in U_1$, as we can choose $A$ such that it contains $pq$. Like before, it follows from Lemma~\ref{lem0} that the quadruple is of type I. If $a=0$, we can also show that the quadruple is of type I, by proceeding as in Case 1 of Lemma~\ref{aempty} (with $\mathsf{(RU2)'}$ instead of $\mathsf{(RU2)}$). 

It is easily verified that each of those $4$-tuples obtained above indeed satisfies the definition of a round-up quadruple.
\end{proof}

\par\bigskip
As the quadruples are the same as before, we can continue in the same way as in the previous section to conclude the proof in this case.
\subsection{Adjacent vertices are semi-opposite ($a=-1$ and case $(iii)$)} \label{k=a=-1}\label{semiopposite} 
Our convention on $i$ and $j$ implies that if $a=-1$, then $|\ell|=|i|=|j|$ (see also Subsection~\ref{Aleeg}). Then two adjacent vertices of $\Gamma$ correspond to opposite subspaces. The opposition case, or at least its non-bipartite version, has been dealt with in \cite{Kas-Mal:13}. The same techniques apply and hence we limit ourselves to summarising their approach: In this particular case, one can work with ``reverse'' round-up triples, i.e., a round-up triple of the complement of $\Gamma$: a (round-up) triple consists of three vertices ($j$-spaces) such that each vertex is either adjacent with one or all of them. After classification we obtain (with our own notation) when $|j|<n-1$ either a triple of type I or of type II$^*$ and when $|j|=n-1$, either a triple of type II or one of type III. As before, we can look at the near-lines. These can be distinguished from each other, leading us to the Grassmannian just like before, except in the two following cases.

\begin{itemize}
\item Let $\Delta$ be a symplectic polar space and $i=j=0$: then a near-line of type I corresponds in $\Res_\Delta(S)$ ($S$ still denotes the common intersection of the triple) to an ordinary line and a near-line of type II$^*$ to a hyperbolic line. Both are lines of the projective space in which $\Delta$ naturally embeds. They behave in the same way and hence cannot be separated.
\item Let $\Delta$ be a parabolic polar space and $i=j=n-1$. Also here, a near-line of type II now behaves in the same way as a near-line of type III.
\end{itemize}

So only in those two cases there are extra automorphisms, and those are explained in detail in Examples~\ref{special1} and~\ref{special2}.

%\textbf{Exception 2: Main Result 1$\boldmath{(ii)}$} $-$ Suppose $\Delta$ is a symplectic polar space and $i=j=0$. Then $\Delta$ arises from a symplectic polarity $\rho$ in a projective space $\mathbb{P}=\PG(2n-1, \mathbb{L})$ for some field $\mathbb{L}$. The points of $\Delta$ are precisely those of $\mathbb{P}$. Let $\overline{\Gamma}$ be the bipartite complement of $\Gamma$ and $x$ one of its vertices. Then $\mathsf{N}_{\overline{\Gamma}}(x)=x^\perp$, i.e., it equals $\rho(x)$. Hence, each automorphism $\sigma$ of $\mathbb{P}$ induces an automorphism $\overline{\sigma}$ of $\Gamma$, by mapping each vertex $x$ on $\sigma(x)$. Indeed, $x \sim y$ in $\overline{\Gamma}$ if and only if $y \in \rho(x)$, and the latter is preserved by $\sigma$. If $\sigma(\Delta)\neq \Delta$, then $\overline{\sigma}$ cannot be induced by an automorphism of $\Delta$. Conversely, any automorphism of $\overline{\Gamma}$ should preserve $\Gamma_{0,2n-2;0}(\mathbb{P})$, whose automorphisms are all induced by elements of $\mathsf{P\Gamma L}(2n,\mathbb{L})$, up to duality (as proved in \cite{proj}). The duality translates to switching between the biparts. In the non-bipartite case, this exception does not occur since $\mathsf{N}_{\overline{\Gamma}}(x)=x^\perp\setminus \{x\}$.
\par\bigskip
Next, suppose that we are in case $(iii)$, i.e., $0=|j|<|i|$. If $|i|=n-1$, then the bipartite complement $\overline{\Gamma}$ of $\Gamma$ is precisely $\mathsf{C}_{0,n-1}(\Delta)$, and we can refer to Proposition~\ref{propk=i}. Hence, we may assume that $|i|<n-1$. In $\overline{\Gamma}$, a point $p$ and a $i$-space $I$ are adjacent when $p \in I^{\perp}$. We continue to work in $\overline{\Gamma}$. With the following lemma we can construct $\mathsf{G}'_i$, which as what we needed to prove.

\begin{lemma} The intersection of two  $i$-spaces $I,I'$ has dimension $i-1$ if and only if there is no $i$-space $I^* \neq I$ such that $\mathsf{N}_{\overline{\Gamma}}(I,I') \subsetneq \mathsf{N}_{\overline{\Gamma}}(I,I^*)$. \end{lemma}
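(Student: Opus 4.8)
The statement to prove is: for two $i$-spaces $I,I'$ (vertices in the bipart $\Omega_i$ of $\overline{\Gamma}$, where $\overline{\Gamma}=\mathsf{C}^{\,c}$ has $p$ and $I$ adjacent iff $p\in I^\perp$), we have $\dim(I\cap I')=i-1$ if and only if there is no $i$-space $I^*\neq I$ with $\mathsf{N}_{\overline{\Gamma}}(I,I')\subsetneq\mathsf{N}_{\overline{\Gamma}}(I,I^*)$. Observe that $\mathsf{N}_{\overline{\Gamma}}(I)$ is simply the point set $I^\perp$ (as a subset of $\Omega_0=X$), so $\mathsf{N}_{\overline{\Gamma}}(I,I')=I^\perp\cap I'^\perp=\<I,I'\>^\perp$. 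Thus the whole statement is a purely geometric claim about perps of spans: $\dim(I\cap I')=i-1$ iff there is no $i$-space $I^*\neq I$ with $\<I,I'\>^\perp\subsetneq\<I,I^*\>^\perp$, equivalently (since $U\subseteq V$ among singular subspaces implies $V^\perp\subseteq U^\perp$, and the perp map is injective on the relevant family because $\Delta$ is non-degenerate and $\<I,I'\>$, $\<I,I^*\>$ are singular hence recoverable as the double perp of their perp within $\Delta$) iff there is no $i$-space $I^*\neq I$ with $\<I,I^*\>\subsetneq\<I,I'\>$.

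**The forward direction.** Suppose $\dim(I\cap I')=i-1$, so $\<I,I'\>$ has dimension $i+1$ (recall $I\sim I'$ at distance $2$ forces $I\neq I'$; if $I=I'$ the statement is vacuous on that side). Any $i$-space $I^*$ with $\<I,I^*\>\subsetneq\<I,I'\>$ would have $\<I,I^*\>$ strictly contained in an $(i+1)$-space and containing the $i$-space $I$, hence $\<I,I^*\>=I$, i.e. $I^*\subseteq I$, so $I^*=I$. Therefore no such $I^*\neq I$ exists, and correspondingly no $I^*\neq I$ with $\<I,I'\>^\perp\subsetneq\<I,I^*\>^\perp$ exists. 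This direction is immediate once the translation to spans is in place.

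**The reverse direction (contrapositive).** Suppose $\dim(I\cap I')\neq i-1$. Since $I\neq I'$ we have $\dim(I\cap I')\leq i-1$, so in fact $d:=\dim(I\cap I')\leq i-2$, giving $\dim\<I,I'\>\geq i+2$. I would pick a point $q\in I'\setminus \<I,(I\cap I')\>$ — more carefully, choose $q\in I'$ with $q\notin I$ and such that $\<I,q\>$ is a genuine $(i+1)$-space properly contained in $\<I,I'\>$; this is possible precisely because $\dim\<I,I'\>\geq i+2>i+1$. Set $I^*:=\<H,q\>$ for a suitable hyperplane $H$ of $I$ — or even simpler, since $\<I,q\>$ already has dimension $i+1$, take any $i$-subspace $I^*$ of $\<I,q\>$ with $I^*\neq I$ and $q\in I^*$ (such exists as $\<I,q\>$ is a projective space of dimension $i+1\geq 2$ and $I$, being a hyperplane of it through which not every $i$-space passes, is not the unique $i$-space). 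Then $I^*\neq I$, $I^*$ is singular, and $\<I,I^*\>=\<I,q\>\subsetneq\<I,I'\>$. Applying perp: $\<I,I'\>^\perp\subseteq\<I,I^*\>^\perp$, and the inclusion is strict because $\<I,I^*\>\subsetneq\<I,I'\>$ are distinct singular subspaces and the perp map is strictly inclusion-reversing on singular subspaces of $\Delta$ (a standard non-degeneracy fact: if $U\subsetneq V$ are singular with $V^\perp=U^\perp$ then every point of $V$ lies in $U^{\perp\perp}$, but $U^{\perp\perp}\cap X$, inside the polar space, equals $U$ for a singular $U$, contradiction). Hence $\mathsf{N}_{\overline{\Gamma}}(I,I')\subsetneq\mathsf{N}_{\overline{\Gamma}}(I,I^*)$, completing the contrapositive.

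**Main obstacle and remarks.** The only genuinely delicate point is justifying that the perp map is \emph{strictly} inclusion-reversing on the spans that occur, i.e. that $\<I,I^*\>\subsetneq\<I,I'\>$ actually yields a \emph{strict} inclusion of perps and not merely $\subseteq$; this is where non-degeneracy of $\Delta$ and the recoverability of a singular subspace as (the polar-space part of) its double perp are used, and one must be slightly careful in the non-embeddable cases $\Delta(\mathbb{O})$ — though by Remark~\ref{infinite} we may assume $\Delta$ is infinite and $\neq\Delta(\mathbb{L})$, and for $\Delta(\mathbb{O})$ the relevant low-rank perp computations are still valid, or one invokes the ``each point collinear with $U^\perp\cap V^\perp$ lies in the double perp''-type property recalled in the Hyperbolic subspaces paragraph. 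A secondary nuisance is the bookkeeping when $i$ is small (e.g. $i=1$): one should note $|i|<n-1$ has been assumed in this subsection, so $I^\perp$ is never everything and perps behave well. With these in hand the lemma follows, and together with Corollary~\ref{gras2} (applied to $\mathsf{G}'_i$, legitimate since $|i|<n-1$ and $i\neq 0$ here because $0=|j|<|i|$) this finishes case $(iii)$ and hence the last outstanding case of Main Theorem~\ref{main1}.
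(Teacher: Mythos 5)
Your proof treats the polar space as if it were a projective space equipped with a polarity under which every subspace is recovered by its double perp, and both directions break on exactly the points where that fails. In the reverse direction, your construction of $I^*$ needs a point $q\in I'\setminus I$ with $\<I,q\>$ a \emph{singular} $(i+1)$-space, i.e.\ a point of $I'\setminus I$ collinear with all of $I$; such a point exists only when $\proj_{I'}(I)\supsetneq I\cap I'$, and this can fail outright (take $I$ and $I'$ opposite, e.g.\ two opposite lines with $d=-1\le i-2$: then no point of $I'$ is collinear with $I$ and $\<I,q\>$ is never singular). The dimension count $\dim\<I,I'\>\ge i+2$ that you offer as the justification is irrelevant to the existence of $q$. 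The paper's construction avoids this by building $I^*$ from $I\cap I'$, a hyperplane of $I$ modulo $I\cap I'$, and a \emph{single} point of $I'$ collinear with that hyperplane --- which always exists because the projection of a point onto a singular subspace has codimension at most one in it.

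In the forward direction the gap is more serious. Your reduction ``no $I^*$ with $\mathsf{N}_{\overline{\Gamma}}(I,I')\subsetneq\mathsf{N}_{\overline{\Gamma}}(I,I^*)$ iff no $I^*$ with $\<I,I^*\>\subsetneq\<I,I'\>$'' uses that the perp map is an order-anti-isomorphism on the subspaces involved, but $\<I,I'\>$ is in general not a singular subspace (it need not even lie in $\Omega$), and for non-singular sets the double perp $\{I,I'\}^{\perp\perp}$ can strictly exceed the span --- this is precisely the hyperbolic-line phenomenon the paper keeps track of throughout. Concretely, from $I^\perp\cap I'^\perp\subsetneq I^\perp\cap I^{*\perp}$ you cannot conclude $\<I,I^*\>\subsetneq\<I,I'\>$; the dangerous candidates are $i$-spaces $I^*$ through $I\cap I'$ corresponding, in $\Res_\Delta(I\cap I')$, to a third point of the hyperbolic line (or of the ordinary line, or of a plane) determined by the points corresponding to $I$ and $I'$. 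Ruling these out requires showing that for such $I^*$ one gets $\mathsf{N}_{\overline{\Gamma}}(I,I')=\mathsf{N}_{\overline{\Gamma}}(I,I^*)$ (equality, hence no \emph{strict} inclusion), which is what the paper's proof does after first reducing, via the reverse direction, to the case $\dim(I\cap I^*)=i-1$ and then analysing the configuration in $\Res_\Delta(I\cap I'\cap I^*)$. Your argument never confronts this case, so the forward direction is not established.
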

\begin{proof}
Let that $I$ and $I'$ be $i$-spaces such that $\dim(I \cap I')< i-1$. In $\Res_{\Delta}(I \cap I')$, $I$ and $I'$ correspond to $v$-spaces $V,V'$ with $v\geq 1$. Take any $v$-space $V^*$, corresponding to a $i$-space $I^*$ in $\Delta$ through $I\cap I'$, such that $\dim(V \cap V^*)=v-1$ and $\dim(V'\cap V^*)=0$. Clearly, $\mathsf{N}_{\overline{\Gamma}}(I,I') \subseteq \mathsf{N}_{\overline{\Gamma}}(I,I^*)$. Suppose for a contradiction that $\mathsf{N}_{\overline{\Gamma}}(I,I') =\mathsf{N}_{\overline{\Gamma}}(I,I^*)$. Consequently, $V \cap V^*$ is collinear with $V'$. Hence, in $\Res_{\Delta}(I \cap I^*)$, where $I$ and $I^*$ correspond to points $q,q^*$,  corresponding to $V'$ is a subspace $D$ strictly containing $q^*$. It is easy to see that there is a point in $q^\perp \cap {q^*}^\perp$ not collinear with $V'$. We conclude that $\mathsf{N}_{\overline{\Gamma}}(I,I') \subsetneq \mathsf{N}_{\overline{\Gamma}}(I,I^*)$. Note that $\dim(I\cap I^*)=i-1$.
 
For the converse, let $I$ and $I'$ be $i$-spaces with $\dim(I \cap I')= i-1$. Suppose for a contradiction that there is an $i$-space $I^*$ such that $\mathsf{N}_{\overline{\Gamma}}(I,I')  \subsetneq \mathsf{N}_{\overline{\Gamma}}(I,I^*)$. If $\dim(I \cap I^*) <i-1$, then the preceding paragraph yields an $i$-space $I^{**}$ with $\dim(I \cap I^{**})=i-1$  such that $\mathsf{N}_\Gamma(I,I^*) \subsetneq \mathsf{N}_\Gamma(I,I^{**})$. But then $\mathsf{N}_{\overline{\Gamma}}(I,I')  \subsetneq \mathsf{N}_{\overline{\Gamma}}(I,I^*) \subsetneq \mathsf{N}_\Gamma(I,I^{**})$, so by replacing $I^*$ by $I^{**}$ if necessary, we may assume that $\dim(I\cap I^*)= i-1$. Then $\dim(I \cap I' \cap I^*) \in \{i-2,i-1\}$. Taking into account that $\mathsf{N}_{\overline{\Gamma}}(I,I') \subsetneq \mathsf{N}_{\overline{\Gamma}}(I,I^*)$,  it is easily deduced that the lines/points corresponding to the $i$-spaces in $\Res_{\Delta} (I \cap I' \cap I^*)$  have to be in a plane/on a (hyperbolic) line. But then one deduces that $\mathsf{N}_{\overline{\Gamma}}(I,I') = \mathsf{N}_{\overline{\Gamma}}(I,I^*)$, a contradiction.
\end{proof}

\subsection{Adjacent vertices are contained in a singular subspace ($b=-1$)}
In this case, adjacent vertices $I$ and $J$ are disjoint subspaces spanning a singular subspace, implying $|i|\leq |j| <n-1$. We will be using triples instead of quadruples and a new type of triple will turn up.

Suppose $J_1,J_2,J_3$ are $j$-spaces intersecting each other in a common subspace $S$. With the same notation as before, we say that they form a triple of type VI$(t)$, with $t$ an integer such that $0 \leq t \leq j$, if the following condition is satisfied. 
\begin{itemize}
\item[$\mathsf{VI(t)}\;$] $J'_1,J'_2$ and $J'_3$ are $t$-spaces in $\Delta'$ generating a hyperbolic $(2t+1)$-space.
\end{itemize}

Note that a triple of type VI$(0)$ is the same as a triple of type II$^*$, but a triple of type VI$(1)$ is in general not the same as a triple of type III$^*$ for the lines in the hyperbolic $3$-space do not necessarily lie on a regulus of a hyperbolic quadric. Hence a triple of type VI$(0)$ occurs when $\Delta$ is not a strictly orthogonal polar space, and a triple of type VI$(t)$ with $t>0$ occurs in every kind of polar space.

\begin{lemma} A triple $\{J_1,J_2,J_3\}$ is of type VI$(t)$. \end{lemma}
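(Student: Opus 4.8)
The plan is to follow the same template used throughout Section~\ref{k=-1}: start from the weak intersection lemma, pass to a residue, use the avoiding construction to force positions, and use $\mathsf{(RU1)}$ together with Lemma~\ref{opp} to upgrade "opposite" to "hyperbolic". Concretely, since $b=-1$ means adjacent vertices span a singular subspace and hence $|i|\le|j|<n-1$, the graph $\Gamma=\Gamma_{-1}^{\ell}$ with $b=-1$ forces $a=|\ell|-|j|-1\ge 0$ (because $i\ge 0$), so Lemma~\ref{gemprojectie}-type arguments are available. First I would establish the analogue of Lemma~\ref{-1, gemdoorsnede}: every point lying in two members of a triple lies in a third, so that renumbering gives $J_1\cap J_2=J_2\cap J_3=J_1\cap J_3=:S$. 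This is proved exactly as before: if $p\in J_1\cap J_2$ with $p\notin J_3$, one constructs (using Construction~\ref{con}, with $b=-1$ so only the $A$-bricks are needed) an $i$-space $I_p\in\mathsf{N}_{(-1)}(J_1,J_2)$ through $p$ avoiding $J_3$, contradicting the definition of the triple since then $I_p\not\sim J_3$.

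Next I would pass to $\Delta':=\Res_\Delta(S)$, where $J_1,J_2,J_3$ correspond to subspaces $J'_1,J'_2,J'_3$ of common dimension $t:=j-\dim(S)-1$; the goal is to show they are pairwise opposite and span a hyperbolic $(2t+1)$-space. Pairwise oppositeness: if $J'_1$ and $J'_2$ were not opposite, there is a point $x_1\in J'_1$ collinear with $J'_2$; combining Lemma~\ref{lem1} (or its $k=-1$ analogue, valid here since $b=-1$ forces $x_c\in\mathsf{P}_c$, exactly the hypothesis permitted in Lemma~\ref{lem1}) with an application of Lemma~\ref{lem0} on $(x_1,J'_2,J'_3)$, one would be forced into a configuration where $J'_1$ is collinear with $J'_2$ or $J'_3$, and then an $I_p$-type construction with the spanning line inside the collinear region produces an $i$-space adjacent to two but not the third member, a contradiction. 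Hence all three are pairwise opposite, which gives $\dim(S)\ge -1$ and $0\le t\le j$. To get the hyperbolic span, one invokes $\mathsf{(RU1)}$ (Lemma~\ref{ru1}, which holds for $\Gamma_{-1}^{\ell}$ including $k=-1$) and Lemma~\ref{opp}: every point of $\Delta'$ collinear with two of the $J'_r$ is collinear with all of them, so by the hyperbolic-subspace criterion from Section~2.1 the three pairwise-opposite $t$-spaces define a hyperbolic $(2t+1)$-space, i.e.\ the triple is of type VI$(t)$. Finally one checks conversely that such a configuration genuinely is a round-up triple — no vertex is adjacent to exactly two of $J_1,J_2,J_3$ (because of $\mathsf{(RU1)}$ and the hyperbolic structure) and $\mathsf{N}(J_1,J_2,J_3)\ne\emptyset$ (produced by Construction~\ref{con}).

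The main obstacle I expect is the oppositeness step when $a=0$ (equivalently $|i|=|j|$, since $b=-1$ and $a=0$ give $|\ell|=|j|+1$): there the $A$-brick is a single point, so the "generic" construction of $I_p$ with prescribed collinearity to $p$ has little room, and one must instead argue as in Case~1 of Lemma~\ref{aempty}, using $\mathsf{(RU2')}$ (Lemma~\ref{ru2'}, which applies precisely because $0\in\{a,b\}$) to rule out the non-opposite and the non-hyperbolic-$3$-space possibilities by a line-transversal argument inside a rank-$\ge 2$ residual polar space. A secondary technical point is keeping track, when $|i|=|j|<n-1$, that $|\ell|=|j|+a+1\le n-2$ so that the residues in which we pick the avoiding subspaces via Fact~\ref{lem4a} have positive rank; this is exactly the bookkeeping already carried out in Lemmas~\ref{ru2'} and~\ref{b-1}, so no genuinely new difficulty arises. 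Everything else is a routine adaptation of Lemmas~\ref{b-1} and~\ref{AenBlijn} with "quadruple" replaced by "triple" and the type-I branch suppressed (type~I cannot occur for triples with $b=-1$ by the same singular-subspace argument, since three collinear $j$-spaces through a common $(j-1)$-space would admit an $i$-space adjacent to two of them missing the third).
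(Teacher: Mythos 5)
There is a genuine gap at the very first step. You propose to prove that all pairwise intersections coincide by taking $p\in J_1\cap J_2$ with $p\notin J_3$ and constructing an $i$-space $I_p\in\mathsf{N}_{(-1)}(J_1,J_2)$ \emph{through} $p$. This is impossible: since $k=-1$, adjacency of $I_p$ with $J_1$ requires $I_p\cap J_1=\emptyset$, whereas $p\in I_p\cap J_1$. The quadruple argument of Lemma~\ref{-1, gemdoorsnede} works precisely because there the offending point lies in $J_3\cap J_4$ but \emph{outside} $J_1\cup J_2$, so an element of $\mathsf{N}_{(-1)}(J_1,J_2)$ through it is legitimate and is then adjacent to exactly two members. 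For a triple there is no second pair avoiding $p$, so this route collapses: an $i$-space through a point of $J_1\cap J_2$ is adjacent to at most one member (namely $J_3$), which does not contradict the round-up condition at all. Since your oppositeness and hyperbolic-span steps all take the common intersection $S$ as given, the argument is left without its foundation.

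The paper proceeds in the opposite order. It first shows that a point contained in precisely one member is collinear with neither of the other two: if $p\in J_1\setminus(J_2\cup J_3)$ were collinear with $J_2$, then $\mathsf{(RU1)}$ (applied with $J_3=J_4$) forces $p\perp J_3$, and since $p\notin J_2\cup J_3$ one may take $I=A\in\mathsf{N}_{(-1)}(J_2,J_3)$ with $p\in A$; this $I$ meets $J_1$, hence is adjacent to exactly two members, a contradiction. Only then is the common intersection deduced, by a short line argument: a line meeting two distinct nonempty sets $J_1\cap J_2$ and $J_1\cap J_3$ would be collinear with both $J_2$ and $J_3$ yet contain a point lying in $J_1$ only, contradicting the claim just proved; and a single nonempty pairwise intersection is ruled out because it is collinear with the third member, producing a point of $J_3$ collinear with $J_1$. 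Your remaining steps are in the right spirit (pairwise oppositeness is exactly the non-collinearity claim read in $\Res_\Delta(S)$, and the hyperbolic span follows from $\mathsf{(RU1)}$ together with Lemma~\ref{opp}), but the detour through Lemmas~\ref{lem1} and~\ref{lem0} and through $\mathsf{(RU2')}$ is unnecessary: the direct $\mathsf{(RU1)}$ argument above already handles $a=0$ and avoids the case split you anticipate.
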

\begin{proof}  We claim that each point contained in precisely one member of the triple is not collinear with any of the two other members of the triple. So assume for a contradiction  that $p \in J_1 \setminus (J_2 \cup J_3)$ is collinear with $J_2$. By Lemma~\ref{ru1}, we know that $\mathsf{(RU1)}$ holds, so $p$ is also collinear with $J_3$. But then, as $p \notin J_2 \cup J_3$ there is an $i$-space $I=A \in \mathsf{N}_{(-1)}(J_2,J_3)$ with $p \in A$, which cannot be adjacent to $J_1$. This contradiction shows the claim.

Denote by $S_{xy}$ the intersection $J_x \cap J_y$ for $x,y \in \{1,2,3\}$ and $x \neq y$ and suppose that these do not coincide. We may assume that $S_{12} \cap S_{23} \cap S_{31}$ is empty, as otherwise we look at its residue.

By $\mathsf{(RU1)}$, each point of $S_{xy}$ is collinear with $J_z$, with $\{x,y,z\}=\{1,2,3\}$. Now suppose that $S_{12}$ and $S_{13}$ are nonempty, and consider a line $L$ intersecting both of them, necessarily in distinct points. These two points are necessarily collinear with $J_2$ and $J_3$. Hence, each point of $L$ has to be collinear with $J_2$ and $J_3$. However, $L$ contains a point which is contained in $J_1$ only, contradicting the first paragraph of this proof. This implies that at most one of the intersections, say $S_{12}$ can be nonempty. But as $S_{12}$ is collinear with $J_3$, the latter contains a point which is collinear with $J_1$, again a contradiction. 

We conclude that the $j$-spaces have one common intersection $S$ and, in $\Res_\Delta(S)$, they correspond to $t$-spaces which are on a hyperbolic $(2t+1)$-space, as required. It is easily verified that each of those $3$-tuples obtained above indeed satisfies the definition of a round-up triple.
\end{proof}
We now distinguish either $t=0$ or $t=1$ from the others, depending on the type of $\Delta$. 

\begin{lemma}\label{dist} 
Suppose $\{J_1,J_2,J_3\}$ is triple of type VI$(t)$. If no $j$-space $J_4$ (with $J_4 \neq J_3$) is such that $\{J_1,J_2,J_4\}$ is a triple whereas $\{J_1,J_3,J_4\}$ is not, then precisely one of the following occurs.

\begin{itemize}
\item[$(i)$] $t=0$ and $\Delta$ contains hyperbolic lines.
\item[$(ii)$] $t=1$ and $\Delta$ contains hyperbolic quadrangles as hyperbolic $3$-spaces.
\end{itemize}

\end{lemma}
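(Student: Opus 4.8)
The statement asserts a Beckman--Quarles style rigidity: a type-$\mathsf{VI}(t)$ triple in the graph $\Gamma_{-1}^\ell$ (with $b=-1$) is ``minimal'' (no $J_4\neq J_3$ makes $\{J_1,J_2,J_4\}$ a triple without $\{J_1,J_3,J_4\}$ being one) precisely in the two borderline situations: $t=0$ with hyperbolic lines present, or $t=1$ with hyperbolic quadrangles occurring as hyperbolic $3$-spaces. The plan is to work entirely inside $\Delta':=\Res_\Delta(S)$, where $S=J_1\cap J_2\cap J_3$ and the three $j$-spaces correspond to $t$-spaces $J_1',J_2',J_3'$ spanning a hyperbolic $(2t+1)$-space $H$; everything reduces to a combinatorial statement about near-lines inside $H$ and its ambient polar space. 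Throughout I would use that $\mathsf{(RU1)}$ holds (Lemma~\ref{ru1}) and that, since $b=-1$, an element of $\mathsf{N}(J_u,J_v)$ is just an $a$-space collinear with and disjoint from both, so adjacency of a fourth $j$-space $J_4$ to the near-line $[J_1,J_2]$ is governed by collinearity/opposition of the corresponding $t$-space with the members of $H$.

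First I would pin down what the near-line $[J_1,J_2]$ looks like: by the classification of type $\mathsf{V}$ and $\mathsf{VI}$ configurations together with Lemma~\ref{singletype}-type reasoning, $[J_1,J_2]$ corresponds in $\Delta'$ to the set of $t$-spaces in the ``regulus-like'' family determined by $J_1',J_2'$ inside the hyperbolic $(2t+1)$-space, i.e.\ one of the two systems of maximal singular subspaces of $H$ (recall $H$ is itself a hyperbolic polar space of rank $t+1$). A fourth $j$-space $J_4$ with $J_4'$ collinear with $H$ but \emph{not} lying in $H$ then gives a triple $\{J_1,J_2,J_4\}$ of type $\mathsf{V}(t+1)$ or similar, and the key dichotomy is: does $J_4'$ meet $J_3'$? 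If $H$ has the property that every point (more precisely: every subspace of the right dimension) collinear with two members of a regulus of $H$ is collinear with all of them --- which is exactly the regulus property characterising hyperbolic quadrangles and hyperbolic lines --- then $\{J_1,J_3,J_4\}$ is automatically a triple whenever $\{J_1,J_2,J_4\}$ is, so no separating $J_4$ exists; conversely, if $H$ does \emph{not} have this property (the generic hyperbolic $3$-space in a non-orthogonal polar space, or a hyperbolic $(2t+1)$-space with $t\geq 2$), one can pick $J_4'$ collinear with $J_1',J_2'$ but not with $J_3'$, producing the separating $J_4$. This is where the case analysis on $t$ bites: for $t\geq 2$ one uses a dimension/transversal argument inside $H$ (three pairwise opposite $t$-spaces in rank $t+1$ always admit a $t$-space meeting two but not the third once $t\geq2$, since the ``each transversal meets all'' phenomenon is special to reguli of quadrangles); for $t=1$ it is precisely the regulus property of a hyperbolic quadrangle; for $t=0$ it is the statement that a hyperbolic line is determined by two of its points, equivalently that two opposite points lie on a (unique) hyperbolic line.

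Concretely the steps are: (1) reduce to $\Delta'$ and record that $[J_1,J_2]$ is a system of MSS of the hyperbolic polar space $H$ of rank $t+1$; (2) translate ``$J_4$ makes $\{J_1,J_2,J_4\}$ a triple'' into ``$J_4'$ is collinear with every member of the system $[J_1,J_2]$'', and similarly for $\{J_1,J_3,J_4\}$, using $\mathsf{(RU1)}$ and the $b=-1$ construction to manufacture the required $i$-space when collinearity fails; (3) show that the non-existence of a separating $J_4$ is equivalent to: every such $J_4'$ collinear with the whole system is automatically collinear with $J_3'$ --- i.e.\ $J_3'$ is ``captured'' by the system $[J_1,J_2]$ (it lies in the span $H$ and $H$ has the regulus property); (4) invoke the structural facts from Section~2.1: a hyperbolic $1$-space exists iff $\Delta$ is not strictly orthogonal, and a hyperbolic $3$-space is a hyperbolic quadrangle iff $\Delta$ is orthogonal, plus the fact that these are the \emph{only} kinds of hyperbolic subspaces in which a maximal set of pairwise opposite lines has the ``every transversal meets all'' property; (5) conclude that no separating $J_4$ exists exactly when $(t,\Delta)$ is as in $(i)$ or $(ii)$, and that these two possibilities are mutually exclusive (a strictly orthogonal $\Delta$ has hyperbolic quadrangles but no hyperbolic lines, while a $\Delta$ with hyperbolic lines and rank $\geq 3$ that is orthogonal would put us in case $(ii)$ not $(i)$ --- I would double-check the exact bookkeeping here).

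The main obstacle I anticipate is step (3)/(4): making precise the claim that ``$J_3'$ is captured by $[J_1,J_2]$'' is equivalent to the regulus property, and handling the $t\geq 2$ case cleanly. For $t\geq 2$ one must genuinely exhibit a $t$-space of $H$ (and then lift it to a $j$-space $J_4$ with $b=-1$) collinear with $J_1',J_2'$ but opposite $J_3'$; the construction is routine via Fact~\ref{lem4a}, but one has to be careful that the lifted $J_4$ does not accidentally become adjacent to $J_3$ through some point outside $H$, which is where $\mathsf{(RU1)}$ and the fact that $H=\{U,V\}^{\perp\!\!\!\perp}$ absorbs all points collinear with two of its opposite $t$-subspaces are essential. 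A secondary nuisance is the degenerate low-rank cases (e.g.\ $t=j$, or $\Res_\Delta(S)$ of small rank) where $H$ fills up the residue; I would dispatch those by the same equivalences, noting that the hypotheses of the lemma already restrict $|j|<n-1$ and $b=-1$, which keeps enough room in the ambient polar space for the constructions.
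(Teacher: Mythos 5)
Your reduction to $\Delta'=\Res_\Delta(S)$ and the hyperbolic $(2t+1)$-space $H$, the case split on $t$, and the appeal to the facts of Section~2.1 (hyperbolic lines exist iff $\Delta$ is not strictly orthogonal; a hyperbolic $3$-space is a grid, with the ``every transversal meets all'' regulus property, iff $\Delta$ is strictly orthogonal) all match the intended argument. The gap is in how you manufacture the separating $J_4$. You propose to take $J_4'$ \emph{collinear} with $J_1'$ and $J_2'$ (``collinear with $H$ but not lying in $H$'', later ``a $t$-space meeting two but not the third''). But by the classification lemma immediately preceding this one, every round-up triple in the present setting is of type $\mathsf{VI}(t)$, so its members are pairwise \emph{opposite} in the residue of their common intersection; a $J_4'$ collinear with $J_1'$ and $J_2'$ cannot form a triple with them (concretely, an $i$-space through a point of such a $J_4'$ is adjacent to $J_1$ and $J_2$ but not to $J_4$, violating the round-up condition), so it witnesses nothing. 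The witness you actually need is the opposite configuration: a $t$-space $T_4$ \emph{inside} $H$, opposite both $T_1$ and $T_2$, but meeting $T_3$ in at least a point. Then $\{J_1,J_2,J_4\}$ is again of type $\mathsf{VI}(t)$, hence a triple, while $\{J_1,J_3,J_4\}$ is not, because $J_3\cap J_4\supsetneq S=J_1\cap J_3$ and the pairwise intersections of a triple must coincide. The lemma then reduces to the existence question for such a $T_4$ in $H$: it always exists for $t\geq 2$; for $t=1$ it exists unless $H$ is a grid, i.e.\ unless $\Delta$ is strictly orthogonal (case $(ii)$); for $t=0$ it never exists, and the triple itself only exists when $H$ is a genuine hyperbolic line (case $(i)$). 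Note also that your worry about the lifted $J_4$ ``accidentally becoming adjacent to $J_3$ through some point outside $H$'' is misplaced: $J_3$ and $J_4$ lie in the same bipartition class and are never adjacent in $\Gamma$; the only thing to verify is the triple condition, which is settled by the intersection pattern just described.
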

\begin{proof}
In $\Delta'$, the $j$-spaces of the triple correspond to $t$-spaces $T_1,T_2$ and $T_3$. Denote the hyperbolic $(2t+1)$-space generated by them by $H$. Suppose there is a $t$-space $T_4$ in $H$ opposite $T_1, T_2$ such that $T_3\cap T_4\neq \emptyset$. Then clearly, $\{J_1,J_3,J_4\}$ cannot be a triple. If $t > 1$, such a $t$-space can always be found.
\begin{itemize}
\item
If $t=1$, such a $t$-space can always be found, except when $\Delta$ is a strictly orthogonal polar space, as in that case $H$ is a hyperbolic quadrangle (the strictly orthogonal polar spaces are the only ones in which a a hyperbolic $3$-space consists of precisely a regulus).
\item
If $t=0$, then $H$ is a polar space of rank $1$ and hence such a $t$-space can never be found. Note that, in this case, $H$ only contains more than two points (i.e., $H$ is a hyperbolic line) if $\Delta$ is not a strictly orthogonal polar space.
\end{itemize}
So we see that either $t=0$ or $t=1$ and only one of these possibilities occurs, depending on $\Delta$. It is easily seen that, in cases $(i)$ and $(ii)$, every $j$-space $J_4$ such that $\{J_1,J_2,J_4\}$ is a triple is also such that $\{J_1,J_3,J_4\}$ is a triple. \end{proof}

Since each polar space either contains hyperbolic lines or is strictly orthogonal, and no strictly orthogonal polar space contains hyperbolic lines, we can either recognise the triples of type VI$(0)$ or those of VI$(1)$. This gives us the following two cases to consider.

\textbf{\boldmath Case $(i)$: $\Delta$ contains hyperbolic lines.\unboldmath} The previous lemma enables us to reduce $\Gamma'$ by restricting its adjacency relation to being contained in a triple of type VI$(0)$. We obtain the (non-bipartite) graph $\Gamma_{j;j-1,j}(\Delta)$ and the result follows from \cite{Kas-Mal:13}.

\textbf{\boldmath Case $(ii)$: $\Delta$ contains hyperbolic quadrangles as hyperbolic $3$-spaces.\unboldmath} In this case $\Delta$ is a strictly orthogonal polar space. Again with the help of the previous lemma, we reduce $\Gamma'$ by restricting the adjacency relation to being contained in a triple of type VI$(1)$ (which is in fact the same as a triple of type III$^*$ in this kind of polar space). We obtain the non-bipartite Weyl-graph $\Gamma_{j;j-2,j}^n(\Delta)$, and the result follows from Section~\ref{k>-1} if $j >1$ and from \cite{Kas-Mal:13} if $j=1$ (see also Subsection~\ref{k=a=-1}).

So also in this case we obtain that each automorphism of $\Gamma$ is induced by one of $\Delta^b$. 
\par\bigskip

As we went trough all cases,  we have reached the end of the proofs of Main Theorems~\ref{main1} and~\ref{main2}.
\hfill $\blacksquare$
\par\bigskip
\emph{We thank the referee for the opportunity to improve the paper. Many arguments have been corrected and/or clarified.}

\end{document}